\DeclareMathOperator{\ev}{ev}
\newcommand{\intd}{\,\mathrm{d}}
\newcommand{\eps}{\varepsilon}
\newcommand{\bphi}{{_{B}\phi}}
\newcommand{\cphi}{{_{C}\phi}}
\newcommand{\bpsi}{{_{B}\psi}}
\newcommand{\cpsi}{{_{C}\psi}}
\newcommand{\phib}{{\phi_{B}}}
\newcommand{\psib}{{\psi_{B}}}
\newcommand{\phic}{{\phi_{C}}}
\newcommand{\psic}{{\psi_{C}}}
\newcommand{\dual}[1]{#1^{\vee}}
\newcommand{\dA}{\dual{A}}
\newcommand{\dB}{\dual{B}}
\newcommand{\dbA}{\dual{(\bA)}}
\newcommand{\dcA}{\dual{(\cA)}}
\newcommand{\dAb}{\dual{(\Ab)}}
\newcommand{\dAc}{\dual{(\Ac)}}
\newcommand{\dbAb}{\dual{(\gr{A}{}{B}{}{B})}}
\newcommand{\dcAc}{\dual{(\gr{A}{}{C}{}{C})}}
\newcommand{\comega}{{_{C}\omega}}
\newcommand{\bomega}{{_{B}\omega}}
\newcommand{\omegac}{\omega_{C}}
\newcommand{\omegab}{\omega_{B}}
\newcommand{\cupsilon}{{_{C}\upsilon}}
\newcommand{\bupsilon}{{_{B}\upsilon}}
\newcommand{\upsilonc}{\upsilon_{C}}
\newcommand{\upsilonb}{\upsilon_{B}}
\newcommand{\cphic}{\cphi{}_{C}}
\newcommand{\bpsib}{\bpsi{}_{B}}
\newcommand{\GstG}{G {_{s}\times_{t}} G}
\newcommand{\GssG}{G {_{s}\times_{s}} G}
\newcommand{\GttG}{G {_{t}\times_{t}} G}
\numberwithin{equation}{section}
\theoremstyle{definition} 
\newtheorem{remark}{Remark}[subsection]
\newtheorem{remarks}{Remarks}[subsection]
\newtheorem{example}[remark]{Example}
\newtheorem{notation}[remark]{Notation}
\newtheorem*{notation*}{Notation}
\theoremstyle{plain}
\newtheorem{definition}[remark]{Definition}
\newtheorem{theorem}[remark]{Theorem}
\newtheorem{proposition}[remark]{Proposition}
\newtheorem{corollary}[remark]{Corollary}
\newtheorem{lemma}[remark]{Lemma}
\newtheorem*{assumption*}{Assumption}
\newcommand{\C}{\mathds{C}}
\newcommand{\Z}{\mathds{Z}}
\newcommand{\oo}{\otimes}
\newcommand{\gr}[5]{\tensor*[^{#2}_{#3}]{#1}{^{#4}_{#5}}}
\newcommand{\id}{\iota}
\DeclareMathOperator{\Hom}{Hom}
\DeclareMathOperator{\End}{End}
\newcommand{\bB}{{_{B}B}}
\newcommand{\Bb}{B_{B}}
\newcommand{\cC}{{_{C}C}}
\newcommand{\Cc}{C_{C}}
\newcommand{\bsA}{\gr{A}{}{B}{}{}}
\newcommand{\btA}{\gr{A}{}{}{B}{}}
\newcommand{\bAs}{\gr{A}{}{}{}{B}}
\newcommand{\bAt}{\gr{A}{B}{}{}{}}
\newcommand{\csA}{\gr{A}{}{C}{}{}}
\newcommand{\ctA}{\gr{A}{}{}{C}{}}
\newcommand{\cAs}{\gr{A}{}{}{}{C}}
\newcommand{\cAt}{\gr{A}{C}{}{}{}}
\newcommand{\cAc}{\gr{A}{}{C}{}{C}}
\newcommand{\cCc}{\gr{C}{}{C}{}{C}}
\newcommand{\bAb}{\gr{A}{}{B}{}{B}}
\newcommand{\bBb}{\gr{B}{}{B}{}{B}}
\newcommand{\AltkA}{\bsA \overline{\times} \btA} 
\newcommand{\ArtkA}{\cAt \overline{\times} \cAs} 
\newcommand{\cAsA}{\cAs \otimes \csA}
\newcommand{\bAsA}{\bAs \otimes \bsA}
\newcommand{\AlA}{\bsA \otimes \btA}
\newcommand{\ArA}{\cAt \otimes \cAs}
\newcommand{\bATA}{\btA \otimes \bAt}
\newcommand{\cATA}{\ctA \otimes \cAt}
\newcommand{\AlAlA}{\bsA \otimes  \gr{A}{}{B'}{B}{} \otimes \gr{A}{}{}{B'}{}}
\newcommand{\bA}{{_{B}A}}
\newcommand{\Ab}{A_{B}}
\newcommand{\cA}{{_{C}A}}
\newcommand{\Ac}{A_{C}}
\newcommand{\sbA}{A^{B}}
\newcommand{\Asc}{\gr{A}{C}{}{}{}}
\newcommand{\AbA}{\Ab \oo \bA}
\newcommand{\ABA}{\bA \oo \Ab}
\newcommand{\AcA}{\Ac \oo \cA}
\newcommand{\ACA}{\cA \oo \Ac}
\newcommand{\op}{\mathrm{op}}
\newcommand{\co}{\mathrm{co}}
\newcommand{\Tl}{T_{\lambda}}
\newcommand{\Tr}{T_{\rho}}
\newcommand{\lT}{{_{\lambda}T}}
\newcommand{\rT}{{_{\rho}T}}
\newcommand{\beps}{{_{B}\varepsilon}}
\newcommand{\epsc}{\varepsilon_{C}}
\newcommand{\ceps}{{_{C}\varepsilon}}
\newcommand{\epsb}{\varepsilon_{B}}
\newcommand{\mult}{m}
\newcommand{\actleft}{\triangleleft}
\newcommand{\actright}{\triangleright}
\title[Integration on algebraic quantum groupoids]{Integration on algebraic quantum groupoids}
\author{Thomas Timmermann} 
 \address{FB Mathematik und Informatik, University of Muenster \\ Einsteinstr.\ 62, 48149
   Muenster, Germany}
 \email{timmermt@math.uni-muenster.de}
 \thanks{Supported by the SFB 878
     ``Groups, geometry and actions'' funded by the DFG}
\date{\today}
 \subjclass[2010]{16T05}
 \keywords{bialgebroids, Hopf algebroids, weak Hopf algebras, quantum
   groupoids, Pontrjagin duality, integrals}
\begin{document}

\begin{abstract}
In this article, we develop a theory of integration on algebraic quantum groupoids in the form of regular multiplier Hopf algebroids, and establish the main properties of integrals  obtained by Van Daele for algebraic quantum groups before  --- faithfulness, uniqueness up to scaling, existence of a modular element and existence of a modular automorphism --- for algebraic quantum groupoids under reasonable assumptions. 
 The approach to integration developed in this article forms the basis for the extension of Pontrjagin duality to algebraic quantum groupoids, and for the passage from algebraic quantum groupoids to operator-algebraic completions, which both will be studied in separate articles.
\end{abstract}
\maketitle

\tableofcontents


 \section{Introduction}

In this article, we develop \emph{a theory of integration on algebraic quantum groupoids} in the form of regular multiplier Hopf algebroids \cite{timmermann:regular}, and establish the \emph{main properties of integrals} that were obtained by Van Daele in \cite{daele}  for algebraic quantum groups  --- faithfulness, uniqueness up to scaling, existence of a modular element and existence of a modular automorphism --- for algebraic quantum groupoids under reasonable assumptions. 
 The approach to integration developed in this article forms the basis for two important constructions.

 To every algebraic quantum groupoid equipped with  suitable integrals, we can associate a \emph{generalized Pontrjagin dual}, which is an
  algebraic quantum groupoid again. This construction generalizes  corresponding results of Van Daele for algebraic quantum groups
  \cite{daele} and of Enock and Lesieur for measured quantum groupoids  \cite{enock:action,lesieur}, and will be studied in a separate
  article \cite{timmermann:dual}, see also \cite{timmermann:integrals}.

 In the involutive case, we can construct \emph{operator-algebraic   completions} in the form of Hopf-von Neumann bimodules  \cite{vallin:1} and of Hopf $C^{*}$-bimodules \cite{timmermann:cpmu}, and thus link the algebraic approaches to  quantum groupoids to the operator-algebraic one.  This construction  generalizes corresponding results of Kustermans and Van Daele for  algebraic quantum groups \cite{kustermans:algebraic} and of the  author for dynamical quantum groups \cite{timmermann:dynamical}, and  is detailed in a forthcoming article \cite{timmermann:opalg}.

\smallskip

To explain the \emph{main ideas and results of our approach}, let us  first look at \emph{multiplier Hopf algebras} \cite{daele}. Given such a multiplier Hopf algebra $A$ with comultiplication $\Delta$, a non-zero linear functional $\phi$  on $A$ is called a \emph{left integral} if
\begin{align} \label{eq:integrals-mha}
  (\id \otimes \phi)(\Delta(a)(1 \otimes b)) = \phi(a)b 
\end{align} 
for all $a,b\in A$,  where the product $\Delta(a)(1 \otimes b)$  lies in $A\otimes A$ by assumption and $\id \otimes \phi$  is the ordinary slice maps from $A\otimes A$ to $A$.  Van Daele showed that every such integral
\begin{enumerate}
\item  is \emph{faithful}: if $b\neq 0$, then $\phi(ab)\neq 0$ and $\phi(bc) \neq 0$ for some $a,c\in A$;
\item is \emph{unique up to scaling}: every left integral has the form $\lambda \phi$ with $\lambda \in \C$;
\item admits a  \emph{modular automorphism} $\sigma$  such that $\phi(ab)=\phi(b\sigma(a))$ for all $a,b$.
\end{enumerate}
Corresponding results hold for every right integral $\psi$, which is a non-zero linear functional on $A$ satisfying
\begin{align} \label{eq:integrals-mha-right}
  (\psi\otimes \id)((a\otimes 1)\Delta(b)) = \psi(b)a
\end{align} 
for all $a,b\in A$. The last key result of Van Daele on integrals is
\begin{enumerate}\setcounter{enumi}{3}
\item \emph{existence of an (invertible) modular element} $\delta$ such that $\psi(a)=\phi(a\delta)$ for all $a\in A$.
\end{enumerate}

\smallskip

Our aim is to establish corresponding results for integrals on algebraic quantum groupoids and to provide the basis for the two applications outlined above. 

In the framework of \emph{weak multiplier Hopf algebras}, this will be done by Van Daele in a forthcoming paper. A weak multiplier Hopf algebra  consists of an algebra $A$ and a comultiplication $\Delta$ that is, in a sense (when extended to the multiplier algebras) no longer unital but still takes values in multipliers of $A\otimes A$. In that setting, the invariance conditions \eqref{eq:integrals-mha} still make sense for  functionals $\phi$ and $\psi$ on $A$, and the results  (1)--(4) above can be carried over from \cite{daele} with additional arguments.

In the present paper, we develop the theory in the considerably  more general and challenging framework of \emph{regular multiplier Hopf algebroids}. The latter were introduced by Van Daele and the author in \cite{timmermann:regular} and simultaneously generalize the regular weak multiplier Hopf algebras studied by Van Daele and Wang \cite{daele:weakmult,daele:relation} and Böhm \cite{boehm:weak}, and  Hopf algebroids studied by \cite{boehm:bijective,lu:hopf,xu}, see also \cite{boehm:hopf}.

A \emph{regular multiplier Hopf algebroid} consists of a total  algebra $A$, commuting subalgebras $B,C$ of the multiplier algebra of $A$ with anti-isomorphisms $S_{B}\colon B\to C$ and $S_{C} \colon C\to B$, and a left and a right comultiplication $\Delta_{B}$ and $\Delta_{C}$ which map $A$ to certain multiplier algebras such that one can form products of the form
\begin{align*}
&  \Delta_{B}(a)(1\otimes b), && \Delta_{B}(b)(a \otimes 1), &&
(a\otimes 1)\Delta_{C}(b), &&  (1\otimes b)\Delta_{C}(a). 
\end{align*}
These products do no longer lie in the tensor product $A\otimes A$ but rather in certain balanced tensor products $\AlA$ and $\ArA$, respectively, which are formed by considering $A$ as a  module over $B$ or $C$ in various ways. 
None of the algebras $A,B,C$ needs to be unital; if  all are, then one has a Hopf algebroid.

\smallskip 

What is the appropriate notion of a left or right integral for regular multiplier Hopf algebroids?

Unlike the case of (weak) multiplier Hopf algebras,  the key invariance relations \eqref{eq:integrals-mha} and \eqref{eq:integrals-mha-right} do no longer make sense for  functionals $\phi$ or $\psi$ on $A$.  The products $\Delta_{B}(a)(1 \otimes b)$ and $(a\otimes 1)\Delta_{C}(b)$ do not lie in the ordinary tensor product $A\otimes A$ but in the  balanced tensor products, and on these balanced tensor products, slice maps of the form $\id \otimes \phi$ or $\psi\otimes \id$ can only be defined if $\phi$ and $\psi$ are   maps from  $A$ to $B$ or $C$, respectively, that are compatible with certain module structures.
In the case of Hopf algebroids, such left- or right-invariant module maps from the total algebra $A$  to the base algebras $B$ and $C$ were studied already by Böhm \cite{boehm:integrals}; see also Böhm  and Szlachyani  \cite{boehm:bijective}.

\smallskip

The \emph{key idea} of our approach is to regard  not only such left- or right-invariant module maps from $A$ to $B$ or $C$, which correspond to \emph{partial, relative} or \emph{fiber-wise integration}, but \emph{total integrals} obtained by composition  with suitable functionals $\mu_{B}$ and $\mu_{C}$ on the base algebras $B$ and $C$. 

Why is it natural, necessary and useful to study  such scalar-valued total integrals?
\begin{enumerate}
\item The \emph{main results}  of Van Daele --- uniqueness and existence of a modular automorphism and of a modular element --- do  not hold for partial integrals or can not even be  formulated.  We shall prove that \emph{all of these results carry    over to scalar-valued total integrals}.

\item  The situation is similar for
  \emph{locally compact groupoids}
    \cite{renault}, where total integration of functions on a groupoid    is given by fiber-wise integration with respect to a left or right
    Haar system, followed by integration over the unit space with    respect to a quasi-invariant measure; and for   \emph{measured quantum groupoids} \cite{lesieur},    \cite{enock:action}, which are given by a Hopf-von Neumann  bimodule, the operator-algebraic counterpart to a multiplier Hopf    algebroid, together with a left- and a right-invariant partial  integral and a suitable weight on the base algebra. Again, the    interplay of the partial integrals and the weight on the base  is crucial for the whole theory.
\item To construct  a \emph{generalized Pontrjagin dual} of a (multiplier) Hopf algebroid,  one first has to define a dual algebra with a convolution product. If one regards the total algebra $A$ as a module over the base algebras $B$ and $C$, one obtains four dual modules with natural convolution products, two dual to  the left and two dual to the right comultiplication. Our approach yields an embedding of these four modules into the dual vector space of $A$ and one subspace  of the intersection where the four products coincide. In \cite{timmermann:dual}, we will show that this subspace can be equipped with the structure of a multiplier Hopf algebroid again; see also \cite{timmermann:integrals}.
\item Total integrals form the key to relate the algebraic  approach to quantum groupoids to the operator-algebraic one, as we  shall show in a forthcoming paper \cite{timmermann:opalg}. Given a multiplier Hopf
  $*$-algebroid with positive total integrals, one can define a  natural Hilbert space of ``square-integrable functions on the  quantum groupoid'' and construct a $*$-representation of the total  algebra which gives rise to a Hopf-von Neumann bimodule.
\end{enumerate}

\smallskip

How should the functionals $\mu_{B}$ and $\mu_{C}$ on the base algebras $B$ and $C$ then be chosen?

 Obviously, they should be faithful. Next, we demand that they are \emph{antipodal} in the sense that
\begin{align} \label{eq:intro-base-weights}
  \mu_{C} &= \mu_{B} \circ S_{C}  &&\text{and} & \mu_{B} &= \mu_{C} \circ S_{B}.
\end{align}
Our third assumption involves the left and the right counit $\beps$ and $\epsc$ of the multiplier Hopf algebroid, which  map $A$ to $B$ and $C$, respectively, and reads
\begin{align}
  \label{eq:intro-counit-condition}
  \mu_{B}\circ \beps = \mu_{C}\circ \epsc.
\end{align}
 This \emph{counitality} condition appeared already in \cite{daele:relation} and has strong implications, for example, that the two equations in \eqref{eq:intro-base-weights} are equivalent and that the anti-isomorphisms $S_{B}$ and $S_{C}$ combine to  \emph{Nakayama automorphisms} or \emph{modular automorphisms} for $\mu_{B}$ and $\mu_{C}$, that is,
\begin{align} 
  \label{eq:intro-modular-automorphism}
  \mu_{B}(xx') &= \mu_{B}(S_{C}S_{B}(x')x) &&\text{and} & \mu_{C}(yy') &= \mu_{C}(y'S_{B}S_{C}(y))
\end{align}
for all $x,x' \in B$, $y,y' \in C$.  It also implies that on a natural subspace of functionals on $A$, the two convolution products induced by the left and by the right comultiplication coincide, which is crucial for the construction of the generalized Pontrjagin dual.

Finally, we demand that $\mu_{B}$ and $\mu_{C}$ are \emph{quasi-invariant} with respect to the partial integrals, which map $A$ to $B$ or $C$, respectively, in a natural sense. This condition is easily seen to be necessary for the existence of a modular element, and similar conditions are used in the theories of locally compact quantum groupoids and of measured quantum groupoids.

\smallskip

What can we say about \emph{existence and uniqueness} of such functionals $\mu_{B}$ and $\mu_{C}$? 

We shall give simple examples which show that neither existence nor uniqueness can be expected in general. This may seem disappointing but is quite natural. Indeed, the situation is  similar to  the question whether an action of a non-compact group on a non-compact space admits an invariant or quasi-invariant measure.

 We also give examples where condition \eqref{eq:intro-counit-condition} can not be satisfied directly, but where the left and  the right comultiplication $\Delta_{B}$ and $\Delta_{C}$ can be \emph{modified}  so that    condition \eqref{eq:intro-counit-condition}  can be satisfied for the new left and right counits. The basic idea is that for every pair of automorphisms $(\Theta_{\lambda},\Theta_{\rho})$ of the underlying algebra $A$  which fix $B$ and $C$ and satisfy
 \begin{align*}
   (\Theta_{\lambda} \bar\times \id) \circ \Delta_{B} = (\id \bar\times \Theta_{\rho}) \circ \Delta_{B},
 \end{align*}
this composition forms a left comultiplication and one obtains a regular multiplier Hopf algebroid again. The right comultiplication can be modified similarly and \emph{independently}. This \emph{modification procedure} considerably generalizes a construction of Van Daele \cite{daele:modified}, and is of interest on its own because it illustrates how loosely the left and the right comultiplication of a (multiplier) Hopf algebroid are related. 

\subsection*{Plan} This article is organized as follows.

 First, we  recall the definition and main properties of regular multiplier Hopf algebroids from \cite{timmermann:regular} (\emph{Section 2}), and introduce the examples  that will be used throughout this article.

Then,  we introduce the  partial integrals, the functionals on the base algebras mentioned above,  the quasi-invariance condition relating the two, and the total integrals obtained by composition  (\emph{Section 3}).

Next, we prove uniqueness of total integrals relative to fixed base functionals $\mu_{B}$ and $\mu_{C}$ up to rescaling (\emph{Section 4}).

We then turn to condition \eqref{eq:intro-counit-condition} which is the last missing ingredient for  our  definition of \emph{measured multiplier Hopf algebroids} (\emph{Section 5}).

Next, we prove the remaining key results on integrals, which are existence of a modular automorphism and modular element, and faithfulness  (\emph{Section 6}). Along the way, we study various convolution  operators and obtain a dual algebra.

Finally, we present the modification procedure mentioned above (\emph{Section 7}) and consider further examples (\emph{Section 8}).

\subsection*{Preliminaries}

We shall use the following conventions and terminology.

All algebras and modules will be complex vector spaces and all homomorphisms will be linear maps, but much of the theory developed in this article applies in wider generality.

The identity map on a set $X$ will be denoted by $\iota_{X}$ or simply
$\iota$.

Let $B$ be an algebra, not necessarily unital. We denote by $B^{\op}$
the \emph{opposite algebra}, which has the same underlying vector space as $B$, but the reversed multiplication.

Given a right module $M$ over $B$, we write $M_{B}$ if we want to emphasize that $M$ is regarded as a right $B$-module. We call $M_{B}$
\emph{faithful} if for each non-zero $b\in B$ there exists an $m\in M$
such that $mb$ is non-zero, \emph{non-degenerate} if for each non-zero
$m\in M$ there exists a $b \in B$ such that $mb$ is non-zero,
\emph{idempotent} if $MB=M$, and we say that $M_{B}$ \emph{has local   units in $B$} if for every finite subset $F\subset M$ there exists a
$b\in B$ with $mb=m$ for all $m\in F$. Note that the last property implies the preceding two. We denote by $\dual{(M_{B})}
:=\Hom(M_{B},B_{B})$ the dual module, and by $\dual{f} \colon
\dual{(N_{B})} \to \dual{(M_{B})}$ the dual of a morphism $f\colon M\to N$
of right $B$-modules, given by $\dual{f}(\chi)= \chi \circ f$.  We use the same notation for duals of vector spaces and of linear maps. We furthermore denote by $L(M_{B}) := \Hom(\Bb,M_{B})$ the space of
\emph{left multipliers} of the module $M_{B}$.

For left modules, we obtain the corresponding notation and terminology by identifying left $B$-modules with right $B^{\op}$-modules.  We denote by $R(_{B}M):=\Hom(\bB,{_{B}M})$ the space of \emph{right   multipliers} of a left $B$-module $_{B}M$.

We write $B_{B}$ or ${_{B}B}$ when we regard $B$ as a right or left module over itself with respect to right or left multiplication. We say that the algebra $B$ is \emph{non-degenerate}, \emph{idempotent},
or \emph{has local units} if the modules ${_{B}B}$ and $B_{B}$ both are non-degenerate, idempotent or both have local units in $B$,
respectively. Note that the last property again implies the preceding two.

We denote by $L(B)=\End(B_{B})$ and
$R(B)=\End({_{B}B})^{\op}$ the algebras of left or right multipliers of $B$, respectively, where the multiplication in the latter algebra is given by $(fg)(b):=g(f(b))$. Note that $B_{B}$ or
${_{B}B}$ is non-degenerate if and only if the natural map from $B$ to
$L(B)$ or $R(B)$, respectively, is injective. If $B_{B}$ is non-degenerate, we define the multiplier algebra of $B$ to be the subalgebra $M(B) :=\{ t\in L(B) : Bt\subseteq B\} \subseteq L(B)$,
where we identify $B$ with its image in $L(B)$. Likewise we could define $M(B) =\{ t\in R(B) : tB \subseteq B\}$ if ${_{B}B}$ is non-degenerate.  If both definitions make sense, that is, if $B$ is non-degenerate, then they evidently coincide up to a natural identification, and a multiplier is given by a pair of maps
$t_{R},t_{L}\colon B\to B$  satisfying $t_{R}(a)b=at_{L}(b)$ for all
$a,b\in B$.

Given a left or right $B$-module $M$ and a space $N$, we regard the space of linear maps from $M$ to $N$ as a right or left $B$-module, where
$(f \cdot b)(m)=f(bm)$ or $(b\cdot f)(m)=f(mb)$ for all maps $f$ and all elements $b\in B$ and $m\in M$, respectively. 

In particular, we regard the dual space $\dual{B}$ of a non-degenerate, idempotent algebra $B$ as a bimodule over $M(B)$,
where $(a \cdot \omega \cdot b)(c)=\omega(bca)$, and call a functional
$\omega \in \dual{B}$ \emph{faithful} if the maps $B \to \dual{B}$
given by $d \mapsto d\cdot\omega$ and $d \mapsto\omega \cdot d$ are injective, that is, $\omega(dB)\neq 0$ and $\omega(Bd) \neq 0$
whenever $d\neq 0$.

We say that a functional $\omega \in \dual{B}$ \emph{admits a modular   automorphism} if there exists an automorphism $\sigma$ of $B$ such that $\omega(ab)=\omega(b\sigma(a))$ for all $a,b\in B$. One easily verifies that this condition holds if and only if $B\cdot \omega =
\omega \cdot B$, and that then $\sigma$ is characterised by the relation $\sigma(b) \cdot \omega = \omega \cdot b$ for all $b\in B$.

We equip the dual space $\dual{B}$ with a preorder $\lesssim$, where
\begin{align} \label{eq:lesssim}
  \upsilon \lesssim \omega \quad :\Leftrightarrow \quad B\upsilon \subseteq B\omega \text{ and } \upsilon B \subseteq \omega B.
\end{align}
The following result is straightforward:
\begin{lemma} \label{lemma:lesssim}
  Suppose that $\omega\in \dB$ is faithful and that $\upsilon \in \dB$.
  \begin{enumerate}
  \item Then $\upsilon \lesssim \omega$ if and only if there exist
    $\delta \in R(B)$ and $\delta' \in L(B)$ such that $
    \omega(x\delta) = \upsilon(x) = \omega(\delta'x)$ for all $x\in
    B$.
  \item If the conditions in (1) hold and $\omega$ admits a
    modular automorphism $\sigma$, then $\delta,\delta'$ lie in $ M(B)$ and
    $\delta=\sigma(\delta')$.
  \end{enumerate}
\end{lemma}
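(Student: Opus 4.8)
The plan is to prove the two directions of (1) separately and then bootstrap (2) from (1) using the modular automorphism. Throughout I use the bimodule structure $(a\cdot\omega\cdot b)(c)=\omega(bca)$, so that $(b\cdot\upsilon)(x)=\upsilon(xb)$ and $(\upsilon\cdot b)(x)=\upsilon(bx)$, together with faithfulness of $\omega$ in the form $\omega(Bd)\neq 0$ and $\omega(dB)\neq 0$ for $d\neq 0$, and idempotency of $B$. For the backward implication of (1) I would simply substitute: if $\upsilon(x)=\omega(x\delta)$ with $\delta\in R(B)$, then the right-multiplier identity $(xb)\delta=x(b\delta)$ gives $(b\cdot\upsilon)(x)=\omega(x(b\delta))=((b\delta)\cdot\omega)(x)$, so $b\cdot\upsilon=(b\delta)\cdot\omega\in B\omega$ and hence $B\upsilon\subseteq B\omega$. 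Symmetrically, $\upsilon(x)=\omega(\delta'x)$ with $\delta'\in L(B)$ and $\delta'(bx)=\delta'(b)x$ yield $\upsilon\cdot b=\omega\cdot(\delta'b)\in\omega B$, so $\upsilon B\subseteq\omega B$; together these say $\upsilon\lesssim\omega$.

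For the forward implication, assume $\upsilon\lesssim\omega$. From $B\upsilon\subseteq B\omega$ each $b\cdot\upsilon$ equals $b'\cdot\omega$ for some $b'$, and $b'$ is unique because $d\mapsto d\cdot\omega$ is injective; writing $\delta(b):=b'$ gives $\upsilon(xb)=\omega(x\delta(b))$ for all $x$. Comparing $\upsilon((xa)b)$ with $\upsilon(x(ab))$ and cancelling $\omega(x\,\cdot\,)$ by faithfulness shows $\delta(ab)=a\delta(b)$, so $\delta\in R(B)$, and idempotency of $B$ promotes $\upsilon(xb)=\omega((xb)\delta)$ to $\upsilon=\omega(\,\cdot\,\delta)$ on all of $B$. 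The mirror argument on $\upsilon B\subseteq\omega B$ produces $\delta'\in L(B)$ with $\upsilon=\omega(\delta'\,\cdot\,)$. This settles (1).

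For (2), I would \emph{not} try to guess $\delta'$ inside $B$, but instead show that $\delta$ is already a two-sided multiplier and read off the identity $\delta=\sigma(\delta')$ afterwards. The key computation: starting from $\upsilon(yz)=\omega(y(z\delta))=\omega((z\delta)\sigma(y))$ and $\upsilon(yz)=\omega(\delta'(y)z)=\omega(z\sigma(\delta'(y)))$ (using the modular relation with $z\delta,\delta'(y)\in B$), I equate and substitute $y=\sigma^{-1}(y')$ to obtain $\omega((z\delta)y')=\omega(z\,T(y'))$ for all $z,y'$, where $T:=\sigma\circ\delta'\circ\sigma^{-1}$ is readily checked to lie in $L(B)$. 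I then upgrade this functional identity to an identity of elements: replacing $z$ by $wz$ and using $(wz)\delta=w(z\delta)$ gives $\omega(w[(z\delta)y'-z\,T(y')])=0$ for all $w$, whence $(z\delta)y'=z\,T(y')$ by faithfulness. This exhibits $T$ as a left-multiplier partner of $\delta$, so $\delta\in M(B)$. Finally, extending $\sigma$ to $M(B)$ and using the modular relation in the form $\omega(mx)=\omega(x\sigma(m))$ for $m\in M(B)$, I put $\delta':=\sigma^{-1}(\delta)\in M(B)$ and check $\omega(\delta'x)=\omega(x\delta)=\upsilon(x)$; uniqueness of $\delta'$ from (1) forces $\delta=\sigma(\delta')$, and by symmetry both lie in $M(B)$.

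The substitutions and faithfulness cancellations are routine. The \textbf{main obstacle} is the step in (2) showing that membership in the one-sided algebras $R(B)$ and $L(B)$ forces membership in $M(B)$: this is exactly where the existence of $\sigma$ is indispensable, since it is $\sigma$ that converts the right-hand data encoded by $\delta$ into the missing left action $T$, while the associativity relation $(wz)\delta=w(z\delta)$ is what lets faithfulness turn the functional identity into an identity of elements.
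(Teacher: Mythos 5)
Your proposal is correct and follows essentially the same route as the paper: part (1) by the same faithfulness/uniqueness argument producing module maps in $R(B)$ and $L(B)$, and part (2) by the same key computation $\omega((x\delta)\sigma(x'))=\upsilon(x'x)=\omega(x\sigma(\delta'x'))$, upgraded via faithfulness to the element identity that exhibits $\sigma\circ\delta'\circ\sigma^{-1}$ as the left-multiplier partner of $\delta$. Your closing step (identifying $\delta'$ with $\sigma^{-1}(\delta)$ by uniqueness rather than reading $\delta=\sigma(\delta')$ directly off the element identity) is a cosmetic variation only.
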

\begin{proof}
  (1) We have $\upsilon \lesssim \omega$ if and only if there exist maps $\delta,\delta'\colon B \to B$  such that
  $x\cdot \upsilon = \delta(x) \cdot \omega$ and $\upsilon \cdot x = \omega \cdot \delta'(x) $ for all $x\in B$. If $\omega$ is faithful, then necessarily $\delta \in R(B)$ and $\delta'\in L(B)$.

  (2) In this case,  we find that for all $x,x'\in B$,
  \begin{align*}
    \omega((x \delta) \sigma(x')) = \omega(x'x\delta) =\upsilon(x'x) = \omega(\delta'x'x) = \omega(x\sigma(\delta' x')).
  \end{align*}
Since $\omega$ is faithful and $x\in B$ was arbitrary, we can conclude that  $(x\delta)\sigma(x') = x\sigma(\delta'x')$, whence the assertion follows.
\end{proof}
Assume that $B$ is a $*$-algebra. We call a functional $\omega \in
\dual{B}$ \emph{self-adjoint} if it coincides with $\omega^{*}=\ast
\circ \omega \circ \ast$, that is, $\omega(a^{*})=\omega(a)^{*}$ for all $a\in B$, and \emph{positive} if additionally $\omega(a^{*}a)\geq
0$ for all $a\in A$.

\section{Regular multiplier Hopf algebroids}

\label{section:multiplier-bialgebroids}

 Regular multiplier Hopf algebroids were introduced in
\cite{timmermann:regular} as non-unital generalizations of Hopf algebroids and are special multiplier bialgebroids. 
As such, they consist of a left and a right multiplier bialgebroid with comultiplications related by a mixed co-associativity condition.

\subsection{Left multiplier bialgebroids}

 Let $A$ be an algebra, not necessarily unital, such that the right module $A_{A}$ is idempotent and non-degenerate. Then we can form the (left) multiplier algebras $L(A)$ and $M(A)\subseteq L(A)$ as explained  above.

Let  $B$ be an algebra, not necessarily unital, with a homomorphism $ s\colon B \to M(A)$ and an anti-homomorphism $t\colon B \to M(A)$ such that $s(B)$ and $t(B)$ commute.  

We denote elements of $B$ by $x,x',y,y',\ldots$ and reserve $a,b,c,\ldots$ for elements of $A$.

  We write $\bsA$ and $\btA$ when we regard $A$ as a left or right   $B$-module via left multiplication along $s$ or $t$, respectively,  that is, $x\cdot a = s(x)a$ and $a\cdot x = t(x)a$. Similarly, we  write $\bAs$ and $\bAt$ when we regard $A$ as a right or left  $B$-module via right multiplication along $s$ or $t$,  respectively. Without further notice, we also regard $\bsA,\bAt$ and  $\bAs,\btA$ as right or left $B^{\op}$-modules, respectively.

Regard the tensor product $\bsA \otimes \btA$ of $B^{\op}$-modules as a right module over $A\otimes 1$ or $1 \otimes A$ in the obvious way and denote by
\begin{align*} 
\AltkA \subseteq \End(\bsA \otimes \btA)
\end{align*} 
the subspace  formed by all endomorphisms $T$ of $\bsA\otimes \btA$ satisfying the following condition: 
 for every $a,b \in A$, there exist elements
  \begin{align*} T(a \otimes 1) \in \bsA \otimes \btA \quad \text{and}
    \quad T(1 \otimes b) \in \bsA \otimes \btA
  \end{align*}
  such that 
  \begin{align*}
T(a \otimes b) = (T(a \otimes 1))(1 \otimes b) = (T(1
    \otimes b))(a \otimes 1)    
  \end{align*}
This subspace is a subalgebra and commutes with the right $A\otimes
A$-module action.

\begin{definition} 
  A \emph{left multiplier bialgebroid} is a tuple $(A,B,s,t,\Delta)$ consisting of
  \begin{enumerate}
  \item algebras $A$ and $B$, where $A$ is non-degenerate and idempotent as a right $A$-module;
  \item a homomorphism $s\colon B \to M(A)$ and an anti-homo\-morphism   $t \colon B \to M(A)$ such that the images of $s$ and $t$ commute,     the $B$-modules $\bsA$ and $\btA$ are faithful and idempotent, and     $\bsA \otimes \btA$ is non-degenerate as a right module over $A
    \otimes 1$ and over $1 \otimes A$;
  \item a homomorphism $\Delta \colon A \to \AltkA$, called the     \emph{left comultiplication},  satisfying 
      \begin{align} \label{eq:left-delta-bimodule}
        \Delta(s(x)t(y)as(x')t(y')) &= (t(y) \otimes
        s(x))\Delta(a)(t(y') \otimes s(x')), \\
 \label{eq:delta-coassociative} (\Delta \otimes \iota)(\Delta(b)(1 \otimes c)) (a\otimes 1 \otimes 1) &=
(\iota\otimes \Delta)(\Delta(b)(a \otimes 1))(1 \otimes 1 \otimes c)
    \end{align} 
\end{enumerate}
A \emph{left counit}  for such a left multiplier bialgebroid  is a map $\eps \colon A \to B$ satisfying
  \begin{gather}
    \begin{aligned}
      \label{eq:left-counit-bimodule}
      \varepsilon(s(x)a ) &= x\varepsilon(a), &
      \varepsilon(t(y)a) &= \varepsilon(a)y,
    \end{aligned} \\
        \label{eq:left-counit}
        (\varepsilon \otimes\id)(\Delta(a)(1 \otimes b)) = ab = (\id
        \otimes \varepsilon)(\Delta(a)(b\otimes 1))
  \end{gather}
for all $a,b\in A$ and $x,y\in B$. 
\end{definition}
Note that \eqref{eq:left-counit-bimodule} implies that the  slice maps 
\begin{align*}
  \varepsilon \otimes \id &\colon \AlA \to A,  \ c\otimes d \mapsto t(\varepsilon(c))d, \\
 \id\otimes \varepsilon&\colon \AlA \to A, \ c\otimes d \mapsto s(\varepsilon(d))c
\end{align*}
occuring in \eqref{eq:left-counit} are well-defined.

\begin{notation} \label{notation:tensor-products}
 We will need to consider iterated tensor products of vector spaces or modules over $B$
  or $B^{\op}$, and if several module structures are used in an iterated tensor product, we mark the
  module structures that go together by primes.  For example, we denote by
  \begin{align*}
    \bsA \otimes A \otimes \btA \quad \text{and}    \quad  \bsA \otimes \gr{A}{}{B'}{B}{} \otimes \gr{A}{}{}{B'}{}
  \end{align*}
  the quotients of $A\otimes A \otimes A$ by the subspaces spanned by all elements of the form
  $s(x)a\otimes b\otimes c - a\otimes b\otimes t(x)a$, where $x\in B, a,b,c\in A$, in the case of 
$\bsA \otimes A \otimes \btA$, or  of the form
  $s(x)a\otimes b\otimes c - a\otimes t(x)b\otimes c$ or $a\otimes s(x')b\otimes c - a\otimes b
  \otimes t(x')c$ in the case of $ \bsA \otimes \gr{A}{}{B'}{B}{} \otimes \gr{A}{}{}{B'}{}$.
\end{notation}

Let $(A,B,s,t,\Delta)$ be a left multiplier bialgebroid. Then the maps
\begin{align*}
  \widetilde{\Tl} \colon A\otimes A &\to \AlA,  \ a \oo b\mapsto  \Delta_{B}(b)(a \oo 1), \\
  \widetilde{\Tr} \colon A\otimes A &\to \AlA,   \  a \oo b\mapsto  \Delta_{B}(a) (1\oo b), 
\end{align*}
 are  well-defined because of 
 the non-degeneracy assumption on $\bsA \otimes \btA$.  By definition and  \eqref{eq:delta-coassociative}, they
make the following diagrams commute,
\begin{gather} \label{dg:left-galois-1}
\xymatrix@R=15pt{
A\otimes A\otimes A \ar[r]^{\id \oo \widetilde{\Tr}} \ar[d]_{\widetilde{\Tl} \oo \id}
      & A \otimes \AlA \ar[d]^{\mult\Sigma \oo \id} \\
\AlA\otimes A \ar[r]^(0.55){\id \oo \mult} & \AlA,
    }
\quad
  \xymatrix@R=15pt{
A\otimes A\otimes A \ar[r]^{\id \oo \widetilde{\Tr}} \ar[d]_{\widetilde{\Tl} \oo \id}
    &
A\otimes \AlA  \ar[d]^{\widetilde{\Tl} \oo \id} \\
\AlA\otimes A  \ar[r]^{\id \oo \widetilde{\Tr}}  & \AlAlA,
}
\end{gather}
where $\Sigma$ denotes the flip map and $m$  the multiplication, 
and by \eqref{eq:left-delta-bimodule}, they factorize to
maps
\begin{align} \label{eq:left-galois-maps}
    \Tl \colon \bATA &\to \AlA,  &
    \Tr \colon \bAsA &\to \AlA,  
\end{align}
which we call the \emph{canonical maps}  of the left multiplier bialgebroid.

\subsection{Right multiplier bialgebroids} 
The notion of a right   multiplier bialgebroid is opposite to the notion of a left
  multiplier bialgebroid in the sense that in all assumptions, left   and right multiplication are reversed.

Let $A$ be an algebra, not necessarily unital, such that the left module $_{A}A$ is non-degenerate and idempotent. Then we can form the (right) multiplier algebras $R(A)$ and  $M(A) \subseteq R(A)$. 

Let $C$ be an algebra with a homomorphism $s\colon C\to M(A) \subseteq R(A)$ and an
anti-homomorphism $t\colon C \to M(A) \subseteq R(A)$ such that the images of $s$ and $t$ commute.

We write $\cAs$ and $\cAt$ if we regard $A$ as a right or left $C$-module such that $a\cdot y = as(y)$
or $y\cdot a = at(y)$ for all $a\in A$ and $y\in C$. We also regard $\cAs$ and $\cAt$ as a left or
right $C^{\op}$-module, and similarly use the notation $\csA$ and $\ctA$ when we use multiplication on
the left hand side instead of the right hand side.

 We consider the opposite algebra  $\End(\cAt \otimes \cAs)^{\op}$ and write $(a \otimes b)T$ for the
image of an element $a\otimes b$ under an element $T \in \End(\cAt \otimes \cAs)^{\op}$, so that $(a\otimes b)(ST) = ((a\otimes b) S)T$ for all $a,b\in A$ and $S,T\in \End(\cAt \otimes \cAs)^{\op}$.
Denote by
\begin{align*}
  \ArtkA \subseteq \End(\cAt \otimes \cAs)^{\op}
\end{align*}
the subspace formed by all endomorphisms $T$ such that for all $a,b\in
A$, there exist elements $(a\otimes 1)T \in \cAt \otimes \cAs$ and $(1
\otimes b)T \in \cAt \otimes \cAs$ such that
\begin{align*}
  (a\otimes b)T = (1\otimes b)((a\otimes 1)T) = (a\otimes 1)((1 \otimes b)T).
\end{align*}
\begin{definition}
  A \emph{right multiplier bialgebroid} is a tuple  $(A,C,s,t,\Delta)$ consisting of
  \begin{enumerate}
  \item  algebras $A$ and $C$, where $A$ is non-degenerate and idempotent as a left $A$-module;
  \item a homomorphism $s\colon C \to M(A) \subseteq R(A)$ and an anti-homo\-morphism $t \colon C
    \to M(A) \subseteq R(A)$ such that the images of $s$ and $t$ commute, the $C$-modules $\cAs$ and
    $\cAt$ are faithful and idempotent, and $\cAt \otimes \cAs$ is non-degenerate as a left module
    over $A \otimes 1$ and over $1 \otimes A$;
\item a homomorphism $\Delta \colon A \to \ArtkA$, called the \emph{right
    comultiplication},  satisfying
  \begin{align} \label{eq:right-delta-bimodule}
    \Delta(s(y)t(x)as(y')t(x')) &= (s(y) \otimes t(x))\Delta(a) (s(y') \otimes t(x')), \\
    (a \otimes 1 \otimes 1) ((\Delta \otimes \id)((1 \otimes
    c)\Delta(b))) &= (1 
\otimes 1 \otimes c)((\id \otimes \Delta)((a \otimes 1)\Delta(b))) \label{eq:right-delta-co-associative}
  \end{align}
for all $a,b,c\in A$ and $x,y\in C$.
\end{enumerate}
A \emph{right counit} for such a right multiplier bialgebroid is a map $\varepsilon \colon A\to C$ satisfying
\begin{gather} 
    \varepsilon(as(y)) = ay, \quad
  \varepsilon(at(x)) = xa,  \label{eq:rt-counit-bimodule}\\
(\varepsilon \otimes \id)((1\otimes b)\Delta(a))=ba =  (\id \otimes
\varepsilon)((b\otimes 1)\Delta(a))  \label{eq:right-counit}
\end{gather}
for all $a,b\in A$, $x,y\in C$.
\end{definition}
Again, \eqref{eq:rt-counit-bimodule} ensures that the slice maps
\begin{align*}
\varepsilon \otimes \id &\colon \ArA \to A,  \ c\otimes d \mapsto ds(\varepsilon(c)), \\
\id \otimes \varepsilon &\colon \ArA \to A, \ c\otimes d \mapsto ct(\varepsilon(d))
\end{align*}
are well-defined.

  Associated to a right multiplier bialgebroid as above are the
 \emph{canonical maps}
  \begin{align*}
      \widetilde{\lT} \colon A\oo A &\to \ArA,  \   a\oo b \mapsto (a \oo 1)\Delta_{C}(b),  \\
\widetilde{\rT}  \colon A\oo A &\to \ArA, \ a\oo b \mapsto  (1 \oo b)\Delta_{C}(a).
  \end{align*}
They     make diagrams similar to those in \eqref{dg:left-galois-1} commute and factorize to maps
  \begin{align}
    \label{eq:right-galois-maps}
    \lT \colon \cAsA &\to \ArA, & \rT \colon \cATA &\to \ArA.
  \end{align}

\subsection{Regular multiplier Hopf algebroids}
\label{subsection:hopf-algebroids}
We now combine the two structures.

\begin{definition} \label{definition:mult-hopf-algebroid} A \emph{multiplier bialgebroid}
  $\mathcal{A}=(A,B,C,S_{B},S_{C},\Delta_{B},\Delta_{C})$ consists of
\begin{enumerate}
\item a non-degenerate, idempotent algebra $A$,
\item subalgebras $B,C \subseteq M(A)$ with anti-isomorphisms $S_{B} \colon
  B\to C$ and $S_{C} \colon C\to B$, 
\item maps $\Delta_{B}\colon A \to \AltkA$ and $\Delta_{C} \colon A \to \ArtkA$
\end{enumerate}
  such that  $\mathcal{A}_{B}=(A,B,\id_{B},S_{B},\Delta_{B})$ is a left multiplier bialgebroid,
  $
\mathcal{A}_{C}= (A,C,\id_{C},S_{C},\Delta_{C})$
is a right multiplier bialgebroid, and
   the following mixed co-associativity conditions hold:
  \begin{align} \label{eq:compatible}
     \begin{aligned} ((\Delta_{B} \otimes \id)((1 \otimes c)\Delta_{C}(b)))(a \otimes 1 \otimes 1) &= (1
\otimes 1 \otimes c)((\id \otimes \Delta_{C})(\Delta_{B}(b)(a \otimes 1))), \\ (a \otimes 1 \otimes 1)((\Delta_{C}
\otimes \id)(\Delta_{B}(b)(1 \otimes c))) &= ((\id \otimes \Delta_{B})((a \otimes 1)\Delta_{C}(b)))(1 \oo
1 \otimes c)
     \end{aligned}
\end{align} 
for all $a,b,c \in A$.
\end{definition}
We call left counits of $\mathcal{A}_{B}$ and right counits of $\mathcal{A}_{C}$ just left and right
counits, respectively, of $\mathcal{A}$. Likewise, we call the canonical maps $\Tl,\Tr$ of
$\mathcal{A}_{B}$ and $\lT,\rT$ of $\mathcal{A}_{C}$ just the canonical maps of $\mathcal{A}$.

Given a  multiplier bialgebroid $(A,B,C,S_{B},S_{C},\Delta_{B},\Delta_{C})$, consider the subspaces
\begin{align*}
  \gr{I}{}{B}{}{} &:= \langle \omega(a) : \omega \in \Hom(\bA,\bB), a \in A\rangle, &
  \gr{I}{}{}{B}{} &:= \langle \omega(a) : \omega \in \Hom(\btA,\Bb), a \in A\rangle, \\
  \gr{I}{}{}{}{C} &:= \langle \omega(a) : \omega \in \Hom(\Ac,\Cc), a \in A\rangle, &
  \gr{I}{C}{}{}{} &:= \langle \omega(a) : \omega \in \Hom(\cAt,\cC), a \in A\rangle
\end{align*}
of $B$ and $C$, respectively.
\begin{definition}
  We call  a multiplier bialgebroid  $(A,B,C,S_{B},S_{C},\Delta_{B},\Delta_{C})$ a \emph{regular multiplier Hopf
    algebroid} if the following conditions hold:
  \begin{enumerate}
  \item the subspaces $S_{B}(\gr{I}{}{B}{}{})\cdot A$, $\gr{I}{}{}{B}{}  \cdot A$, $A\cdot S_{C}(\gr{I}{}{}{}{C})$ and
    $A \cdot \gr{I}{C}{}{}{}$ are equal to $A$;
  \item the canonical maps $\Tl,\Tr,\lT,\rT$ are
  bijective.
  \end{enumerate}
\end{definition}
If $\mathcal{A}$ is a multiplier Hopf algebroid as above, then the maps $\Delta_{B}$ and $\Delta_{C}$ can be extended to homomorphisms
from $M(A)$ to $\End(\AlA)$ and $\End(\ArA)^{\op}$, respectively, such that
\begin{align*}
\Delta_{B}(T)\Delta_{B}(a)(b\otimes c) &= 
\Delta_{B}(Ta)(b\otimes c), &
(a\otimes b)\Delta_{C}(c)\Delta_{C}(T) &= (a\otimes b)\Delta_{C}(cT)
\end{align*}
for all $T\in M(A)$ and $a,b,c\in A$, and then \eqref{eq:left-delta-bimodule} and \eqref{eq:right-delta-bimodule} take the form
\begin{align} \label{eq:delta-bimodule-extended}
  \Delta_{B}(xy) &= y\otimes x, & \Delta_{C}(xy) &= y\otimes x,
\end{align}
where $y\otimes x$ is regarded as an element of  $\End(\AlA)$ and $\End(\ArA)^{\op}$, respectively, via left or right multiplication.

The main result in \cite{timmermann:regular} is the characterization of regular multiplier Hopf algebroids in terms of an invertible antipode:
\begin{theorem}[{\cite[Theorem 5.6]{timmermann:regular}}] \label{tm:hopf-characterization} 
Let  $\mathcal{A}=(A,B,C,S_{B},S_{C},\Delta_{B},\Delta_{C})$ be a multiplier bialgebroid. Then $\mathcal{A}$ is a regular multiplier
  Hopf algebroid if and only if there exists an 
  anti-automorphism $S$ of $A$ satisfying the following conditions:
  \begin{enumerate}
  \item $S(xyax'y')=S_{C}(y')S_{B}(x')S(a)S_{C}(y)S_{B}(x)$     for all $x,x'\in B, y,y' \in C, a \in A$;
  \item there exist a left counit $\beps$ and a right counit $\epsc$    for $\mathcal{A}$ such that the following diagrams commute,
    where $m$ denotes the multiplication maps:
    \begin{align} \label{dg:antipode}
      \xymatrix@C=25pt@R=15pt{\Ab\otimes \bA \ar[d]_{ T_{\rho}}
        \ar[r]^(0.55){S_{C}\epsc \otimes \id} & A, & \Ac\otimes \cA
        \ar[d]_{_{\lambda} T} \ar[r]^(0.55){\id \otimes S_{B}\beps}
        &A. \\
        \bA \otimes \sbA \ar[r]_{S \otimes \id} & \ar[u]_{m} \Ac
        \otimes \cA & \Asc \otimes \Ac \ar[r]_{\id \otimes S} & \ar[u]_{m}
        \Ab \otimes \bA}
    \end{align}
  \end{enumerate}
 In that case, the map $S$, the left counit $\beps$ and
  the right counit $\epsc$ are
  uniquely determined.
\end{theorem}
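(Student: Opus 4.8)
The statement parallels Van Daele's characterisation of multiplier Hopf algebras \cite{daele}, where bijectivity of the canonical maps is shown to be equivalent to the existence of a counit and an invertible antipode; the plan is to adapt that scheme to the present bialgebroid setting, treating the two implications and uniqueness in turn.

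For the \emph{``if'' direction}, assume that $S$, $\beps$, $\epsc$ exist and satisfy (1) and (2). Unravelling the diagrams \eqref{dg:antipode} through the slice-map conventions, they read as the antipode axioms
\[
  m(S\otimes\id)\bigl(\Delta_{B}(a)(1\otimes b)\bigr) = S_{C}(\epsc(a))\,b,
  \qquad
  m(\id\otimes S)\bigl((a\otimes 1)\Delta_{C}(b)\bigr) = a\,S_{B}(\beps(b)).
\]
I would first derive the two companion relations for $m(\id\otimes S)\Delta_{B}$ and $m(S\otimes\id)\Delta_{C}$ by applying the anti-automorphism $S$ together with property (1), and then use all four relations, coassociativity \eqref{eq:delta-coassociative}, \eqref{eq:right-delta-co-associative}, the mixed conditions \eqref{eq:compatible} and the counit axioms to write down explicit inverses of the four canonical maps $\Tl,\Tr,\lT,\rT$. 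Property (1) is precisely what makes each candidate inverse descend from $A\otimes A$ to the relevant balanced tensor product, and the counit axioms make it a two-sided inverse; this yields condition (2) of the definition. For the fullness conditions (1) of the definition I would use that $S$ is bijective, so that together with idempotency of $A$ the relations above force $S_{B}(\gr{I}{}{B}{}{})\cdot A = A$, $\gr{I}{}{}{B}{}\cdot A = A$ and their two analogues on the right.

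For the \emph{``only if'' direction} one must instead \emph{construct} $\beps$, $\epsc$ and $S$ from bijectivity and fullness. I would first build the counits: using bijectivity of $\Tr$ and $\lT$ and the fullness conditions (1), define $\beps\colon A\to B$ and $\epsc\colon A\to C$ as base-valued module maps and verify \eqref{eq:left-counit-bimodule}--\eqref{eq:left-counit} and \eqref{eq:rt-counit-bimodule}--\eqref{eq:right-counit}; idempotency and non-degeneracy of the modules $\bsA,\btA,\cAs,\cAt$ are what pin these maps down. The antipode is then obtained by reading the first diagram in \eqref{dg:antipode} as a \emph{definition}: on the range of $\Tr$ it forces the values of $S$, and since $\Tr$ is bijective this defines $S$ on a spanning set, while the fullness condition $\gr{I}{}{}{B}{}\cdot A = A$ (and its partners) guarantees that these elements exhaust $A$, so that $S$ extends to all of $A$. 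One then checks that $S$ is well defined, anti-multiplicative in the sense of (1), and bijective, the inverse arising symmetrically from the co-opposite canonical maps.

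The step I expect to be the \emph{main obstacle} is the well-definedness and anti-multiplicativity of $S$. In contrast to the multiplier Hopf algebra case, $A$ carries four module structures over the two base algebras $B$ and $C$, and $S$ must intertwine $S_{B}$ with $S_{C}$ exactly as in (1); verifying that the defining formula descends through all the balanced tensor-product identifications and respects these bimodule structures forces a careful bookkeeping with \eqref{eq:delta-coassociative}, \eqref{eq:right-delta-co-associative} and especially the mixed co-associativity \eqref{eq:compatible}. I would isolate this in a chain of lemmas establishing the covariance of $\Tl,\Tr,\lT,\rT$ under $B$ and $C$ before assembling $S$. \emph{Uniqueness} is then comparatively routine: once existence holds, the counit axioms determine $\beps$ and $\epsc$ on the idempotent algebra $A$, and the diagrams \eqref{dg:antipode} together with bijectivity of the canonical maps determine $S$.
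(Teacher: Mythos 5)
This theorem is not proved in the paper at all: it is recalled verbatim from \cite[Theorem 5.6]{timmermann:regular}, so there is no in-paper argument to compare your proposal against. Judged on its own terms, your outline does reproduce the strategy of the cited reference at the level of structure — for the ``if'' direction one builds explicit inverses of $\Tl,\Tr,\lT,\rT$ from $S$ and the counits, with condition (1) ensuring the candidate inverses descend to the balanced tensor products, and the fullness conditions indeed drop out of the counit axioms \eqref{eq:left-counit} and \eqref{eq:right-counit} together with idempotency of $A$; for the ``only if'' direction one first constructs the counits and then the antipode from the inverted canonical maps.

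However, what you have written is a plan, not a proof: the two steps that carry essentially all of the technical weight are named but not executed. First, the existence of $\beps$ and $\epsc$ given only bijectivity of the canonical maps and the fullness of the subspaces $S_{B}(\gr{I}{}{B}{}{})\cdot A$, etc., is a substantial construction (it occupies Sections 3--4 of \cite{timmermann:regular}) and cannot be dispatched by saying the module structures ``pin these maps down.'' Second, your proposed definition of $S$ from the first diagram in \eqref{dg:antipode} only determines the composite $m\circ(S\otimes\id)$ on the image of $\Tr$; extracting $S$ itself requires the slice-map/local-unit manipulations and the covariance lemmas you allude to, and verifying that the resulting map is a well-defined anti-automorphism satisfying (1) and that the two diagrams are compatible with one another (via the mixed co-associativity \eqref{eq:compatible}) is exactly the content of Section 5 of the reference. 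So the proposal identifies the right road map but supplies neither of the two arguments on which the theorem actually rests.
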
 

Let $\mathcal{A}$ be a regular multiplier Hopf algebroid. Then the map $S$ above is called the \emph{antipode} of $\mathcal{A}$, and the following diagrams commute,
\begin{gather} \label{dg:galois-inverse}
        \xymatrix@R=10pt@C=15pt{\ABA \ar[d]_{\rT} \ar[r]^{\id \oo S} & \AlA,
          \\
          \ArA \ar[r]_{\id \oo S} & \AbA \ar[u]_{\Tr} } \quad
        \xymatrix@R=10pt@C=15pt{\ACA \ar[d]_{\Tl} \ar[r]^{S \oo \id} & \ArA,
          \\
          \AlA \ar[r]_{S \oo \id} & \AcA \ar[u]_{\lT} } \\ \label{dg:galois-antipode}
    \xymatrix@C=25pt@R=10pt{
      \ACA \ar[d]_{\Tl} \ar[r]^{\Sigma(S \oo S)} &       \ABA
      \ar[d]^{\rT} \\
      \AlA \ar[r]_{\Sigma (S \oo S)} & \ArA, }
  \quad
    \xymatrix@C=25pt@R=10pt{
      \AcA \ar[d]_{\lT} \ar[r]^{\Sigma (S \oo S)} &       \AbA
      \ar[d]^{\Tr} \\
      \ArA \ar[r]_{\Sigma (S \oo S)} & \AlA,  } 
\end{gather}
where $\Sigma$ denotes the flip maps on varying tensor products; see Theorem 6.8, Proposition 6.11 and Proposition 6.12 in \cite{timmermann:regular}. Furthermore, by Corollary 5.12 in \cite{timmermann:regular},
\begin{gather} \label{eq:antipode-counits}
  S_{B}\circ \beps = \epsc\circ S \quad\text{and}\quad  S_{C}\circ \epsc =
  \beps \circ S.
\end{gather}
We shall also use the following multiplicativity of the  counits, see (3.5) and (4.9) in \cite{timmermann:regular}:
\begin{align} \label{eq:counits-multiplicative}
  \begin{aligned}
    \beps(ab)&=\beps(a\beps(b))=\beps(aS_{B}(\beps(b))), \\
    \epsc(ab)&=\epsc(\epsc(a)b) = \epsc(S_{C}(\epsc(a))b)
  \end{aligned}
\end{align}
for all $a,b\in A$.

Let us finally consider involutions.
\begin{definition}  \label{definition:involution}
A \emph{multiplier Hopf  $*$-algebroid} is a regular multiplier Hopf algebroid $\mathcal{A}=(A,B,C,S_{B},S_{C},\Delta_{B},\Delta_{C})$ with an
  involution on the underlying algebra $A$   such that
  \begin{enumerate}
  \item $B$ and $C$ are $^*$-subalgebras of $M(A)$;
  \item $  S_{B}\circ \ast \circ S_{C} \circ \ast =\id_{C}$ and $S_{C}\circ \ast \circ S_{B}
\circ \ast =\id_{B}$;
\item $\Delta_{B}(a^{*})(b^{*}
    \otimes c^{*}) = ((b\otimes c)\Delta_{C}(a))^{(-)^{*} \otimes (-)^{*}}$ for all $a,b,c\in A$.
\end{enumerate}
\end{definition}
Here, condition (2) ensures that the map
\begin{align*}
(-)^{*} \otimes (-)^{*}\colon  \AlA \to \ArA, \ a\otimes b \mapsto a^{*} \otimes b^{*}
\end{align*}
is well-defined.
If
  $\mathcal{A}$ is a multiplier Hopf
  $^{*}$-algebroid as above, then its left and right counits $\beps,\epsc$ and its antipode $S$ satisfy
  \begin{align} \label{eq:counit-antipode-involution}
   \epsc\circ * &= *\circ
  S_{B}\circ \beps, & \beps\circ *&=*\circ S_{C}\circ \epsc, & 
  S\circ *\circ S \circ *&=\id_{A};
  \end{align}
see Proposition 6.2 in \cite{timmermann:regular}.

\subsection{Examples of  multiplier Hopf algebroids}
\label{subsection:hopf-examples}

 The following examples  from \cite{timmermann:regular} will be used throughout this article.

\begin{example}[Unital case] \label{example:hopf-unital} We call a multiplier bialgebroid $\mathcal{A}$ \emph{unital} if the algebras $A,B,C$ and the maps $S_{B},S_{C},\Delta_{B},\Delta_{C}$ are unital. In that case, it is easy to see that also the antipode and the left and the right counit are unital.
Such unital multiplier bialgebroids correspond to usual bialgebroids  as defined, for example, in \cite{boehm:bijective,boehm:hopf}, and regular multiplier Hopf algebroids correspond with Hopf algebroids whose antipode is invertible, see \cite[Propositions 3.2, 5.13]{timmermann:regular}.
\end{example} 
\begin{example}[Weak multiplier Hopf algebras] \label{example:wmha}
  Let $(A,\Delta)$ be regular weak multiplier Hopf algebra with counit $\varepsilon$ and antipode $S$; see \cite{daele:weakmult}. Then one can define maps $\varepsilon_{s},\varepsilon_{t} \colon A \to M(A)$ such that
  \begin{align} \label{eq:wmha-source-target}
    \varepsilon_{s}(a)b &= \sum S(a_{(1)})a_{(2)}b, & b\varepsilon_{t}(a) &=  \sum ba_{(1)}S(a_{(2)})
  \end{align}
for all $a,b\in A$. Let $B=\varepsilon_{s}(A)$ and $C=\varepsilon_{t}(A)$. Then the extension of the antipode $S$ to $M(A)$ restricts to anti-isomorphisms $S_{B}\colon B\to C$ and $S_{C} \colon C\to B$.
 Denote by 
$\pi_{B} \colon A\otimes A \to \AlA$ and $\pi_{C} \colon A\otimes A \to \ArA$ the  quotient maps. Then the formulas
\begin{align*}
  \Delta_{B}(a)(b\otimes c) &:=\pi_{B}(\Delta(a)(b \otimes c)), &
  (a\otimes b)\Delta_{C}(c) &= \pi_{C}((a\otimes b)\Delta(c))
\end{align*}
define a left comultiplication $\Delta_{B}$ and a right comultiplication $\Delta_{C}$ such that
\begin{align*}
  \mathcal{A} = (A,B,C,S_{B},S_{C},\Delta_{B},\Delta_{C})
\end{align*}
becomes a regular multiplier Hopf algebroid, see \cite[Theorem 4.8]{daele:relation}. Its antipode coincides with $S$, and its left counit and right counit are given by $\beps = S^{-1}\circ \varepsilon_{t}$ and  $\epsc = S^{-1}\circ \varepsilon_{s}$, respectively.
\end{example}

\begin{example}[Function algebra of an \'etale groupoid] \label{example:hopf-groupoid-functions}
Let $G$ be a locally compact,  Hausdorff groupoid which is \emph{\'etale} in the sense that the source and the target maps $s,t\colon G\to G^{0}$ are open and local homeomorphisms \cite{renault}.
Denote by $C_{c}(G)$ and $C_{c}(G^{0})$ the algebras of compactly supported continuous functions on
$G$ and on $G^{0}$, respectively, by $s^{*},t^{*} \colon C_{c}(G^{0}) \to M(C_{c}(G))$
the pull-back of functions along $s$ and $t$, respectively, let
$A=C_{c}(G)$, $B=s^{*}(C_{c}(G^{0}))$ and $C =t^{*}(C_{c}(G^{0}))$,
and denote by $S_{B},S_{C}$ the isomorphisms $B \rightleftarrows C$ mapping $s^{*}(f)$ to $t^{*}(f)$
and vice versa.  Since $G$ is \'etale, the natural map $A \otimes A \to C_{c}(G\times G)$ factorizes
to an isomorphism
$\bA \otimes \sbA =
 \Asc\otimes \Ac  \cong C_{c}(\GstG)$,
where $\GstG$ denotes the composable pairs of elements of $G$.
Denote by $\Delta_{B}, \Delta_{C}  \colon C_{c}(G) \to M(C_{c}(\GstG))$ the pull-back
of functions along the groupoid multiplication, that is,
  \begin{align*} (\Delta_{B}(f)(g \otimes h))(\gamma,\gamma')
=f(\gamma\gamma')g(\gamma)h(\gamma') = ((g\otimes h)\Delta_{C}(f))(\gamma,\gamma')
  \end{align*}
  for all $f,g,h\in A, \gamma,\gamma' \in G$.  Then  $(A,B,C,S_{B},S_{C},\Delta_{B},\Delta_{C})$ is a
multiplier Hopf $^*$-algebroid with counits and antipode given by
$\beps(f) = s^{*}(f|_{G^{0}})$, $
\epsc(f) = t^{*}(f|_{G^{0}})$, $
  (S(f))(\gamma) =f(\gamma^{-1})$
for all $f\in C_{c}(G)$.
\end{example}

\begin{example}[Convolution algebra of an \'etale groupoid] \label{example:hopf-groupoid-algebra}
Let $G$ be a locally compact, \'etale, Hausdorff groupoid again.
Then the space $C_{c}(G)$ can also be regarded as a $^*$-algebra with
respect to the convolution product and involution given by
\begin{align*}
  (f\ast g)(\gamma) &= \sum_{\gamma=\gamma'\gamma''}
  f(\gamma')g(\gamma''), &
  f^{*}(\gamma) &= \overline{f(\gamma^{-1})}.
\end{align*}
Since $G$ is \'etale, $G^{0}$ is closed and open in $G$, and the function algebra $C_{c}(G^{0})$
embeds into the convolution algebra $C_{c}(G)$. Denote by $\hat A$ this convolution algebra, let
$\hat B=\hat C=C_{c}(G^{0}) \subseteq \hat A$ and let $\hat S_{\hat B}=\hat S_{\hat
  C}=\id_{C_{c}(G^{0})}$. Then the natural map $A \otimes A \to C_{c}(G\times G)$ factorizes to
isomorphisms
\begin{align} \label{eq:groupoid-isomorphism-convolution}
  {_{\hat B}\hat A} \otimes  \gr{\hat A}{}{}{\hat B}{} &\cong
C_{c}(\GttG), & \gr{\hat A}{\hat C}{}{}{} \otimes \hat A_{\hat C}
&\cong C_{c}(\GssG),
\end{align}
and we obtain a   multiplier
Hopf $^*$-algebroid  $(\hat A,\hat B,\hat C,\hat S_{\hat B},\hat
S_{\hat C},\hat \Delta_{\hat B},\hat \Delta_{\hat C})$, where
\begin{align*}
  (\hat \Delta_{\hat B}(f)(g\otimes h))(\gamma',\gamma'') &=
  \sum_{t(\gamma)=t(\gamma')} f(\gamma)g(\gamma^{-1}\gamma')h(\gamma^{-1}\gamma''), \\
  ((g\otimes h)\hat \Delta_{\hat C}(f))(\gamma',\gamma'') &=
  \sum_{s(\gamma)=s(\gamma')} g(\gamma'\gamma^{-1})h(\gamma''\gamma^{-1})f(\gamma).
\end{align*}
Its counits and antipode are given by
\begin{align*}
  (_{\hat B}\hat \varepsilon(f))(u) &= \sum_{t(\gamma')=u} f(\gamma'), 
&
(\hat \varepsilon_{\hat C}(f))(u) &= \sum_{s(\gamma')=u}f(\gamma'),
  &
  (\hat S(f))(\gamma)&= f(\gamma^{-1}).
\end{align*}
\end{example}

\begin{example}[Tensor product] \label{example:hopf-cb}
  Let $B$ and $C$ be non-degenerate,
idempotent algebras with anti-isomorphisms $S_{B} \colon B\to C$ and
$S_{C}\colon C\to B$, form the the tensor
product $A=C\otimes B$, and  identify $B$ and $C$ with their images in
$M(A)$ under the canonical inclusions.
Then we obtain a regular
multiplier Hopf algebroid $(A,B,C,S_{B},S_{C},\Delta_{B},\Delta_{C})$ with  comultiplication, counits and antipode given by
\begin{gather*}
  \begin{aligned}
    \Delta_{B}(y \otimes x)(a \otimes a') &= ya \otimes  xa', & (a
    \otimes a')\Delta_{C}(y \otimes x) &= ay\otimes a' x, &
  \end{aligned} \\
  \begin{aligned}
    \beps(y\otimes x) &= xS_{B}^{-1}(y), & \epsc(y\otimes x)
    &=S_{C}^{-1}(x)y, & S(y\otimes x) &=S_{B}(x)\otimes S_{C}(y)
  \end{aligned}
\end{gather*}
for all $x\in B$, $y\in C$, $a,a'\in A$.
\end{example}

The following example is a special case of the extension of scalars considered in \cite[\S 4.1.5]{boehm:hopf}.
\begin{example}[Symmetric crossed product] \label{example:hopf-ch}
Let $C$ be a non-degenerate, idempotent, commutative algebra with a unital left action of a regular multiplier Hopf algebra $(H,\Delta_{H})$, that is,
\begin{align*}
    h\actright (yy') &= (h_{(1)} \actright y)(h_{(2)} \actright y')
\end{align*}
for all $y,y'\in C$ and $h\in H$ \cite{daele:actions}, and assume that the action is \emph{symmetric} in the sense that
\begin{align} \label{eq:ch-symmetric}
  h_{(1)} \otimes h_{(2)} \actright y = h_{(2)} \otimes h_{(1)} \actright y
\end{align}
for all $h\in H$ and $y\in C$.  If the action is faithful, then symmetry follows easily from commutativity of $C$, but in general, it is an extra assumption and equivalent to the Yetter-Drinfeld condition for the given action and the trivial coaction of $H$ on $C$. As an example, in the case that $H$
is the function algebra of a discrete group $\Gamma$, a symmetric action of $C_{c}(\Gamma)$ corresponds to a grading by  the center of $\Gamma$.

Denote by $A=C\# H$ the  usual smash product or crossed product, that is, the vector space $C\otimes H$ with multiplication given by
\begin{align*}
(y\otimes h)(y' \otimes h') = y (h_{(1)} \actright y')\otimes h_{(2)}h'.
\end{align*}
Then $C$ and $H$ can naturally be identified with  subalgebras of $M(A)$.

We obtain a regular multiplier Hopf algebroid $(A,B,C,S_{B},S_{C},\Delta_{B},\Delta_{C})$, where
 $B=C$, $S_{B}=S_{C}=\id_{C}$ and
\begin{align*}
  \Delta_{B}(yh)(a \otimes a') &= yh_{(1)}a \otimes h_{(2)}a' = h_{(1)}a \otimes yh_{(2)}a', \\
(a\otimes a')  \Delta_{C}(hy) &= ah_{(1)}y \otimes a'h_{(2)} = ah_{(1)}\otimes  a'h_{(2)}y
\end{align*}
for all $y\in C$, $h\in H$ and $a,a'\in A$. Its antipode and counits are  given by
\begin{align} \label{eq:ch-counits-antipode}
  S(yh) &= S_{H}(h)y, & \beps(yh) &= y\varepsilon_{H}(h) = \epsc(hy)
\end{align}
for all $y\in C$ and $h\in H$, where $S_{H}$ and $\varepsilon_{H}$ denote the antipode and counit of $(H,\Delta_{H})$. The verification is straightforward.
  \end{example}

\begin{example}[A two-sided crossed product] \label{example:hopf-chb}
  Let $B$, $C$, $S_{B}$ and $S_{C}$ be as above and let $H$ be a regular multiplier Hopf algebra with a unital
left action on $C$ and a unital right action on $B$ such that  for all $h\in H$, $x,x'\in B$, $y,y'\in C$,
  \begin{align} \label{eq:chb-action-algebra}
  (xx') \actleft h &= (x \actleft h_{(1)})(x'
  \actleft h_{(2)}), 
&  h\actright (yy') &= (h_{(1)} \actright y)(h_{(2)} \actright y'), \\ \label{eq:chb-action-antipode}
 S_{B}(x \actleft h) &= S_{H}(h) \actright S_{B}(x), & S_{C}(h \actright
    y) &= S_{C}(y) \actleft S_{H}(h),
  \end{align}
where  $S_{H}$ denotes the antipode of $H$.
    Then the space $A=C \otimes H \otimes B$ is a non-degenerate, idempotent algebra with
    respect to the product
      \begin{align*} (y \otimes h \otimes x)(y' \otimes h' \otimes x') &=
y(h_{(1)} \actright y') \otimes h_{(2)}h'_{(1)} \otimes (x \actleft h'_{(2)})x'.
    \end{align*}
  The algebras $C,H,B$
  embed naturally into $M(A)$, and  we identify them with their images in $M(A)$. Then the products $yhx,yxh,hyx$
    lie in $A\subseteq M(A)$ for all $x\in B$, $y\in C$ and $h\in H$,
    and we obtain a     regular multiplier Hopf algebroid
    $(A,B,C,S_{B},S_{C},\Delta_{B},\Delta_{C})$, where
    \begin{align*}
      \Delta_{B}(yhx)(a \otimes a') &= yh_{(1)}a \otimes h_{(2)}xa', &
      (a\otimes a')\Delta_{C}(yhx) &= ayh_{(1)} \otimes a'h_{(2)}x
    \end{align*} 
    for all $x\in B$, $y\in C$, $h\in H$, $a,a'\in A$.  Its counits
    and antipode are given by
    \begin{align*}
       \beps(xhy) &= xS_{B}^{-1}(h \actright y), & \epsc(xhy)
&=S_{C}^{-1}(x\actleft h)y, &
S(yhx) &= S_{B}(x)S_{H}(h)S_{C}(y).
    \end{align*} 
\end{example}

\section{Partial integrals and quasi-invariant base weights}
\label{section:integration}

This section  introduces the basic ingredients for integration on a regular multiplier Hopf algebroid $\mathcal{A}=(A,B,C,S_{B},S_{C},\Delta_{B},\Delta_{C})$. These are partial  left and partial right integrals, which are maps
$\cphi\colon A \to C$ and $\bpsib\colon A\to B$  satisfying suitable invariance conditons with respect to the comultiplications, and base weights $(\mu_{B},\mu_{C})$, which are functionals on $B$ and $C$, respectively, that are quasi-invariant with respect to $\cphic$ and $\bpsib$.
We first formulate the appropriate left- and right-invariance for maps from $A$ to $B$ and $C$ (subsection
\ref{subsection:invariant-elements}). In the case of Hopf algebroids, such invariant maps were studied already in
\cite{boehm:integrals}, and yield conditional expectations onto the orbit algebra of $\mathcal{A}$ (subsection \ref{subsection:expectation}).   We then discuss the quasi-invariance assumption  on the functionals $\mu_{B}$ and $\mu_{C}$,
(subsection \ref{subsection:base-weights}) and study the algebraic implications thereof (subsection \ref{subsection:factorizable}). 
Along the way, we keep an eye on the examples of multiplier Hopf algebroids introduced in subsection \ref{subsection:hopf-examples}.

\subsection{Partial integrals}
\label{subsection:invariant-elements}

 We use the notation introduced in Section \ref{section:multiplier-bialgebroids}.
\begin{proposition} \label{proposition:partial-integrals}
Let $\mathcal{A}=(A,B,C,S_{B},S_{C},\Delta_{B},\Delta_{C})$ be a regular multiplier Hopf algebroid.
  \begin{enumerate}
  \item For every linear map $\bpsib\colon A \to B$, the following conditions are equivalent:
    \begin{enumerate}
    \item $\bpsib \in \Hom(\bA,\bB)$ and for all $a,b\in A$,
      \begin{align}\label{eq:partial-left-deltab}
         (\bpsib \otimes \iota)(\Delta_{B}(a)(1 \otimes     b)) = \bpsib(a)b; 
      \end{align}
    \item $\bpsib \in \Hom(\Ab,\Bb)$ and for all $a,b\in A$,
      \begin{align} \label{eq:partial-left-deltac}
(S_{C}^{-1}\circ\bpsib \otimes \iota)((1 \otimes     b)\Delta_{C}(a)) = b\bpsib(a);        
      \end{align}
    \item $\bpsib\in \Hom(\bAb,\bBb)$ and the following diagram commutes:
      \begin{align} \label{dg:strong-invariance-right}
        \xymatrix@R=12pt@C=35 pt{ \AcA \ar[r]^{\lT} \ar[d]_{\Tl\Sigma}
          & \ArA
          \ar[d]^{S\circ (S_{C}^{-1}\circ\bpsib \otimes\iota)}\\
          \AlA \ar[r]_(0.55){\bpsib\otimes \iota} &
          A. }        
      \end{align}
  \end{enumerate}
  \item For every linear map $\cphic \colon A \to C$, the following conditions are equivalent:
    \begin{enumerate}
    \item $\cphic \in \Hom(\cA,\cC)$ and for all $a,b\in A$,
      \begin{align} \label{eq:partial-right-deltab}
        (\iota \otimes     S_{B}^{-1}\circ\cphic)(\Delta_{B}(b)(a\otimes 1))=\cphic(b)a;
      \end{align}
    \item $\cphic \in \Hom(\Ac,\Cc)$ and  for all $a,b\in A$,
      \begin{align} \label{eq:partial-right-deltac}
   (\iota \otimes \cphic)((a\otimes   1)\Delta_{C}(b)) = a\cphic(b);     
      \end{align}
    \item $\cphic \in \Hom(\cAc,\cCc)$ and the following diagram commutes:
      \begin{align} \label{dg:strong-invariance-left}
        \xymatrix@R=12pt@C=35pt{  \AbA \ar[r]^{\Tr}
          \ar[d]_(0.55){\rT\Sigma} & \AlA \ar[d]^(0.55){S\circ (\iota
            \otimes S_{B}^{-1}\circ\cphic)}\\
      \ArA \ar[r]_(0.55){\iota\otimes \cphic} & A. }       
      \end{align}
    \end{enumerate}
    \end{enumerate}
\end{proposition}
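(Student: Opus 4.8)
The plan is to prove the two parts in parallel: they are interchanged by the antipode $S$, which by \eqref{dg:galois-antipode} intertwines the left and right canonical maps through $\Sigma(S\otimes S)$ and swaps the roles of $B,S_{B},\Delta_{B}$ with those of $C,S_{C},\Delta_{C}$. Thus I would describe the argument for part (1) and obtain part (2) by transporting each of the three conditions through $S$ (replacing $\bpsib$ by an associated map $A\to C$ built from $S$ and $S_{C}^{-1}$). Within part (1) I would organize the proof as the cycle $(a)\Rightarrow(c)\Rightarrow(b)\Rightarrow(a)$, treating the commuting square in \eqref{dg:strong-invariance-right} as the pivotal ``strong-invariance'' formulation from which the two one-leg relations \eqref{eq:partial-left-deltab} and \eqref{eq:partial-left-deltac} are recovered by inverting canonical maps.

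For $(a)\Rightarrow(c)$, assuming $\bpsib\in\Hom(\bA,\bB)$ together with \eqref{eq:partial-left-deltab}, I would chase a general element of $\AcA$ around the square. Unfolding the definitions of $\lT$ and $\Tl\Sigma$ turns both composites into expressions built from products of the form $\Delta_{B}(a)(1\otimes b)$ and $(a\otimes 1)\Delta_{C}(b)$; the Galois relations \eqref{dg:galois-inverse}, which express the canonical map of one comultiplication in terms of that of the other through $S$, together with the invariance \eqref{eq:partial-left-deltab}, collapse these to the same element of $A$. In the same computation one reads off that $\bpsib$ is also right $B$-linear, so that in fact $\bpsib\in\Hom(\bAb,\bBb)$; this upgrade uses the bimodule property \eqref{eq:left-delta-bimodule} and the faithfulness and idempotency of $\bsA$ and $\btA$ built into the definition of a left multiplier bialgebroid.

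For $(c)\Rightarrow(b)$ and $(b)\Rightarrow(a)$, the commuting square, precomposed with the inverse of the bijective canonical map $\lT$ and using invertibility of $\Tl$, lets me strip off one leg and extract the pointwise relation \eqref{eq:partial-left-deltac} in its $\Delta_{C}$-form; here the normalization $S_{C}^{-1}\circ\bpsib$ together with the antipode--counit relations \eqref{eq:antipode-counits} is exactly what makes the $B$- and $C$-valued formulations agree. To pass from $(b)$ back to $(a)$, I would apply $S$ to the $\Delta_{C}$-invariance and use its anti-multiplicativity over $B$ and $C$ from Theorem \ref{tm:hopf-characterization}(1), again combined with \eqref{dg:galois-inverse}, to convert $\Delta_{C}$ into $\Delta_{B}$ and recover \eqref{eq:partial-left-deltab}, the factors $S_{B}$ and $S_{C}^{-1}$ cancelling by \eqref{eq:antipode-counits}.

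The main obstacle I anticipate is bookkeeping that is routine in spirit but genuinely delicate: there are four relevant module structures on $A$ (left and right multiplication along $s=\id$ and along $t=S_{B}$ or $S_{C}$), and every slice map and every balanced tensor product is well defined only for the matching one-sided linearity. The crux is therefore to verify, first, that the invariance relation promotes one-sided $B$-linearity to genuine $B$-bilinearity, so that the diagram in \eqref{dg:strong-invariance-right} is even well posed, and second, that the antipode inserts $S_{B}^{\pm1}$ and $S_{C}^{\pm1}$ in precisely the positions needed to identify the $\Delta_{B}$- and $\Delta_{C}$-formulations. Bijectivity of all four canonical maps is what renders every implication reversible.
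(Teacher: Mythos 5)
Your proposal is correct in substance and rests on the same pillars as the paper's proof: the upgrade of one-sided $B$-linearity to genuine bilinearity via \eqref{eq:left-delta-bimodule} and \eqref{eq:right-delta-bimodule} (which is needed before the diagram in (c) is even well posed), and the compatibility of the antipode with the canonical maps combined with their bijectivity. Two organizational differences are worth noting. First, the paper proves the two biconditionals (a)$\Leftrightarrow$(b) and (a)$\Leftrightarrow$(c) separately, each time by embedding the two relevant triangles into a larger diagram whose remaining cells commute a priori and whose arrows are bijections; your cycle (a)$\Rightarrow$(c)$\Rightarrow$(b)$\Rightarrow$(a) also works, but it makes (c)$\Rightarrow$(b) the most delicate step, since condition (c) is phrased via $\lT$ while condition (b) is phrased via $\rT$, so you must pass between the two parametrizations of $\ArA$ --- a comparison the paper's hub-and-spoke arrangement anchored at (a) avoids. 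Second, the paper obtains part (2) by a parallel argument, whereas you transport part (1) through the antipode using \eqref{dg:galois-antipode}; this is legitimate (it is exactly the mechanism behind Proposition \ref{proposition:partial-integrals-bimodule-antipode}(3), proved later and independently) and saves repetition, at the mild cost of checking that the three conditions of (2) land on the three conditions of (1) --- note that (2a) is carried onto (1b) and (2b) onto (1a), which is harmless since all three are being shown equivalent. One small imprecision: for the equivalence with (c), the identity actually needed is $\Tl = \Tr\circ(\iota\otimes S)\circ\lT\Sigma$ on $\AcA$ (Proposition 5.8 of \cite{timmermann:regular}), which is a companion of, but not literally one of, the squares in \eqref{dg:galois-inverse}; your sketch should cite it explicitly rather than subsuming it under ``the Galois relations''.
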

\begin{proof}
  We only prove (1) because (2) is similar. If (a) holds, then 
  \begin{align*}
    \bpsib(ax)b = (\bpsib \otimes \id)(\Delta_{B}(ax)(1 \otimes b)) =  (\bpsib \otimes \id)(\Delta_{B}(a)(1 \otimes xb)) = \bpsib(a)xb
  \end{align*}
for all $x\in B$ and $a,b\in A$ by
\eqref{eq:left-delta-bimodule} and hence $\bpsib \in \Hom(\bAb,\bBb)$. A similar application of \eqref{eq:right-delta-bimodule} shows that the same conclusion holds if (b) is satisfied.

Suppose now that $\bpsib\in \Hom(\bAb,\bBb)$. 

We show that (1a) and (1b) are equivalent.  The equations
\eqref{eq:partial-left-deltac} and \eqref{eq:partial-left-deltab} are
equivalent to commutativity of the triangle on the left hand side or
on the right hand side, respectively, in the following diagram:
    \begin{align*}
        \xymatrix@R=10pt@C=45pt{\ABA \ar[dd]_{\rT} \ar[rrr]^{\id
            \oo S}
          \ar[rd]^(0.65){\bpsib \otimes  \iota}
          &&& \AlA           \ar[ld]_(0.65){\bpsib \otimes  \iota}
          \\ &  A \ar[r]^{S} & A &  \\
          \ArA \ar[rrr]_{\id \oo S}
          \ar[ru]_(0.65){S_{C}^{-1} \circ \bpsib \otimes  \iota}
 && &\AbA \ar[uu]_{\Tr} \ar[lu]^(0.65){\bpsib \otimes \iota}}      
\end{align*}
The large rectangle commutes by \eqref{dg:galois-inverse}, and the upper and lower cells commute by inspection. Since all horizontal and vertical arrows are bijections, the triangle on the left hand side commutes if and only if the triangle on the right hand side does.

To see that (1a) and (1c) are equivalent, consider the following diagram:
   \begin{align*}
     \xymatrix@R=15pt@C=45 pt{ \ACA \ar[rr]^{\Tl} \ar[d]_{\lT\Sigma}
       && \AlA
       \ar[d]^{\bpsib\otimes \iota} \\
       \ArA \ar[r]^{\iota \otimes S} \ar[rd]_{S_{C}^{-1} \circ \bpsib
         \otimes\iota}       & \AbA \ar[r]^{\bpsib \oo \id} \ar[ur]^{\Tr} & A \\
 & A \ar[ru]_{S} } 
   \end{align*}
 The upper left cell commutes by \cite[Proposition 5.8]{timmermann:regular}, and the lower  cell commutes by inspection. Therefore, the outer cell commutes if and only if the triangle on  the right commutes. 
\end{proof}
 \begin{definition}\label{definition:invariant-elements}
   Let $\mathcal{A}$ be a regular multiplier Hopf algebroid. A
 \emph{partial right integral} on $\mathcal{A}$ is a map $\bpsib\colon A \to B$ satisfying the equivalent conditions in  Proposition \ref{proposition:partial-integrals} (1), and a
 \emph{partial left integral} on $\mathcal{A}$ is a map $\cphic\colon A \to C$ satisfying the equivalent conditions in  Proposition \ref{proposition:partial-integrals} (2).\end{definition}

Regard $\Hom(\cAc,\cCc)$ as an $M(B)$-bimodule and $\Hom(\bAb,\bBb)$ as an
$M(C)$-bimodule, where $(x \cdot \cphic \cdot x')(a) = \cphic(x' a x)$
and $(y \cdot \bpsib \cdot y') (a) = \bpsib(y' a y)$.
\begin{proposition} \label{proposition:partial-integrals-bimodule-antipode}
  Let $\mathcal{A}$ be a regular multiplier Hopf algebroid.
  \begin{enumerate}
  \item All partial left integrals form an $M(B)$-sub-bimodule of $\Hom(\cAc,\cCc)$;
  \item all partial right integrals form an $M(C)$-sub-bimodule of $\Hom(\bAb,\bBb)$;
  \item the maps $\cphic \mapsto S^{\pm 1} \circ \cphic \circ S^{\mp 1}$   are bijections between all partial left and all partial right integrals.
  \end{enumerate}
\end{proposition}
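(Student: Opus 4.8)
The three parts are of different character, so I would handle them in turn, but in every case the strategy is the same: reduce a condition that looks complicated to one of the \emph{simpler} equivalent characterizations provided by Proposition~\ref{proposition:partial-integrals}. Since all the defining relations are linear, it is clear from the outset that the partial left and partial right integrals form linear subspaces; the only content of (1) and (2) is stability under the $M(B)$- and $M(C)$-actions, and the content of (3) is that the antipode interchanges the two invariance conditions. I would not verify the strong bimodule membership $\Hom(\cAc,\cCc)$ or $\Hom(\bAb,\bBb)$ by hand, but only the weaker one-sided module property together with the invariance equation, and then invoke Proposition~\ref{proposition:partial-integrals} to upgrade to the full condition automatically.

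For (1), I would fix a partial left integral $\cphic$ and $x,x'\in M(B)$ and set $\cphic'(a)=\cphic(x'ax)$. The weaker membership $\cphic'\in\Hom(\Ac,\Cc)$ is immediate from right $C$-linearity of $\cphic$ and the fact that $B$ and $C$ commute in $M(A)$: for $y\in C$ one has $\cphic'(ay)=\cphic(x'ayx)=\cphic(x'axy)=\cphic'(a)y$. The invariance equation \eqref{eq:partial-right-deltac} is the key point and rests on the identity
\[ (1\otimes x')\Delta_C(b)(1\otimes x)=\Delta_C(x'bx), \]
which is exactly the bimodule relation \eqref{eq:right-delta-bimodule} (with $x,x'$ read as images $t(C)=S_C(C)$ and extended to $M(B)\subseteq M(A)$). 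Feeding this into \eqref{eq:partial-right-deltac} lets me absorb $x',x$ into the second leg of $\Delta_C$ and reduce the invariance of $\cphic'$ to that of $\cphic$; Proposition~\ref{proposition:partial-integrals}(2) then yields that $\cphic'$ is again a partial left integral. Part (2) is entirely symmetric, now absorbing the $M(C)$-multipliers into the \emph{first} leg of $\Delta_B$ via \eqref{eq:left-delta-bimodule}, and verifying the weak condition $\bpsib'\in\Hom(\bA,\bB)$ in the same way.

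For (3), I would set $\bpsib:=S\circ\cphic\circ S^{-1}$. Since $S$ restricts to $S_B,S_C$ on $B,C$, one computes $S_C^{-1}\circ\bpsib=\cphic\circ S^{-1}$, which is precisely the map appearing in \eqref{eq:partial-left-deltac}; and $\bpsib\in\Hom(\bAb,\bBb)$ because $S^{\pm1}$ carry the relevant $B$-bimodule structures to $C$-bimodule structures and back through $S_B,S_C$. The plan is then to chase the characterizing diagram \eqref{dg:strong-invariance-left} for $\cphic$ through the antipode isomorphisms \eqref{dg:galois-antipode}, which interchange $\Tl$ with $\rT$ and $\lT$ with $\Tr$ via $\Sigma\circ(S\otimes S)$: under these, the maps $\Tr,\rT\Sigma$ of \eqref{dg:strong-invariance-left} go over to the maps $\lT,\Tl\Sigma$ of \eqref{dg:strong-invariance-right}, and the slices $\iota\otimes\cphic$ and $S\circ(\iota\otimes S_B^{-1}\circ\cphic)$ go over to $\bpsib\otimes\iota$ and $S\circ(S_C^{-1}\circ\bpsib\otimes\iota)$. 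This produces \eqref{dg:strong-invariance-right} for $\bpsib$, so $\bpsib$ is a partial right integral. Finally, the map $\psi\mapsto S^{-1}\circ\psi\circ S$ is of the same form and is visibly a two-sided inverse, so $S^{\pm1}\circ(-)\circ S^{\mp1}$ are mutually inverse bijections between the partial left and the partial right integrals.

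The genuinely routine steps are the one-sided module-linearity checks and the two absorption identities in (1)--(2). I expect the \emph{main obstacle} to be the bookkeeping in (3): one must keep the three maps $S,S_B,S_C$ and the several module structures straight so that the counit-twisted slices $\iota\otimes\cphic$, $\iota\otimes S_B^{-1}\circ\cphic$, $\bpsib\otimes\iota$ and $S_C^{-1}\circ\bpsib\otimes\iota$ match up correctly after transport by $\Sigma\circ(S\otimes S)$; here the precise form of the antipode relations \eqref{dg:galois-inverse} and \eqref{dg:galois-antipode} is essential. A secondary technical point, already present in (1)--(2), is justifying the absorption identities for multipliers in $M(B)$ (resp.\ $M(C)$) rather than for elements of $B$ (resp.\ $C$), which uses the extension of $\Delta_B,\Delta_C$ to $M(A)$ recorded after \eqref{eq:delta-bimodule-extended}.
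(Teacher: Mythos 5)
Your proposal is correct and follows exactly the route the paper takes: its proof of this proposition is a one-line appeal to \eqref{eq:left-delta-bimodule}, \eqref{eq:right-delta-bimodule} and \eqref{dg:galois-antipode}, which are precisely the absorption identities and antipode-flip diagrams you use. You have simply written out the details the paper leaves to the reader.
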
 
\begin{proof}
This follows easily from \eqref{eq:left-delta-bimodule}, \eqref{eq:right-delta-bimodule} and 
\eqref{dg:galois-antipode}.
\end{proof}

Thus, a regular multiplier Hopf algebroid has a surjective partial left integral if and only if it has a  surjective partial right integral.

In the following result, we use the extension of $\Delta_{B}$ and $\Delta_{C}$ to multipliers as described in Section 2 and explained after  \eqref{eq:delta-bimodule-extended}, and  identify $M(B)$ and $M(C)$ with subalgebras of $M(A)$.
\begin{proposition} \label{proposition:ergodic}
Let $\mathcal{A}$ be a regular multiplier Hopf algebroid which has a surjective partial integral.
 Then
  \begin{align*}
    M(B) &= \{ z\in M(A) \cap C' : \Delta_{B}(z) = 1\otimes z\} 
    = \{ z\in M(A) \cap C' : \Delta_{C}(z) = 1\otimes z\}, 
 \\
    M(C) &= \{z\in M(A) \cap B' : \Delta_{B}(z) = z\otimes 1\} = \{z\in M(A) \cap B' : \Delta_{C}(z) =  z \otimes 1\}.
  \end{align*}
\end{proposition}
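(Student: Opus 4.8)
The plan is to prove each of the four claimed equalities by a two-sided inclusion, treating $M(B)$ and $M(C)$ symmetrically. Throughout I would use that the module $\bsA$ is idempotent, so $BA=A$, that $A_{A}$ and $_{A}A$ are non-degenerate, and the extensions of $\Delta_{B},\Delta_{C}$ to homomorphisms $M(A)\to \End(\AlA)$ and $M(A)\to\End(\ArA)^{\op}$, together with the identities $\Delta_{B}(x)=1\otimes x=\Delta_{C}(x)$ for $x\in B$ and $\Delta_{B}(y)=y\otimes 1=\Delta_{C}(y)$ for $y\in C$ coming from \eqref{eq:delta-bimodule-extended}. For the easy inclusions, let $z\in M(B)$, viewed in $M(A)$. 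Since $B$ and $C$ commute and $BA=A$, one checks $zc=cz$ on $A$ for every $c\in C$, so $z\in C'$; and writing $a=xa'$ with $x\in B$ and using $\Delta_{B}(za)=\Delta_{B}((zx)a')=(1\otimes zx)\Delta_{B}(a')$ from \eqref{eq:left-delta-bimodule}, one gets $\Delta_{B}(za)=(1\otimes z)\Delta_{B}(a)$ for all $a$, whence $\Delta_{B}(z)=1\otimes z$ because the elements $\Delta_{B}(a)(1\otimes b)$ span $\AlA$. The identity $\Delta_{C}(z)=1\otimes z$ follows in the same way. This shows $M(B)$ is contained in both sets on the first line, and the mirror-image argument (with $y\in C$, $CA=A$, and the tensor legs swapped) shows $M(C)$ is contained in both sets on the second line.

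For the reverse inclusions I would use a surjective partial integral. By assumption and the remark following Proposition \ref{proposition:partial-integrals-bimodule-antipode}, there exist a surjective partial right integral $\bpsib\colon A\to B$ and a surjective partial left integral $\cphic\colon A\to C$. Consider first $z\in M(A)\cap C'$ with $\Delta_{B}(z)=1\otimes z$. Since the extension of $\Delta_{B}$ is a homomorphism, $\Delta_{B}(za)=(1\otimes z)\Delta_{B}(a)$ and $\Delta_{B}(az)=\Delta_{B}(a)(1\otimes z)$. Applying the slice $\bpsib\otimes\iota$ to $\Delta_{B}(za)(1\otimes b)$ and to $\Delta_{B}(az)(1\otimes b)$ and using the invariance \eqref{eq:partial-left-deltab}, together with the fact that $\bpsib\otimes\iota$ intertwines left multiplication by $z$ on the second tensor leg with left multiplication by $z$ on $A$ (this needs $z\in C'$, which also makes $1\otimes z$ well defined on $\AlA$), I obtain
\begin{align*}
  \bpsib(za)\,b &= z\,\bpsib(a)\,b, & \bpsib(az)\,b &= \bpsib(a)\,z\,b
\end{align*}
for all $a,b\in A$. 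By non-degeneracy of $A_{A}$ this yields $\bpsib(za)=z\,\bpsib(a)$ and $\bpsib(az)=\bpsib(a)\,z$ in $M(A)$; since these values lie in $B$ and $\bpsib$ is surjective, it follows that $zB\subseteq B$ and $Bz\subseteq B$. Left multiplication by $z$ therefore defines an element $\tilde z\in M(B)$, and because the inclusion $B\subseteq M(A)$ is non-degenerate with $BA=A$, the image of $\tilde z$ under the canonical embedding $M(B)\hookrightarrow M(A)$ equals $z$; hence $z\in M(B)$. This proves the first equality.

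The remaining three equalities follow by the same scheme. For $z\in M(A)\cap C'$ with $\Delta_{C}(z)=1\otimes z$ one runs the identical argument with the second invariance \eqref{eq:partial-left-deltac} of $\bpsib$ in place of \eqref{eq:partial-left-deltab}, again arriving at $\bpsib(za)=z\bpsib(a)$ and $\bpsib(az)=\bpsib(a)z$ and hence $z\in M(B)$; for the two descriptions of $M(C)$ one uses the surjective partial left integral $\cphic$ and its invariances \eqref{eq:partial-right-deltab} and \eqref{eq:partial-right-deltac} in the mirror-image computation. Alternatively, once the first equality is established, the two equalities for $M(C)$ can be deduced from it by applying the antipode $S$, which restricts to anti-isomorphisms $M(B)\to M(C)$ and $C'\to B'$ and intertwines $\Delta_{B}$ with $\Delta_{C}$ up to the flip. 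I expect the main obstacle to be purely bookkeeping: pinning down the exact form of the slice maps $\bpsib\otimes\iota$ and $S_{C}^{-1}\circ\bpsib\otimes\iota$ on the balanced tensor products, keeping track of the opposite-algebra convention governing $\Delta_{C}$, and verifying in each case that $z\in C'$ (respectively $z\in B'$) is exactly what is needed both for the operators $1\otimes z$ to be well defined on $\AlA$ (respectively $\ArA$) and for $z$ to commute past the base-algebra elements produced by the slice maps.
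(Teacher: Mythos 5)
Your proposal is correct and follows essentially the same route as the paper: the easy inclusions come from the bimodule property \eqref{eq:left-delta-bimodule} of the comultiplications, and the reverse inclusions are obtained by slicing $\Delta_{B}(za)(1\otimes b)$ and $\Delta_{B}(az)(1\otimes b)$ with a surjective partial right integral to get $z\bpsib(a)=\bpsib(za)\in B$ and $\bpsib(a)z=\bpsib(az)\in B$, whence $z\in M(B)$ by surjectivity. The paper only writes out the first equality and leaves the other three to symmetry, which you carry out explicitly (including the antipode shortcut for the $M(C)$ cases); this is just additional bookkeeping, not a different method.
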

\begin{proof}
We only prove the first equality. The inclusion $\subseteq$  follows from \eqref{eq:left-delta-bimodule}. To prove the reverse inclusion, suppose that $\bpsib$ is a surjective right integral and that $z \in M(A) \cap C'$ satisfies $ \Delta_{B}(z) = 1\otimes z$. Then for all $a,b\in A$,
\begin{align*}
  \bpsib(za)b &= (\bpsib \otimes \id)(\Delta_{B}(za)(1\otimes b)) = 
  z(\bpsib \otimes id)(\Delta_{B}(a)(1 \otimes b)) = z\bpsib(a)b, \\
  \bpsib(az)b &= (\bpsib \otimes \id)(\Delta_{B}(az)(1\otimes b)) = 
  (\bpsib \otimes id)(\Delta_{B}(a)(1 \otimes zb)) = \bpsib(a)zb, 
\end{align*}
and hence $z\bpsib(a)=\bpsib(za) \in B$ and $\bpsib(a)z=\bpsib(az) \in B$. Since $\bpsib$ is surjective, we can conclude that $z\in M(B)$.
\end{proof}

Let us return to the examples introduced in subsection \ref{subsection:hopf-examples}.
\begin{example} \label{example:integrals-groupoid-functions}
Let $G$ be a locally compact, \'etale Hausdorff groupoid and regard the function algebra $C_{c}(G)$ as a multiplier Hopf $*$-algebroid
as in Example \ref{example:hopf-groupoid-functions}. Then for each  $h\in C(G^{0})$, the maps
$\cphic^{(h)} \colon C_{c}(G) \to t^{*}(C_{c}(G^{0}))$ and $
\bpsib^{(h)} \colon C_{c}(G) \to s^{*}(C_{c}(G^{0}))$
given by
\begin{align*}
  (\cphic^{(h)}(f))(\gamma) &= \sum_{\substack{\gamma' \in G\\ t(\gamma')=t(\gamma)}} f(\gamma')h(s(\gamma')), &
  (\bpsib^{(h)}(f))(\gamma) &= \sum_{\substack{\gamma' \in G \\ s(\gamma')=s(\gamma)}} f(\gamma')h(t(\gamma'))
\end{align*}
are a partial left and a partial right integral, respectively.
\end{example}

\begin{example} \label{example:integrals-groupoid-algebra}
Let $G$ be as above and regard the convolution algebra $C_{c}(G)$ as a multiplier Hopf $*$-algebroid
as in Example \ref{example:hopf-groupoid-algebra}. Then for each $h\in C(G^{0})$, the map
$\cphic^{(h)} \colon C_{c}(G) \to C_{c}(G^{0})$
given by\begin{align*}
  (\cphic^{(h)}(f))(u) &=  f(u)h(u)
\end{align*}
is easily seen to be a partial left and a partial right integral.
\end{example}

\begin{example} \label{example:integrals-cb} 
Consider  the tensor product $A=C\otimes B$ discussed in Example \ref{example:hopf-cb}. For all $\upsilon \in \dual{B}$ and $\omega \in \dual{C}$, the maps 
  \begin{align*}
     \cphic^{(\upsilon)} &:= \id \otimes \upsilon \colon A\to C, &
     \bpsib^{(\omega)} &:=\omega \otimes \id \colon A \to B
  \end{align*}
are a partial left and a partial right integral, respectively, as one can easily check again.  
\end{example}

\begin{example} \label{example:integrals-ch}
  Consider the symmetric crossed product $A=C\# H$ constructed in Example \ref{example:hopf-ch}. If $\phi_{H}$ is a left integral and $\psi_{H}$ is a right integral for the multiplier Hopf algebra $H$,  then the maps
  \begin{align} \label{eq:integrals-ch}
    \cphic &\colon C\# H \to C, \ y h \mapsto y\phi_{H}(h),  &
    \bpsib&\colon C\# H\to C=B, \ yh\mapsto y\psi_{H}(h)
  \end{align}
are a partial left and a partial right integral, respectively. Note that 
\begin{gather} \label{eq:integrals-ch-symmetric}
  \begin{aligned}
    \cphic(hy) &= (h_{(1)}\actright y)\phi_{H}(h_{(2)}) = y\phi_{H}(h), \\
    \bpsib(hy) &= (h_{(2)} \actright y)\psi_{H}(h_{(1)}) =
    y\psi_{H}(h).
  \end{aligned}
\end{gather}
\end{example}

\begin{example}
  \label{example:integrals-chb} Consider the two-sided crossed product $A=C\# H \# B$ discussed in Example \ref{example:hopf-chb}.
Suppose that the multiplier Hopf algebra $H$ has a left integral $\phi_{H}$, and let $\upsilon \in \dual{B}$. Then the map
  \begin{align*}
    \cphic^{(\upsilon)}  \colon A \to C, \quad yhx \mapsto y\phi_{H}(h)\upsilon(x),
  \end{align*}
is a partial left integral.  Indeed,  clearly $\cphic^{(\upsilon)} \in  \Hom(\cA,\cC)$, and
  \begin{multline*}
    (\id \otimes S_{B}^{-1}\cphic^{(\upsilon)})(\Delta_{B}(yhx)(y'h'y' \otimes 1)) = 
      y(h_{(1)} \actright y')h_{(2)}h'x' \cdot \phi_{H}(h_{(3)})\upsilon(x) \\
      = yy'h'x' \phi_{H}(h)\upsilon(x) = \cphic^{(\upsilon)}(yhx)y'h'x'
  \end{multline*}
for all $y,y'\in C$, $x,x'\in B$ and $h,h' \in H$  by left-invariance of $\phi_{H}$. 

 Similarly, if $\psi_{H}$ is a right integral of $H$ and if  $\omega\in \dual{C}$, then the map
  \begin{align*}
    \bpsib^{(\omega)} \colon A \to B, \quad yhx \mapsto \omega(y)\psi_{H}(h)x,
  \end{align*}
is right-invariant. 
\end{example}

\subsection{Expectations onto the orbit algebra in the proper case}
\label{subsection:expectation}

We show that for a proper regular multiplier Hopf algebroid,  partial integrals restrict to conditional expectations  to the orbit algebra and are completely determined by these restrictions.  Let us first define these terms.
 \begin{definition} \label{definition:proper}
We call  a regular multiplier Hopf algebroid $\mathcal{A}$ as above 
 \emph{proper} if
   \begin{align*}
   BC\subseteq A \text{ in } M(A).
   \end{align*}
  The \emph{orbit algebra} of $\mathcal{A}$  is the subalgebra
  \begin{align*}
   \mathcal{O}:=M(B) \cap M(C) \subseteq M(A).
  \end{align*}
   We call $\mathcal{A}$ \emph{ergodic} if $\mathcal{O}=\C 1$.
 \end{definition}
Clearly, the orbit algebra $\mathcal{O}$ is central in $M(B)$ and in $M(C)$. 
\begin{proposition} \label{proposition:orbit-antipode}
The antipode $S$ of a regular multiplier Hopf algebroid $\mathcal{A}$ acts trivially on its orbit algebra.
\end{proposition}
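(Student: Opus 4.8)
The plan is to prove $S(z)=z$ for every $z$ in the orbit algebra $\mathcal{O}=M(B)\cap M(C)$ by combining two ingredients: that the antipode restricts on the base algebras to the given anti-isomorphisms, and that the left counit acts on $B$ as the identity but on $C$ as $S_B^{-1}$.

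First I would record that $S|_{M(B)}=S_B$ and $S|_{M(C)}=S_C$. Specializing the base variables in condition~(1) of Theorem~\ref{tm:hopf-characterization} (taking the remaining ones to be the unit of $M(A)$) gives $S(xa)=S(a)S_B(x)$ for all $x\in B$ and $a\in A$; since $S$ is an anti-automorphism of $A$ we also have $S(xa)=S(a)S(x)$, and comparing these two expressions, using $S(A)=A$ and non-degeneracy of $A$, yields $S(x)=S_B(x)$ for $x\in B$. Applying the same reasoning to products $zx$ with $z\in M(B)$, $x\in B$ extends this to $S|_{M(B)}=S_B$, and symmetrically $S|_{M(C)}=S_C$. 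In particular $S$ maps $\mathcal{O}$ into itself, and for $z\in\mathcal{O}$ we already have $S(z)=S_B(z)$; it therefore remains to show $S_B(z)=z$.

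For the key step I would exploit that the counit distinguishes $B$ from $C$. Since $\beps$ is the left counit of the left bialgebroid $(A,B,\id_B,S_B,\Delta_B)$, its source map is $\id_B$ and its target map is $S_B$, so \eqref{eq:left-counit-bimodule} reads $\beps(xa)=x\beps(a)$ for $x\in B$ and $\beps(S_B(y)a)=\beps(a)y$ for $y\in B$, for all $a\in A$. Fix $z\in\mathcal{O}$. Regarding $z$ as a multiplier of $B$ gives $\beps(za)=z\beps(a)$, while writing $z=S_B(S_B^{-1}(z))$ with $S_B^{-1}(z)\in M(B)$ (legitimate because $z\in M(C)$) and using the target relation gives $\beps(za)=\beps(a)S_B^{-1}(z)$. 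As $\mathcal{O}$ is central in $M(B)$ we have $z\beps(a)=\beps(a)z$, and combining the identities yields
\begin{align*}
  \beps(a)\bigl(z-S_B^{-1}(z)\bigr)=0\qquad\text{for all }a\in A.
\end{align*}
Since $\beps$ is surjective onto $B$ and $B$ is non-degenerate, this forces $S_B^{-1}(z)=z$, i.e.\ $S_B(z)=z$, and hence $S(z)=S_B(z)=z$.

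The main obstacle I anticipate is the bookkeeping imposed by non-unitality: one must make the extensions of $S$, $S_B$ and $\beps$ to the multiplier algebras precise and justify each cancellation at the level of the module actions on $A$ and $B$. The decisive analytic input is the final cancellation, which rests on surjectivity of the counit $\beps$ (so that $\beps(a)$ sweeps out all of $B$) together with non-degeneracy of $B$; this is the point I would isolate as a separate lemma. A fully symmetric argument with the right counit $\epsc$, for which $\epsc|_C=\id$ and $\epsc|_B=S_C^{-1}$, yields $S_C(z)=z$ and serves as a useful consistency check.
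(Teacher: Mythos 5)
Your proof is correct in substance and reaches the same conclusion ($S_B(z)=z$, hence $S(z)=z$) by a route that is a close cousin of the paper's but bypasses the comultiplication. The paper uses \eqref{eq:delta-bimodule-extended} to write $\Delta_{B}(z)=1\otimes z=z\otimes 1$ for $z\in\mathcal{O}$, combines this with the balancing relation of $\bsA\otimes\btA$ to get $a\otimes S_{B}(z)b=a\otimes zb$, and then slices with $\beps\otimes\id$; you instead apply the two bimodule properties of $\beps$ directly, computing $\beps(za)$ once through the source map ($z\in M(B)$) and once through the target map ($z=S_{B}(S_{B}^{-1}(z))$ with $S_{B}^{-1}(z)\in M(B)$). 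These are two presentations of the same mechanism --- the left counit sees the $B$-action via the identity and the $C$-action via $S_{B}^{-1}$, and $z$ lives in both --- so neither buys much over the other, though yours has the small advantage of not needing the extension of $\Delta_{B}$ to multipliers, and you are more explicit than the paper about why $S|_{M(B)}=S_{B}$, which the paper leaves implicit.

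One point to tighten: at the final cancellation you invoke surjectivity of $\beps$. That is only a without-loss-of-generality normalization (obtained by shrinking $B$ to $\beps(A)$), and performing that reduction changes $M(B)$ and hence potentially the orbit algebra, so it is not automatic that the statement for the reduced algebroid gives the statement for the original one. This is easily repaired without any reduction: the counit identity $(\id\otimes\beps)(\Delta_{B}(a)(b\otimes 1))=ab$ together with surjectivity of $T_{\lambda}$ and idempotence of $A$ gives $\beps(A)A=A$ unconditionally (this is exactly the relation the paper's proof quotes), and since $z-S_{B}^{-1}(z)$ is central in $M(B)$ (both $z$ and $S_{B}^{-1}(z)$ are: the latter because $S_{B}^{-1}$ carries the center of $M(C)$ to the center of $M(B)$), the relation $\beps(a)\bigl(z-S_{B}^{-1}(z)\bigr)=0$ yields $\bigl(z-S_{B}^{-1}(z)\bigr)\beps(a)a'=0$ for all $a,a'\in A$, hence $\bigl(z-S_{B}^{-1}(z)\bigr)A=0$ and the claim follows from non-degeneracy of $A$. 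With that substitution your argument is complete.
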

\begin{proof}
Let $z\in \mathcal{O}$. By Proposition \ref{proposition:ergodic}, we have for all $a,b\in A$
\begin{align*}
a\otimes S_{B}(z)b = za\otimes b = \Delta_{B}(z)(a\otimes b) = a\otimes zb
\end{align*}
in $\AlA$. We apply $\beps \otimes \id$, use the relations $\beps(A)A=A$ and $z\in C' \cap B' \cap M(A)$, and find $S_{B}(z)c=zc$ for all $c\in A$.
\end{proof}
In the proper case, partial integrals extend to conditional expectations on the base algebras and are completely determined by these extensions.
\begin{proposition} \label{proposition:proper-partial-integrals}
  Let $\mathcal{A}$ be  a proper regular multiplier Hopf algebroid with a partial left integral $\cphic$ and a partial right integral $\bpsib$.  Then the extensions $\bpsib|_{C} \colon C \to ZM(B)$ and $\cphic|_{B} \colon B\to ZM(C)$ 
defined by \begin{align*}
   \bpsib|_{C}(y)x =\bpsib(yx) \quad\text{and} \quad  \cphic|_{B} (x)y = \cphic(xy)
\end{align*}
 take values in the orbit algebra $\mathcal{O}$ and we have
  \begin{align} \label{eq:proper-composed-partial-integrals}
    \cphic|_{B} \circ \bpsib = \bpsib|_{C}\circ \cphic
  \end{align}
as maps from $A$ to $\mathcal{O}$.
\end{proposition}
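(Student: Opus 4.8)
The plan is to establish the three claims in turn: that $\bpsib|_C$ and $\cphic|_B$ are well-defined with values in $ZM(B)$ and $ZM(C)$; that these values actually lie in $\mathcal{O}$; and finally the identity \eqref{eq:proper-composed-partial-integrals}. The first point is routine. For $y\in C$ and $x\in B$ the products $yx,xy$ lie in $A$ by properness, so $\bpsib(yx)$ and $\bpsib(xy)$ are defined, and since $\bpsib\in\Hom(\bAb,\bBb)$ is a $B$-bimodule map the assignments $x\mapsto\bpsib(yx)$ and $x\mapsto\bpsib(xy)$ are the left and right actions of a multiplier $\bpsib|_C(y)\in M(B)$, the compatibility $\bpsib(xy)x'=\bpsib(xyx')=x\bpsib(yx')$ being immediate. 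Because $B$ and $C$ commute in $M(A)$ we get $\bpsib|_C(y)x=\bpsib(yx)=\bpsib(xy)=x\bpsib|_C(y)$, so $\bpsib|_C(y)$ is central in $M(B)$; the case of $\cphic|_B$ is symmetric.

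The main obstacle is to prove that $z:=\bpsib|_C(y)$ lies in $M(C)$, and symmetrically for $\cphic|_B$. By Proposition \ref{proposition:ergodic}, membership $z\in M(B)$ already yields $z\in C'$ and $\Delta_B(z)=1\otimes z$, while centrality gives $z\in B'$; by the same proposition, $z\in M(C)$ is equivalent to $\Delta_B(z)=z\otimes 1$. Now in $\AlA$ the balancing $s(x)a\otimes b=a\otimes t(x)b$ with $t=S_B$ shows that $z\otimes 1=1\otimes S_B(z)$, so what remains is precisely the orbit-invariance $S_B(z)=z$, i.e.\ that the antipode fixes $z$ (compare Proposition \ref{proposition:orbit-antipode}). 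I expect this to be the crux and to need the Hopf-algebroid axioms, not just properness: the approach I would take is to substitute $b=yc$ into the left co-associativity \eqref{eq:delta-coassociative}, using $\Delta_B(yc)=(y\otimes 1)\Delta_B(c)$ from \eqref{eq:left-delta-bimodule}, and then apply the right-invariance of $\bpsib$ to the relevant leg in order to read off $\Delta_B(z)=z\otimes 1$. In the groupoid picture of Example \ref{example:integrals-groupoid-functions} this is exactly the statement that the number of arrows between two units depends only on their orbit, which is forced by composability of arrows and is the concrete shadow of co-associativity and the antipode.

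Finally, for \eqref{eq:proper-composed-partial-integrals}, once both composites are known to lie in $\mathcal{O}\subseteq M(A)$ it suffices to compare them on products. I would prove the identity
\begin{align*}
\cphic(\bpsib(a)y)\,x=\bpsib(\cphic(a)x)\,y\qquad(a\in A,\ x\in B,\ y\in C),
\end{align*}
whose two sides lie in $BC\subseteq A$; since $\cphic|_B(\bpsib(a))$ is central in $M(C)$ and $\bpsib|_C(\cphic(a))$ is central in $M(B)$, this identity says exactly that the two multipliers agree on every product $xy$, whence they coincide by non-degeneracy of $A$. To establish the displayed equality I would slice a comultiplication of $a$ by $\bpsib$ in one leg and by $\cphic$ in the other, collapse each slice through the invariance relations \eqref{eq:partial-left-deltab} and \eqref{eq:partial-right-deltac}, and re-associate via the mixed co-associativity \eqref{eq:compatible}. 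This is the abstract form of Fubini's theorem and is the second substantial computation; it is the only place where the left and the right integral genuinely interact.
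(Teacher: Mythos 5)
There is a genuine gap in the main step, namely the proof that $\bpsib|_{C}$ and $\cphic|_{B}$ take values in $\mathcal{O}$. Your route is to establish $\Delta_{B}(z)=z\otimes 1$ for $z=\bpsib|_{C}(y)$ and then invoke Proposition \ref{proposition:ergodic} to conclude $z\in M(C)$. But the direction of Proposition \ref{proposition:ergodic} you need (from $\Delta_{B}(z)=z\otimes 1$ and $z\in B'$ back to $z\in M(C)$) is exactly the hard inclusion, and its proof requires a \emph{surjective} partial integral — a hypothesis that is not available in the present proposition, where $\cphic$ and $\bpsib$ are arbitrary partial integrals. Moreover, the reduction of $\Delta_{B}(z)=z\otimes 1$ to ``$S_{B}(z)=z$'' is close to circular: $S_{B}(z)=z$ with $z\in M(B)$ and $S_{B}(z)\in M(C)$ already \emph{is} the statement $z\in\mathcal{O}$ (Proposition \ref{proposition:orbit-antipode} only tells you the antipode fixes elements already known to lie in $\mathcal{O}$), and your sketch of how coassociativity would deliver this is not carried out. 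The paper avoids all of this by a direct computation: properness puts $xy$ in $A$, so one may apply left-invariance \eqref{eq:partial-right-deltab} to $b=xy$ together with $\Delta_{B}(xy)=y\otimes x$ from \eqref{eq:delta-bimodule-extended}; this yields $\cphic|_{B}(x)\,yx'a=S_{B}^{-1}(\cphic|_{B}(x))\,x'ya$ for all $x'\in B$, $y\in C$, $a\in A$, i.e.\ the multiplier $\cphic|_{B}(x)\in M(C)$ acts on $A$ as the element $S_{B}^{-1}(\cphic|_{B}(x))\in M(B)$, whence it lies in $M(B)\cap M(C)=\mathcal{O}$. No appeal to Proposition \ref{proposition:ergodic} is needed.

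Your treatment of \eqref{eq:proper-composed-partial-integrals} is essentially the right Fubini-type idea and the reduction to comparing the two central multipliers on products $xy\in BC\subseteq A$ is fine. Two remarks: first, by slicing one leg with an invariance relation for $\Delta_{B}$ and the other with one for $\Delta_{C}$ you force yourself to go through the mixed coassociativity \eqref{eq:compatible}, whereas the paper slices a single element $\Delta_{B}(a)(yx'\otimes S_{B}(x'')x)$ with $\bpsib$ in the first leg (via \eqref{eq:partial-left-deltab}) and $S_{B}^{-1}\circ\cphic$ in the second (via \eqref{eq:partial-right-deltab}), so only $\Delta_{B}$ is involved; second, done this way one first obtains the twisted identity $\bpsib|_{C}\circ\cphic=S_{B}^{-1}\circ\cphic|_{B}\circ\bpsib$, and only then does Proposition \ref{proposition:orbit-antipode} (legitimately, since the values are now known to lie in $\mathcal{O}$) remove the $S_{B}^{-1}$. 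Your sketch also glosses over the module-compatibility needed for the slice maps to be well defined on the balanced tensor products, which is precisely why the antipode twists appear in the paper's formulas.
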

\begin{proof}
To see that the extension  $\cphic|_{B}$ takes values in $\mathcal{O}$, let $a\in A$,  $x,x'\in B$, $y\in C$. Then
\begin{align*}
  \cphic|_{B}(x)yx'a=\cphic(xy)x'a&= (\id \otimes S_{B}^{-1}\cphic)(\Delta_{B}(xy)(x'a \otimes 1)) \\ &= 
(\id \otimes S_{B}^{-1}\cphic)(ya \otimes xS_{B}(x')) \\ &  = S_{B}^{-1}(\cphic(xS_{B}(x')))ya =
S_{B}^{-1}(\cphic|_{B}(x))x'ya,
\end{align*}
  and hence $\cphic|_{B}(x) \in \mathcal{O}$.
  A similar argument shows that $\bpsib|_{C}(y) \in \mathcal{O}$ for all $y\in C$.

To prove \eqref{eq:proper-composed-partial-integrals},   let $a\in A$, $x,x',x''\in B$ and $y\in C$.  Then the expression
  \begin{align*}
        (\bpsib \otimes S_{B}^{-1}\cphic)(\Delta_{B}(a)(yx' \otimes S_{B}(x'')x)
  \end{align*}
is equal to
  \begin{align*}
    \bpsib(\cphic(ax)x''yx') =    \bpsib|_{C}(\cphic(axy))x''x'
\end{align*}
by left-invariance of $\cphic$, and to
\begin{align*}
 S_{B}^{-1}\cphic(\bpsib(ay)S_{B}(x''x')x)  =
S_{B}^{-1}(\cphic|_{B}(\bpsib(axy)))x''x'
  \end{align*}
by right-invariance of $\bpsib$. Hence,  $\bpsib|_{C}\circ \cphic = S^{-1}_{B} \circ \cphic|_{B}\circ \bpsi$. With Proposition \ref{proposition:orbit-antipode}, the claim follows.
\end{proof}
\begin{remark} \label{remark:proper-ergodic}
  \begin{enumerate}
  \item If there are sufficiently many partial integrals in the sense that  (extensions of) the partial  left integrals separate the points of $B$, which by Proposition \ref{proposition:partial-integrals-bimodule-antipode} is equivalent to partial right integrals separating the points of $C$, then \eqref{eq:proper-composed-partial-integrals} implies that every partial left integral $\cphic$  and every partial right integral $\bpsib$ is uniquely determined by the extension $\cphic|_{B}$ or  $\bpsib|_{C}$, respectively.
  \item In the proper and ergodic case, every partial left integral    $\cphic$  determines a functional $\mu_{B}$ on $B$ such that
    $\cphic|_{B}(x)=\mu_{B}(x)1 \in M(A)$, and every partial right integral    $\bpsib$ determines a functional $\mu_{C}$ on $C$ such that
    $\bpsib|_{C}(y)=\mu_{C}(y)1 \in M(A)$. Under the assumption in (1),  $\cphic$ and $\bpsib$
    are uniquely determined by these functionals.
  \end{enumerate}
\end{remark}

\subsection{Quasi-invariant base weights}
\label{subsection:base-weights}

  Let $\mathcal{A}$ be a regular multiplier Hopf algebroid, not necessarily proper. To obtain  total integrals  on $\mathcal{A}$, we  compose partial integrals  with suitable functionals $\mu_{B}$ and $\mu_{C}$  on the base algebras $B$ and $C$, respectively. On these functionals, we impose several conditions. 
\begin{definition}
 A \emph{base weight}   for $\mathcal{A}$ is  a pair of faithful functionals $(\mu_{B},\mu_{C})$ on  the algebras $B$ and $C$, respectively. We call such a base weight
 \begin{enumerate}
 \item \emph{antipodal} if
$ \mu_{B} \circ S_{C} = \mu_{C}$ and $\mu_{C} \circ     S_{B} = \mu_{B}$,
\item 
 \emph{modular} if $\sigma_{B}:=S_{B}^{-1}S_{C}^{-1}$ and $\sigma_{C}:=S_{B}S_{C}$ are modular automorphisms of $\mu_{B}$ and $\mu_{C}$, respectively;
\item \emph{positive} if $\mathcal{A}$ is a multiplier Hopf  $*$-algebroid and $\mu_{B}$ and $\mu_{C}$ are positive.
 \end{enumerate}
\end{definition}
 Conditon (1)  is quite natural. Like (2), it implies
\begin{align}
  \label{eq:base-weight-invariance}
  \mu_{B} &= \mu_{B} \circ S_{C} \circ S_{B} &&\text{and} & \mu_{C} &= \mu_{C} \circ S_{B} \circ S_{C}.
\end{align}
We shall later introduce another condition, counitality, which implies condition (2) and in the unital case also (1).  
\begin{definition}
We call a base weight $(\mu_{B},\mu_{C})$ \emph{quasi-invariant} with respect to
\begin{enumerate}
\item a partial left integral $\cphic$ if the functional $\phi:=\mu_{C} \circ \cphic$ can be written
  \begin{align} \label{eq:left-integral} \phi =
    \mu_{B} \circ \bphi = \mu_{B} \circ \phib
  \end{align}
with maps $\bphi 
  \in \Hom(\bA,\bB)$ and $\phib \in \Hom(\Ab,\Bb)$;
\item  a partial right integral $\bpsib$
 if the functional $\psi:=\mu_{B}\circ \bpsib$ can be written
    \begin{align} \label{eq:right-integral}
\psi = \mu_{C} \circ \cpsi = \mu_{C} \circ   \psic
    \end{align}
with maps  $\cpsi \in \Hom(\cA,\cC)$ and $\psic
    \in \Hom(\Ac,\Cc)$.
\end{enumerate}
We call a functional $\phi$ or $\psi$ of the form above a \emph{total left} or \emph{total right integral}.
\end{definition}

We next consider some special cases and   examples. 
In these examples,  we shall frequently deduce quasi-invariance from the existence of certain modular multipliers:
\begin{lemma} \label{lemma:quasi-modular}
Let $\cphic$ be a partial left  and $\bpsib$   a partial right integral for $\mathcal{A}$.
Suppose that  $(\mu_{B},\mu_{C})$ is a base weight and that there exist invertible multipliers $\delta \in R(A)$, $\delta'\in L(A)$ such that the functionals  $\phi:=\mu_{C}\circ \cphic$ and $\psi:=\mu_{B}\circ \bpsib$ satisfy
    \begin{align*}
\psi(a\delta)=      \phi(a)=\psi(\delta'a)
    \end{align*}
 for all $a\in A$. 
 Then    $(\mu_{B},\mu_{C})$ is quasi-invariant with respect to $\cphic$ and  $\bpsib$.
\end{lemma}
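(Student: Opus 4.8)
The plan is to produce the four module maps required by the definition of quasi-invariance directly from $\bpsib$ and $\cphic$ by inserting the multipliers $\delta,\delta'$ on the appropriate side. The guiding principle is that a map into $B$ or $C$ will be a \emph{left} module homomorphism precisely when the inserted multiplier sits on the \emph{right} of $a$, and a \emph{right} module homomorphism when it sits on the left; this way the module action and the multiplier act on opposite sides of $a$ and never interfere.

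Concretely, for quasi-invariance with respect to $\cphic$ I would set $\bphi(a) := \bpsib(a\delta)$ and $\phib(a) := \bpsib(\delta'a)$. Both are maps $A\to B$, and the factorizations $\mu_{B}\circ\bphi = \psi(\,\cdot\,\delta) = \phi$ and $\mu_{B}\circ\phib = \psi(\delta'\,\cdot\,) = \phi$ are immediate from the hypothesis and $\psi=\mu_{B}\circ\bpsib$. For the module properties I would use that $\bpsib$ is a $B$-bimodule map (Proposition \ref{proposition:partial-integrals}) together with the defining behaviour of multipliers: since $\delta\in R(A)$ commutes with left multiplication by elements of $M(A)$, one has $\bphi(xa) = \bpsib((xa)\delta) = \bpsib(x(a\delta)) = x\bpsib(a\delta) = x\bphi(a)$, so $\bphi\in\Hom(\bA,\bB)$; and since $\delta'\in L(A)$ commutes with right multiplication, $\phib(ax) = \bpsib(\delta'(ax)) = \bpsib((\delta'a)x) = \bpsib(\delta'a)x = \phib(a)x$, so $\phib\in\Hom(\Ab,\Bb)$.

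For quasi-invariance with respect to $\bpsib$ I would first invoke invertibility of $\delta,\delta'$ to rewrite the hypothesis as $\psi(a) = \phi(a\delta^{-1}) = \phi(\delta'^{-1}a)$, where $\delta^{-1}\in R(A)$ and $\delta'^{-1}\in L(A)$. Then, symmetrically, I would set $\cpsi(a) := \cphic(a\delta^{-1})$ and $\psic(a) := \cphic(\delta'^{-1}a)$, both maps $A\to C$, with $\mu_{C}\circ\cpsi = \phi(\,\cdot\,\delta^{-1}) = \psi$ and $\mu_{C}\circ\psic = \phi(\delta'^{-1}\,\cdot\,) = \psi$. The same computation as above, now using that $\cphic$ is a $C$-bimodule map, gives $\cpsi\in\Hom(\cA,\cC)$ and $\psic\in\Hom(\Ac,\Cc)$, which is exactly the data required for quasi-invariance with respect to $\bpsib$.

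The only point requiring care, and the place where a naive attempt goes astray, is the pairing of each multiplier with the correct module side: inserting $\delta$ (or $\delta^{-1}$) on the \emph{same} side as the module action would force a genuine commutation relation between $\delta$ and $B$ (or $C$) inside $R(A)$, which the hypotheses do not supply. Once the multiplier is placed opposite to the module action, no such commutation is needed, the bimodule property of the partial integrals passes through directly, and I expect no serious obstacle beyond getting this bookkeeping right; invertibility of $\delta,\delta'$ is used only to treat the $C$-valued maps $\cpsi,\psic$.
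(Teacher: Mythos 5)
Your proposal is correct and uses exactly the same maps as the paper's proof ($\bphi(a)=\bpsib(a\delta)$, $\phib(a)=\bpsib(\delta'a)$, $\cpsi(a)=\cphic(a\delta^{-1})$, $\psic(a)=\cphic(\delta'{}^{-1}a)$); the paper simply states that these satisfy the required identities, while you additionally spell out the routine module-map verifications. The point you flag about placing each multiplier on the side opposite the module action is indeed the right bookkeeping, and nothing further is needed.
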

\begin{proof}
 The maps $\bphi,\phib,\cpsi,\psic$ defined by the formulas $\bphi(a):=\bpsib(a\delta)$, $\phib:=\bpsib(\delta'a)$, $\cpsi(a):=\cphic(a\delta^{-1})$ and $\psic(a):=\cphic(\delta'{}^{-1}a)$ satisfy  \eqref{eq:left-integral} and \eqref{eq:right-integral}.
\end{proof}
Under suitable assumptions, a converse holds; see Corollary \ref{corollary:modular-element} and Theorem \ref{theorem:modular-element-second}.

In the proper  case, quasi-invariant base weights can be constructed by an analogue of \cite[Proposition 3.6]{renault} from functionals on the orbit algebra $\mathcal{O}=M(B)\cap M(C)$.  If $\mathcal{A}$ is also ergodic, this choice picks just a scalar; see Remark \ref{remark:proper-ergodic}.
\begin{lemma} \label{lemma:quasi-proper}
Let $\mathcal{A}$ be a proper regular multiplier Hopf algebroid with a partial left integral
 $\cphic$, a partial right integral $\bpsib$ and a faithful functional $\tau$ on its orbit algebra.   Suppose that
  the extensions $\cphic\big|_{B}$ and $\bpsib\big|_{C}$ are faithful. 
\begin{enumerate}
\item
  The functionals $\mu_{B}:=\tau\circ\cphic\big|_{B}$ and
    $\mu_{C}:=\tau\circ \bpsib\big |_{C}$ form a base weight and
    \begin{align*}
      \mu_{B} \circ \bpsib = \mu_{C} \circ \cphic.
    \end{align*}
In particular, $(\mu_{B},\mu_{C})$ is
     quasi-invariant with respect to $\cphic$ and $\bpsib$.
   \item This base weight is  antipodal if and only if   
     $S^{-1}\circ \cphic \circ S = \bpsib = S\circ \cphic \circ S^{-1}$.
  \end{enumerate}
\end{lemma}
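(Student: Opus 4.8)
For~(1), I would read everything off Proposition~\ref{proposition:proper-partial-integrals}. Since $\tau$ is faithful on $\mathcal{O}$ and the extensions $\cphic\big|_{B}$, $\bpsib\big|_{C}$ are faithful, the functionals $\mb=\tau\circ\cphic\big|_{B}$ and $\mc=\tau\circ\bpsib\big|_{C}$ are faithful, so $(\mb,\mc)$ is a base weight. Composing the identity $\cphic\big|_{B}\circ\bpsib=\bpsib\big|_{C}\circ\cphic$ from \eqref{eq:proper-composed-partial-integrals} with $\tau$ gives $\mb\circ\bpsib=\mc\circ\cphic$. Setting $\phi:=\mc\circ\cphic=\mb\circ\bpsib$ and recalling that every partial right integral lies in $\Hom(\bAb,\bBb)\subseteq\Hom(\bA,\bB)\cap\Hom(\Ab,\Bb)$, the choice $\bphi=\phib=\bpsib$ witnesses quasi-invariance with respect to $\cphic$, and symmetrically $\cpsi=\psic=\cphic$ witnesses quasi-invariance with respect to $\bpsib$.

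For~(2), write $\bpsib':=S^{-1}\circ\cphic\circ S$ and $\bpsib'':=S\circ\cphic\circ S^{-1}$, which are partial right integrals by Proposition~\ref{proposition:partial-integrals-bimodule-antipode}(3). The first step is to compute their base extensions. Using $S|_{B}=S_{B}$ and $S|_{C}=S_{C}$, the commutativity of $B$ and $C$, properness, and the triviality of $S$ on $\mathcal{O}$ (Proposition~\ref{proposition:orbit-antipode}), a direct calculation of $\bpsib'(yx)$ for $x\in B$, $y\in C$ gives $\bpsib'\big|_{C}=\cphic\big|_{B}\circ S_{C}$ and, in the same way, $\bpsib''\big|_{C}=\cphic\big|_{B}\circ S_{B}^{-1}$. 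In particular $\tau\circ\bpsib'\big|_{C}=\mb\circ S_{C}$ and $\tau\circ\bpsib''\big|_{C}=\mb\circ S_{B}^{-1}$. The implication ``$\Leftarrow$'' is then immediate: assuming $\bpsib=\bpsib'=\bpsib''$, I apply $\tau$ to $\bpsib\big|_{C}=\cphic\big|_{B}\circ S_{C}$ and to $\bpsib\big|_{C}=\cphic\big|_{B}\circ S_{B}^{-1}$ to obtain $\mc=\mb\circ S_{C}$ and $\mc=\mb\circ S_{B}^{-1}$, that is, antipodality.

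The implication ``$\Rightarrow$'' is the main obstacle, because antipodality only yields equalities \emph{after} composing with the scalar $\tau$, whereas the goal is an equality of $B$-valued maps. I would bridge this gap in two rigidity steps. First, a short computation (left $B$-linearity of the partial integrals together with centrality of $\mathcal{O}$) shows that all the extensions $C\to\mathcal{O}$ occurring here are $\mathcal{O}$-linear; and if $f,g\colon C\to\mathcal{O}$ are $\mathcal{O}$-linear with $\tau\circ f=\tau\circ g$, then $f=g$: were $f(y)\neq g(y)$, faithfulness of $\tau$ would give $z\in\mathcal{O}$ with $\tau\big((f(y)-g(y))z\big)\neq0$, and $\mathcal{O}$-linearity rewrites the left-hand side as $\tau\big((f-g)(yz)\big)=0$, a contradiction. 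Applied with antipodality this upgrades the scalar identities to $\bpsib\big|_{C}=\bpsib'\big|_{C}$ and $\bpsib\big|_{C}=\bpsib''\big|_{C}$.

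Second, I would feed these into \eqref{eq:proper-composed-partial-integrals} for the pairs $(\cphic,\bpsib)$ and $(\cphic,\bpsib')$, obtaining $\cphic\big|_{B}\circ\bpsib=\bpsib\big|_{C}\circ\cphic=\bpsib'\big|_{C}\circ\cphic=\cphic\big|_{B}\circ\bpsib'$ as maps on $A$. Evaluating at $ax$ and using $\bpsib(a)x=\bpsib(ax)$ and the analogous identity for $\bpsib'$ then gives $\cphic\big|_{B}\big((\bpsib(a)-\bpsib'(a))x\big)=0$ for all $a\in A$ and $x\in B$, whence $\bpsib=\bpsib'$ by faithfulness of $\cphic\big|_{B}$; the same argument with $\bpsib''$ yields $\bpsib=\bpsib''$. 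The delicate point throughout is to keep track of the module structures carefully, so that faithfulness of $\tau$ and of $\cphic\big|_{B}$ can be invoked in the $\mathcal{O}$-linear and the $B$-linear settings, respectively.
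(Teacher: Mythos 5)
Your argument is correct and follows essentially the same route as the paper: part (1) via faithfulness of $\tau$ and the extensions together with \eqref{eq:proper-composed-partial-integrals}, and part (2) by first comparing the base extensions $\bpsib\big|_{C}$ and $\cphic\big|_{B}\circ S_{C}$ (resp.\ $\cphic\big|_{B}\circ S_{B}^{-1}$) using faithfulness of $\tau$, then upgrading to an identity of maps on $A$. The only difference is that where the paper simply cites Remark~\ref{remark:proper-ergodic}~(1) for this last upgrade, you spell out the underlying mechanism (feeding the equality of extensions into \eqref{eq:proper-composed-partial-integrals} and invoking faithfulness of $\cphic\big|_{B}$), which is exactly the content of that remark.
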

\begin{proof}
(1)
First, observe that the functionals $\mu_{B}$ and $\mu_{C}$ are faithful. For example, if $\mu_{B}(xx')=0$ for some $x\in B$ and all $x'\in B$, then $\tau(\cphic|_{B}(xx')z) = \mu_{B}(xx'z)=0$ for all $z\in \mathcal{O}$ and by faithfulness of $\tau$ and $\cphic$, we first conclude that $\cphic|_{B}(xx')=0$ for all $x'\in B$ and then that $x=0$. 

Next, Proposition \ref{proposition:proper-partial-integrals} (2)  implies
\begin{align*}
\mu_{C}   \circ \cphic  &= \tau \circ \bpsib\big|_{C} \circ \cphic = \tau \circ \cphic\big|_{B} \circ \bpsib = \mu_{B} \circ \bpsib.  \end{align*}

(2)  Suppose that $(\mu_{B},\mu_{C})$ is antipodal.  Then  by Proposition \ref{proposition:orbit-antipode}, 
\begin{align*}
  \tau(z\cphic(S_{C}(y))) = \mu_{B} (S_{C}(zy)) = \mu_{C}(zy) = \tau(z\bpsib(y))
\end{align*}
for all $z\in \mathcal{O}$ and $y\in C$, whence $\cphic\big|_{B} \circ S_{C} = \bpsib\big|_{C}$. By Proposition \ref{proposition:partial-integrals-bimodule-antipode} (3), \ref{proposition:orbit-antipode} and   Remark \ref{remark:proper-ergodic} (1), we can conclude that $S^{-1}\circ \cphic \circ S=\bpsib$. Similarly, the relation $\mu_{B} \circ S_{C}^{-1} = \mu_{C}$ implies that $S^{-1}\circ \cphic \circ S=\bpsib$.

The converse implication follows easily from the definitions and  Proposition \ref{proposition:orbit-antipode}.
\end{proof}

In the case of  regular multiplier Hopf algebroids arising from weak multiplier Hopf algebras as in  Example \ref{example:wmha}, there exists a canonical base weight which satisfies all of our conditions.
\begin{lemma} \label{lemma:quasi-wmha}
Suppose that $\mathcal{A}$ is a  regular multiplier Hopf algebroid associated to a
 regular weak multiplier Hopf algebra $(A,\Delta)$. 
\begin{enumerate}
\item The functionals on $B=\varepsilon_{s}(A)$ and $C=\varepsilon_{t}(A)$ defined by
\begin{align} \label{eq:weak-base-weight}
  \mu_{B}(\varepsilon_{s}(a)) &= \varepsilon(a) & &\text{and} &
  \mu_{C}(\varepsilon_{t}(a)) &= \varepsilon(a)
\end{align}
 form an antipodal, modular base weight that is quasi-invariant with respect to every partial   integral.
\item The assignment $\cphic\mapsto \mu_{C}\circ \cphic$ defines a bijection between all partial left integrals $\cphic$ and all functionals $\phi$ on $A$ satisfying
\begin{align*}
  (\id \otimes \phi)((b\otimes 1)\Delta(a)) &= (\id \otimes \phi)((b\otimes a)E)  \quad \text{for all } a,b\in A,
\end{align*}
where $E=\Delta(1) \in M(B\otimes C)$ is the canonical separability idempotent of $(A,\Delta)$. 
\end{enumerate}
\end{lemma}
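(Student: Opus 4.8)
The plan is to treat the two parts in turn, exploiting throughout the explicit description of $\mathcal{A}$ in terms of $(A,\Delta)$ from Example~\ref{example:wmha}. For part~(1) I would first observe that $\mu_{B}$ and $\mu_{C}$ are nothing but the restrictions of (the multiplier extension of) the counit $\varepsilon$ to $B=\varepsilon_{s}(A)$ and $C=\varepsilon_{t}(A)$: well-definedness of the formulas \eqref{eq:weak-base-weight} is exactly the standard identities $\varepsilon\circ\varepsilon_{s}=\varepsilon$ and $\varepsilon\circ\varepsilon_{t}=\varepsilon$. Faithfulness of $\mu_{B}$ and $\mu_{C}$ I would take from the structure theory of regular weak multiplier Hopf algebras, where $\varepsilon|_{B}$ and $\varepsilon|_{C}$ are the canonical Frobenius functionals attached to the separability idempotent $E=\Delta(1)\in M(B\otimes C)$; see \cite{daele:weakmult,daele:relation}. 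Antipodality, $\mu_{B}\circ S_{C}=\mu_{C}$ and $\mu_{C}\circ S_{B}=\mu_{B}$, then follows cleanly from $S_{B}=S|_{B}$, $S_{C}=S|_{C}$ together with the standard relations $S\circ\varepsilon_{s}=\varepsilon_{t}\circ S$, $S\circ\varepsilon_{t}=\varepsilon_{s}\circ S$ and $\varepsilon\circ S=\varepsilon$.

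For modularity I would route through the counitality condition $\mu_{B}\circ\beps=\mu_{C}\circ\epsc$, which I can verify directly: inserting $\beps=S^{-1}\circ\varepsilon_{t}$ and $\epsc=S^{-1}\circ\varepsilon_{s}$ and applying the antipodal identities just established, both composites evaluate to $\varepsilon$. Given counitality, the base weight is modular by the implication recorded around \eqref{eq:intro-modular-automorphism}; equivalently, the Nakayama automorphisms of $\varepsilon|_{B}$ and $\varepsilon|_{C}$ are $S^{-2}|_{B}=S_{B}^{-1}S_{C}^{-1}$ and $S^{2}|_{C}=S_{B}S_{C}$, which is the familiar fact that the square of the antipode implements the modular automorphism of the base functionals.

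Quasi-invariance with respect to an arbitrary partial left integral $\cphic$ I would establish using the characterization in part~(2): writing $\phi=\mu_{C}\circ\cphic$, I set $\bphi(a):=(\iota\otimes\phi)(\Delta(a))$. This lands in $B$, since its first leg is carried by $\Delta(1)$, and it is left $B$-linear by the source-algebra relation $\Delta(x)=(x\otimes 1)\Delta(1)$ for $x\in B$ together with $\Delta(1)\Delta(a)=\Delta(a)$; a symmetric right-handed slice produces $\phib\in\Hom(\Ab,\Bb)$. The equality $\mu_{B}\circ\bphi=\phi=\mu_{B}\circ\phib$ then drops out of $\mu_{B}=\varepsilon|_{B}$ and $(\varepsilon\otimes\iota)\Delta(1)=1$, so $(\mu_{B},\mu_{C})$ is quasi-invariant with respect to $\cphic$, and the argument for an arbitrary partial right integral $\bpsib$ is symmetric.

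For part~(2), I would show that $\cphic\mapsto\mu_{C}\circ\cphic$ lands in, and exhausts, the functionals $\phi$ satisfying the displayed invariance, which is precisely Van Daele--Wang's notion of a left integral on $(A,\Delta)$. The forward direction translates the algebroid invariance \eqref{eq:partial-right-deltac} of $\cphic$ into the stated $\Delta$-invariance of $\phi$ via $\Delta_{C}=\pi_{C}\circ\Delta$ and $\mu_{C}=\varepsilon|_{C}$, the idempotent $E=\Delta(1)$ accounting for the non-unitality of $\Delta$. Injectivity is immediate from faithfulness of $\mu_{C}$ and right $C$-linearity of $\cphic$, so that $\phi(ay)=\mu_{C}(\cphic(a)y)$ for all $y\in C$ determines $\cphic(a)$; surjectivity comes from reconstructing $\cphic$ from a given $\phi$ through the same slice formulas. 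I expect the main obstacle to be exactly this translation between the two invariance formalisms: pinning down the weak-Hopf identities for $\Delta$ on the source and target algebras $B,C$ and for $E$, and matching them against the balanced-tensor-product module structures underlying $\Delta_{B},\Delta_{C}$ and the slice maps $\iota\otimes\cphic\colon\ArA\to A$ and $\bphi\otimes\iota\colon\AlA\to A$.
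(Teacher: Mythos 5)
Your overall route is close to the paper's: both rest on the separability idempotent $E=\Delta(1)$ and on identifying $\mu_{B},\mu_{C}$ with restrictions of $\varepsilon$, and your detour through counitality to obtain modularity (using that counital base weights are modular, Proposition \ref{proposition:counit-kms}) is a legitimate alternative to the paper's direct appeal to the separability-idempotent calculus of \cite{daele:separability}. Part (2) as you outline it is essentially the paper's argument.

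The genuine gap is in your quasi-invariance step. You set $\bphi(a):=(\iota\otimes\phi)(\Delta(a))$ and assert that this ``lands in $B$, since its first leg is carried by $\Delta(1)$''. That justification fails: the first leg of $E\Delta(a)$ is a product of an element of $M(B)$ with an element of $A$, and $BA\not\subseteq B$; for a generic functional $\phi$ the multiplier $(\iota\otimes\phi)(\Delta(a))$ need not lie in $M(B)$ at all (take $\phi=\varepsilon$, which returns $a$ itself). The statement that this slice lies in $B$ is \emph{equivalent} to left invariance of $\phi$, i.e.\ it is the content of part (2) rather than an input to it --- and the proof of part (2) itself begins by factoring an arbitrary functional as $\mu_{C}\circ\phic$, which is exactly the factorizability you are trying to establish, so your ordering is circular. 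The paper avoids this by building the factorizations from $E$ alone: $\phib(a)=(\phi\otimes S_{C})((a\otimes 1)E)$ and $\bphi(a)=(\phi\otimes S_{B}^{-1})(E(a\otimes 1))$, which are well defined because $(a\otimes 1)E$ and $E(a\otimes 1)$ are genuine elements of $A\otimes C$, take values in $B$, and satisfy $\mu_{B}\circ\phib=\phi=\mu_{B}\circ\bphi$ together with the required one-sided $B$-linearity by the identities $(\id\otimes\mu_{C})(E)=\id_{B}$ and $(\id\otimes\mu_{C})((1\otimes y)E)=S_{C}(y)$. This construction uses no invariance of $\phi$ whatsoever --- it shows that \emph{every} functional on $A$ is factorizable --- and that stronger fact is precisely what makes both the quasi-invariance claim in (1) and the surjectivity claim in (2) go through.
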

Assertion (1) is a particular case of a more general result, see Example \ref{example:factorization-frobenius} and the comments there.   Of course,  (2) has an analogue for right integrals.
\begin{proof}
(1)  The results in  \cite[Propositions 1.7, 2.1 and 4.8]{daele:separability} and the remarks    following  Definition 2.2 in \cite{daele:separability} show that   the functionals $\mu_{B}$ and $\mu_{C}$ form an antipodal, modular base weight and that the  separability idempotent $E$  satisfies
\begin{align} \label{eq:wmha-separability-two}
  (\mu_{B} \otimes \id)(E) &= \id_{C}, &
  (\id \otimes \mu_{C})(E) &=\id_{B}, \\
 \label{eq:wmha-separability-one}
  (\mu_{B} \otimes \id)(E(x\otimes 1)) &= S_{B}(x), &
  (\id \otimes \mu_{C})((1\otimes y)E) &= S_{C}(y).
\end{align}

Let $\cphic$ be a partial left integral, write $\phi=\mu_{C} \circ \cphic$ and define $\bphi,\phib\colon A\to M(B)$ by
\begin{align*}
  \phib(a) &:= (\phi \otimes S_{C})((a \otimes 1)E), & \bphi(a) &:= (\phi \otimes S_{B}^{-1})(E(a\otimes 1)).
\end{align*}
These maps take values in $B$ because $(a\otimes 1)E$ and $E(a\otimes 1)$ lie in $A \otimes C$. By antipodality of $(\mu_{B},\mu_{C})$ and by \eqref{eq:wmha-separability-two},
\begin{align*}
  \mu_{B} (\phib(a)) = (\phi \otimes \mu_{C})((a\otimes 1)E) = \phi(a) = (\phi \otimes \mu_{C})(E(a\otimes 1)) = \mu_{B}(\bphi(a)).
\end{align*}
Moreover, by \eqref{eq:wmha-separability-one},
\begin{align*}
  \phib(aS_{C}(y)) &= (\phi \otimes S_{C})((aS_{C}(y)\otimes 1)E) \\ & = (\phi\otimes S_{C})((a\otimes y)E) = (\phi \otimes S_{C})((a\otimes 1)E)S_{C}(y) = \phib(a)S_{C}(y),
\end{align*}
and a similar calculation shows that $\bphi(xa)=x\bphi(a)$ for all $a\in A$, $x\in B$. 

A similar argument shows that $(\mu_{B},\mu_{C})$ is quasi-invariant with respect to every partial right integral.

(2) Let $\phi$ be a functional on $A$. A similar argument as above shows that we can  write $\phi=\mu_{C} \circ \phic$ with some $\phic \in \Hom(\Ac,\Cc)$.     Let  now $a,b\in A$ and write
\begin{align*}
  (a \otimes 1)\Delta(b)=\sum_{i} c_{i} \otimes d_{i}
\end{align*}
with $c_{i},d_{i} \in A$.  
Then $(a\otimes 1)\Delta_{C}(b) = \sum_{i} c_{i} \otimes d_{i} \in \ArA$.
Since $\Delta(b)=\Delta(b)E$ and $\phi(ey)=\mu_{C}(\phic(e)y)$ for all $e\in A$ and $y\in C$,
\begin{align*}
  (\id \otimes \phi)((a \otimes 1)\Delta(b)) &= \sum_{i} (\id \otimes \phi)((c_{i} \otimes d_{i})E) 
 = \sum_{i} c_{i}(\id \otimes \mu_{C})((1 \otimes \phic(d_{i}))E).
\end{align*}
But   \eqref{eq:wmha-separability-one} implies that 
$(\id \otimes \mu_{C})((1 \otimes \phic(d_{i}))E) = S_{C}(\phic(d_{i}))$ and hence
\begin{align*}
  (\id \otimes \phi)((a \otimes 1)\Delta(b))  &=\sum_{i} c_{i}S_{C}(\phic(d_{i})) = (\id \otimes \phic)((a\otimes 1)\Delta_{C}(b)). \end{align*}
On the other hand, a similar calculation shows that
$(\id \otimes \phi)((a\otimes b)E) = aS_{C}(\phic(b))$. The assertion follows.
\end{proof}

Let us next consider the examples introduced in Subsection  \ref{subsection:hopf-examples}.
\begin{example}  \label{example:quasi-groupoid-functions}
Consider the function algebra of a  locally compact, \'etale Hausdorff groupoid $G$; see Example \ref{example:hopf-groupoid-functions}. 
Suppose that $\mu$ is a Radon measure on the space of units $G^{0}$ with full support.
 Then the functionals $\mu_{B}$ and $\mu_{C}$ given on  $ B=s^{*}(C_{c}(G^{0}))$ and $C=t^{*}(C_{c}(G^{0}))$ by
\begin{align} \label{eq:base-weight-groupoid-functions}
  \mu_{B}(s^{*}(f)) = \int_{G^{0}} f \intd \mu = \mu_{C}(t^{*}(f))
\end{align}
for all $f\in C_{c}(G^{0})$  form  an antipodal, modular, positive base weight. 

  Consider the partial left  integral $\cphic$ and the partial right integral $\bpsib$
given by
\begin{align} \label{eq:integrals-groupoid-functions}
  (\cphic(f))(\gamma) &= \sum_{\substack{\gamma' \in G\\ t(\gamma')=t(\gamma)}} f(\gamma'), &
  (\bpsib(f))(\gamma) &= \sum_{\substack{\gamma' \in G \\ s(\gamma')=s(\gamma)}} f(\gamma').
\end{align}
The compositions $\phi:=\mu_{C} \circ \cphic$ and $\psi:=\mu_{B} \circ \bpsib$ correspond to integration with respect to the measures $\nu$ and $\nu^{-1}$ on $G$ defined by
   \begin{align} \label{eq:dnu}
  \int_{G} f \intd \nu  &=  \phi(f)= \int_{G^{0}} \sum_{r(\gamma)=u} f(\gamma) \intd\mu(u), &
\int_{G} f \intd \nu^{-1} &= \psi(f)=\int_{G^{0}} \sum_{s(\gamma) = u} f(\gamma) \intd\mu(u).
\end{align}

Assume that the  measure $\mu$ on $G^{0}$  is \emph{continuously quasi-invariant} in the sense that the measures $\nu$ and $\nu^{-1}$ on $G$ are related by a  continuous Radon-Nikodym derivative, 
\begin{align*}
\nu  =  D\nu^{-1} \text{ for some } D \in C(G).
\end{align*}
Then $\phi(f)=\psi(fD)$ for all $f \in C_{c}(G)$ and hence $(\mu_{B},\mu_{C})$ is quasi-invariant with respect to $\cphic$ and $\bpsib$.
Conversely, we shall see in Example \ref{example:modular-groupoid-functions} that if $(\mu_{B},\mu_{C})$  is quasi-invariant  with respect to $\cphic$ (or $\bpsib$), then $\mu$ is continuously quasi-invariant.
\end{example}

\begin{example} \label{example:quasi-groupoid-algebra}
Let $G$  be as above and consider the convolution algebra $C_{c}(G)$ as in Example \ref{example:hopf-groupoid-algebra}. Suppose that 
 $\mu$ is a Radon measure on the space of units $G^{0}$ with full support and consider the functional
\begin{align} \label{eq:base-weight-groupoid-algebra}
\hat  \mu\colon C_{c}(G^{0}) \to \C, \quad f\mapsto \int_{G^{0}} f \intd \mu.
\end{align}
Since both base algebras $\hat B$ and $\hat C$ coincide with $C_{c}(G^{0})$, the pair $(\hat \mu,\hat \mu)$ is an antipodal, modular, positive base weight, and quasi-invariant with respect to every partial integral.
\end{example}

In the following example, we use the preorder $\lesssim$ on $\dual{B}$ defined  in \eqref{eq:lesssim}.
\begin{example} \label{example:quasi-cb}
Consider  a tensor product $A=C\otimes B$ as discussed in Example \ref{example:hopf-cb}, and
the partial integrals
  \begin{align*}
     \cphic^{(\upsilon)} &:= \id \otimes \upsilon \colon A\to C, &
     \bpsib^{(\omega)} &:=\omega \otimes \id \colon A \to B
  \end{align*}
associated to arbitrary functionals $\upsilon$ and $\omega$  on $B$ and $C$, respectively, as in Example \ref{example:integrals-cb}.
Then a base weight $(\mu_{B},\mu_{C})$ is quasi-invariant with respect to $\cphic^{(\upsilon)}$  if and only if $\upsilon\lesssim \mu_{B}$. Indeed, if $\upsilon \lesssim \mu_{B}$ and $\delta\in R(B)$ and $\delta' \in L(B)$ are as in Lemma \ref{lemma:lesssim}, then 
\begin{align*}
  \mu_{C}(\cphic^{(\upsilon)}(y\otimes x)) = \mu_{C}(y)\upsilon(x) = \mu_{B}(x\mu_{C}(y)\delta) = \mu_{B}(\mu_{C}(y)\delta'x),
\end{align*}
and the maps $\bphi \colon y\otimes x\mapsto \mu_{C}(y)x\delta$ and $\phib\colon y\otimes x\mapsto \mu_{C}(y)\delta'x$ satisfy \eqref{eq:base-weight-invariance}. Conversely, if $(\mu_{B},\mu_{C})$ is quasi-invariant with respect to $\cphic^{(\upsilon)}$ and if $y\in C$ is chosen such that $\mu_{C}(y)=1$, then the formulas
\begin{align*}
x  \delta &:= \bphi(y\otimes x) &&\text{and} & \delta'x:= \phib(y\otimes x)
\end{align*}
define a right and a left   multiplier of $B$ such that $\mu_{B}(x\delta) =  \upsilon(x) = \mu_{B}(\delta'x)$.

Similarly, $(\mu_{B},\mu_{C})$ is quasi-invariant with respect to $\bpsib^{(\omega)}$ if and only if $\omega \lesssim \mu_{C}$.
\end{example}

\begin{example} \label{example:quasi-ch}
  Consider a symmetric crossed product $A=C\# H$ as discussed in Example \ref{example:hopf-ch}.  Suppose that  $\mu$  is a faithful functional on $C$ and that $\phi_{H}$ is a left  and $\psi_{H}$ is a right integral on $(H,\Delta_{H})$. Then $(\mu,\mu)$ is an antipodal, modular base weight and as such quasi-invariant with respect to the partial integrals $\cphic$ and $\bpsib$ defined  in Example \ref{example:integrals-ch}.
Indeed  by \cite[Theorem 3.7, Propositions 3.10, 3.12]{daele}, there exist invertible multipliers $\delta_{H},\delta'_{H}$ such that $\psi_{H}=\delta_{H} \cdot \phi_{H} = \phi_{H} \cdot \delta'_{H}$,  the compositions $\phi=\mu \circ \cphic$ and $\psi=\mu \circ \bpsib$ then satisfy
    \begin{align*}
      \psi(yh \delta_{H}) & = \mu(y)\psi_{H}(h\delta_{H}) = \mu(y)\phi_{H}(h) = \phi(yh)
    \end{align*}
    for all $y\in C$ and $h\in H$,    and a similar calculation using \eqref{eq:integrals-ch-symmetric} shows that $\phi \cdot \delta'_{H} = \psi$. Now, quasi-invariance follows from Lemma \ref{lemma:quasi-modular}.
\end{example}

\begin{example} \label{example:quasi-chb}
  Consider a two-sided crossed product $A=C\# H \# B$ as discussed in Example \ref{example:hopf-chb}.
Suppose that $\mu_{B}$ and $\mu_{C}$ are faithful functionals on $B$ and $C$, respectively, which are $H$-invariant in the sense that
\begin{align} \label{eq:base-invariant}
  \mu_{C}(h\actright y) &= \varepsilon_{H}(h) \mu_{C}(y),  &
  \mu_{B}(x\actleft h)&=\varepsilon_{H}(h)\mu_{B}(x)
\end{align}
for all $y\in C$, $x\in B$ and $h\in H$. 
Then the base weight  $(\mu_{B},\mu_{C})$ is quasi-invariant with respect to the partial integrals $\cphic$ and $\bpsib$ defined by
\begin{align*}
  \cphic(yhx) &:= y\phi_{H}(h) \mu_{B}(x) &&\text{and} &
  \bpsib(yhx) &:=\mu_{C}(y)\phi_{H}(h)x;
\end{align*}
see also Example \ref{example:integrals-chb}. Indeed
if $\delta_{H},\delta'_{H} \in M(H)$ are as in the preceding example, then \eqref{eq:base-invariant} implies that the compositions $\phi=\mu_{C} \circ \cphic$ and $\psi = \mu_{B} \circ \bpsib$ satisfy
$\phi(a)=\psi(a\delta_{H})=\psi(\delta'_{H}a)$ for all $a\in A$, and quasi-invariance follows from Lemma \ref{lemma:quasi-modular}.
\end{example}

\subsection{Factorizable functionals}
\label{subsection:factorizable}
We now focus on the algebraic aspects of the quasi-invariance condition introduced above. The proper context for this are functionals on bimodules. For the moment,  $B$ and $C$ denote arbitrary non-degenerate, idempotent algebras, not necessarily coming from a regular multiplier Hopf algebroid.

\begin{definition}
  Let $B$ and $C$ be non-degen\-erate, idempotent algebras with faithful functionals $\mu_{B}$ and $\mu_{C}$, respectively, and  let $M$   be an idempotent $B$-$C$-bimodule. We call a functional $\omega \in \dual{M}$ \emph{factorizable} (with respect to $\mu_{B}$ and $\mu_{C}$) if there 
exist   maps $\bomega  \in \Hom(_{B}M,{_{B}B})$ and $\omegac \in \Hom(M_{C},C_{C})$  such that
\begin{align}
  \label{eq:1}
  \mu_{B} \circ \bomega = \omega = \mu_{C} \circ \omegac.
\end{align}
We denote by $M^{\sqcup} \subseteq \dual{M}$ the subspace of all such factorizable functionals.
\end{definition}
Using the fact that $\mu_{B}$ and $\mu_{C}$ are faithful and that the bimodule $M$ is idempotent,  one can reformulate the condition above as follows.
A functional $\omega\in \dual{M}$ is factorizable if and only if there exist maps $\bomega \colon M\to B$ and $\omegac \colon M \to C$ such that
\begin{align*}
  \omega(xm) &= \mu_{B}(x\bomega(m)) &&\text{and} &
\omega(my) &= \mu_{C}(\omegac(m)y)
\end{align*}
for all $x\in B$, $y\in C$ and $m\in M$.
Note that such maps, if they exist, are  uniquely  determined by $\omega$.

The assignment $M \mapsto M^{\sqcup}$ is  functorial. Indeed,
if $T\colon M\to N$ is a morphism of idempotent $B$-$C$-bimodules, then the dual map
$ \dual{T} \colon \dual{N} \to \dual{M}$   restricts to a map
\begin{align*}
  T^{\sqcup} \colon N^{\sqcup} \to M^{\sqcup}, \quad \omega \mapsto \omega\circ T.
\end{align*}

The key property of factorizable functionals is that one can form \emph{slice maps} and \emph{tensor products} for such functionals   as follows.

\begin{lemma} \label{lemma:factorizable-slice-tensor}
Let $_{B}M_{C}$ be an idempotent $B$-$C$-bimodule and $_{C}N_{D}$  an idempotent $C$-$D$-bimodule, where $B,C,D$ are non-degenerate, idempotent algebras with faithful functionals $\mu_{B},\mu_{C},\mu_{D}$, respectively,  and consider the balanced tensor product   $_{B}M_{C}\otimes {_{C}N_{D}}$.
Suppose that $\upsilon \in \dual{M}$ and $\omega\in \dual{N}$ are factorizable. 
  \begin{enumerate}
  \item The formulas
    \begin{align*}
      (\upsilon \underset{\mu_{C}}{\otimes} \id)(m \otimes n) &:=\upsilonc(m)n, &
(\id \underset{\mu_{C}}{\otimes} \omega)(m\otimes n) &:=m\comega(n)
    \end{align*}
define  morphisms of modules
\begin{align*}
  \upsilon \underset{\mu_{C}}{\otimes} \id & \colon M_{C}\otimes {_{C}N_{D}} \to N_{D}, &
\id \underset{\mu_{C}}{\otimes} \omega  &\colon {_{B}M_{C}} \otimes {_{C}N} \to {_{B}M}.
\end{align*}
  \item  The formula
    \begin{align*}
      (\upsilon \underset{\mu_{C}}{\otimes} \omega)(m \otimes
      n)=\mu_{C}(\upsilonc(m)\comega(n))
    \end{align*}
    defines a factorizable functional $\upsilon \underset{\mu_{C}}{\otimes}
    \omega$ on $M_{C}\otimes {_{C}N} \to N$, and
    \begin{align*}
      _{B}(\upsilon \underset{\mu_{C}}{\otimes} \omega) &= {_{B}\upsilon}
      \circ (\id \underset{\mu_{C}}{\otimes} \omega), & (\upsilon
      \underset{\mu_{C}}{\otimes} \omega)_{D} &= \omega_{D} \circ (\upsilon
      \underset{\mu_{C}}{\otimes} \id).
    \end{align*}
  \end{enumerate}
\end{lemma}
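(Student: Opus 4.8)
The plan is to handle (1) and (2) in turn, in each case first checking that the displayed formula descends from the algebraic tensor product $M\otimes N$ to the $C$-balanced tensor product, and then reading off the asserted module properties. Throughout I would work with the pointwise reformulation of factorizability recorded just before the lemma: for $\upsilon$ this means $\upsilon(xm)=\mu_{B}(x\bupsilon(m))$ and $\upsilon(my)=\mu_{C}(\upsilonc(m)y)$, and for $\omega$ it means $\omega(yn)=\mu_{C}(y\comega(n))$ and $\omega(nd)=\mu_{D}(\omega_{D}(n)d)$, together with the module-linearity of the four factorization maps $\bupsilon,\upsilonc,\comega,\omega_{D}$.

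For (1), well-definedness amounts to $C$-balancing. For $\upsilon\underset{\mu_{C}}{\otimes}\id$ this is the identity $\upsilonc(my)\,n=\upsilonc(m)\,(yn)$, which follows at once from right $C$-linearity of $\upsilonc$; for $\id\underset{\mu_{C}}{\otimes}\omega$ it is $(my)\,\comega(n)=m\,\comega(yn)$, which follows from left $C$-linearity of $\comega$. Once the maps descend, right $D$-linearity of $\upsilon\underset{\mu_{C}}{\otimes}\id$ and left $B$-linearity of $\id\underset{\mu_{C}}{\otimes}\omega$ are immediate from the formulas, since the $D$-action only meets the $N$-factor and the $B$-action only the $M$-factor.

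For (2), I would first check that $(m\otimes n)\mapsto\mu_{C}(\upsilonc(m)\comega(n))$ is $C$-balanced, using right $C$-linearity of $\upsilonc$ and left $C$-linearity of $\comega$ simultaneously, so that $\upsilon\underset{\mu_{C}}{\otimes}\omega$ is a well-defined functional on the $B$-$D$-bimodule $M_{C}\otimes{_{C}N}$. To establish factorizability with the stated maps it then suffices, by the reformulation and the uniqueness of factorization maps, to verify two pointwise identities. For the left one, $\bupsilon\circ(\id\underset{\mu_{C}}{\otimes}\omega)$ is a composite of left $B$-linear maps, hence lies in $\Hom({_{B}(M_{C}\otimes{_{C}N})},\bB)$; evaluating on $x(m\otimes n)=xm\otimes n$ I would move the $B$-action through $\bupsilon$, apply $\mu_{B}\circ\bupsilon=\upsilon$, and then rewrite $\upsilon(xm\,\comega(n))=\mu_{C}(\upsilonc(xm)\comega(n))$ via the right factorization of $\upsilon$, which is exactly $(\upsilon\underset{\mu_{C}}{\otimes}\omega)(x(m\otimes n))$. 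The right identity is entirely dual: evaluating $\omega_{D}\circ(\upsilon\underset{\mu_{C}}{\otimes}\id)$ on $(m\otimes n)d$, I would use right $D$-linearity of $\omega_{D}$, then $\mu_{D}\circ\omega_{D}=\omega$, and finally the left factorization $\omega(\upsilonc(m)(nd))=\mu_{C}(\upsilonc(m)\comega(nd))$.

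There is no deep obstacle; the argument is bookkeeping of the various module actions. The one point needing care is that the left factorization map of $\upsilon\underset{\mu_{C}}{\otimes}\omega$ is built from \emph{both} factorization maps of $\upsilon$ — it strips the $B$-action through $\bupsilon$ but closes up through $\upsilonc$ and $\mu_{C}$ — while its right factorization map is built from both factorization maps of $\omega$. Making the pairing of $\mu_{B}$ with $\upsilonc$, and of $\mu_{D}$ with $\comega$, close up correctly is the crux, and it works precisely because the two pointwise factorization relations for $\upsilon$ and $\omega$ feed into one another through the shared functional $\mu_{C}$.
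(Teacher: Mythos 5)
Your proposal is correct; the paper itself leaves this proof to the reader as straightforward, and your verification is exactly the intended bookkeeping: balancedness of the slice maps follows from the one-sided $C$-linearity of $\upsilonc$ and $\comega$, and the two factorization maps of $\upsilon\underset{\mu_{C}}{\otimes}\omega$ are confirmed by threading $\mu_{B}\circ\bupsilon=\upsilon=\mu_{C}\circ\upsilonc$ and $\mu_{C}\circ\comega=\omega=\mu_{D}\circ\omega_{D}$ through one another. Your closing observation correctly identifies the only point of substance, namely that each factorization map of the tensor product functional uses both factorization maps of one factor, closing up through the shared functional $\mu_{C}$.
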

 The proof is straightforward and left to the reader. Note that
 \begin{align*}
\upsilon \circ (\id \underset{\mu_{C}}{\otimes} \omega)=   (\upsilon \underset{\mu_{C}}{\otimes} \omega) = \omega \circ (\upsilon  \underset{\mu_{C}}{\otimes} \id).
 \end{align*}

Clearly, the product $(\upsilon,\omega) \mapsto \upsilon \underset{\mu_{C}}{\otimes} \omega$ is associative and  unital in the sense that if $\upsilon,\omega$ are as above and $\theta$ is a factorizable functional on an idempotent $D$-$E$-bimodule $O$, where $E$ is a non-degenerate, idempotent algebra with a fixed faithful functional, then
\begin{gather*}
  ((\upsilon \underset{\mu_{C}}{\otimes} \omega) \underset{\mu_{D}}{\otimes}\theta)((m\otimes n)\otimes o) = \omega(\upsilon_{C}(m) \cdot n \cdot {_{D}\theta}(o)) =  (\upsilon \underset{\mu_{C}}{\otimes} (\omega \underset{\mu_{D}}{\otimes}\theta))(m\otimes (n\otimes o)), \\
  (\mu_{B} \underset{\mu_{B}}{\otimes} \upsilon)(b\otimes m) = \upsilon(bm),  \quad
  (\upsilon \underset{\mu_{C}}{\otimes} \mu_{C})(m \otimes c) = \upsilon(mc)
\end{gather*}
for all $m\in M$, $n\in N$ and $o\in O$.

\begin{example}\label{example:factorization-frobenius}
  Assume that $B$ and $C$ are separable Frobenius algebras with separating linear functionals $\mu_{B}$ and $\mu_{C}$, respectively, as defined in \cite{daele:separability}; see  also Section 26 in \cite{brzezinski:book}.  Then similar arguments as in the proof of Lemma \ref{lemma:quasi-wmha} show that every  linear functional on an indempotent, non-degenerate $B$-$C$-bimodule is factorizable; see also \cite[Proposition 3.3]{boehm:comodules}. 
\end{example}

Of particular interest for us is the case where the fixed functionals $\mu_{B}$ and $\mu_{C}$ admit modular automorphisms. Regard $\dual{M}$ as an $M(C)$-$M(B)$-bimodule.
\begin{lemma} \label{lemma:factorizable-bimodule}
  Assume that $\mu_{B}$   and $\mu_{C}$ admit modular automorphisms $\sigma_{B}$ and $\sigma_{C}$, respectively. Then $M^{\sqcup}$ is an  $M(C)$-$M(B)$-sub-bimodule  of $\dual{M}$.
\end{lemma}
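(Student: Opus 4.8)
The plan is to prove the two one-sided statements $M^{\sqcup}\cdot M(B)\subseteq M^{\sqcup}$ and $M(C)\cdot M^{\sqcup}\subseteq M^{\sqcup}$ separately; since $(c\cdot\omega\cdot b)=c\cdot(\omega\cdot b)$ this suffices. Recall that the bimodule structure is $(c\cdot\omega\cdot b)(m)=\omega(bmc)$ for $b\in M(B)$, $c\in M(C)$, $m\in M$, and that $\omega\in M^{\sqcup}$ precisely when there are (necessarily unique) maps $\bomega\colon M\to B$ and $\omegac\colon M\to C$ with $\omega(xm)=\mu_{B}(x\bomega(m))$ and $\omega(my)=\mu_{C}(\omegac(m)y)$ for all $x\in B$, $y\in C$, $m\in M$. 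Throughout I would use that, since $B,C$ are non-degenerate and idempotent and $M$ is idempotent, the modular automorphisms $\sigma_{B},\sigma_{C}$ extend to $M(B),M(C)$, the multiplier actions on $M$ are defined, and the module maps extend accordingly, so that $\bomega(bm)=b\,\bomega(m)$ and $\omegac(mc)=\omegac(m)c$.

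Fix $\omega\in M^{\sqcup}$ with factorizing maps $\bomega,\omegac$, and let $b\in M(B)$. I would check that $\omega\cdot b$ is factorizable by exhibiting its two factorizing maps explicitly, namely ${_{B}(\omega\cdot b)}(m):=\bomega(m)\sigma_{B}(b)$ and $(\omega\cdot b)_{C}(m):=\omegac(bm)$. These land in $B$ and $C$, and are a left $B$-module map and a right $C$-module map, respectively, because $\bomega,\omegac$ are, because $\sigma_{B}(b)$ is a right multiplier sitting on the right, and because $b$ acts on $M$ from the left (so it commutes past the right $C$-action). Factorizability is then the computation
\begin{align*}
(\omega\cdot b)(xm)=\omega(bxm)=\mu_{B}(bx\,\bomega(m))=\mu_{B}\bigl(x\,\bomega(m)\sigma_{B}(b)\bigr)
\end{align*}
for the $B$-side, where the last equality is the modular relation $\mu_{B}(\alpha\beta)=\mu_{B}(\beta\sigma_{B}(\alpha))$ moving the leading factor $b$ to the end as $\sigma_{B}(b)$, together with $(\omega\cdot b)(my)=\omega(bmy)=\mu_{C}(\omegac(bm)\,y)$ for the $C$-side.

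Symmetrically, for $c\in M(C)$ I would set ${_{B}(c\cdot\omega)}(m):=\bomega(mc)$ and $(c\cdot\omega)_{C}(m):=\sigma_{C}^{-1}(c)\,\omegac(m)$, and verify factorizability via $(c\cdot\omega)(xm)=\omega(xmc)=\mu_{B}(x\,\bomega(mc))$ and
\begin{align*}
(c\cdot\omega)(my)=\omega(myc)=\mu_{C}(\omegac(m)yc)=\mu_{C}\bigl(\sigma_{C}^{-1}(c)\,\omegac(m)\,y\bigr),
\end{align*}
the last equality being the modular relation for $\mu_{C}$, which transports the trailing factor $c$ to the front as $\sigma_{C}^{-1}(c)$. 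This shows $\omega\cdot b,\ c\cdot\omega\in M^{\sqcup}$ and hence that $M^{\sqcup}$ is an $M(C)$-$M(B)$-sub-bimodule.

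The only real obstacle is the $B$-side factorization of $\omega\cdot b$ and the dual $C$-side factorization of $c\cdot\omega$. After inserting the factorization of $\omega$, the multiplier $b$ (resp.\ $c$) sits on the \emph{wrong side} of the variable element $x$ (resp.\ $y$) and therefore cannot be absorbed into the value of a module map. It is exactly the modular automorphism that cyclically transports the multiplier to the opposite side, converting a left multiplication into a right multiplication (or vice versa) that can be incorporated into $\bomega(m)$ (resp.\ $\omegac(m)$). This is precisely why the hypothesis that $\mu_{B}$ and $\mu_{C}$ admit modular automorphisms is needed, and why no modular correction appears in the two ``easy'' factorizing maps, where the multiplier already acts on $M$ from the correct side.
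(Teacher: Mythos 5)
Your proof is correct and follows essentially the same route as the paper's: insert the factorization of $\omega$, observe that the multiplier lands on the wrong side of the variable element, and use the modular automorphism $\sigma_{B}$ (resp.\ $\sigma_{C}^{-1}$) to transport it; the four factorizing maps you write down are exactly those in the paper. Your additional remarks on why modularity is needed and on the module-map property of the new factorizing maps are consistent with (and slightly more explicit than) the paper's argument.
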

\begin{proof}
  Let $\omega \in M^{\sqcup}$ and  $x_{0}\in M(B)$. Then 
  $\omega(x_{0}my) = \mu_{C}(\omega_{C}(x_{0}m)y)$ and  \begin{align*}
    \omega(x_{0}xm) &= \omega(x_{0}x\bomega(m)) = \mu_{B}(x\bomega(m)\sigma_{B}(x_{0}))
  \end{align*}
 for all $x\in B$, $m\in M$, $y\in C$, showing that the functional $\omega x_{0}$ is factorizable and
 \begin{align*}
   _{B}(\omega x_{0})(m) &= \bomega(m)\sigma_{B}(x_{0}), & ( \omega  x_{0})_{C}(m) = \omega_{C} (mx_{0}).
 \end{align*} 
 A similar argument shows that $\omega y_{0}$ is factorizable for every $y_{0}\in M(C)$ and that
 \begin{align*}
   _{B}(y_{0 }\omega)(m) &= {_{B}\omega(my_{0})}, &
   ( y_{0} \omega)_{C}(m) &= \sigma_{C}^{-1}(y)\omega_{C}(m).  \qedhere
 \end{align*}
\end{proof}
\begin{remark} \label{remark:bphi-phib}
In the situation above, suppose that $y_{0}  \omega = \omega x_{0}$ for some $\omega \in M^{\sqcup}$, $x_{0}\in M(B)$ and $y_{0} \in M(C)$. Then the equations above imply that for all $m\in M$,
\begin{align*} 
  \bomega(my_{0}) &= \bomega(m)\sigma_{B}(x_{0}), & \omegac(x_{0}m) = \sigma_{C}^{-1}(y_{0}) \omega_{C}(m). 
\end{align*}
\end{remark}
Under the assumptions above, the assignment $M \mapsto M^{\sqcup}$ becomes a functor from $B$-$C$-bimodules to $C$-$B$-bimodules or, equivalently, to $B^{\op}$-$C^{\op}$-bimodules. Moreover, if we assume  in the situation of Lemma
\ref{lemma:factorizable-slice-tensor} that the functionals $\mu_{C},\mu_{D}$ and $\mu_{E}$ admit modular automorphisms, then the assignment $\upsilon \otimes \omega \mapsto \upsilon \underset{\mu_{C}}{\otimes} \omega$ factorizes to a morphism of $B^{\op}$-$D^{\op}$-bimodules
\begin{align*}
 {_{B^{\op}}(M^{\sqcup})_{C^{\op}}} \otimes 
 {_{C^{\op}}(N^{\sqcup})_{D^{\op}}}  \to 
 {_{B^{\op}}((M_{C} \otimes {_{C}N})^{\sqcup})_{D^{\op}}}.
\end{align*}
Finally, let us look at the factorizable functionals on $_{B}B_{B}$ in case where $\mu_{B}$ admits a modular automorphism.
\begin{example}  \label{example:factorizable-base}
  Regard $B$ as a $B$-bimodule and assume that $\mu_{B}$ admits a modular automorphism $\sigma_{B}$. Then for every multiplier $T \in M(B)$, the functional
  $\mu_{B} T \colon b\to \mu_{B}(Tb)$, is
 factorizable, and the assignment $T \mapsto \mu_{B} T$  defines a bijection between $M(B)$ and $B^{\sqcup}$.
This follows easily from  Lemma \ref{lemma:factorizable-bimodule} and \ref{lemma:lesssim} (2).
\end{example}

We us now return to the discussion of base weights and integrals.
Let
\begin{align*}
  \mathcal{A}=(A,B,C,S_{B},S_{C},\Delta_{B},\Delta_{C})
\end{align*}
 be a regular multiplier Hopf algebroid
with an antipodal base weight $(\mu_{B},\mu_{C})$. 
\begin{lemma} \label{lemma:factorizable}
   A functional $\omega \in
  \dA$   is  factorizable as a functional on the $B$-bimodule $_{B}A_{B}$ and on the $C$-bimodule $_{C}A_{C}$ if and only if
it is factorizable as a functional on the $B$-bimodule $_{B}A^{B}$ and on the $C$-bimodule $^{C}A_{C}$.
\end{lemma}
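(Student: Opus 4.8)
The plan is to unwind factorizability on each bimodule into its two constituent one-sided factorizations and then to match them across the two sides. Recall the reformulation following the definition of factorizability: $\omega$ is factorizable on ${}_{B}A_{B}$ exactly when there are $\bomega\in\Hom(\bA,\bB)$ and $\omegab\in\Hom(\Ab,\Bb)$ with $\omega=\mu_{B}\circ\bomega=\mu_{B}\circ\omegab$, and on ${}_{C}A_{C}$ exactly when there are $\comega\in\Hom(\cA,\cC)$ and $\omegac\in\Hom(\Ac,\Cc)$ with $\omega=\mu_{C}\circ\comega=\mu_{C}\circ\omegac$; concretely $\omega(xa)=\mu_{B}(x\bomega(a))$, $\omega(ax)=\mu_{B}(\omegab(a)x)$, $\omega(ya)=\mu_{C}(y\comega(a))$, $\omega(ay)=\mu_{C}(\omegac(a)y)$. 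The twisted bimodule ${}^{C}A_{C}$ carries the left $C$-action $y\cdot a=aS_{C}(y)$ together with the standard right action $a\cdot y=ay$, and ${}_{B}A^{B}$ carries the standard left action $x\cdot a=xa$ together with the twisted right $B$-action $a\cdot x=S_{B}(x)a$. Since the left action on ${}_{B}A^{B}$ and the right action on ${}^{C}A_{C}$ are the standard ones, the maps $\bomega$ (left over $B$) and $\omegac$ (right over $C$) are literally common to both sides of the asserted equivalence. Thus it suffices to match the standard right-$B$ factorization $\omegab$ with a twisted left-$C$ factorization ${}^{C}\omega$ on ${}^{C}A_{C}$, and the standard left-$C$ factorization $\comega$ with a twisted right-$B$ factorization $\omega^{B}$ on ${}_{B}A^{B}$.

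First I would produce these maps explicitly using the base automorphisms $\sigma_{B}:=S_{B}^{-1}S_{C}^{-1}$ and $\sigma_{C}:=S_{B}S_{C}$. Given $\omegab$, set ${}^{C}\omega:=\sigma_{C}^{-1}\circ S_{B}\circ\omegab\colon A\to C$. Since $S_{C}(y)\in B$, right $B$-linearity of $\omegab$ gives $\omegab(aS_{C}(y))=\omegab(a)S_{C}(y)$, and applying the anti-homomorphism $S_{B}$ turns this into $S_{B}(\omegab(aS_{C}(y)))=S_{B}(S_{C}(y))S_{B}(\omegab(a))=\sigma_{C}(y)S_{B}(\omegab(a))$; composing with the automorphism $\sigma_{C}^{-1}$ yields ${}^{C}\omega(aS_{C}(y))=y\,{}^{C}\omega(a)$, i.e.\ left ${}^{C}A$-linearity. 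Symmetrically, given $\comega$, set $\omega^{B}:=\sigma_{B}\circ S_{C}\circ\comega$; the same computation with the roles of $B$ and $C$ exchanged, using $S_{C}(S_{B}(x))=\sigma_{B}^{-1}(x)$, gives $\omega^{B}(S_{B}(x)a)=\omega^{B}(a)x$, i.e.\ right $A^{B}$-linearity.

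It then remains to check that these maps still recover $\omega$, and this is where antipodality enters. Using $\mu_{C}\circ S_{B}=\mu_{B}$ together with the invariance $\mu_{C}\circ\sigma_{C}=\mu_{C}$, which follows from antipodality via \eqref{eq:base-weight-invariance}, I obtain $\mu_{C}\circ{}^{C}\omega=\mu_{C}\circ\sigma_{C}^{-1}\circ S_{B}\circ\omegab=\mu_{C}\circ S_{B}\circ\omegab=\mu_{B}\circ\omegab=\omega$, and likewise $\mu_{B}\circ\omega^{B}=\mu_{B}\circ\sigma_{B}\circ S_{C}\circ\comega=\mu_{B}\circ S_{C}\circ\comega=\mu_{C}\circ\comega=\omega$. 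The reverse implications come from the inverse assignments $\omegab:=S_{B}^{-1}\circ\sigma_{C}\circ{}^{C}\omega$ and $\comega:=S_{C}^{-1}\circ\sigma_{B}^{-1}\circ\omega^{B}$, verified by reading the same two computations backwards and again invoking the invariance of $\mu_{B},\mu_{C}$. Combining, under the shared maps $\bomega$ and $\omegac$ the pair $(\omegab,\comega)$ exists if and only if the pair $(\omega^{B},{}^{C}\omega)$ does, which is precisely the assertion.

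The only genuine subtlety is the automorphic twist by $\sigma_{B}^{\pm1},\sigma_{C}^{\pm1}$ that the anti-isomorphisms $S_{B},S_{C}$ force upon the candidate maps: without the correction by $\sigma_{C}^{-1}$ or $\sigma_{B}$ the naive composites $S_{B}\circ\omegab$ and $S_{C}\circ\comega$ fail to be module morphisms for the twisted actions. The key point, which I expect to be the heart of the argument, is that antipodality supplies exactly the invariances $\mu_{B}\circ\sigma_{B}=\mu_{B}$ and $\mu_{C}\circ\sigma_{C}=\mu_{C}$ needed to absorb this twist when recovering $\omega$, so that the stronger modularity of the base weight is not required here.
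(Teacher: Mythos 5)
Your argument is correct and is exactly the computation the paper leaves implicit as ``straightforward'': the one-sided factorizations $\bomega$ and $\omegac$ are common to both sides, and the remaining pair $(\omegab,\comega)$ is exchanged with $({}^{C}\omega,\omega^{B})$ by composing with the anti-isomorphisms, antipodality of $(\mu_{B},\mu_{C})$ ensuring that $\omega$ is recovered. Note only that your composites collapse, since $\sigma_{C}^{-1}\circ S_{B}=S_{C}^{-1}$ and $\sigma_{B}\circ S_{C}=S_{B}^{-1}$, so the transported maps are simply $S_{C}^{-1}\circ\omegab$ and $S_{B}^{-1}\circ\comega$; with this the module property follows from $S_{C}^{-1},S_{B}^{-1}$ being anti-isomorphisms and the recovery of $\omega$ from $\mu_{C}\circ S_{C}^{-1}=\mu_{B}$ and $\mu_{B}\circ S_{B}^{-1}=\mu_{C}$, without any appeal to \eqref{eq:base-weight-invariance}.
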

\begin{proof}
This is   straightforward.
\end{proof}
\begin{definition}
 We call a functional $\omega \in
  \dA$  \emph{factorizable (with respect to $(\mu_{B},\mu_{C})$)} if it satisfies the equivalent conditions in Lemma \ref{lemma:factorizable}. We denote by $A^{\sqcup} \subseteq \dA$ the subspace of all such factorizable functionals.
\end{definition}

\begin{remarks} \label{remark:factorizable}
  \begin{enumerate}
  \item The definition of quasi-invariance can now be reformulated as
    follows. The base weight $(\mu_{B},\mu_{C})$ is quasi-invariant
    with respect to a partial left integral $\cphic$ (or a partial
    right integral $\bpsib$) if and only if the functional
    $\phi:=\mu_{C} \circ \cphic$ (or $\psi:=\mu_{B} \circ \bpsib$) is
    factorizable.
  \item If $\mathcal{A}$ arises from a regular weak multiplier Hopf algebra $(A,\Delta)$ as in Example \ref{example:wmha}, then the base algebras $B$ and $C$ are Frobenius separable in the sense of \cite{daele:separability} and every functional on $A$ is factorizable; see Example  \ref{example:factorization-frobenius} and Lemma \ref{lemma:quasi-wmha}.
  \end{enumerate}
\end{remarks}
The following auxiliary result will be used later.  Denote by $\ast$ the involution on $\C$.
\begin{lemma} \label{lemma:base}
Let $(\mu_{B},\mu_{C})$ be an antipodal base weight for $\mathcal{A}$ and let $\omega \in A^{\sqcup}$.
 \begin{enumerate}
 \item Suppose that the base weight is modular.
 Let $x,x' \in M(B)$, $y,y' \in M(C)$ and $\omega':= xy \cdot
   \omega \cdot x'y'$. Then $\omega' \in A^{\sqcup}$ and
   \begin{align*}
     \bomega'(a) &=\bomega(y'axy)\sigma_{B}(x'), & \omegab'(a)
     &=
     (\sigma_{B})^{-1}(x)\omegab(x'y'ay), \\
     \comega'(a) &= \comega(x'axy)\sigma_{C}(y'), & \omegac'(a)
     &= (\sigma_{C})^{-1}(y) \omegac(x'y'ax).
   \end{align*}
 \item  Let $k\in \Z$ be odd and $\omega':=\omega  \circ    S^{k}$. Then $\omega' \in A^{\sqcup}$ and
  \begin{align*}
    _{B}(\omega') &= S^{-k} \circ \upsilonc \circ S^{k}, & (\omega')_{B}&= S^{-k} \circ \cupsilon  \circ  S^{k}, \\
    _{C}(\omega') &= S^{-k} \circ \upsilonb  \circ S^{k}, & (\omega')_{C} &= S^{-k} \circ \bupsilon  \circ S^{k}.
  \end{align*}
\item Assume that $\mathcal{A}$ is a multiplier Hopf $*$-algebroid and that the base weight is positive. Then
  $\omega^{*}:= \ast \circ \omega \circ \ast$ lies in $A^{\sqcup}$ and
  \begin{align*}
    _{B}(\omega^{*}) &= \ast \circ \upsilonb \circ \ast, & (\omega^{*})_{B} &=
\ast \circ     \bupsilon \circ \ast, & _{C}(\omega^{*}) &=  \ast \circ \upsilonc \circ \ast,
& (\omega^{*})_{C} &=  \ast \circ \cupsilon \circ \ast.
  \end{align*}
 \end{enumerate}
\end{lemma}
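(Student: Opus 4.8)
The plan is to treat all three parts by one scheme. For each of the three functionals $\omega':=xy\cdot\omega\cdot x'y'$, $\omega\circ S^{k}$ and $\omega^{*}$, I would write down explicit candidate maps and then verify the defining factorisation identities: a functional $\chi\in\dA$ lies in $A^{\sqcup}$ precisely when there are maps with $\chi(xa)=\mu_{B}(x\,\chi_{1}(a))$, $\chi(ax)=\mu_{B}(\chi_{2}(a)x)$, $\chi(ya)=\mu_{C}(y\,\chi_{3}(a))$ and $\chi(ay)=\mu_{C}(\chi_{4}(a)y)$ for all $x\in B$, $y\in C$, $a\in A$ (this is the content of Lemma~\ref{lemma:factorizable} together with the pointwise reformulation of factorizability). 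I use throughout the four factorisation maps $\bomega,\omegab,\comega,\omegac$ of $\omega$, the bimodule convention $(xy\cdot\omega\cdot x'y')(a)=\omega(x'y'axy)$, and the fact that the antipode extends to an anti-automorphism of $M(A)$ restricting to $S_{B}$ on $B$ and $S_{C}$ on $C$ (Theorem~\ref{tm:hopf-characterization}(1)).

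For part (1) the structural input is that $B$ and $C$ commute inside $M(A)$. To find ${_{B}\omega'}$, I evaluate $\omega'(za)=\omega(x'y'zaxy)$ for $z\in B$, commute $z$ past $y'$ (they lie in $B$ and $M(C)$, which commute) so as to bring $x'z\in B$ to the left, apply $\omega((x'z)b)=\mu_{B}(x'z\,\bomega(b))$ with $b=y'axy$, and finally transport $x'$ to the right via the modular automorphism $\sigma_{B}$ of $\mu_{B}$, obtaining $\mu_{B}(z\,\bomega(b)\sigma_{B}(x'))$; this gives ${_{B}\omega'}(a)=\bomega(y'axy)\sigma_{B}(x')$. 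The three remaining maps follow by the same recipe: commute the free $B$- or $C$-test element to the appropriate end, apply the matching factorisation map of $\omega$, and reposition the leftover fixed multiplier with $\sigma_{B}$ or $\sigma_{C}$. Closure of $A^{\sqcup}$ under the bimodule action is already known from Lemma~\ref{lemma:factorizable-bimodule}, so here only the explicit formulas require proof.

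For part (2), the decisive preliminary --- and the main obstacle --- is the pair of intertwining identities $\mu_{C}\circ S^{k}=\mu_{B}$ on $B$ and $\mu_{B}\circ S^{k}=\mu_{C}$ on $C$ for odd $k$, together with the analogous statements for $S^{-k}$. I would prove these from antipodality and the invariance $\eqref{eq:base-weight-invariance}$, using $S^{2}|_{B}=S_{C}S_{B}=\sigma_{B}^{-1}$ and $S^{2}|_{C}=S_{B}S_{C}=\sigma_{C}$: the even part of $S^{k}$ preserves $\mu_{B}$ respectively $\mu_{C}$ by $\eqref{eq:base-weight-invariance}$, while the one surviving factor $S_{B}$ or $S_{C}$ exchanges $\mu_{B}$ and $\mu_{C}$ by antipodality (so part (2) needs only antipodality, not modularity). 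Anti-multiplicativity of $S^{-k}$ then yields the commutation rule $\mu_{C}(P\,S^{k}(x))=\mu_{B}(S^{-k}(P\,S^{k}(x)))=\mu_{B}(x\,S^{-k}(P))$ for $P\in C$, $x\in B$, and its three variants. With these at hand I compute, for $x\in B$, $\omega'(xa)=\omega(S^{k}(a)S^{k}(x))$ with $S^{k}(x)\in C$, apply $\omegac$ on the right, and rewrite $\mu_{C}(\omegac(S^{k}(a))\,S^{k}(x))=\mu_{B}(x\,S^{-k}(\omegac(S^{k}(a))))$, giving ${_{B}\omega'}=S^{-k}\circ\omegac\circ S^{k}$; the other three maps arise in the same way, each pairing one factorisation map of $\omega$ with the opposite base algebra.

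For part (3) I use only that positivity makes $\mu_{B}$ and $\mu_{C}$ self-adjoint and that $B,C$ are $*$-subalgebras. For $x\in B$ I write $\omega^{*}(xa)=\omega((xa)^{*})^{*}=\omega(a^{*}x^{*})^{*}$ with $x^{*}\in B$, apply the right factorisation $\omega(a^{*}x^{*})=\mu_{B}(\omegab(a^{*})x^{*})$, and push the involution through $\mu_{B}$ by self-adjointness to get $\mu_{B}(x\,\omegab(a^{*})^{*})$, so that ${_{B}(\omega^{*})}=\ast\circ\omegab\circ\ast$; the other three identities follow by the identical manipulation, each replacing a factorisation map of $\omega$ by its $\ast$-conjugate on the opposite side. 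In every case the exhibited maps are automatically the correct kind of module homomorphism, inherited from the maps of $\omega$, so the remaining work is purely the bookkeeping of the displayed formulas, with the intertwining identities of part (2) the only genuinely non-routine ingredient.
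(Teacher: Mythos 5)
Your proposal is correct and amounts to writing out in full what the paper's one-line proof delegates elsewhere: your part (1) is exactly the computation behind Lemma~\ref{lemma:factorizable-bimodule}, your part (2) makes explicit the identities $\mu_{C}\circ S^{k}|_{B}=\mu_{B}$ and $\mu_{B}\circ S^{k}|_{C}=\mu_{C}$ that underlie the paper's appeal to functoriality of $M\mapsto M^{\sqcup}$ (and you rightly note only antipodality is needed there), and part (3) is the "straightforward" verification the paper leaves to the reader. Same approach, just with the bookkeeping carried out.
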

\begin{proof}
Assertion (1) follows from Lemma \ref{lemma:factorizable-bimodule},  (2) from functoriality of the assignment $M \mapsto M^{\sqcup}$,  and (3)  is  straightforward and left to the reader.
\end{proof}

\begin{corollary} \label{corollary:integrals-bimodule-antipode}
  Let $\mathcal{A}$ be a regular multiplier Hopf algebroid
  $\mathcal{A}$ with an antipodal base weight $(\mu_{B},\mu_{C})$.
  \begin{enumerate}
  \item If the base weight is modular, then all left (right) integrals form an $M(B)$-bimodule
    ($M(C)$-bimodule).
  \item  The maps $\phi \mapsto \phi\circ S^{\pm 1}$ are bijections
    between all left and all right integrals.
  \end{enumerate}
\end{corollary}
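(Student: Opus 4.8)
The plan is to reduce both assertions to the corresponding statements about \emph{partial} integrals in Proposition~\ref{proposition:partial-integrals-bimodule-antipode}, using the identification of total integrals with factorizable functionals and the stability properties of $A^{\sqcup}$ from Lemma~\ref{lemma:base}. Throughout I use Remark~\ref{remark:factorizable}~(1): a functional $\phi\in\dA$ is a left integral precisely when $\phi\in A^{\sqcup}$ and $\phi=\mu_{C}\circ\cphic$ for some partial left integral $\cphic$, and dually a right integral is a $\psi\in A^{\sqcup}$ of the form $\psi=\mu_{B}\circ\bpsib$ with $\bpsib$ a partial right integral.

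For~(1), the key observation is that the $M(B)$-bimodule action on $\dA$ inherited from $M(A)$, namely $(x\cdot\omega\cdot x')(a)=\omega(x'ax)$ for $x,x'\in M(B)$, is intertwined by the assignment $\cphic\mapsto\mu_{C}\circ\cphic$ with the $M(B)$-bimodule action on partial left integrals. Indeed, for a left integral $\phi=\mu_{C}\circ\cphic$ one computes
\begin{align*}
  (x\cdot\phi\cdot x')(a)=\phi(x'ax)=\mu_{C}(\cphic(x'ax))=\mu_{C}\big((x\cdot\cphic\cdot x')(a)\big),
\end{align*}
so that $x\cdot\phi\cdot x'=\mu_{C}\circ(x\cdot\cphic\cdot x')$. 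Here $x\cdot\cphic\cdot x'$ is again a partial left integral by Proposition~\ref{proposition:partial-integrals-bimodule-antipode}~(1), while $x\cdot\phi\cdot x'\in A^{\sqcup}$ by Lemma~\ref{lemma:base}~(1) applied with $y=y'=1$ --- and this is exactly where modularity of the base weight is used. Hence $x\cdot\phi\cdot x'$ is again a left integral, so the left integrals form an $M(B)$-sub-bimodule of $\dA$; the statement for right integrals and the $M(C)$-action is entirely symmetric, using Proposition~\ref{proposition:partial-integrals-bimodule-antipode}~(2) and Lemma~\ref{lemma:base}~(1) with $x=x'=1$.

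For~(2), fix a left integral $\phi=\mu_{C}\circ\cphic$. Since the antipode restricts to $S_{B}$ on $B$ and to $S_{C}$ on $C$ (Theorem~\ref{tm:hopf-characterization}~(1)), we have $S^{-1}(C)=B$, so $\bpsib:=S^{-1}\circ\cphic\circ S$ is a well-defined map $A\to B$ and is a partial right integral by Proposition~\ref{proposition:partial-integrals-bimodule-antipode}~(3). As $\cphic\circ S$ takes values in $C$ and $S^{-1}$ restricts there to $S_{B}^{-1}$, antipodality in the form $\mu_{B}\circ S_{B}^{-1}=\mu_{C}$ yields
\begin{align*}
  \mu_{B}\circ\bpsib=\mu_{B}\circ S^{-1}\circ\cphic\circ S=\mu_{C}\circ\cphic\circ S=\phi\circ S.
\end{align*}
Since $\phi\circ S\in A^{\sqcup}$ by Lemma~\ref{lemma:base}~(2) with the odd exponent $k=1$, the functional $\phi\circ S$ is a right integral. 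The analogous computation with $\bpsib:=S\circ\cphic\circ S^{-1}$ and the relation $\mu_{B}\circ S_{C}=\mu_{C}$ shows that $\phi\circ S^{-1}$ is a right integral as well, and the same arguments run in reverse send right integrals to left integrals. As $S$ is invertible, the pairs of maps $\phi\mapsto\phi\circ S$, $\psi\mapsto\psi\circ S^{-1}$ and $\phi\mapsto\phi\circ S^{-1}$, $\psi\mapsto\psi\circ S$ are mutually inverse, so $\phi\mapsto\phi\circ S^{\pm1}$ are bijections between left and right integrals.

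I do not expect a genuine obstacle, since all the structural ingredients --- the bimodule property of partial integrals, the antipode intertwining partial left and right integrals, and the stability of $A^{\sqcup}$ under the base actions and under $S^{\pm1}$ --- are already available. The only point demanding care is the bookkeeping of the antipodal identities in~(2): one must consistently track that $S^{\pm1}$ restricts on $B$ and $C$ to $S_{B}^{\pm1}$ and $S_{C}^{\pm1}$ in the correct directions, so that post-composition with $\mu_{B}$ collapses the base map of the transported partial integral to $\mu_{C}$ on the range of $\cphic$ (and symmetrically for the reverse bijection).
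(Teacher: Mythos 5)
Your proof is correct and follows exactly the route the paper takes: its own proof is the one-line "This follows easily from Lemma \ref{lemma:base} and Proposition \ref{proposition:partial-integrals-bimodule-antipode}," and you have simply spelled out that reduction, with the bookkeeping of the antipodal identities and the role of modularity in part (1) handled correctly.
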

\begin{proof}
This follows easily from    Lemma \ref{lemma:base} and  Proposition \ref{proposition:partial-integrals-bimodule-antipode}.
\end{proof}

\section{Uniqueness of integrals relative to a base weight}

Let $\mathcal{A}$ be a regular multiplier Hopf algebroid with a modular base weight $(\mu_{B},\mu_{C})$ and a left integral $\phi$. Then for every  $x\in M(B)$, the rescaled functionals $a\mapsto \phi(ax)$ and $a\mapsto \phi(xa)$ are left integrals again by Corollary~\ref{corollary:integrals-bimodule-antipode}. We now show that under a certain non-degeneracy assumption on $\phi$ and local projectivity of $A$ as a module over $B$ and $C$, every left integral is of this form. Of course, a similar statement holds for right integrals.

\subsection{The unital case}
Let us first consider the  case of a    unital regular multiplier Hopf algebroid
  $\mathcal{A}$   with a base weight $(\mu_{B},\mu_{C})$ satisfying
  $\mu_{B}|_{\mathcal{O}} = \mu_{C}|_{\mathcal{O}}$,
where $\mathcal{O}=B\cap C$ denotes the orbit algebra. For example, this relation holds for antipodal base weights by Proposition \ref{proposition:orbit-antipode},  and for the base weights constructed in  Lemma \ref{lemma:quasi-proper}.
\begin{lemma}\label{lemma:unital-uniqueness-1}
Let $\phi$ be a left and $\psi$ a right integral for $(\mathcal{A},\mu_{B},\mu_{C})$. Then for all $a\in A$,
\begin{align} \label{eq:unital-fullness}
\psi(a\bphi(1)) = \psi(\phib(1)a) = \phi(a\cpsi(1))= \phi(\psic(1)a).
\end{align}
\end{lemma}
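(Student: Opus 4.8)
The plan is to show that all four expressions funnel through the single ``bridge'' identity
\begin{align*}
\phi(\bpsib(a))=\psi(\cphic(a)),
\end{align*}
proving the two outer equalities by purely module-theoretic manipulations on the respective base algebra, and the central equality by passing through the orbit algebra $\mathcal{O}$. Throughout I use that $\bpsib$ is a $B$-bimodule map and $\cphic$ a $C$-bimodule map (Proposition~\ref{proposition:partial-integrals}, Definition~\ref{definition:invariant-elements}), that $\phi=\mu_{C}\circ\cphic=\mu_{B}\circ\bphi=\mu_{B}\circ\phib$ and $\psi=\mu_{B}\circ\bpsib=\mu_{C}\circ\cpsi=\mu_{C}\circ\psic$, and that in the unital case $B,C\subseteq A$, so every element of $B$ or $C$ may be inserted into $\phi$ and $\psi$.

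First I would handle $\psi(a\bphi(1))$ and $\psi(\phib(1)a)$. Writing $\psi=\mu_{B}\circ\bpsib$ and using right- and left-$B$-linearity of $\bpsib$ gives $\bpsib(a\bphi(1))=\bpsib(a)\bphi(1)$ and $\bpsib(\phib(1)a)=\phib(1)\bpsib(a)$, so both reduce to $\mu_{B}$ of a product of two elements of $B$. On the other hand, evaluating the module relations of $\bphi,\phib$ at the unit gives $\bphi(z)=z\bphi(1)$ and $\phib(z)=\phib(1)z$ for $z\in B$, whence $\phi=\mu_{B}\circ\bphi=\mu_{B}\circ\phib$ yields
\begin{align*}
\mu_{B}(z\bphi(1))=\phi(z)=\mu_{B}(\phib(1)z)\qquad\text{for all }z\in B.
\end{align*}
Taking $z=\bpsib(a)$ shows $\psi(a\bphi(1))=\phi(\bpsib(a))=\psi(\phib(1)a)$. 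The mirror computation on the $C$-side, using $\psi=\mu_{C}\circ\cpsi=\mu_{C}\circ\psic$, the relations $\cpsi(w)=w\cpsi(1)$, $\psic(w)=\psic(1)w$ for $w\in C$, and $\phi=\mu_{C}\circ\cphic$ with $\cphic$ right $C$-linear, gives $\phi(a\cpsi(1))=\psi(\cphic(a))=\phi(\psic(1)a)$.

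It then remains to establish the bridge $\phi(\bpsib(a))=\psi(\cphic(a))$. Here I would invoke that a unital multiplier Hopf algebroid is automatically proper, so that Proposition~\ref{proposition:proper-partial-integrals} applies. Evaluating the defining relations of the extensions at the unit gives $\cphic\big|_{B}(x)=\cphic(x)$ for $x\in B$ and $\bpsib\big|_{C}(y)=\bpsib(y)$ for $y\in C$, so that by \eqref{eq:proper-composed-partial-integrals} the element
\begin{align*}
\Phi(a):=\cphic(\bpsib(a))=\bpsib(\cphic(a))
\end{align*}
is well defined and lies in $\mathcal{O}$. Since $\phi=\mu_{C}\circ\cphic$, $\psi=\mu_{B}\circ\bpsib$, and $\bpsib(a)\in B\subseteq A$, $\cphic(a)\in C\subseteq A$, we get $\phi(\bpsib(a))=\mu_{C}(\Phi(a))$ and $\psi(\cphic(a))=\mu_{B}(\Phi(a))$, and these agree because $\Phi(a)\in\mathcal{O}$ and $\mu_{B}|_{\mathcal{O}}=\mu_{C}|_{\mathcal{O}}$. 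Chaining the three identities then gives all four equalities.

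The only genuinely non-formal input is this bridge step: the two outer equalities are forced by the bimodule structure of the component maps and by unitality alone, whereas the passage from the $B$-side to the $C$-side rests entirely on the compatibility \eqref{eq:proper-composed-partial-integrals} of the two partial integrals through $\mathcal{O}$ together with the hypothesis $\mu_{B}|_{\mathcal{O}}=\mu_{C}|_{\mathcal{O}}$. I expect this to be the main point to get right; in particular one must check carefully that in the unital case the extensions reduce to $\cphic$ and $\bpsib$ on $B$ and $C$, so that $\Phi(a)$ indeed equals both $\cphic(\bpsib(a))$ and $\bpsib(\cphic(a))$.
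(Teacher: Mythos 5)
Your proof is correct and follows essentially the same route as the paper: the two outer equalities come from the bimodule properties of $\bphi,\phib,\cpsi,\psic$ and $\bpsib,\cphic$ evaluated at the unit, and the central equality is exactly the bridge $\phi\circ\bpsib=\psi\circ\cphic$ obtained from Proposition~\ref{proposition:proper-partial-integrals} together with the standing hypothesis $\mu_{B}|_{\mathcal{O}}=\mu_{C}|_{\mathcal{O}}$. Your added checks (unital implies proper, and the extensions reducing to $\cphic$, $\bpsib$ on $B$ and $C$) are correct and only make explicit what the paper leaves implicit.
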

\begin{proof}
  By definition of $\bphi$ and of $\phib$, we can write $\phi(\bpsib(a))$ in the form
  \begin{align*}
\mu_{B}(\bpsib(a)\bphi(1)) = \psi(a\bphi(1)) \quad\text{or} \quad
\mu_{B}(\phib(1)\bpsib(a)) = \psi(\phib(1)a).
  \end{align*}
  Similarly, we can write $\psi(\cphic(a))$ in the form
  $\mu_{C}(\psic(1)\cphic(a))$ or $\mu_{C}(\cphic(a)\cpsi(a))$.
Now,  \eqref{eq:unital-fullness} follows because  by Propositon \ref{proposition:proper-partial-integrals},
  \begin{align}
    \phi \circ \bpsib &= \mu_{C} \circ \cphic \circ \bpsib = \mu_{B} \circ \bpsib \circ \cphic = \psi \circ \cphic. \qedhere
  \end{align}
\end{proof}
We can now conclude the following equivalence:
\begin{lemma} \label{lemma:unital-uniqueness-2}
Suppose that the base weight $(\mu_{B},\mu_{C})$ is antipodal. Then
for every functional $h$  on $A$, the following conditions are equivalent:
\begin{enumerate}
\item $h$ is a left integral and $h|_{B} = \mu_{B}$;
\item $h$ is a right integral and $h|_{C} = \mu_{C}$.
\end{enumerate}
If these conditions hold, then $h=h\circ S$, and with $\tau:=\mu_{B}|_{\mathcal{O}} = \mu_{C}|_{\mathcal{O}}$, we have
\begin{align*}
  _{C}h_{C}(1) &= 1, & _{B}h_{B}(1)&=1, & \mu_{B} &= \tau\circ {_{C}h_{C}}|_{B}, & \mu_{C} &= \tau\circ {_{B}h_{B}}|_{C}.
\end{align*}
\end{lemma}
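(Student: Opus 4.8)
The plan is to read off the whole statement from the density identity of Lemma~\ref{lemma:unital-uniqueness-1}, by showing that under hypothesis (1) all four elements $\bphi(1),\phib(1),\cpsi(1),\psic(1)$ appearing there equal $1$; this forces $h$ to agree with its antipodal partner $h\circ S$ and hence to be simultaneously a left and a right integral. I will carry out $(1)\Rightarrow(2)$ in full; since $S$ interchanges left and right integrals (Corollary~\ref{corollary:integrals-bimodule-antipode}) and the hypotheses are symmetric under $B\leftrightarrow C$, the implication $(2)\Rightarrow(1)$ and the displayed identities then follow from the mirror argument. I will use repeatedly that a unital $\mathcal{A}$ is proper with $M(B)=B$, $M(C)=C$ and $\mathcal{O}=B\cap C$.

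Assume (1). Being a left integral, $h$ is factorizable (Remarks~\ref{remark:factorizable}), so $h\in A^{\sqcup}$ and, by Lemma~\ref{lemma:factorizable}, $h=\mu_{B}\circ{}_{B}h=\mu_{B}\circ h_{B}$ for maps ${}_{B}h,h_{B}\colon A\to B$; write $\cphic$ for the partial left integral with $h=\mu_{C}\circ\cphic$, so that ${}_{C}h=h_{C}=\cphic$. Putting $a=1$ in $h(xa)=\mu_{B}(x\,{}_{B}h(a))$ and $h(ax)=\mu_{B}(h_{B}(a)x)$ and using $h|_{B}=\mu_{B}$ yields $\mu_{B}(x\,{}_{B}h(1))=\mu_{B}(x)=\mu_{B}(h_{B}(1)x)$ for all $x\in B$, so faithfulness of $\mu_{B}$ forces ${}_{B}h(1)=h_{B}(1)=1$.

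Now set $\psi:=h\circ S$, which is a right integral by Corollary~\ref{corollary:integrals-bimodule-antipode} and therefore factorizable. For $y\in C$ I compute $\psi(y)=h(S_{C}(y))=\mu_{B}(S_{C}(y))=\mu_{C}(y)$, using $S|_{C}=S_{C}$ (Theorem~\ref{tm:hopf-characterization}), $h|_{B}=\mu_{B}$ and antipodality $\mu_{B}\circ S_{C}=\mu_{C}$; comparing this with $\psi(y)=\mu_{C}(y\,{}_{C}\psi(1))$ and using faithfulness of $\mu_{C}$ gives ${}_{C}\psi(1)=1$. Feeding the normalisations ${}_{B}h(1)={}_{C}\psi(1)=1$ into Lemma~\ref{lemma:unital-uniqueness-1} applied to the pair $(h,\psi)$ gives $\psi(a)=\psi(a\,{}_{B}h(1))=h(a\,{}_{C}\psi(1))=h(a)$ for all $a$, whence $h=\psi=h\circ S$ is a right integral and $h|_{C}=\psi|_{C}=\mu_{C}$. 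This is assertion (2).

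Finally the displayed equalities follow. As $h$ is now a right integral, ${}_{B}h_{B}=\bpsib$ with ${}_{B}h=h_{B}=\bpsib$, so ${}_{B}h_{B}(1)={}_{B}h(1)=1$; and $h|_{C}=\mu_{C}$ together with right $C$-linearity of $\cphic$ forces ${}_{C}h_{C}(1)=\cphic(1)=1$ by faithfulness of $\mu_{C}$. By Proposition~\ref{proposition:proper-partial-integrals} the elements $\cphic(x)$ ($x\in B$) and $\bpsib(y)$ ($y\in C$) lie in $\mathcal{O}$, so with $\tau=\mu_{B}|_{\mathcal{O}}=\mu_{C}|_{\mathcal{O}}$ we obtain $\mu_{B}(x)=h(x)=\mu_{C}(\cphic(x))=\tau({}_{C}h_{C}|_{B}(x))$ and, symmetrically, $\mu_{C}=\tau\circ{}_{B}h_{B}|_{C}$. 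The step I expect to be most delicate is the bookkeeping around Lemma~\ref{lemma:unital-uniqueness-1}: one must match the four factorization maps of $h$ and of $\psi=h\circ S$ to the symbols $\bphi(1),\phib(1),\cpsi(1),\psic(1)$ and verify that the single identity ${}_{C}\psi(1)=1$ is exactly what collapses the density chain to $\psi=h$; everything else is routine once this is in place.
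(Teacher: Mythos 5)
Your proof is correct and follows essentially the same route as the paper's: normalize ${}_{B}h(1)=h_{B}(1)=1$ from $h|_{B}=\mu_{B}$ and faithfulness, pass to $\psi=h\circ S$ (a right integral by Corollary \ref{corollary:integrals-bimodule-antipode}), normalize ${}_{C}\psi(1)=1$, and collapse the identity of Lemma \ref{lemma:unital-uniqueness-1} to $\psi=h$. The only, harmless, difference is in the middle step: the paper gets ${}_{C}\psi(1)=1$ from Lemma \ref{lemma:base}(2) and then deduces $\psi|_{C}=\mu_{C}$, whereas you compute $\psi|_{C}=\mu_{C}$ directly from $S|_{C}=S_{C}$ and antipodality and then read off ${}_{C}\psi(1)=1$ by faithfulness.
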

\begin{proof}
  Suppose that  (1) holds. Since $\mu_{B}$ is faithful and $\mu_{B}(x) = h(x)=\mu_{B}(h_{B}(1)x)=\mu_{B}(x{_{B}h}(1))$ for all $x\in B$, we must have $h_{B}(1)=1={_{B}h}(1)$. By Corollary \ref{corollary:integrals-bimodule-antipode}, $\psi:=h\circ S$ is a right integral and by Lemma 
\ref{lemma:base}, $\cpsi(1) = 1=\psic(1)$. In particular, $\psi|_{C}=\mu_{C}$. Now, $\psi=h$ by Lemma \ref{lemma:unital-uniqueness-2} and (2) follows. The reverse implication follows similarly.
\end{proof}
\begin{definition}
  Let $\mathcal{A}$ be a unital regular multiplier Hopf algebroid with an antipodal base weight $(\mu_{B},\mu_{C})$. We call a functional $h$ on $A$ a \emph{Haar integral} for $(\mathcal{A},\mu_{B},\mu_{C})$ if it satisfies the equivalent conditions in Lemma \ref{lemma:unital-uniqueness-2}.
\end{definition}
\begin{example}
Suppose that $\cphic$ is a partial left integral on  a unital regular multiplier Hopf algebroid $\mathcal{A}$ such that $\cphic(1)=1$, $\cphic\circ S^{2} = S^{2} \circ \cphic$  and $\cphic|_{B}$ is faithful. 
Let $\bpsib=S\circ \cphic \circ S^{-1}$ and choose a faithful functional $\tau$ on the orbit algebra $\mathcal{O}$. Then $\mu_{B} := \tau\circ \cphic|_{B}$ and $\mu_{C} := \tau\circ \bpsib|_{C}$ form an antipodal base weight by Lemma \ref{lemma:quasi-proper} and  $\phi=\mu_{C}\circ \cphic$ is a Haar integral. 
\end{example}

The preceding results immediately imply the following uniqueness result:
\begin{corollary}
  Let $\mathcal{A}$ be a unital regular multiplier Hopf algebroid with an antipodal base weight $(\mu_{B},\mu_{C})$. If a Haar integral $h$ exists, then it is unique, and then 
  \begin{align*}
    h(a\bphi(1)) =     \phi(a) = h(\phib(1)a) \quad \text{and} \quad h(a\cpsi(1)) = h(a) = h(\psic(1)a)
  \end{align*}
for every left integral $\phi$, every right integral $\psi$ and all $a\in A$.
\end{corollary}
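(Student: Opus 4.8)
The plan is to read off every assertion directly from Lemma~\ref{lemma:unital-uniqueness-1}, using as the only extra input the fact recorded in Lemma~\ref{lemma:unital-uniqueness-2} that the factorization maps of a Haar integral fix the unit: for any Haar integral $h$ one has ${_{B}h}(1)=h_{B}(1)=1$ and ${_{C}h}(1)=h_{C}(1)=1$. The strategy is to invoke Lemma~\ref{lemma:unital-uniqueness-1} three times, each time feeding $h$ (which is simultaneously a left and a right integral) into one of the two slots and observing that the multiplier insertions coming from $h$ then collapse to the unit.

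For uniqueness, I would take two Haar integrals $h$ and $h'$ and apply Lemma~\ref{lemma:unital-uniqueness-1} with the left integral $\phi:=h$ and the right integral $\psi:=h'$. The resulting chain
\begin{align*}
  h'\bigl(a\,{_{B}h}(1)\bigr)=h'\bigl(h_{B}(1)\,a\bigr)=h\bigl(a\,{_{C}h'}(1)\bigr)=h\bigl(h'_{C}(1)\,a\bigr)
\end{align*}
reduces, after substituting ${_{B}h}(1)=h_{B}(1)=1$ and ${_{C}h'}(1)=h'_{C}(1)=1$, to $h'(a)=h(a)$ for all $a\in A$; hence $h=h'$.

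The displayed identities follow by the same device. Taking $\psi:=h$ in Lemma~\ref{lemma:unital-uniqueness-1} while leaving $\phi$ an arbitrary left integral gives
\begin{align*}
  h\bigl(a\,\bphi(1)\bigr)=h\bigl(\phib(1)\,a\bigr)=\phi\bigl(a\,{_{C}h}(1)\bigr)=\phi\bigl(h_{C}(1)\,a\bigr),
\end{align*}
and the two right-hand expressions collapse to $\phi(a)$ because ${_{C}h}(1)=h_{C}(1)=1$, which is the first identity. Dually, taking $\phi:=h$ while leaving $\psi$ an arbitrary right integral, and using ${_{B}h}(1)=h_{B}(1)=1$ on the left, yields $\psi(a)=h\bigl(a\,\cpsi(1)\bigr)=h\bigl(\psic(1)\,a\bigr)$; this is the second identity, whose middle term should read $\psi(a)$ rather than $h(a)$.

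There is no genuine obstacle here, since each ingredient is already supplied by the two preceding lemmas; the only point demanding care is the bookkeeping of the four factorization maps, namely keeping straight which one-sided module structure each presubscript or subscript $B$ or $C$ splits off, and pairing $\phi$ with $\bphi,\phib$ and $\psi$ with $\cpsi,\psic$ correctly when specialising Lemma~\ref{lemma:unital-uniqueness-1}.
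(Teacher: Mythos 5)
Your proof is correct and is exactly the paper's intended argument: the paper's own proof is literally ``Combine Lemmas \ref{lemma:unital-uniqueness-1} and \ref{lemma:unital-uniqueness-2}'', and your three specialisations of Lemma \ref{lemma:unital-uniqueness-1} (with $\phi=h,\psi=h'$ for uniqueness, then $\psi=h$ and $\phi=h$ for the two displayed identities), combined with the normalisations ${_{B}h}(1)=h_{B}(1)=1$ and ${_{C}h}(1)=h_{C}(1)=1$ established in (the proof of) Lemma \ref{lemma:unital-uniqueness-2}, carry this out. You are also right that the middle term of the second displayed identity should read $\psi(a)$ rather than $h(a)$ --- as printed it is a typo in the statement, and your derivation yields the corrected version.
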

\begin{proof}
  Combine Lemmas \ref{lemma:unital-uniqueness-1} and  \ref{lemma:unital-uniqueness-2}.
\end{proof}
\subsection{Uniqueness of integrals}

\label{subsection:uniqueness}
Let us now consider the general case. The first step towards the proof of uniqueness is the following result.
\begin{lemma} \label{lemma:uniqueness-phi-psi} Let $\phi$
  be a left and $\psi$ a right integral on a regular   multiplier Hopf algebroid $\mathcal{A}$ with antipodal base weight. Then 
  \begin{align*}
    (A \bphi(A)) \cdot \psi = (A \cpsi(A)) \cdot \phi \quad \text{and}\quad
\psi \cdot
  (\phib(A)A) = \phi \cdot (\psic(A)A).
  \end{align*}
\end{lemma}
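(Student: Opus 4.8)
The plan is to rewrite all four functionals occurring in the statement in terms of the partial integrals and the base weight, reduce the first asserted equality to a single identity between two functionals on $\AlA$, and then deduce the second equality by left--right symmetry. Recall that, by quasi-invariance, $\phi=\mu_{B}\circ\bphi=\mu_{C}\circ\cphic$ and $\psi=\mu_{B}\circ\bpsib=\mu_{C}\circ\cpsi$, where $\bphi={_{B}\phi}$ is left $B$-linear, $\cpsi={_{C}\psi}$ is left $C$-linear, and the partial integrals $\bpsib,\cphic$ are $B$- resp.\ $C$-bimodule maps. First I would carry out the purely module-theoretic computation, valid for all $a,b,c\in A$,
\begin{align*}
  ((a\bphi(b))\cdot\psi)(c) &= \psi(ca\,\bphi(b)) = \mu_{B}(\bpsib(ca)\bphi(b)) = \phi(\bpsib(ca)\,b), \\
  ((a\cpsi(b))\cdot\phi)(c) &= \phi(ca\,\cpsi(b)) = \mu_{C}(\cphic(ca)\cpsi(b)) = \psi(\cphic(ca)\,b),
\end{align*}
where the middle steps use only that $\bpsib,\cphic$ are bimodule maps and that $\bphi(b),\cpsi(b)$ lie in $B$, resp.\ $C$. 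This reduces the first identity to the equality of the subspaces $\lspan\{\,c\mapsto\phi(\bpsib(ca)b):a,b\in A\,\}$ and $\lspan\{\,c\mapsto\psi(\cphic(ca)b):a,b\in A\,\}$ of $\dA$.

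Next I would bring in the comultiplication. By the left-invariance \eqref{eq:partial-left-deltab} of $\bpsib$ and the invariance \eqref{eq:partial-right-deltab} of $\cphic$,
\[
  \phi(\bpsib(ca)b)=\Xi_{1}(\Delta_{B}(ca)(1\otimes b)),\qquad
  \psi(\cphic(ca)b)=\Xi_{2}(\Delta_{B}(ca)(b\otimes 1)),
\]
where $\Xi_{1}:=\phi\circ(\bpsib\otimes\iota)$ and $\Xi_{2}:=\psi\circ(\iota\otimes S_{B}^{-1}\circ\cphic)$ are functionals on $\AlA$. Since $\Delta_{B}$ is multiplicative, both arguments factor as $\Delta_{B}(c)$ applied to $\Delta_{B}(a)(1\otimes b)=\Tr(a\otimes b)$, resp.\ $\Delta_{B}(a)(b\otimes 1)=\Tl(b\otimes a)$; as $a,b$ range over $A$ these elements exhaust $\AlA$, because the canonical maps $\Tr,\Tl$ are bijective. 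Hence both families coincide with $\lspan\{\,c\mapsto\Xi_{i}(\Delta_{B}(c)Z):Z\in\AlA\,\}$, and the first identity follows once I show that $\Xi_{1}=\Xi_{2}$ on $\AlA$.

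To prove this last identity I would exploit the antipodal and modular properties of the base weight. Writing $\Xi_{1}(d\otimes e)=\phi(\bpsib(d)\,e)$ and $\Xi_{2}(d\otimes e)=\psi(S_{B}^{-1}(\cphic(e))\,d)$, the point is to match the $B$- and $C$-factorizations of the single functional $\phi$: from $\mu_{C}=\mu_{B}\circ S_{C}$, the modular automorphism $\sigma_{B}=S_{B}^{-1}S_{C}^{-1}$ of $\mu_{B}$, and the fact that $\bphi={_{B}\phi}$ and $\cphic=\phi_{C}$ arise from the same $\phi$, one obtains $\phi(S_{B}(x)e)=\mu_{B}(S_{B}^{-1}(\cphic(e))\,x)$ for all $x\in B$, $e\in A$; applied with $x=\bpsib(d)$ this turns $\Xi_{2}(d\otimes e)$ into $\phi(S_{B}(\bpsib(d))e)$, and the remaining equality $\phi(\bpsib(d)e)=\phi(S_{B}(\bpsib(d))e)$ on $\AlA$ is then extracted from the balancing relation of $\AlA$ and faithfulness of $\mu_{B}$. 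Finally, the second asserted identity is the mirror image of the first: the same computation gives $(\psi\cdot(\phib(b)a))(c)=\phi(b\,\bpsib(ac))$ and $(\phi\cdot(\psic(b)a))(c)=\psi(b\,\cphic(ac))$, so one repeats the argument with left and right multiplication interchanged (equivalently, one passes to the co-opposite algebroid, under which left and right integrals and $S^{\pm1}$ are exchanged, invoking Lemma \ref{lemma:base} and Corollary \ref{corollary:integrals-bimodule-antipode}).

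The step I expect to be the main obstacle is the identity $\Xi_{1}=\Xi_{2}$ on $\AlA$. The two sides of the mixed co-associativity \eqref{eq:compatible} carry $\Delta_{B}$ and $\Delta_{C}$ in different tensor legs, so no single slice collapses both at once, and one cannot fall back on the proper-case identity $\phi\circ\bpsib=\psi\circ\cphic$ of Proposition \ref{proposition:proper-partial-integrals}, since properness is not assumed. Everything therefore has to be pushed through the balanced tensor product $\AlA$, and the base-weight twists $S_{B},S_{C},\sigma_{B}$ must be tracked with care; this is precisely the place where antipodality and modularity of $(\mu_{B},\mu_{C})$ become indispensable.
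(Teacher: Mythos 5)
Your overall strategy coincides with the paper's: there, too, one evaluates a single functional on $\AlA$ --- namely $\psi\underset{\mu_B}{\otimes}\phi$ --- on $\Delta_{B}(a)(b\otimes c)$ in two ways, using the two decompositions $b\otimes c=\sum_i\Delta_B(d_i)(1\otimes e_i)=\sum_j\Delta_B(f_j)(g_j\otimes 1)$ together with \eqref{eq:partial-left-deltab} and \eqref{eq:partial-right-deltab}, and then invokes bijectivity of $\Tl$ and $\Tr$. Your $\Xi_1$ and $\Xi_2$ are exactly the two slicing orders of this tensor product, so the identity $\Xi_1=\Xi_2$ that you isolate as the main obstacle is not an extra obstacle at all: it is precisely the content of Lemma \ref{lemma:factorizable-slice-tensor}\,(2) and the displayed remark following it. Since $\psi$ and $\phi$ are factorizable (for the second leg $\btA=A^{B}$ one passes through Lemma \ref{lemma:factorizable}, which is where antipodality enters), the functional $\psi\underset{\mu_B}{\otimes}\phi$ is well defined on the balanced tensor product and equals both $\phi\circ(\bpsib\otimes\id)$ and $\psi\circ(\id\otimes S_B^{-1}\circ\cphic)$. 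No modularity is needed --- which is just as well, since the lemma assumes only an antipodal base weight.

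Where your argument has a genuine gap is in the verification of $\Xi_1=\Xi_2$. The naive unfolding $\Xi_1(d\otimes e)=\phi(\bpsib(d)e)$ is not well defined on $\AlA$: the balancing relation is $xd\otimes e=d\otimes S_B(x)e$, and $\phi(x\bpsib(d)e)\neq\phi(S_B(x)\bpsib(d)e)$ in general. The well-defined slice map carries the twist by $t=S_B$ recorded in the convention for $\varepsilon\otimes\id$ stated after \eqref{eq:left-counit}, i.e.\ $(\bpsib\otimes\id)(d\otimes e)=S_B(\bpsib(d))e$, and \eqref{eq:partial-left-deltab} must be read with that convention. Consequently the ``remaining equality'' you reduce to, $\phi(\bpsib(d)e)=\phi(S_B(\bpsib(d))e)$, is simply false: already for $A=C\otimes B$ as in Example \ref{example:hopf-cb} with $B=C=\C^{2}$, $S_B=S_C$ the coordinate swap, $\mu_B(e_i)=\lambda_i$, $\mu_C(e_i)=\lambda_{3-i}$ and $\phi=\mu_C\otimes\mu_B$, one has $\phi(e_1\cdot(e_1\otimes e_1))=\lambda_2\lambda_1\neq 0=\phi(S_B(e_1)\cdot(e_1\otimes e_1))$, and no appeal to the balancing relation, to faithfulness of $\mu_B$, or to modularity can rescue it. Once the twisted slice map is used throughout, your own computation $\Xi_2(d\otimes e)=\phi(S_B(\bpsib(d))e)$ already closes the argument; the rest of your proof --- the initial reduction, the use of bijectivity of $\Tl,\Tr$, and the left--right symmetry for the second identity --- is correct and agrees with the paper.
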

\begin{proof}
  We only prove the first equation.  Let $a\in A$ and $b\otimes c\in
  \AlA$ and write
  \begin{align*}
    b\otimes c=\sum_{i} \Delta_{B}(d_{i})(1 \otimes e_{i}) = \sum_{j}
    \Delta_{B}(f_{j}) (g_{j} \otimes 1)
  \end{align*}
  with $d_{i},e_{i},f_{j},g_{j} \in A$.  Then  $(\psi \underset{\mu_B}{\otimes} \phi)(\Delta_{B}(a)(b\otimes c))$ is equal to
  \begin{align*}
    \sum_{i} (\psi \underset{\mu_B}{\otimes} \phi)(\Delta_{B}(ad_{i})(1 \otimes e_{i})))
    &= \sum_{i} \phi(\bpsib(ad_{i}) e_{i}) =
\sum_{i}    \psi(ad_{i}\bphi(e_{i}))
  \end{align*}
  because $\psi$ is a right integral, and to
  \begin{align*}
    \sum_{j} (\psi \underset{\mu_B}{\otimes} \phi)(\Delta_{B}(af_{j})(g_{j}\otimes 1))) &=
    \sum_{j} \psi(\cphic(af_{j})g_{j}) =
\sum_{j}    \phi(af_{j}\cpsi(g_{j}))
  \end{align*}
because $\phi$ is a left integral. Since the maps $\Tl$ and $\Tr$ are bijective, we can conclude that $(A \bphi(A)) \cdot \psi = (A
  \cpsi(A)) \cdot \phi$.
\end{proof}
The preceding result suggests to consider the following non-degeneracy condition.
\begin{definition} \label{definition:full}
  Let $\mathcal{A}$ be a regular   multiplier Hopf algebroid  with an antipodal base weight $(\mu_{B},\mu_{C})$.
We call  a left integral $\phi$ (right integral $\psi$) for 
$(\mathcal{A},\mu_{B},\mu_{C})$
  \emph{full} if $\bphi$ and $\phib$ ($\cpsi$ and $\psic$,
  respectively) are surjective.  
\end{definition}
\begin{remark}\label{remark:full}
  Note that $\omega$ is a full left or right integral, then the right
  or left integrals $\omega \circ S$ and $\omega \circ S^{-1}$ are
  full again by Lemma \ref{lemma:base} (2).
\end{remark}
We obtain the following corollaries, which involve the relation \eqref{eq:lesssim} on $\dA$:
\begin{proposition} \label{proposition:uniqueness-full}
  Let $\omega$ and $\omega'$ be   left or right integrals  on a regular multiplier Hopf algebroid $\mathcal{A}$ with a fixed antipodal base weight.
   If $\omega$ is full, then $\omega' \lesssim \omega$.
 \end{proposition}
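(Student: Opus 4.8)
The plan is to derive the statement from Lemma~\ref{lemma:uniqueness-phi-psi} together with the surjectivity that fullness provides, and then to bridge left and right integrals through the antipode.

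First I would treat the opposite-type case. Suppose $\omega$ is a full \emph{left} integral and $\psi$ is an arbitrary \emph{right} integral. Applying Lemma~\ref{lemma:uniqueness-phi-psi} with $\phi:=\omega$ gives
\[
(A\,\bomega(A))\cdot \psi = (A\,\cpsi(A))\cdot \omega, \qquad
\psi\cdot(\omegab(A)\,A) = \omega\cdot(\psic(A)\,A).
\]
Fullness of $\omega$ means $\bomega$ and $\omegab$ are surjective, so $\bomega(A)=\omegab(A)=B$; since $\bsA,\btA,\cAs,\cAt$ are idempotent and $S_{B}(B)=C$, $S_{C}(C)=B$, we have $AB=BA=AC=CA=A$. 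Hence $A\,\bomega(A)=A$ and $\omegab(A)\,A=A$, so the left-hand sides are $A\psi$ and $\psi A$. The right-hand sides lie in $A\omega$ and $\omega A$, because $\cpsi(A),\psic(A)\subseteq C\subseteq M(A)$ give $A\,\cpsi(A)\subseteq A$ and $\psic(A)\,A\subseteq A$. Thus $A\psi\subseteq A\omega$ and $\psi A\subseteq \omega A$, i.e. $\psi\lesssim\omega$. Symmetrically, taking $\psi:=\omega$ to be a full \emph{right} integral and $\phi$ an arbitrary \emph{left} integral in Lemma~\ref{lemma:uniqueness-phi-psi} (so that fullness now asserts surjectivity of $\comega$ and $\omegac$), the same computation yields $\phi\lesssim\omega$.

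It remains to remove the hypothesis that $\omega$ and $\omega'$ have opposite type. By Corollary~\ref{corollary:integrals-bimodule-antipode}(2) the maps $\phi\mapsto\phi\circ S^{\pm 1}$ interchange left and right integrals, and by Remark~\ref{remark:full} they preserve fullness. Consider the case that $\omega$ is a full left integral and $\omega'$ a left integral. Then $\omega\circ S$ is a full right integral, so the second (symmetric) statement above gives $\omega'\lesssim \omega\circ S$, while the first statement, applied to the right integral $\omega\circ S$ and the full left integral $\omega$, gives $\omega\circ S\lesssim\omega$. Since $\lesssim$ is a preorder, transitivity yields $\omega'\lesssim\omega$. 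The three remaining combinations are handled the same way: if $\omega$ and $\omega'$ are of opposite type the conclusion is one of the two statements of the first paragraph, and if they are both right integrals one routes through the full left integral $\omega\circ S^{-1}$ exactly as above.

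The only substantive ingredient is Lemma~\ref{lemma:uniqueness-phi-psi}; once it is available the argument is essentially bookkeeping. I expect the main obstacle to be conceptual rather than computational: the lemma compares a left integral with a right integral, so two integrals of the same type can never be compared by a single application, and the comparison must be routed through $S$, using that $S$ both interchanges the two types (Corollary~\ref{corollary:integrals-bimodule-antipode}) and preserves fullness (Remark~\ref{remark:full}), together with transitivity of $\lesssim$. The minor technical point is the bookkeeping with the $M(A)$-actions on $\dA$ and the identities $AB=BA=AC=CA=A$, which are exactly what collapse $A\,\bomega(A)$ and $\omegab(A)\,A$ to $A$; these come from idempotency of $\bsA,\btA$ in the left bialgebroid and of $\cAs,\cAt$ in the right bialgebroid.
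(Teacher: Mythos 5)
Your proof is correct and takes essentially the same route as the paper, whose own proof is just the one-line observation that the proposition follows from Lemma~\ref{lemma:uniqueness-phi-psi}. You have accurately supplied the details that the paper leaves implicit: using idempotency of the module structures to collapse $A\,\bphi(A)$ and $\phib(A)A$ to $A$, and routing the comparison of two integrals of the same type through $\omega\circ S^{\pm1}$ (full by Remark~\ref{remark:full}, of the opposite type by Corollary~\ref{corollary:integrals-bimodule-antipode}) together with transitivity of $\lesssim$.
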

 \begin{proof}
   This follows immediately from the preceding result.
 \end{proof}
\begin{proposition} \label{proposition:full}
For every full left integral  $\phi$ and every full right integral
$\psi$ on a regular multiplier Hopf algebroid $\mathcal{A}$ with a fixed antipodal base weight, the partial integrals $\cphic$ and $\bpsib$ are surjective.
\end{proposition}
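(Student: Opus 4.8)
The plan is to deduce both surjectivity statements from the first identity of Lemma~\ref{lemma:uniqueness-phi-psi},
\[
  (A\bphi(A))\cdot\psi = (A\cpsi(A))\cdot\phi,
\]
by comparing the left $B$- and the left $C$-factorizations of the two subspaces of $\dA$ occurring on either side. The second identity of that lemma will not be needed, and the argument will use the hypotheses that $\phi$ is full and $\psi$ is full in a symmetric way.

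First I would note that every functional of the form $d\cdot\psi$ with $d\in A$ carries explicit left $B$- and left $C$-factorizations. Since $\bpsib$ is left $B$-linear, $\cpsi$ is left $C$-linear, and $\psi=\mu_{B}\circ\bpsib=\mu_{C}\circ\cpsi$, one computes $(d\cdot\psi)(xb)=\mu_{B}(x\,\bpsib(bd))$ and $(d\cdot\psi)(yb)=\mu_{C}(y\,\cpsi(bd))$ for all $x\in B$, $y\in C$, $b\in A$; faithfulness of $\mu_{B}$ and $\mu_{C}$ then forces $b\mapsto\bpsib(bd)$ and $b\mapsto\cpsi(bd)$ to be the \emph{unique} such factorizations. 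The same holds for $e\cdot\phi$, whose factorizations are $b\mapsto\bphi(be)$ and $b\mapsto\cphic(be)$. By this uniqueness the two factorization operations extend linearly to the subspaces $(A\bphi(A))\cdot\psi$ and $(A\cpsi(A))\cdot\phi$, so I may apply them to the displayed equality and compare the linear spans of the resulting ranges. This yields
\[
  \cpsi(A\bphi(A))=\cphic(A\cpsi(A))
  \qquad\text{and}\qquad
  \bpsib(A\bphi(A))=\bphi(A\cpsi(A)).
\]

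Finally I would feed in fullness together with idempotency of $A$ as a module. Since $\phi$ is full we have $\bphi(A)=B$, hence $A\bphi(A)=AB=A$; since $\psi$ is full we have $\cpsi(A)=C$, hence $A\cpsi(A)=AC=A$. The two identities above therefore collapse to $\cpsi(A)=\cphic(A)$ and $\bpsib(A)=\bphi(A)$, and as $\cpsi(A)=C$ and $\bphi(A)=B$ this gives precisely $\cphic(A)=C$ and $\bpsib(A)=B$, i.e.\ surjectivity of both partial integrals.

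The only delicate point is the bookkeeping in the second paragraph: one must verify that the left $B$- and left $C$-factorizations are genuinely well defined as linear maps on the relevant subspaces, which is exactly where faithfulness of the base weight enters, and that passing from the equality of subspaces to the equality of the spans of the factorization ranges is legitimate. I expect no real obstacle beyond this; the antipodal symmetry (Remark~\ref{remark:full} together with Lemma~\ref{lemma:base}(2)) shows that the conclusion for $\cphic$ and the conclusion for $\bpsib$ are in fact equivalent, so that proving either one already implies the other.
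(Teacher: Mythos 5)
Your argument is correct and follows essentially the same route as the paper: both rest on the identity of Lemma~\ref{lemma:uniqueness-phi-psi}, reduce it via fullness to an equality of subspaces of $\dA$, and then exploit uniqueness of the $B$- and $C$-valued factorizations (faithfulness of $\mu_{B},\mu_{C}$) to transfer surjectivity from $\cpsi$ and $\bphi$ to $\cphic$ and $\bpsib$. The only cosmetic difference is that the paper pairs $\phi$ with the full right integral $\phi\circ S$ and compares individual elements of $A\cdot\phi=A\cdot(\phi\circ S)$, whereas you pair $\phi$ with the given $\psi$ and compare spans of factorization ranges; both are legitimate.
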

\begin{proof}
Suppose that $\phi$ is a full left integral. Then $\psi':=\phi
   \circ S$ is a full right integral and  $A \phi = A \psi'$ by Proposition
   \ref{proposition:uniqueness-full}. Let $a,b\in A$ and choose $c\in A$ such that $c\cdot \phi=b\cdot \psi'$. Then $_{C}\psi'(ab)=\cphic(ac)$. Therefore, $C={_{C}\psi'(A)}=\cphic(A)$.  A similar argument shows that
   $\bpsib(A)=B$ for every right integral $\psi$.
\end{proof}

To show that  integrals are unique up to scaling, we need some further preparations.
\begin{lemma}\label{lemma:integrals-modular-base}
  Let $\phi$ be a left and $\psi$ a right integral for a  regular multiplier Hopf algebroid   $\mathcal{A}$ with a modular base weight. Then for all $x\in B$ and $y\in C$,
  \begin{align*}
    \phi  \cdot y&=  S^{2}(y)\cdot \phi  &&\text{and} & x\cdot \psi &=\psi\cdot S^{2}(x).
  \end{align*}
\end{lemma}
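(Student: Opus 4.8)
The plan is to unwind the $M(A)$-bimodule action on $\dA$ and reduce both identities to pointwise statements, then feed in just two structural facts. By the conventions of the Preliminaries, $(\phi\cdot y)(a)=\phi(ya)$ and $(S^{2}(y)\cdot\phi)(a)=\phi(aS^{2}(y))$, so the first claim amounts to $\phi(ya)=\phi(aS^{2}(y))$ for all $y\in C$, $a\in A$; likewise the second amounts to $\psi(ax)=\psi(S^{2}(x)a)$ for all $x\in B$, $a\in A$. The two inputs are: first, since $\phi$ is a total left integral it has the form $\phi=\mu_{C}\circ\cphic$ with $\cphic\in\Hom(\cAc,\cCc)$, hence $\cphic$ is an ordinary $C$-bimodule map, and similarly $\psi=\mu_{B}\circ\bpsib$ with $\bpsib$ an ordinary $B$-bimodule map; second, the antipode restricts to the base maps, $S|_{B}=S_{B}$ and $S|_{C}=S_{C}$ (a consequence of Theorem~\ref{tm:hopf-characterization}~(1) together with non-degeneracy of $A$), so that $S^{2}|_{C}=S_{B}S_{C}=\sigma_{C}$ and $S^{2}|_{B}=S_{C}S_{B}=\sigma_{B}^{-1}$.

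For the first identity I would compute, for $y\in C$ and $a\in A$, using in turn the $C$-bimodule property of $\cphic$, the modularity $\mu_{C}(uv)=\mu_{C}(v\sigma_{C}(u))$, and once more the $C$-bimodule property,
\begin{align*}
\phi(ya)=\mu_{C}(\cphic(ya))=\mu_{C}(y\,\cphic(a))=\mu_{C}(\cphic(a)\,\sigma_{C}(y))=\mu_{C}(\cphic(a\,\sigma_{C}(y)))=\phi(a\,\sigma_{C}(y)).
\end{align*}
Since $\sigma_{C}(y)=S^{2}(y)$, this is exactly $\phi(ya)=\phi(aS^{2}(y))$, i.e. $\phi\cdot y=S^{2}(y)\cdot\phi$.

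The second identity is entirely parallel. For $x\in B$ and $a\in A$, using the $B$-bimodule property of $\bpsib$, the modularity $\mu_{B}(uv)=\mu_{B}(v\sigma_{B}(u))$, and the relation $\sigma_{B}(S^{2}(x))=x$ (which holds because $S^{2}|_{B}=\sigma_{B}^{-1}$),
\begin{align*}
\psi(S^{2}(x)a)=\mu_{B}(\bpsib(S^{2}(x)a))=\mu_{B}(S^{2}(x)\,\bpsib(a))=\mu_{B}(\bpsib(a)\,\sigma_{B}(S^{2}(x)))=\mu_{B}(\bpsib(a)\,x)=\psi(ax),
\end{align*}
so $x\cdot\psi=\psi\cdot S^{2}(x)$. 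The only non-formal ingredient is the identification $\sigma_{C}=S^{2}|_{C}$ and $\sigma_{B}^{-1}=S^{2}|_{B}$; once the restrictions $S|_{B}=S_{B}$, $S|_{C}=S_{C}$ are in hand, the rest is a two-line manipulation. I therefore expect no genuine obstacle beyond keeping the left/right module conventions and the direction of $\sigma_{B},\sigma_{C}$ straight.
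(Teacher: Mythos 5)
Your proof is correct and is essentially the paper's own argument: both reduce the identities to $\phi(ya)=\mu_{C}(y\,\cphic(a))=\mu_{C}(\cphic(a)S^{2}(y))=\phi(aS^{2}(y))$ using the $C$-bimodule property of $\cphic$, the modularity $\sigma_{C}=S_{B}S_{C}=S^{2}|_{C}$, and the analogous computation for $\psi$. The extra bookkeeping you do with $\sigma_{B}^{-1}=S^{2}|_{B}$ is exactly what the paper leaves implicit.
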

\begin{proof}
  We only prove the first assertion. If $(\mu_{B},\mu_{C})$ denotes the base weight and $a\in A$, $y\in C$, then
$ \phi(ya) = \mu_{C}(y\cphic(a)) = \mu_{C}(\cphic(a)S^{2}(y)) = \phi(aS^{2}(y))$.
\end{proof}

To prove  the desired uniqueness result for integrals, we need a further assumption. 

Let $M$ be a right module over an algebra $D$. Recall that $M$ is called \emph{firm}  if the multiplication map induces an isomorphism  $M_{D} \otimes {_{D}D} \to M$, and  \emph{locally projective} \cite{zimmermann-huisgen} if for every finite number of elements $m_{1},\ldots,m_{k} \in M$, there exist finitely many $\upsilon_{i} \in\Hom(M_{D},D_{D})$ and $e_{i} \in \Hom(D_{D},M_{D})$ such that $m_{j} = \sum_{i} e_{i}(\upsilon_{i}(m_{j}))$ for all $j=1,\ldots,k$. The corresponding definition for left modules is obvious.  The algebra $D$ is firm if it is so as a right (or, equivalently, as a left) module over itself.
The following results may be well-known, but we could not find a reference and leave the straightforward proof to the reader.
\begin{lemma} \label{lemma:projective}
  Let $D$ be a firm algebra.
  \begin{enumerate}
  \item  Every idempotent, locally projective right $D$-module is firm.
  \item Let $M_{D}$ be a locally projective right $D$-module, $_{D}N$  a firm left $D$-module,
 and suppose that the set of maps $P \subseteq
    \Hom({_{D}N},{_{D}D})$ separates the points of $N$. Then the slice
    maps
    \begin{align*}
      \id \otimes \omega \colon M_{D} \otimes {_{D}N} \to M, \quad
      m\otimes n \mapsto m\omega(n),
    \end{align*}
    where $\omega \in P$, separate the points of $M_{D} \otimes
    {_{D}N}$.
  \end{enumerate}
\end{lemma}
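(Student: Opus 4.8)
The plan for both parts is to exploit local projectivity of $M_{D}$ through a single mechanism. Given finitely many elements $m_{1},\dots,m_{k}\in M$, local projectivity furnishes maps $\upsilon_{1},\dots,\upsilon_{n}\in\Hom(M_{D},D_{D})$ and $e_{1},\dots,e_{n}\in\Hom(D_{D},M_{D})$ with $m_{j}=\sum_{i}e_{i}(\upsilon_{i}(m_{j}))$ for all $j$. Since these maps are right $D$-linear, the assignments $\upsilon_{i}\otimes\id$ and $e_{i}\otimes\id$ descend to well-defined maps on the $D$-balanced tensor products in question, and the dual-basis relation then yields $\sum_{i}(e_{i}\otimes\id)(\upsilon_{i}\otimes\id)(u)=u$ for any $u$ admitting a representative of the form $\sum_{j}m_{j}\otimes(-)$. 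This reduces each statement to a computation over the base algebra $D$, where firmness supplies injectivity of the relevant multiplication map.

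For (1), surjectivity of the multiplication map $M_{D}\otimes{_{D}D}\to M$ is immediate from idempotence, $MD=M$. For injectivity I would take $u=\sum_{j}m_{j}\otimes d_{j}$ with $\sum_{j}m_{j}d_{j}=0$, apply the mechanism above to the finite set $\{m_{j}\}$, and observe that $(\upsilon_{i}\otimes\id)(u)=\sum_{j}\upsilon_{i}(m_{j})\otimes d_{j}\in{_{D}D}\otimes{_{D}D}$ is carried by multiplication to $\sum_{j}\upsilon_{i}(m_{j})d_{j}=\upsilon_{i}(\sum_{j}m_{j}d_{j})=0$, using right $D$-linearity of $\upsilon_{i}$. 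Since $D$ is firm, multiplication $D\otimes_{D}D\to D$ is injective, so $(\upsilon_{i}\otimes\id)(u)=0$ for each $i$, and therefore $u=\sum_{i}(e_{i}\otimes\id)(\upsilon_{i}\otimes\id)(u)=0$.

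For (2) I would run the identical scheme with $N$ in place of the second factor. Starting from $u=\sum_{j}m_{j}\otimes n_{j}$ with $(\id\otimes\omega)(u)=\sum_{j}m_{j}\omega(n_{j})=0$ for all $\omega\in P$, it suffices to show $(\upsilon_{i}\otimes\id)(u)=\sum_{j}\upsilon_{i}(m_{j})\otimes n_{j}=0$ in ${_{D}D}\otimes{_{D}N}$. By firmness of ${_{D}N}$ the multiplication $D\otimes_{D}N\to N$ is injective, so it is enough to check $\sum_{j}\upsilon_{i}(m_{j})n_{j}=0$ in $N$; and since $P$ separates the points of $N$, it is enough to verify this after applying each $\omega\in P$. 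Left $D$-linearity of $\omega$ together with right $D$-linearity of $\upsilon_{i}$ gives $\omega(\sum_{j}\upsilon_{i}(m_{j})n_{j})=\sum_{j}\upsilon_{i}(m_{j})\omega(n_{j})=\upsilon_{i}(\sum_{j}m_{j}\omega(n_{j}))=\upsilon_{i}((\id\otimes\omega)(u))=0$, which closes the argument as before.

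The only points requiring care are the well-definedness of the maps $\upsilon_{i}\otimes\id$ and $e_{i}\otimes\id$ on the $D$-balanced tensor products, which rests on the right $D$-linearity of the $\upsilon_{i}$ and $e_{i}$, and the bookkeeping that the dual-basis relation holds only for the finite set of $m_{j}$ occurring in the chosen representative of $u$, so the maps must be selected only after fixing that representative. Everything else is a direct computation, which is why the proof is regarded as routine; the genuine content is simply the reduction to firmness of $D$ (respectively of ${_{D}N}$) by means of local projectivity, with the separating family $P$ entering only to detect vanishing in $N$.
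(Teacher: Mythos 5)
Your proof is correct, and the dual-basis mechanism you use (local projectivity of $M_{D}$ giving $m_{j}=\sum_{i}e_{i}(\upsilon_{i}(m_{j}))$, followed by reduction to injectivity of the multiplication map out of $D\otimes_{D}D$ resp.\ $D\otimes_{D}N$) is exactly the standard argument the authors presumably intend, since the paper explicitly leaves the proof to the reader. The two points of care you flag --- well-definedness of $\upsilon_{i}\otimes\id$ and $e_{i}\otimes\id$ on the balanced tensor products, and fixing the representative of $u$ before choosing the dual basis --- are indeed the only delicate steps, and you handle both correctly.
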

\begin{definition} \label{definition:projective}
  We call a regular multiplier Hopf algebroid $(A,B,C,S_{B},S_{C},\Delta_{B},\Delta_{C})$ \emph{locally projective} if 
  the algebras $B$ and $C$ are firm and the modules $\bA,\Ab,\cA,\Ac$ are locally projective.
\end{definition}
\begin{theorem} \label{theorem:integrals-uniqueness} 
Let $\mathcal{A}$ be a locally projective regular multiplier Hopf algebroid with modular base weight $(\mu_{B},\mu_{C})$.
Then for every full   and faithful left  integral $\phi$ for $(\mathcal{A},\mu_{B},\mu_{C})$,
    \begin{gather*}
      M(B) \cdot \phi = \{ \text{left integrals } \phi' \text{ for }
(\mathcal{A},\mu_{B},\mu_{C}) \} = \phi \cdot M(B),
\end{gather*}
 and for every full and faithful right   integral $\psi$ for $(\mathcal{A},\mu_{B},\mu_{C})$,
\begin{gather*}
      M(C) \cdot \psi = \{ \text{right integrals } \psi' \text{ for }
(\mathcal{A},\mu_{B},\mu_{C}) \} = \psi \cdot M(C).
    \end{gather*}
\end{theorem}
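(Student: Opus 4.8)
The plan is to prove the statement for left integrals; the case of right integrals is entirely symmetric, with $M(C)$ and the second characterization in Proposition~\ref{proposition:ergodic} replacing $M(B)$ and the first. One inclusion is immediate: since the base weight is modular, Corollary~\ref{corollary:integrals-bimodule-antipode}(1) shows that $M(B)\cdot\phi$ and $\phi\cdot M(B)$ consist of left integrals, so both sit inside the set of all left integrals. For the reverse inclusions I would fix an arbitrary left integral $\phi'$. Because $\phi$ is full, Proposition~\ref{proposition:uniqueness-full} yields $\phi'\lesssim\phi$, that is $A\cdot\phi'\subseteq A\cdot\phi$ and $\phi'\cdot A\subseteq\phi\cdot A$ in $\dA$. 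Since $\phi$ is faithful, the argument of Lemma~\ref{lemma:lesssim}(1), applied now to the $M(A)$-bimodule $\dA$ in place of $\dB$, produces linear maps $\delta,\delta'\colon A\to A$ with $a\cdot\phi'=\delta(a)\cdot\phi$ and $\phi'\cdot a=\phi\cdot\delta'(a)$; faithfulness forces $\delta\in R(A)$ and $\delta'\in L(A)$, and by idempotency of $A$ this rewrites as $\phi'(a)=\phi(a\delta)=\phi(\delta'a)$ for all $a\in A$.

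It then remains to show that $\delta$ and $\delta'$ in fact lie in $M(B)$, for then $\phi'=\delta\cdot\phi\in M(B)\cdot\phi$ and $\phi'=\phi\cdot\delta'\in\phi\cdot M(B)$, which together with the easy inclusion gives all three equalities. Here I would use the explicit description of the base furnished by Proposition~\ref{proposition:ergodic}: since $\phi$ is full, Proposition~\ref{proposition:full} makes $\cphic$ and $\bpsib$ surjective, so $M(B)=\{z\in M(A)\cap C':\Delta_{B}(z)=1\otimes z\}$. Accordingly I would first verify that the pair $(\delta',\delta)$ assembles into a two-sided multiplier of $A$ commuting with $C$, and then that $\Delta_{B}(\delta)=1\otimes\delta$. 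Both verifications rest on the left-invariance of $\phi$ and $\phi'$ together with the bijectivity of the canonical maps $\Tl,\Tr,\lT,\rT$: feeding the relation $\phi'(\cdot)=\phi(\cdot\,\delta)$ through the invariance identities of Proposition~\ref{proposition:partial-integrals} produces identities in the balanced tensor products $\AlA$ and $\ArA$, and local projectivity of the modules $\bA,\Ab,\cA,\Ac$ lets me strip off the slice maps by Lemma~\ref{lemma:projective}(2) and recover the sought multiplier equations. The modularity of the base weight is what makes the candidate sets consist of integrals at all (Corollary~\ref{corollary:integrals-bimodule-antipode}) and, via Lemma~\ref{lemma:base}(1), lets me track the factorization components of $\delta\cdot\phi$ and $\phi\cdot\delta'$ along the way.

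The main obstacle is precisely this last step, namely promoting $\delta\in R(A)$ and $\delta'\in L(A)$ to elements of $M(B)$. The abstract relation $\phi'\lesssim\phi$ by itself does not push $\delta$ into the base — in contrast to Lemma~\ref{lemma:lesssim}(2), which needed a modular automorphism of the functional, something not available for $\phi$ a priori — so one must genuinely invoke the invariance of both $\phi$ and $\phi'$, and it is exactly in converting the sliced identities in $\AlA$ and $\ArA$ into honest equalities of multipliers that local projectivity is indispensable. Once $\delta,\delta'\in M(B)$ is secured, the right-integral statement follows either by the symmetric argument using the description of $M(C)$ in Proposition~\ref{proposition:ergodic}, or by transporting along the antipode, since for a full faithful right integral $\psi$ the functionals $\psi\circ S^{\mp1}$ are full faithful left integrals by Remark~\ref{remark:full}.
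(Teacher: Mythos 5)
Your proposal follows essentially the same route as the paper's proof: the easy inclusion via Corollary \ref{corollary:integrals-bimodule-antipode}, the multipliers $\delta\in R(A)$, $\delta'\in L(A)$ obtained from Proposition \ref{proposition:uniqueness-full} together with faithfulness of $\phi$, commutation with $C$ from modularity of the base weight, and the promotion to $M(B)$ by feeding the invariance identities through the bijective canonical maps, separating points with slice maps via local projectivity (Lemma \ref{lemma:projective}), and invoking the characterization of $M(B)$ from Propositions \ref{proposition:full} and \ref{proposition:ergodic}. The only cosmetic difference is that the paper additionally records the intermediate relations $B\alpha\subseteq B$ and $\beta B\subseteq B$ and a short $\mu_{B}$-faithfulness computation before concluding, but the substance and the key ingredients are identical, and you correctly identify why Lemma \ref{lemma:lesssim}(2) cannot be used directly (no modular automorphism of $\phi$ is available at this stage).
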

\begin{proof}
  We only prove the assertion concerning a full and faithful left integral $\phi$.

  Every element of $M(B) \cdot \phi$ and of $\phi \cdot M(B)$ is a  left integral by Corollary  \ref{corollary:integrals-bimodule-antipode}.

  Conversely, assume that $\phi'$ is a left integral on  $\mathcal{A}$. By Proposition \ref{proposition:uniqueness-full} and Lemma \ref{lemma:lesssim},  there exist  a unique  multipliers $\alpha \in L(A)$ and  $\beta \in R(A)$ such that $\alpha \cdot \phi = \phi' = \phi \cdot
  \beta$.  The multiplier $\beta$ commutes with $C$ because $\phi$ is faithful and by Lemma \ref{lemma:integrals-modular-base},
  \begin{align*}
\phi\cdot \beta y = \phi' \cdot y = S^{2}(y) \cdot \phi' = S^{2}(y) \cdot \phi \cdot \beta  = \phi \cdot y\beta
\end{align*}
for all $y\in C$, and similarly, $\alpha$ commutes with $C$.

  Choose a full left integral $\psi$, for example, $\phi \circ S$.  We will show that for all $a,b\in A$,
  \begin{align} \label{eq:integrals-uniqueness-2}
a \bpsib(b)\alpha &= a\bpsib(b\alpha), &    \beta \bpsib(b) a &= \bpsib(\beta b)a.
  \end{align}
  These equations imply $B\alpha \subseteq B$ and $\beta B \subseteq   B$. We then conclude, similarly as in the proof of Lemma \ref{lemma:lesssim} (2), that
  \begin{align*}
    \mu_{B}(x\alpha S^{-2}(\phib(a))) =
    \mu_{B}(\phib(a)x\alpha) &= \phi(ax\alpha) \\ &= \phi(\beta ax) =
    \mu_{B}(\phib(\beta a)x) = \mu_{B}(xS^{-2}(\phi_{B}(\beta a)))
  \end{align*}
 for all $a\in A$, $x\in B$. Since $\mu_{B}$ is faithful and $\phi$ is  full, this relation implies $\alpha B \subseteq B$, that is, $\alpha
 \subseteq M(B)$. A similar argument shows that also $\beta \in M(B)$.

 Therefore, we only need to prove
 \eqref{eq:integrals-uniqueness-2}. We focus on the second equation; the first equation follows  similarly.  Let $a,b\in A$.  Since $\cphic' = \cphic \cdot \beta$ and 
 $\cphic$ are partial left inegrals,
  \begin{align*}
    (\id \otimes S_{B}^{-1} \circ \cphic \circ \beta)(\Delta_{B}(b)(a \otimes 1)) & =
    \cphic(\beta b)a = (\id \otimes S_{B}^{-1} \circ    \cphic)(\Delta_{C}(\beta b)(a \otimes 1)).
  \end{align*}
  Since $\Tl$ is surjective, we can conclude
  \begin{align*}
    (\id \otimes S_{B}^{-1} \circ \cphic \circ \beta)(\Delta_{B}(b)(a\otimes cd)) & =
    (\id \otimes S_{B}^{-1} \circ \cphic)(\Delta_{B}(\beta b)(a \otimes cd))
  \end{align*}
  for all $a,b,c,d \in A$. Since $\phi$ is faithful and $A$ non-degenerate, maps of the form
  $d \cdot \cphic$ separate the points of $A$. By assumption and Lemma \ref{lemma:projective}, slice maps of the form $\id \otimes S_{B}^{-1} \circ  (d\cdot \cphic)$ separate the points of
  $\AlA$. Consequently,
  \begin{align*}
    (\iota \otimes \beta)(\Delta_{B}(b)(1 \otimes c)) =
    \Delta_{B}(\beta b) (1\otimes c) \quad \text{ for all }a,b,d\in A.
  \end{align*}
Proposition \ref{proposition:ergodic} now implies $\beta \in M(B)$.
\end{proof}
For every full and faithful left integral, we thus obtain bijections between $M(B)$ and the space of left integrals, and between invertible multipliers of $B$ and full and faithful
left integrals. Of course, a similar remark applies right  integrals.

 \section{Counital base weights and measured multiplier Hopf algebroids}

We now introduce the missing last 
 assumption on our base weights, which is existence of a factorizable counit functional. This condition  appeared in a related context in \cite{daele:relation} and   
implies a much closer relation between the left and the right comultiplication of a regular multiplier Hopf algebroid than the mixed co-associativity condition alone. After a discussion of this condition, we  finally define the notion of a measured regular multiplier Hopf algebroid and look at  examples.

\subsection{Counital base weights} \label{subsection:counital}
  Let $\mathcal{A}=(A,B,C,S_{B},S_{C},\Delta_{B},\Delta_{C})$ be a regular multiplier Hopf algebroid with antipode $S$, left counit $\beps$ and right counit $\epsc$.
\begin{definition}
 A base weight $(\mu_{B},\mu_{C})$  for $\mathcal{A}$ is \emph{counital} if it is antipodal and satisfies
\begin{align}
  \label{eq:base-weight-counital}
  \mu_{B} \circ \beps = \mu_{C} \circ \epsc. 
\end{align}
 In this case, we call this composition the associated \emph{counit functional} and denote it by $\varepsilon$.
\end{definition}
We shall give examples in the next subsection and first discuss the condition above.
\begin{remarks}  \label{remarks:counital}
Let $(\mu_{B},\mu_{C})$ be an antipodal base weight.
  \begin{enumerate}
  \item Relation
 \eqref{eq:antipode-counits}  implies
  \begin{align} \label{eq:counit-antipode}
    (\mu_{B} \circ \beps) \circ S  &= \mu_{C} \circ \epsc, & (\mu_{C} \circ \epsc) \circ S &= \mu_{B} \circ \beps,
  \end{align}
so that \eqref{eq:base-weight-counital} is equivalent to invariance of either sides under the antipode. Thus, the counit functional associated to a counital base weight is invariant under the antipode.
\item Conversely, suppose that $\mathcal{A}$ is unital and \eqref{eq:base-weight-counital} holds. Then $(\mu_{B},\mu_{C})$ is antipodal because then $\beps(1_{A})=1_{B}$, $\epsc(1_{A})=1_{C}$  and hence
  \begin{align*}
  \mu_{C}(S_{C}^{-1}(x)) = \mu_{C}(\epsc(x)) = \mu_{B}(\beps(x)) =\mu_{B}(x)
\end{align*}
for all $x\in B$ and similarly $\mu_{B}(S_{B}^{-1}(y))=\mu_{C}(y)$ for all $y\in C$.
\item Suppose that the base weight $(\mu_{B},\mu_{C})$ is counital. Then the associated counit functional $\varepsilon$ is factorizable and the associated module maps are
  \begin{align} \label{eq:counit-functional-factorizations}
    \beps, \quad \epsb = S_{C} \circ \epsc, \quad \ceps = S_{B}\circ \beps, \quad \epsc.
  \end{align}
 Indeed, 
 \eqref{eq:left-counit-bimodule} implies 
$\eps(xa) = \mu_{B}(x\beps(a))$ and
  \begin{align*}
    \eps(ax) &=\mu_{C}(\epsc(ax))=\mu_{C}(S_{C}^{-1}(x)\epsc(a)) =
    \mu_{B}(S_{C}(\epsc(a))x)
  \end{align*}
  for all $a\in A$ and $x\in B$, and \eqref{eq:rt-counit-bimodule} implies $\eps(ay)=\mu_{C}(\epsc(a)y)$ and
  $\eps(ya)=\mu_{C}(yS_{B}(\epsb(a)))$ for all $a\in A$ and $y\in C$.  

 We can therefore form the relative tensor products $\eps \underset{\mu_{B}}{\otimes} \eps$ and $\eps \underset{\mu_{C}}{\otimes} \eps$, which are functionals on $\bAsA$ and  $\cAsA$, respectively, and
 \eqref{eq:counit-antipode} and \eqref{eq:counits-multiplicative} imply 
\begin{align*}
  (\eps \underset{\mu_{B}}{\otimes} \eps)(a\otimes b) =\varepsilon(ab) = (\eps \underset{\mu_{C}}{\otimes} \eps)(a\otimes b)
\end{align*}
for all $a,b\in A$.
\end{enumerate}
\end{remarks}
In the involutive case, \eqref{eq:base-weight-counital} is equivalent to several natural conditions.
\begin{remarks}
 Suppose that $\mathcal{A}$ is a multiplier Hopf $*$-algebroid and that $(\mu_{B},\mu_{C})$ is positive and antipodal.
\begin{enumerate}
\item Equation  \eqref{eq:base-weight-counital} is equivalent to self-adjointness of either sides  because by
  \eqref{eq:counit-antipode-involution},
  \begin{align*}
    \mu_{B} \circ \beps \circ * = \mu_{B} \circ * \circ S_{C} \circ
    \epsc = * \circ \mu_{C}\circ \epsc.
  \end{align*}
\item Equip $B$  with the inner product $\langle x|x'\rangle:=\mu_{B}(x^{*}x')$ and  consider the map
  \begin{align*}
    \pi_{B} \colon A \to \End(B), \quad
\pi_{B}(a)x &:= \beps(ax).
  \end{align*}
This is a homomorphism because of \eqref{eq:counits-multiplicative}.   Now,  \eqref{eq:base-weight-counital} holds if and only if
\begin{align*}
  \langle x|\pi_{B}(a)x'\rangle &=   \langle \pi_{B}(a^{*})x|x'\rangle
\end{align*}
for all $x,x'\in B$ and $a\in A$ because the inner products above are given by
 \begin{align*}
  \mu_{B}(x^{*}\beps(ax)) = (\mu_{B} \circ \beps)(x^{*}ax')
\end{align*}
and
\begin{align*}
 \mu_{B}(\beps(a^{*}x)^{*}x') =
 \mu_B(x'{}^{*}\beps(a^{*}x))^{*} = (\ast \circ \mu_{B}\circ \beps \circ \ast)(x^{*}ax'),
\end{align*}
respectively. Similarly, condition \eqref{eq:base-weight-counital}  can be reformulated in terms of the map $\pi_{C} \colon A^{\op} \to \End(C)$ given by $\pi_{C}(a^{\op})y := \epsc(ya)$  and the inner product on $C$ induced by $\mu_{C}$.
  \end{enumerate}
\end{remarks}

Recall that without loss of generality, one can assume the left and the right counit of  a regular multiplier Hopf algebroid to be surjective; see \cite[Lemma 3.6 and 3.7]{timmermann:regular}.
\begin{proposition} \label{proposition:counit-kms} 
Suppose that $\mathcal{A}$ is a regular multiplier Hopf algebroid with surjective left and surjective right counit. Then every counital base weight for $\mathcal{A}$ is modular.
\end{proposition}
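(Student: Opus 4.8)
The plan is to pass to the associated counit functional $\varepsilon := \mu_{B}\circ\beps = \mu_{C}\circ\epsc$ and to exploit that, by Remarks~\ref{remarks:counital}\,(3), it factorizes with the four module maps in \eqref{eq:counit-functional-factorizations}, none of whose derivations uses modularity. First I would record that, because the base weight is antipodal, the two asserted modular identities are equivalent: from $\mu_{B}=\mu_{C}\circ S_{B}$ and the anti-multiplicativity of $S_{B}$ one gets $\mu_{B}(xx')=\mu_{C}(S_{B}(x')S_{B}(x))$, and a short substitution shows that $\mu_{B}(xx')=\mu_{B}(x'\sigma_{B}(x))$ for all $x,x'\in B$ holds if and only if $\mu_{C}(uv)=\mu_{C}(v\sigma_{C}(u))$ for all $u,v\in C$. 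Hence it suffices to prove that $\sigma_{C}=S_{B}S_{C}$ is a modular automorphism of $\mu_{C}$.

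The heart of the argument is to evaluate $\varepsilon(ab)$ in two ways and use that counitality forces the results to agree. Using the multiplicativity of the left counit \eqref{eq:counits-multiplicative}, $\beps(ab)=\beps(a\beps(b))$, together with the right factorization $\varepsilon_{B}=S_{C}\circ\epsc$ of \eqref{eq:counit-functional-factorizations}, I obtain
\[
  \varepsilon(ab)=\varepsilon\bigl(a\,\beps(b)\bigr)=\mu_{B}\bigl(S_{C}(\epsc(a))\,\beps(b)\bigr).
\]
Dually, using $\epsc(ab)=\epsc(\epsc(a)b)$ and the left factorization ${}_{C}\varepsilon=S_{B}\circ\beps$, I obtain
\[
  \varepsilon(ab)=\varepsilon\bigl(\epsc(a)\,b\bigr)=\mu_{C}\bigl(\epsc(a)\,S_{B}(\beps(b))\bigr).
\]
Since $\mu_{B}\circ\beps=\mu_{C}\circ\epsc$ by \eqref{eq:base-weight-counital}, the two right-hand sides coincide.

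To finish, I would bring the first expression into $\mu_{C}$ using $\mu_{B}=\mu_{C}\circ S_{B}$ and $S_{B}(vw)=S_{B}(w)S_{B}(v)$, rewriting the identity as
\[
  \mu_{C}\bigl(\epsc(a)\,S_{B}(\beps(b))\bigr)=\mu_{C}\bigl(S_{B}(\beps(b))\,S_{B}S_{C}(\epsc(a))\bigr).
\]
Setting $u:=\epsc(a)$ and $w:=S_{B}(\beps(b))$ and using that $\epsc$ and $\beps$ are surjective and $S_{B}$ is bijective, $u$ and $w$ range over all of $C$, so $\mu_{C}(uw)=\mu_{C}(w\,\sigma_{C}(u))$ for all $u,w\in C$. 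This is exactly modularity of $\mu_{C}$ with $\sigma_{C}=S_{B}S_{C}$, and modularity of $\mu_{B}$ with $\sigma_{B}=S_{B}^{-1}S_{C}^{-1}$ then follows from the equivalence noted above.

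I expect the main obstacle to be purely organizational: keeping straight the four module-map factorizations of $\varepsilon$ and the antipodal twists $S_{B},S_{C}$, and checking that these factorizations are available without assuming modularity, so the argument stays non-circular. The conceptual point, which explains why the hypotheses are exactly the right ones, is that counitality $\mu_{B}\circ\beps=\mu_{C}\circ\epsc$ is precisely what equates the two evaluations of $\varepsilon(ab)$, while surjectivity of the two counits is precisely what upgrades the resulting identity from the images of $\epsc$ and $\beps$ to all of $C$.
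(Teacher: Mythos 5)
Your proof is correct, and it runs on the same engine as the paper's: counitality lets you switch between $\mu_{B}\circ\beps$ and $\mu_{C}\circ\epsc$ on a product, and the twist $S_{B}S_{C}$ (resp.\ $S_{B}^{-1}S_{C}^{-1}$) appears because the two counits absorb base-algebra elements through $S_{B}$ and $S_{C}$ in opposite ways. The packaging is genuinely different, though. The paper proves modularity of $\mu_{B}$ by one chain of equalities starting from $\mu_{B}(x\beps(a))$ with $x\in B$, $a\in A$: pull $x$ inside by left $B$-linearity of $\beps$, switch to $\mu_{C}\circ\epsc$ by counitality, replace $x$ by $S_{C}^{-1}(x)$ via \eqref{eq:counits-multiplicative}, switch back, and extract $S_{B}^{-1}S_{C}^{-1}(x)$ on the right; it therefore never needs the factorization maps of Remark \ref{remarks:counital}\,(3) and uses only surjectivity of $\beps$ for the $\mu_{B}$-statement (and only that of $\epsc$ for the $\mu_{C}$-statement). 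You instead evaluate $\varepsilon(ab)$ for $a,b\in A$ in two ways through the four factorizations \eqref{eq:counit-functional-factorizations}, which costs you both surjectivity hypotheses at once plus the (correct) preliminary reduction that, under antipodality, modularity of $\mu_{C}$ with $\sigma_{C}$ is equivalent to modularity of $\mu_{B}$ with $\sigma_{B}$. Your non-circularity concern is settled as you suspected: those factorizations are derived from the counit bimodule relations, \eqref{eq:counits-multiplicative} and antipodality alone. The only cosmetic point is that surjectivity gives that $\epsc(A)$ and $S_{B}(\beps(A))$ span $C$, so the identity $\mu_{C}(uw)=\mu_{C}(w\,S_{B}S_{C}(u))$ extends to all of $C$ by bilinearity — which is all you need.
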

\begin{proof}
  We only show that $S_{B}^{-1}S_{C}^{-1}$ is a modular automorphism of $\mu_{B}$. Let
  $x \in B$ and $a\in A$. Then \eqref{eq:counits-multiplicative} implies
  \begin{align*}
    \mu_{B}(x\beps(a)) = \mu_{B}(\beps(xa)) &= \mu_{C}(\epsc(xa)) \\ &=
    \mu_{C}(\epsc(S_{C}^{-1}(x)a)) \\ &= \mu_{B}(\beps(S_{C}^{-1}(x)a)) =
    \mu_{B}(\beps(a)S_{B}^{-1}(S_{C}^{-1}(x))).  \qedhere
  \end{align*}
\end{proof}
The preceding result fits with the theory of measured
    quantum groupoids, where the square of the antipode generates the
    scaling group and the latter restricts to the modular automorphism
    groups on the base algebras, that is, in the notation of
    \cite{enock:action}, $S^{2} = \tau_{i}$ and $\tau_{t} \circ \alpha
    = \alpha \circ \sigma^{\nu}_{t}$, $\tau_{t} \circ \beta = \beta
    \circ \sigma^{\nu}$.

\subsection{Measured regular multiplier Hopf algebroids}
\label{subsection:measured}
We have now gathered all ingredients and assumptions to define the main objects of interest of this article.
\begin{definition}
  A \emph{measured regular multiplier Hopf algebroid} consists of a regular multiplier Hopf algebroid $\mathcal{A}$, a  base weight $(\mu_{B},\mu_{C})$, a faithful partial right integral $\bpsib$ and a faithful partial left integral $\cphic$ such that 
  \begin{enumerate}
  \item the base weight is counital, and quasi-invariant with respect to $\bpsib$ and $\cphic$,
  \item the right integral $\psi:=\mu_{B} \circ \bpsib$ and the left integral $\phi:=\mu_{C} \circ \cphic$ are full.
  \end{enumerate}
We call it a \emph{measured multiplier Hopf $*$-algebroid} 
if additionally $\mathcal{A}$ is a multiplier Hopf $*$-algebroid and the functionals $\mu_{B},\mu_{C},\phi,\psi$ are positive.
\end{definition}
  Faithfulness of $\psi$ and  $\phi$, and hence also of $\bpsib$ and $\cphic$,  follows from the other assumptions if  $A$ is locally projective as a module over $B$ and $C$, as we shall see  in Theorem  \ref{theorem:integrals-faithful}.

Let us  consider our list of examples.
\begin{example} \label{example:base-weights-wmha}
Consider the  regular multiplier Hopf algebroid  associated to a regular weak multiplier Hopf  algebra $(A,\Delta)$ as  in Example \ref{example:wmha}.  Then the  base weight  in \eqref{eq:weak-base-weight} is counital because it is antipodal and the left and the right counit of $\mathcal{A}$ are given by $\beps =S^{-1} \circ \varepsilon_{t}$ and $\epsc = S^{-1} \circ \varepsilon_{s}$, respectively.  The associated counit functional  is just the counit of $(A,\Delta)$.
\end{example}

\begin{example} \label{example:measured-groupoid-functions}
Let $G$ be a locally compact, \'etale Hausdorff groupoid with a Radon measure  $\mu$
 on the space of units $G^{0}$ that has full support and is continuously quasi-invariant as  in Example \ref{example:quasi-groupoid-functions}. 
Then the multiplier Hopf $*$-algebroid $\mathcal{A}$  of functions on $G$ defined in Example \ref{example:hopf-groupoid-functions} together with the base weight $(\mu_{B},\mu_{C})$ and the partial integrals $\cphic$ and $\bpsib$  defined in \eqref{eq:integrals-groupoid-functions} forms a measured  multiplier Hopf $*$-algebroid. Indeed, the base weight $(\mu_{B},\mu_{C})$ is  counital   and its the associated counit functional is given by
\begin{align*}
  \varepsilon(f) = \int_{G^{0}} f|_{G^{0}} \intd \mu,
\end{align*}
and the integrals $\phi$ and $\psi$ are given easily seen to be full and faithful.
\end{example}

\begin{example} \label{example:measured-groupoid-algebra}
Let $G$ and $\mu$ be as above and consider the multiplier Hopf $*$-algebroid $\hat{\mathcal{A}}$
associated to the convolution algebra $\hat A=C_{c}(G)$ as in Example \ref{example:hopf-groupoid-algebra}. Then the base weight $(\hat\mu_{\hat B},\hat\mu_{\hat C})$ associated to $\mu$ as in Example \ref{example:quasi-groupoid-algebra}
is counital if and only if for every $f\in C_{c}(G)$, the integrals
\begin{align*}
  \hat\mu(_{\hat B}\hat\varepsilon(f))&=  \int_{G^{0}} \sum_{t(\gamma)=u} f(\gamma) \intd\mu(u) &&\text{and} &
  \hat\mu(\hat\varepsilon_{\hat C}(f)) &=  \int_{G^{0}} \sum_{s(\gamma) = u} f(\gamma)\intd\mu(u)
\end{align*}
coincide, that is, if and only if
 $\mu$ is  \emph{invariant} in  sense of \cite[Definition 3.12]{renault}.  If this condition holds, then together with the partial integral 
 \begin{align*}
   _{\hat B}\hat \psi_{\hat B}={_{\hat C}\hat \phi_{\hat C}}\colon C_{c}(G) \to C_{c}(G^{0}), \quad f\mapsto  f|_{G^{0}},
 \end{align*}
we obtain a measured multiplier Hopf $*$-algebroid again; see also Example \ref{example:integrals-groupoid-algebra} and \ref{example:quasi-groupoid-algebra}.

Invariance of the measure $\mu$ on $G^{0}$  is a rather strong condition, and in subsection \ref{subsection:modification-convolution}, we shall see that after a slight modification of the multiplier Hopf $*$-algebroid, it suffices to assume $\mu$  to be   continuously quasi-invariant in the sense defined in Example \ref{example:quasi-groupoid-functions}.
\end{example}

\begin{example} \label{example:measured-cb}
Consider  the tensor product $A=C\otimes B$ discussed in Example \ref{example:hopf-cb}.
In this case, an antipodal base weight $(\mu_{B},\mu_{C})$ is counital if and only if the two expressions
  \begin{align*}
    \mu_{B}(\beps(y\otimes x)) &=\mu_{B}(xS_{B}^{-1}(y)) &&\text{and} &  \mu_{C}(\epsc(y\otimes x)) &= \mu_{C}(S_{C}^{-1}(x)y)
  \end{align*}
  coincide for all $x\in B$, $y\in C$, and therefore if and only if it is modular. Suppose that this condition holds.  The maps
  \begin{align*}
     \cphic &:= \id \otimes \mu_{B} \colon A\to C, &
     \bpsib &:=\mu_{C} \otimes \id \colon A \to B
  \end{align*}
are  left- and right-invariant, respectively, by Example \ref{example:integrals-cb}, and the resulting integral   $\phi=\psi=\mu_{C} \circ \mu_{B}$ is full and faithful.  We therefore  obtain a measured regular multiplier Hopf algebroid again.
\end{example}

\begin{example} \label{example:measured-ch} 
    Consider the symmetric crossed product $A=C\# H$ introduced in Example \ref{example:hopf-ch},
 and  let $\mu$  be a faithful functional on the algebra $C$. Then the base weight $(\mu,\mu)$ is counital if and only if the expressions
  \begin{align*}
 \mu(\beps(hy)) &= \mu(\beps((h_{(1)} \actright y)h_{(2)})) = \mu(h\actright y) & &\text{and} & \mu(\epsc(hy)) &= \mu(y)\varepsilon_{H}(h)
  \end{align*}
 coincide for all $y\in C$ and $h\in H$, that is, if and only if $\mu$ is invariant under the action of $H$. Suppose that this condition holds, and that $\phi_{H}$ is a left  and $\psi_{H}$  is a right integral on $(H,\Delta_{H})$. With the partial integrals $\cphic$ and $\bpsib$ defined as in Example \ref{example:integrals-ch}, we obtain a measured regular multiplier Hopf algebroid; see also Example \ref{example:quasi-ch}.

  Again, a weaker  quasi-invariance condition on $\mu$ turns out to be sufficient after a suitable modification of the multiplier Hopf algebroid, see subsection \ref{subsection:modification-crossed}.
\end{example}
\begin{example} \label{example:measured-chb}
Consider the two-sided crossed product $A=C\# H \# B$ discussed in Example \ref{example:hopf-chb}. In this case, an antipodal base weight $(\mu_{B},\mu_{C})$ is counital if and only if the two expressions
\begin{align} \label{eq:chb-base-weights}
    \mu_{B}(\beps(xh y)) &= \mu_{B}(xS^{-1}(h\actright y)) && \text{and} &
  \mu_{C}(\epsc(xhy)) &=  \mu_{C}(S^{-1}(x\actleft h)y) 
\end{align}
coincide for all $x\in B$, $h\in H$ and $y\in C$. Suppose that  $(\mu_{B},\mu_{C})$ is modular. Then
\begin{align*}
   \mu_{B}(xS^{-1}(h\actright y)) = \mu_{C}((h\actright y)S(x)) = \mu_{C}(S^{-1}(x)(h\actright y)),
\end{align*}
and then equality of the expressions in \eqref{eq:chb-base-weights}  is equivalent to invariance of $\mu_{C}$ and $\mu_{B}$ under $H$ in the sense of  \eqref{eq:base-invariant}. Suppose that also this condition holds, and 
 that $\phi_{H}$ is a left and $\psi_{H}$ is a right integral on $(H,\Delta_{H})$. Together with  the partial integrals
 \begin{align*}
  \cphic(yhx) &:= y\phi_{H}(h) \mu_{B}(x) &&\text{and} &
  \bpsib(yhx) &:=\mu_{C}(y)\phi_{H}(h)x
 \end{align*}
defined in Example \ref{example:integrals-chb}, we obtain a measured regular multiplier Hopf algebroid; see also Example \ref{example:quasi-chb}.

 In subsection \ref{subsection:modification-twosided}, we shall treat the case where $\mu_{C}$ and $\mu_{C}$ are only  quasi-invariant with respect to the action of $H$.
 \end{example}

\section{The key results on integration}

With all the assumptions in place, we now establish the key results on integration listed in the introduction --- existence of a modular automorphism (subsection \ref{subsection:modular-automorphism}), existence of a modular element (subsection \ref{subsection:modular-element}), and faithfulness of the integrals (subsection \ref{subsection:faithful}). Along the way, we use and study left and right 
convolution operators naturally associated to factorizable functionals (subsection \ref{subsection:convolution}). Here, the counitality assumption on the base weight comes into play, and ensures that the convolutions formed with respect to the left and with respect to the right comultiplication coincide; see Corollary \ref{corollary:convolution}.

\subsection{Convolution operators and the modular automorphism}

 \label{subsection:modular-automorphism}

Let $\mathcal{A}$ be a regular multiplier Hopf algebroid with a counital base weight $(\mu_{B},\mu_{C})$. 
We  show that   integrals for $(\mathcal{A},\mu_{B},\mu_{C})$ which are full and faithful automatically admit modular automorphisms.

  As a tool, we  use the following left   convolution operators   associated to elements $ \bupsilon \in \dbA = \Hom(\bA,\bB)$ and $\upsilon'_{B} \in \dAb=\Hom(\Ab,\Bb)$,
\begin{align*}
  \begin{aligned}
    \lambda(\bupsilon) &\colon A \to L(A), & \lambda(\bupsilon)(a)b &=
    (\bupsilon \oo \id)(\Delta_{B}(a)(1
    \oo     b)), \\
    \lambda(\upsilonb') &\colon A \to R(A), & b\lambda(\upsilonb')(a)
    &= (S_{C}^{-1}\circ \upsilonb' \oo \id)((1 \oo b)\Delta_{C}(a)),
  \end{aligned}
\end{align*}
and the following right   convolution operators   associated to elements $\comega \in \dcA = \Hom(\cA,\cC)$ and $\omega'_{C} \in \dAc=\Hom(\Ac,\Cc)$,
associated to maps
\begin{align*}
  \begin{aligned}
     \rho(\comega) &\colon A \to L(A), &
    \rho(\comega)(a)b &= (\id \oo S_{B}^{-1}\circ
    \comega)(\Delta_{B}(a)(b \oo 1)), \\
    \rho(\omegac') &\colon A \to R(A),   &
    b\rho(\omegac')(a) &= (\id \oo \omegac')((b \oo
    1)\Delta_{C}(a)).
  \end{aligned}
 \end{align*}

 The notation above can be  ambiguous for elements $\bupsilon_{B}\in \Hom(\bAb,\bBb)
$ and $\comega_{C} \in \Hom(\cAc,\cCc)$, and we shall always write  $\rho(\bupsilon)$, $\rho(\upsilonb)$, $\lambda(\comega)$ or  $\lambda(\omegac)$ to indicate which convolution operator we mean. This ambiguity will be resolved in Lemma \ref{lemma:convolution} (4) below.

 Let us collect a few easy observations.   

For all  $\bupsilon,\upsilonb',\comega,\omegac'$ as above and $a,c\in A$, the multipliers
\begin{align*}
   \lambda(c \cdot \bupsilon)(a), \quad
   \lambda( \upsilonb'\cdot c)(a), \quad
  \rho(c \cdot \comega)(a), \quad
  \rho(\omegac'\cdot c)(a)
\end{align*}
lie in $A$;  for example,
$\lambda(c \cdot \bupsilon)(a)  = (\bupsilon \oo
  \id)(\Delta_{B}(a)(c \oo 1))$.

By Proposition \ref{proposition:partial-integrals},  a  map $\bpsib\in \dbAb$ is a partial right integral if and only if the following equivalent conditions hold,
\begin{align} \label{eq:convolution-right-invariance}
    \lambda(\bpsi) &= \bpsi, &
    \lambda(\psib) &= \psib, &
\lambda(a\cdot \bpsi)(b) &= S(\lambda(\psib \cdot b)(a))  \text{ for     all } a,b\in A.
\end{align}
 Similarly, $\cphic \in \dcAc$ is a partial left integral if and only if the following equivalent conditions hold:
\begin{align}
  \label{eq:convolution-left-invariance}
  \rho(\cphi) &= \cphi, & \rho(\phic) &= \phic, &
    \rho(\phic \cdot a)(b) &=    S(\rho(b\cdot \cphi)(a))     \text{ for     all } a,b\in A.
\end{align}
 Finally, \eqref{eq:left-counit} and \eqref{eq:right-counit}  imply\begin{align} \label{eq:convolution-counit}
  \lambda(\beps) &= \rho(\ceps) =  \lambda(\epsb) =
\rho(\epsc) = \id_{A},
\end{align}
where $\ceps = S_{B} \circ \beps$ and $\epsb = S_{C} \circ \epsc$. As before, $\varepsilon$ denotes the counit functional.
\begin{lemma} \label{lemma:convolution} 
Let $\mathcal{A}$ be a regular multiplier Hopf algebroid with a countial base weight $(\mu_{B},\mu_{C})$,
  let $\bupsilon \in A\cdot
  \dbA$, $ \upsilonb' \in \dAb \cdot A$, $ \comega \in A \cdot \dcA$ and $ \omegac' \in \dAc \cdot A$, and   write
  $\upsilon:=\mu_{B} \circ \bupsilon$, $\omega:=\mu_{C}\circ \comega$,
  $\upsilon':=\mu_{B}\circ \upsilon'_{B}$, $\omega':=\mu_{C} \circ
  \omega'_{C}$. Then
  \begin{enumerate}
  \item  $\eps \circ \lambda(\bupsilon) =   \upsilon$,
    $\eps \circ \rho(\comega) =  \omega$, $\eps \circ
    \lambda(\upsilonb') = \upsilon'$, 
    $\eps \circ
    \rho(\omegac')  = \omega'$;
  \item $\lambda(\bupsilon)$ and $\lambda(\upsilonb')$ commute     with both $\rho(\comega)$ and $\rho(\omegac')$;
  \item $\upsilon \circ \rho(\comega) = \omega \circ
    \lambda(\bupsilon)$,  $\upsilon \circ \rho(\omegac') =
    \omega' \circ \lambda(\bupsilon)$, $\upsilon' \circ
    \rho(\comega) = \omega \circ \lambda(\upsilonb')$ and
    $\upsilon' \circ \rho(\omegac') = \omega' \circ
    \lambda(\upsilonb')$.
  \item Suppose that  factorizable functionals
        separate the points of $A$. Then     $\lambda(\bupsilon)=\lambda(\upsilonb')$ whenever $\upsilon=\upsilon'$, and      $\rho(\comega)=\rho(\omegac')$ whenever  $\omega=\omega'$.
  \end{enumerate}
\end{lemma}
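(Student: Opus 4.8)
The plan is to prove the four assertions in order, deriving (3) and (4) formally from (1) and (2). Throughout I use that the hypotheses $\bupsilon\in A\cdot\dbA$, $\upsilonb'\in\dAb\cdot A$, $\comega\in A\cdot\dcA$, $\omegac'\in\dAc\cdot A$ force all four convolution operators to take values in $A$, as recorded just before the lemma, so every composite below is an honest map $A\to A$ and may be paired with functionals on $A$. For (1) I would first establish the $B$- and $C$-valued refinements. Writing $\bupsilon=c\cdot\bupsilon_{0}$ with $\bupsilon_{0}\in\dbA$ gives $\lambda(\bupsilon)(a)=(\bupsilon_{0}\oo\id)(\Delta_{B}(a)(c\oo 1))$; since the target map of $\mathcal{A}_{B}$ is $S_{B}$, this slice inserts $S_{B}(\bupsilon_{0}(\cdot))=t(\bupsilon_{0}(\cdot))$, so applying $\beps$ and using \eqref{eq:left-counit-bimodule} to push $\bupsilon_{0}$ past $\beps$ leaves $\beps(\lambda(\bupsilon)(a))=\bupsilon_{0}\big((\id\oo\beps)(\Delta_{B}(a)(c\oo 1))\big)=\bupsilon_{0}(ac)=\bupsilon(a)$ by \eqref{eq:left-counit}. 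Composing with $\mu_{B}$ yields $\eps\circ\lambda(\bupsilon)=\upsilon$, and symmetrically $\epsc\circ\rho(\comega)=\comega$, whence $\eps\circ\rho(\comega)=\omega$. The analogous computation for the primed operators runs through $\Delta_{C}$, the right counit \eqref{eq:right-counit}--\eqref{eq:rt-counit-bimodule} and $S_{C}^{-1}$, and there the antipodality of the base weight ($\mu_{C}\circ S_{C}^{-1}=\mu_{B}$) is exactly what is needed to conclude $\eps\circ\lambda(\upsilonb')=\upsilon'$ and $\eps\circ\rho(\omegac')=\omega'$.

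Assertion (2) is the technical core and where I expect the only real difficulty. For each of the four pairs I would exhibit the two composites as slices of one iterated comultiplication and invoke the appropriate co-associativity: $\lambda(\bupsilon)\circ\rho(\comega)$ and $\rho(\comega)\circ\lambda(\bupsilon)$ both equal the triple slice $(\bupsilon\oo\id\oo S_{B}^{-1}\circ\comega)$ applied to $(\Delta_{B}\oo\id)\Delta_{B}$ resp.\ $(\id\oo\Delta_{B})\Delta_{B}$, which agree by \eqref{eq:delta-coassociative}; the mixed pairs $\lambda(\bupsilon)\leftrightarrow\rho(\omegac')$ and $\lambda(\upsilonb')\leftrightarrow\rho(\comega)$ use \eqref{eq:compatible}, and $\lambda(\upsilonb')\leftrightarrow\rho(\omegac')$ uses \eqref{eq:right-delta-co-associative}. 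The obstacle is purely bookkeeping: the products $\Delta_{B}(a)(1\oo b)$ and the like live only in the balanced tensor products $\AlA,\ArA$, so the naive triple-slice manipulation must be carried out in the covered form in which \eqref{eq:delta-coassociative}, \eqref{eq:compatible} and \eqref{eq:right-delta-co-associative} are stated, inserting the auxiliary factors that make each expression land in an honest tensor product.

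Granting (1) and (2), assertion (3) is immediate. Using $\upsilon=\eps\circ\lambda(\bupsilon)$ and $\omega=\eps\circ\rho(\comega)$ from (1) together with the commutation from (2),
\[
\upsilon\circ\rho(\comega)=\eps\circ\lambda(\bupsilon)\circ\rho(\comega)=\eps\circ\rho(\comega)\circ\lambda(\bupsilon)=\omega\circ\lambda(\bupsilon),
\]
and the remaining three identities follow verbatim from the other three commutations and the corresponding part-(1) formulas.

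Finally, for (4) assume $\upsilon=\upsilon'$. Combining the first and third identities of (3) gives, for every $\comega\in A\cdot\dcA$ and $\omega=\mu_{C}\circ\comega$, the equality $\omega\circ\lambda(\bupsilon)=\upsilon\circ\rho(\comega)=\upsilon'\circ\rho(\comega)=\omega\circ\lambda(\upsilonb')$. It remains to check that the functionals $\mu_{C}\circ\comega$ with $\comega\in A\cdot\dcA$ separate the points of $A$. Any factorizable $\theta\in A^{\sqcup}$ satisfies $\theta=\mu_{C}\circ{_{C}\theta}$, and for $c\in A$ the left translate $c\cdot\theta$ (given by $(c\cdot\theta)(a)=\theta(ac)$) has $_{C}(c\cdot\theta)=c\cdot{_{C}\theta}\in A\cdot\dcA$, so $\mu_{C}\circ{_{C}(c\cdot\theta)}=c\cdot\theta$. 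If $(c\cdot\theta)(x)=\theta(xc)=0$ for all $c$ and all factorizable $\theta$, then $xc=0$ for every $c$ because factorizable functionals separate points, whence $x=0$ by non-degeneracy of $A_{A}$; thus the family $\{\mu_{C}\circ\comega:\comega\in A\cdot\dcA\}$ separates points. Since $\lambda(\bupsilon)(a),\lambda(\upsilonb')(a)\in A$, we conclude $\lambda(\bupsilon)=\lambda(\upsilonb')$. The statement $\rho(\comega)=\rho(\omegac')$ for $\omega=\omega'$ follows by the symmetric argument, combining the first and second identities of (3) and pairing against $\{\mu_{B}\circ\bupsilon:\bupsilon\in A\cdot\dbA\}$.
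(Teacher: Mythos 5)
Your proposal is correct and follows the same route as the paper: (1) and (2) from the counit property and the various (mixed) coassociativity conditions, (3) formally from (1) and (2) via $\upsilon\circ\rho(\comega)=\eps\circ\lambda(\bupsilon)\circ\rho(\comega)=\eps\circ\rho(\comega)\circ\lambda(\bupsilon)=\omega\circ\lambda(\bupsilon)$, and (4) from (3) together with the observation that non-degeneracy of $A$ and the separation hypothesis make functionals of the form $\mu_{C}\circ\comega$ (resp. $\mu_{B}\circ\bupsilon$) separate points. You in fact supply more detail than the paper, which dispatches (1) and (2) as immediate; your identification of where antipodality of the base weight enters in the primed cases of (1) and your explicit separation argument in (4) are both accurate.
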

\begin{proof}
  The relations in (1) and (2) follow immediately from the counit   property and the coassociativity conditions relating $\Delta_{B}$
  and $\Delta_{C}$, respectively. Combined, they imply
  \begin{align*}
    \upsilon \circ \rho(\comega) &= \eps \circ
    \lambda(\bupsilon) \circ \rho(\comega) = \eps \circ
    \rho(\comega) \circ \lambda(\bupsilon) = \omega \circ
    \lambda(\bupsilon)
  \end{align*}
  which is (3). Let us prove (4). Suppose that $\upsilon=\upsilon'$.  Then the assumption and  non-degeneracy of  $A$ imply that  functionals of the form like $\omega$ will
  separate the points of $A$, and by (3), $\omega \circ
  \lambda(\bupsilon) = \upsilon \circ \rho(\comega) = \upsilon' \circ
  \rho(\comega) = \omega \circ \lambda(\upsilonb')$, whence
  $\lambda(\bupsilon) = \lambda(\upsilonb')$. A similar argument   proves the assertion concerning $\rho(\comega)$ and
  $\rho(\omegac')$.
\end{proof}
We proceed with the study of convolution operators in the next subsection.

The next result is the key step towards the existence of modular automorphisms.
\begin{theorem}\label{theorem:modular}
  Let $\mathcal{A}$ be a regular multiplier Hopf algebroid with a counital base weight $(\mu_{B},\mu_{C})$. Then $A\cdot \phi=\phi\cdot A$ for every full left integral $\phi$ and $A\cdot \psi=\psi\cdot A$ for every full right integral $\psi$ for $(\mathcal{A},\mu_{B},\mu_{C})$.
\end{theorem}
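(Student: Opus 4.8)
The plan is to prove the statement for left integrals; the one for right integrals then follows formally. Indeed, if $\psi$ is a full right integral, then $\phi:=\psi\circ S^{-1}$ is a full left integral by Corollary~\ref{corollary:integrals-bimodule-antipode} and Remark~\ref{remark:full}, and since $S$ is an anti-automorphism with $S(A)=A$, a direct computation gives $a\cdot\psi=(\phi\cdot S(a))\circ S$ and $\psi\cdot a=(S(a)\cdot\phi)\circ S$ for all $a\in A$, so that $A\cdot\psi=\psi\cdot A$ is equivalent to $\phi\cdot A=A\cdot\phi$. Fix therefore a full left integral $\phi$ and put $\psi:=\phi\circ S$, a full right integral. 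The same computation yields $A\cdot\psi=(\phi\cdot A)\circ S$ and $\psi\cdot A=(A\cdot\phi)\circ S$, while Lemma~\ref{lemma:uniqueness-phi-psi}, combined with fullness, gives $A\cdot\psi=A\cdot\phi$ and $\psi\cdot A=\phi\cdot A$: here one uses $\bphi(A)=\phib(A)=B$ and $\cpsi(A)=\psic(A)=C$ together with $AB=AC=BA=CA=A$, the last equalities being the idempotency of the modules $\bsA,\btA,\cAs,\cAt$. Putting these together, $A\cdot\phi=(\phi\cdot A)\circ S$ and $\phi\cdot A=(A\cdot\phi)\circ S$, so it suffices to prove the single inclusion $\phi\cdot A\subseteq A\cdot\phi$: applying $\circ\,S$ to it then yields $A\cdot\phi\subseteq\phi\cdot A$ and hence the desired equality.

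To prove $\phi\cdot A\subseteq A\cdot\phi$, I would work with the convolution operators, which is where counitality of $(\mu_{B},\mu_{C})$ becomes essential. By Lemma~\ref{lemma:convolution}(1), for all $a,c\in A$ one has $\phi\cdot a=\eps\circ\lambda(\phib\cdot a)$ and $c\cdot\phi=\eps\circ\rho(c\cdot\cphic)$, where $\eps$ is the counit functional associated to the counital base weight. Thus $\phi\cdot A$ is described through the left convolution operators built from the $\Delta_{B}$-picture of $\phi$, whereas $A\cdot\phi$ is described through the right convolution operators built from its $\Delta_{C}$-picture; matching the two is exactly the point where counitality is used, since it forces $\mu_{B}\circ\beps$ and $\mu_{C}\circ\epsc$ to coincide and thereby makes the interchange relation~(3) of Lemma~\ref{lemma:convolution}, $\upsilon\circ\rho(\comega)=\omega\circ\lambda(\bupsilon)$, available.

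The heart of the argument is then, for fixed $a\in A$, to produce $c\in A$ with $\eps\circ\lambda(\phib\cdot a)=\eps\circ\rho(c\cdot\cphic)$, that is, $\phi\cdot a=c\cdot\phi$. For this I would combine the interchange relation~(3) with the strong-invariance relations \eqref{eq:convolution-right-invariance} and \eqref{eq:convolution-left-invariance}, using bijectivity of the canonical maps $\Tl,\Tr$ to convert the $\Delta_{B}$-side expression into a $\Delta_{C}$-side one and to cancel the auxiliary tensor legs, while surjectivity of $\cphic$ (from fullness, by Proposition~\ref{proposition:full}) ensures that $c$ can be found inside $A$. Applying $\eps$ to the resulting identity gives $\phi\cdot a=c\cdot\phi\in A\cdot\phi$, as required.

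I expect this last matching step to be the main obstacle. The difficulty is structural: multiplication by a general element of $A$ does not preserve factorizability, so neither $\phi\cdot a$ nor $a\cdot\phi$ is factorizable, and one cannot argue directly within the module-map formalism. The purpose of the convolution operators is precisely to reabsorb the stray factor into the comultiplication, and the purpose of counitality is to render the resulting left- and right-comultiplication descriptions compatible; carrying out the conversion between the two pictures, and cancelling the extra legs by bijectivity of the canonical maps, is the delicate bookkeeping on which the whole proof rests.
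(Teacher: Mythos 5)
Your reduction steps are correct and in fact slightly cleaner than the paper's: the paper proves the inclusion $\phi\cdot A\subseteq A\cdot\psi$ and then asserts that "a similar argument" gives the reverse, whereas your observation that $A\cdot\phi=(\phi\cdot A)\circ S$ and $\phi\cdot A=(A\cdot\phi)\circ S$ (via Lemma~\ref{lemma:uniqueness-phi-psi}, fullness and idempotency of the modules) genuinely reduces everything to a single inclusion. You have also correctly identified the entire toolbox the paper uses: the convolution operators, counitality entering through the interchange relation of Lemma~\ref{lemma:convolution}(3), the strong-invariance identities \eqref{eq:convolution-right-invariance} and \eqref{eq:convolution-left-invariance}, bijectivity of the canonical maps, and fullness.

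The gap is that the step you yourself flag as "the main obstacle" is precisely where the content of the theorem lies, and your sketch of it does not contain the mechanism that makes it work. A single application of the interchange relation to $\phi\cdot a=\eps\circ\lambda(\phib\cdot a)$ does not produce an element of $A\cdot\phi$; it only trades a composition $\upsilon'\circ\rho(\comega)$ for $\omega\circ\lambda(\upsilonb')$, and neither side is yet of the required form. What the paper actually does is evaluate the three-variable expression $((a\cdot\phi)\circ\lambda(\psib\cdot b))(c)$, with $\psi=\phi\circ S$, and apply the interchange relation \emph{three times}, alternating with the strong-invariance identities; the crucial trick, absent from your plan, is that after each use of strong invariance one is left with an antipode-shifted functional ($S(b)\cdot\phi$, then $S^{-1}(c)\cdot\psi$) which is \emph{not} in the correct factorizable form for the next interchange, and one must invoke the already-established equality $A\cdot\phi=A\cdot\psi$ to replace it by some $b'\cdot\psi$, resp.\ $c'\cdot\phi$, before continuing. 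The conclusion is then not reached by "producing $c$ for a fixed $a$" but by observing that the resulting identity $\phi(\lambda(\psib\cdot b)(c)\,a)=\psi(a\,\rho(c'\cdot\cphi)(b'))$ holds for all $a$, while the elements $\lambda(\psib\cdot b)(c)=(S_{C}^{-1}\circ\psib\otimes\id)((b\otimes 1)\Delta_{C}(c))$ span $AS_{C}^{-1}(\psib(A))=AC=A$ by surjectivity of $\lT$ and of $\psib$ (fullness of $\psi$, not surjectivity of $\cphic$ as you suggest). Without this iterated bootstrapping through $\psi$, the "delicate bookkeeping" you defer does not close up, so the proof is incomplete as it stands.
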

\begin{proof}
  We only prove the assertion concerning a full left integral $\phi$.
 Let $\psi:=\phi \circ   S$. Then $\psi$ is a full right integral and $A \cdot \phi = A \cdot  \psi$ by Proposition \ref{proposition:uniqueness-full}. We show that $\phi
  \cdot A \subseteq A \cdot \psi$, and a similar argument   proves the reverse inclusion.

  Let $a,b,c \in A$.   By Lemma \ref{lemma:convolution} (3),
  \begin{align*}
    ((a \cdot \phi) \circ \lambda(\psi_{B} \cdot b))(c) &=
    ((\psi \cdot b) \circ \rho(a \cdot \cphi))(c)  =
    ((S(b) \cdot \phi) \circ S  \circ \rho(a \cdot \cphi))(c).
  \end{align*}
  Choose $b' \in A$ with $S(b) \cdot \phi = b' \cdot \psi$ and use \eqref{eq:convolution-right-invariance} to rewrite the expression above in the form
\begin{align*}
  ((b' \cdot \psi) \circ \rho(\phic \cdot c))(a) &=
  ((\phi \cdot c) \circ \lambda(b'\cdot \bpsi))(a) \\ &=
  ((\phi \cdot c) \circ S \circ \lambda(\psib \cdot a))(b') =
  ((S^{-1}(c) \cdot \psi') \circ \lambda(\psib \cdot a))(b').
\end{align*}
 Choose $c' \in A$
with $S^{-1}(c) \cdot \psi = c' \cdot \phi$ and use   Lemma \ref{lemma:convolution} (3) again to rewrite this expression in the form
\begin{align*}
  ((c' \cdot \phi)  \circ \lambda(\psib\cdot a))(b') &=
  ((\psi \cdot a) \circ \rho(c' \cdot \cphi))(b').
\end{align*}
We thus obtain
\begin{align*}
  \phi(\lambda(\psib \cdot b)(c) a) &=
      ((a \cdot \phi) \circ \lambda(\bpsi \cdot b))(c) \\ &=
  ((\psi \cdot a) \circ \rho(c' \cdot \cphi))(b') =
  \psi(a \rho(c' \cdot \cphi)(b')).
\end{align*}
Here, $b$ and $c \in A$ were arbitrary, and the linear span of all elements of the form
\begin{align*}
  \lambda(\psib\cdot b)(c) &= (S_{C}^{-1} \circ \psib\oo \id)((b
  \oo 1)\Delta_{C}(c))
\end{align*}
is equal to $AS_{C}^{-1}(\psib(A))=AC=A$ because $\lT$ and $\psib$ are surjective. Thus,  $\phi \cdot A \subseteq A
\cdot \psi=A \cdot \phi$ and consequently $A\cdot \phi = \phi \cdot A$.  
\end{proof}
\begin{theorem} \label{theorem:modular-automorphism}
  Let $(\mathcal{A},\mu_{B},\mu_{C},\bpsib,\cphic)$ be a measured regular multiplier Hopf algebroid. Then $\phi$ and  $\psi$ admit modular automorphisms $\sigma^{\phi}$ and  $\sigma^{\psi}$, respectively, and
  \begin{align*}
  \sigma^{\phi}|_{C} &=S^{2}|_{C}, &
     \Delta_{B} \circ \sigma^{\phi} &= (S^{2} \otimes \sigma^{\phi})
    \circ \Delta_{B}, &
    \Delta_{C} \circ \sigma^{\phi} &= (S^{2} \otimes \sigma^{\phi})
    \circ \Delta_{C}, \\
  \sigma^{\psi}|_{B} &=S^{-2}|_{B}, &
    \Delta_{B} \circ \sigma^{\psi} &= (\sigma^{\psi} \otimes S^{-2})
    \circ \Delta_{B},  &
    \Delta_{C} \circ \sigma^{\psi} &= (\sigma^{\psi} \otimes S^{-2})
    \circ \Delta_{C}.    
  \end{align*}
  If $\mathcal{A}$ is locally projective, then
  $\sigma^{\phi}(M(B))=M(B)$ and $\sigma^{\psi}(M(C))=M(C)$. If these  equations hold, then
for all $x\in M(B)$, $y\in M(C)$ and $a\in A$, 
 \begin{align} \label{eq:bphi-phib}
     \phib(xa) &= (S^{2} \circ \sigma^{\phi})(x)\phib(a), &
     \bphi(ax) &= \bphi(a)(\sigma^{\phi} \circ S^{2})^{-1}(x), \\ \label{eq:cpsi-psic}
  \psic(ya) &= (S^{-2}\circ \sigma^{\psi})(y)\psic(a), &
  \cpsi(ay) &= \cpsi(a)(\sigma^{\psi} \circ S^{-2})^{-1}(y).
\end{align}
\end{theorem}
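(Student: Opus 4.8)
The plan is to split the statement into three layers: existence of the modular automorphisms together with their action on the base algebras, the two comultiplication intertwining relations, and finally the consequences under local projectivity.

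First I would settle existence and the restriction to the base. Since $\phi=\mu_{C}\circ\cphic$ with $\cphic$ and $\mu_{C}$ faithful, $\phi$ is faithful, and likewise $\psi=\mu_{B}\circ\bpsib$; as $\phi$ is moreover full, Theorem~\ref{theorem:modular} gives $A\cdot\phi=\phi\cdot A$, so by the criterion recalled in the Preliminaries $\phi$ admits a modular automorphism $\sigma^{\phi}$, characterised by $\sigma^{\phi}(a)\cdot\phi=\phi\cdot a$ and extended to $M(A)$ in the usual way; the same argument applied to $\psi$ yields $\sigma^{\psi}$. To identify $\sigma^{\phi}$ on $C$, I combine this characterisation with Lemma~\ref{lemma:integrals-modular-base}: for $y\in C$ one has $\sigma^{\phi}(y)\cdot\phi=\phi\cdot y=S^{2}(y)\cdot\phi$, so faithfulness of $\phi$ forces $\sigma^{\phi}|_{C}=S^{2}|_{C}$; symmetrically $x\cdot\psi=\psi\cdot S^{2}(x)=\sigma^{\psi}(S^{2}(x))\cdot\psi$ gives $\sigma^{\psi}|_{B}=S^{-2}|_{B}$, using that $S^{2}$ restricts to an automorphism $S_{C}S_{B}$ of $B$.

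The comultiplication relations are the main obstacle. I would prove $\Delta_{C}\circ\sigma^{\phi}=(S^{2}\otimes\sigma^{\phi})\circ\Delta_{C}$, and its $\Delta_{B}$-analogue, by reducing the operator identity to scalar identities on $A$. Multiplying by $(b\otimes 1)$ and slicing the second leg with the right convolution operators $\rho(\omegac')$ turns the desired equality into identities of the form $b\,\rho(\omegac')(\sigma^{\phi}(a))=S^{2}\bigl(S^{-2}(b)\,\rho(\omegac'\circ\sigma^{\phi})(a)\bigr)$, the $S^{2}$-twist on the first leg arising because $\sigma^{\phi}(ay)=\sigma^{\phi}(a)S^{2}(y)$ for $y\in C$. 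These scalar identities I would derive from the defining relation $\phi\cdot a=\sigma^{\phi}(a)\cdot\phi$ together with the strong-invariance relations \eqref{eq:convolution-left-invariance} and \eqref{eq:convolution-right-invariance} and the exchange formula Lemma~\ref{lemma:convolution}(3), which routes a left convolution through the counit functional into a right convolution and thereby introduces the antipode; the companion right integral $\psi=\phi\circ S$ is what produces the precise power $S^{2}$. Since $\phi$ is faithful and $A$ is non-degenerate, maps of the form $d\cdot\cphic$ separate the points of $A$, exactly as in the proof of Theorem~\ref{theorem:integrals-uniqueness}, so the scalar identities recover the operator identity. The genuinely delicate point, where I expect most of the work to lie, is the bookkeeping across the balanced tensor products $\AlA$ and $\ArA$: one must verify that $S^{2}\otimes\sigma^{\phi}$ respects the relevant module structures so that the slice manipulations are legitimate. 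Here the counitality hypothesis is essential, since it is what forces the left and the right convolution operators to agree (Corollary~\ref{corollary:convolution}) and lets a single computation deliver both the $\Delta_{B}$- and the $\Delta_{C}$-relation.

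Finally I would treat the locally projective case. There Theorem~\ref{theorem:integrals-uniqueness} gives $M(B)\cdot\phi=\phi\cdot M(B)$, so for $x\in M(B)$ the left integral $\phi\cdot x$ equals $x'\cdot\phi$ for a unique $x'\in M(B)$; by faithfulness $\sigma^{\phi}(x)=x'\in M(B)$, and applying this to $\sigma^{\phi,-1}$ yields $\sigma^{\phi}(M(B))=M(B)$, with the analogous statement for $\sigma^{\psi}$ and $M(C)$. Granting these, the explicit formulas \eqref{eq:bphi-phib} and \eqref{eq:cpsi-psic} follow from a direct computation: for $x\in M(B)$, $a\in A$ and $x'\in B$ the modular relation gives $\mu_{B}(\phib(xa)x')=\phi(xax')=\phi(ax'\sigma^{\phi}(x))=\mu_{B}(\phib(a)x'\sigma^{\phi}(x))$. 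Since the base weight is modular with $\sigma_{B}=S_{B}^{-1}S_{C}^{-1}$ and $S^{2}|_{B}=S_{C}S_{B}$, one has $\sigma_{B}(S^{2}(w))=w$ for all $w\in M(B)$, hence $\mu_{B}(S^{2}(w)u)=\mu_{B}(u\,\sigma_{B}(S^{2}(w)))=\mu_{B}(uw)$; taking $w=\sigma^{\phi}(x)$ and $u=\phib(a)x'$ and using faithfulness of $\mu_{B}$ gives $\phib(xa)=(S^{2}\circ\sigma^{\phi})(x)\phib(a)$. The remaining identities in \eqref{eq:bphi-phib} and \eqref{eq:cpsi-psic} are obtained in the same way, replacing $\phib$ by $\bphi$ and switching to $\psi$, $\sigma^{\psi}$, $M(C)$ and $\sigma_{C}=S_{B}S_{C}$ for the right-hand statements.
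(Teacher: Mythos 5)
Your proposal is correct and follows essentially the same route as the paper: existence of $\sigma^{\phi}$ from Theorem \ref{theorem:modular}, the restriction $\sigma^{\phi}|_{C}=S^{2}|_{C}$ from Lemma \ref{lemma:integrals-modular-base} plus faithfulness, the comultiplication relations from the strong-invariance identities for convolution operators combined with the modular relation and separation of points, local projectivity via Theorem \ref{theorem:integrals-uniqueness}, and the final formulas \eqref{eq:bphi-phib}--\eqref{eq:cpsi-psic} from modularity of the base weight (the paper cites Remark \ref{remark:bphi-phib}, which encapsulates exactly your last computation). The one cosmetic divergence is in the middle step: the paper gets the key scalar identity $\rho(\phi\cdot a)(\sigma^{\phi}(b))=S^{2}\bigl(\rho((\phi\cdot a)\circ\sigma^{\phi})(b)\bigr)$ by applying \eqref{eq:convolution-left-invariance} twice with the modular relation inserted in between, so the power $S^{2}$ arises from the double application of strong invariance rather than from the companion right integral $\psi=\phi\circ S$ or from Lemma \ref{lemma:convolution}(3) as you suggest.
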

\begin{proof}
From Theorem \ref{theorem:modular}, we conclude existence of a unique   bijection $\sigma^{\phi}\colon A \to A$ such that $\phi \cdot a =
  \sigma^{\phi}(a) \cdot \phi$ for all $a \in A$. This map is   easily seen to be an algebra automorphism.
 Lemma \ref{lemma:integrals-modular-base} implies that  $\sigma^{\phi}(y)=S^{2}(y)$  all $y\in C$. In particular, 
 the tensor product $S^{2} \otimes \sigma^{\phi}$ is well-defined on
  $\AlA$ and on $\ArA$.
  Two applications of  \eqref{eq:convolution-left-invariance}
  show that for all $a,b \in A$,
  \begin{align*}
    \rho(\phi \cdot a)(\sigma^{\phi}(b)) = S(\rho(\sigma^{\phi}(b)
    \cdot \phi)(a)) &=S(\rho(\phi \cdot b)(a)) \\
    &=S^{2}(\rho(a \cdot
    \phi)(b)) = S^{2}(\rho((\phi \cdot a) \circ \sigma^{\phi})(b)).
\end{align*}
Since $a \in A$ was arbitrary, we can conclude the desired formulas for   $\Delta_{B} \circ \sigma^{\phi}$ and $
    \Delta_{C} \circ \sigma^{\phi}$. 

If $\mathcal{A}$ is locally projective, then 
 the relation $\sigma^{\phi}(M(B))=M(B)$ follows     immediately from the relation $M(B) \cdot \phi = \phi \cdot M(B)$
    obtained in Theorem \ref{theorem:integrals-uniqueness}. 

The last equations follow easily from Remark \ref{remark:bphi-phib}.
\end{proof}
Let us look at our  examples again.
\begin{example} \label{example:modular-groupoid}
  Let $G$ be a locally compact, \'etale Hausdorff groupoid. Then  the function algebra $C_{c}(G)$ is commutative and hence every integral is tracial. The case of the convolution algebra will be considered in subsection \ref{subsection:modification-convolution}.
\end{example}
\begin{example} \label{example:modular-cb}
For the measured regular multiplier Hopf algebroid associated to  the tensor product $A=C\otimes B$ and suitable functionals $\mu_{B}$ and $\mu_{C}$ on $B$ and $C$ as in Example \ref{example:measured-cb}, we have $\phi = \psi = \mu_{C} \otimes \mu_{B}$ and  $\sigma^{\phi}=\sigma^{\psi}=S^{2} \otimes S^{-2}$.
\end{example}
\begin{example} \label{example:modular-ch}
For the measured regular multiplier Hopf algebroid associated to  a symmetric crossed  product $A=C\# H$, a faithful, $H$-invariant functional $\mu$ on $C$ and left and right integrals $\phi_{H}$ and $\psi_{H}$ of $(H,\Delta_{H})$ as in Example \ref{example:measured-ch}, the  integrals $\phi$ and $\psi$ are given by
\begin{align*}
  \phi(yh) &= \mu(y)\phi_{H}(h) = \phi(hy), &
  \psi(yh) &= \mu(y)\psi_{H}(h) = \psi(hy),
\end{align*}
for all $y\in C$ and $h\in H$,
see \eqref{eq:integrals-ch-symmetric}, and hence their modular automorphisms are given by
\begin{align*}
  \sigma^{\phi}(yh) &= y\sigma_{H}(h), & \sigma^{\psi}(yh) &= y\sigma'_{H}(h),
\end{align*}
where $\sigma_{H}$ and $\sigma'_{H}$ denote the modular automorphisms of $\phi_{H}$ and $\psi_{H}$.
\end{example}
\begin{example} 
Consider  the measured regular multiplier Hopf algebroid associated to the two-sided crossed product $A=C \# H\# B$, 
 suitable $H$-invariant functionals $\mu_{B}$ and $\mu_{C}$ on $B$ and $C$ and left and right integrals $\phi_{H}$ and $\psi_{H}$ of $(H,\Delta_{H})$  as in Example \ref{example:measured-chb}. The left integral $\phi$ is given by
 \begin{align*}
   \phi(yhx) &= \mu_{C}(y)\phi_{H}(h)\mu_{B}(x)
 \end{align*}
for all $y\in C$, $h\in H$, $x\in B$. Let us compute its modular automorphism $\sigma^{\phi}$.  By Lemma \ref{lemma:integrals-modular-base},  $\sigma^{\phi}(y)=S^{2}(y)$ for all $y\in C$. 
   Denote by  $\delta_{H}$ the modular element of $\phi_{H}$, see \cite[Proposition 3.8]{daele}. Then  $\phi_{H} \circ S_{H}=\delta_{H} \cdot \phi_{H}$ is right-invariant on $H$ and  hence  $\psi':=\delta_{H}\cdot \phi$ is right-invariant; see also  Example \ref{example:measured-chb}. The modular automorphism $\sigma^{\psi'}$ of $\psi'$ satisfies $\sigma^{\psi'}(x)=S^{-2}(x)$ for all $x\in B$ by  Lemma \ref{lemma:integrals-modular-base} again, and hence
 \begin{align*}
   \sigma^{\phi}(x)=\delta^{-1}_{H}\sigma^{\psi'}(x)\delta_{H}=S^{-2}(x) \actleft \delta_{H}.
 \end{align*}
 Finally, using $H$-invariance of $\mu_{C}$ and $\mu_{B}$, we find
\begin{align*}
  \phi(h'yhx) = \mu_{C}(y)\phi_{H}(h'h)\mu_{B}(x) = \mu_{C}(y)\phi_{H}(h\sigma_{H}(h'))\mu_{B}(x) = \phi(yhx\sigma_{H}(h')),
\end{align*}
where $\sigma_{H}$ denotes the modular automorphism of $\phi_{H}$, and hence $\sigma^{\phi}(h)=\sigma_{H}(h)$.
\end{example}

\subsection{Convolution operators and the dual algebra}
\label{subsection:convolution}
The results obtained so far immediately imply the existence of modular elements:
\begin{corollary} \label{corollary:modular-element} Let $(\mathcal{A},\mu_{B},\mu_{C},\bpsib,\cphic)$ be a measured regular multiplier Hopf algebroid. Then there exists a unique invertible multiplier $\delta \in   M(A)$ such that $\psi = \delta \cdot \phi$.
\end{corollary}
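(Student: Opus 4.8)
The plan is to combine two facts already in hand: that full integrals dominate one another in the preorder $\lesssim$, and that each integral admits a modular automorphism. First I would record that $\phi$ and $\psi$ are faithful. This follows without any local projectivity assumption: since the base functionals $\mu_{B},\mu_{C}$ and the partial integrals $\cphic,\bpsib$ are faithful, the module identities $\cphic(ay)=\cphic(a)y$, $\cphic(ya)=y\cphic(a)$ for $y\in C$ (and the analogues $\bpsib(ax)=\bpsib(a)x$, $\bpsib(xa)=x\bpsib(a)$ for $x\in B$) let one propagate nonvanishing through the compositions $\phi=\mu_{C}\circ\cphic$ and $\psi=\mu_{B}\circ\bpsib$: given $a\neq 0$, a nonzero $C$-value $\cphic(ab)=y_{0}$ and faithfulness of $\mu_{C}$ produce $y\in C$ with $\phi(aby)=\mu_{C}(y_{0}y)\neq 0$, and $aby\in A$. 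This is a short direct check.

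Next, since $\phi$ is a full left integral and $\psi$ a full right integral for $(\mathcal{A},\mu_{B},\mu_{C})$, Proposition~\ref{proposition:uniqueness-full} applies in both directions and yields $\psi\lesssim\phi$ and $\phi\lesssim\psi$ simultaneously. By Theorem~\ref{theorem:modular-automorphism} the functionals $\phi$ and $\psi$ admit modular automorphisms $\sigma^{\phi}$ and $\sigma^{\psi}$. I would then invoke Lemma~\ref{lemma:lesssim}, applied to the non-degenerate idempotent algebra $A$ and its dual $\dA$ in place of $B$ and $\dB$: from $\psi\lesssim\phi$ together with faithfulness of $\phi$, part~(1) produces $\delta\in R(A)$, $\delta'\in L(A)$ with $\phi(a\delta)=\psi(a)=\phi(\delta'a)$ for all $a\in A$, and part~(2), using the modular automorphism $\sigma^{\phi}$, upgrades these to $\delta,\delta'\in M(A)$ with $\delta=\sigma^{\phi}(\delta')$. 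In particular $\psi=\delta\cdot\phi$, which gives the existence of the multiplier.

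For invertibility I would run the same argument with the roles of $\phi$ and $\psi$ interchanged: from $\phi\lesssim\psi$ and faithfulness/modularity of $\psi$, Lemma~\ref{lemma:lesssim} furnishes $\eta\in M(A)$ with $\phi=\eta\cdot\psi$, i.e.\ $\phi(a)=\psi(a\eta)$. Substituting the two relations gives $\psi(a)=\phi(a\delta)=\psi(a\delta\eta)$ for all $a\in A$, so the functional $a\mapsto\psi(a(\delta\eta-1))$ vanishes identically; faithfulness of $\psi$ together with non-degeneracy of $A$ then forces $\delta\eta=1$ in $M(A)$, and the symmetric computation gives $\eta\delta=1$, so $\delta$ is invertible with $\delta^{-1}=\eta$. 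Uniqueness is immediate: if $\psi=\delta_{1}\cdot\phi=\delta_{2}\cdot\phi$, then $(\delta_{1}-\delta_{2})\cdot\phi=0$, whence $\delta_{1}=\delta_{2}$ by faithfulness of $\phi$.

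The computations are all routine, so the genuine points to get right are only two. The first is (a) checking that faithfulness of $\phi,\psi$ is available without assuming local projectivity, which is exactly where the $B$- and $C$-module structure of the partial integrals is used. The second, and the one I regard as the main obstacle, is (b) passing from the pointwise relation $\psi(a(\delta\eta-1))=0$ on $A$ to the multiplier equality $\delta\eta=1$ in $M(A)$: this must be argued through faithfulness of $\psi$ and non-degeneracy of $A$ rather than by a naive cancellation, and it is the step that genuinely secures invertibility of $\delta$ rather than merely its existence.
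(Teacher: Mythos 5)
Your proof follows essentially the same route as the paper: Proposition~\ref{proposition:uniqueness-full} gives $\psi\lesssim\phi$ and $\phi\lesssim\psi$, Theorem~\ref{theorem:modular-automorphism} supplies the modular automorphisms, and Lemma~\ref{lemma:lesssim}~(2) produces the multiplier. Your additional explicit checks --- that faithfulness of $\phi,\psi$ follows from faithfulness of the partial integrals and base weight, and the symmetric argument $\delta\eta=1=\eta\delta$ securing invertibility --- are correct and fill in steps the paper leaves implicit.
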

\begin{proof}
 By Proposition \ref{proposition:uniqueness-full}, $\psi \lesssim \phi$ and $\phi\lesssim \psi$, and by Theorem \ref{theorem:modular-automorphism} $\phi$ and $\psi$ admit modular automorphisms. Now, apply Lemma \ref{lemma:lesssim} (2).
\end{proof}
To determine the behaviour of the comultiplication,
counits and antipode on $\delta$, we  need a few more results on the convolution operators introduced above.

Let $(\mathcal{A},\mu_{B},\mu_{C},\bpsib,\cphic)$ be a measured regular multiplier Hopf algebroid. Then
Proposition \ref{proposition:uniqueness-full} and  Theorem \ref{theorem:modular-automorphism} imply that the subspaces
\begin{align*}
  A\cdot \phi, \quad \phi \cdot A, \quad A\cdot \psi, \quad \psi \cdot A
\end{align*}
of $\dA$ coincide, and Theorem \ref{theorem:integrals-uniqueness} implies that it does not depend on the choice of $\cphic$ and $\bpsib$.
 Since $\phi,\psi$ are factorizable, it follows that all functionals in the space
\begin{align} \label{eq:hata}
  \hat A:=A\cdot \phi = \phi \cdot A = A \cdot \psi = \psi \cdot A
\end{align}
are factorizable, that is, $\hat A \subseteq A^{\sqcup}$.   Functionals in $\hat A$ naturally extend to $M(A)$:
\begin{lemma} \label{lemma:extension-multipliers}   There exists a unique embedding
  $j\colon\hat A \to \dual{M(A)}$ such that
  \begin{align*}
    j(a \cdot \phi)(T) &= \phi(Ta), & j(\phi \cdot       a)(T) &= \phi(aT), & j(a \cdot \psi)(T) &= \psi(Ta),
    & j(\psi \cdot a)(T) &= \psi(aT)
  \end{align*}
  for every   $T\in M(A)$ and $a\in A$.
\end{lemma}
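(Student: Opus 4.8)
The plan is to exploit the four descriptions of $\hat A$ recorded in \eqref{eq:hata}: every $\omega\in\hat A$ can be written as $a\cdot\phi$, as $\phi\cdot a'$, as $a''\cdot\psi$ and as $\psi\cdot a'''$ for suitable $a,a',a'',a'''\in A$. For $T\in M(A)$ the products $Ta$, $a'T$, $Ta''$, $a'''T$ again lie in $A$, so the four prescribed values $\phi(Ta)$, $\phi(a'T)$, $\psi(Ta'')$, $\psi(a'''T)$ are well-defined scalars. Defining $j(\omega)(T)$ by any one of them, the content of the lemma splits into two points: (i) each formula is independent of the chosen representative, and (ii) all four formulas agree. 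Linearity of $j$ is then immediate, and taking $T=c\in A\subseteq M(A)$ gives $j(\omega)|_{A}=\omega$, so $j$ is automatically injective (an embedding) and is the unique map satisfying the stated relations, since every element of $\hat A$ admits such a representation.

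The essential tool is that the total algebra $A$ of a regular multiplier Hopf algebroid has local units: for any finite subset $F\subseteq A$ there is an $e\in A$ with $ef=fe=f$ for all $f\in F$. For point (i), suppose $a\cdot\phi=0$, that is, $\phi(ca)=0$ for all $c\in A$; choosing $e$ with $e(Ta)=Ta$ and writing $\phi(Ta)=\phi\big((eT)a\big)$ with $eT\in A$ shows $\phi(Ta)=0$, so $a\cdot\phi=\tilde a\cdot\phi$ forces $\phi(Ta)=\phi(T\tilde a)$. The identical argument, inserting the local unit on the appropriate side, handles the representations $\phi\cdot a'$, $a''\cdot\psi$ and $\psi\cdot a'''$. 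I stress that this uses no faithfulness of $\phi$ or $\psi$, which is important because faithfulness of the total integrals is only established later, in Theorem~\ref{theorem:integrals-faithful}.

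For point (ii) I will transfer the identities valid on $A$ to $M(A)$ by the same device. For instance, $a\cdot\phi=\phi\cdot a'$ means $\phi(ca)=\phi(a'c)$ for all $c\in A$; picking a local unit $e$ with $e(Ta)=Ta$ and $a'e=a'$ and substituting $c=eT\in A$ yields
\[
\phi(Ta)=\phi(eTa)=\phi(a'eT)=\phi(a'T).
\]
The comparisons $\phi(Ta)=\psi(Ta'')$ and $\phi(a'T)=\psi(a'''T)$ follow in the same way from $\phi(ca)=\psi(ca'')$ and $\phi(a'c)=\psi(a'''c)$ (again valid because the relevant representations coincide as functionals on $A$), using a local unit for the relevant pair and inserting $T$ on the matching side; combining the three identities gives equality of all four expressions. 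I expect the only delicate point to be precisely this passage from $A$ to $M(A)$: since $\phi$ and $\psi$ are defined only on $A$, every multiplier $T$ must first be absorbed into $A$ by a local unit, and one must keep careful track of whether $T$ is to be multiplied on the left or on the right so that the local unit lands on the side fixed by the chosen representative. Once this bookkeeping of sides is set up, the verification is routine.
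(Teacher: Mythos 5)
Your overall reduction is the right one --- define $j(\omega)(T)$ by any one of the four formulas, check that the value is independent of the representative and that the four formulas agree, and observe that injectivity and uniqueness are then immediate from $j(\omega)|_{A}=\omega$. But the tool you use for the compatibility step is not available: nothing in the definition of a (measured) regular multiplier Hopf algebroid gives you local units in $A$. The total algebra is only assumed non-degenerate and idempotent, and the paper treats local units as an \emph{extra} hypothesis whenever it needs them (for instance ``$B$ has local units'' is added explicitly in Theorem \ref{theorem:integrals-faithful}); no such property is ever asserted for $A$. Idempotency alone does not rescue your absorption trick: writing $a=\sum_i b_ic_i$ turns $\phi(Ta)$ into $\sum_i(c_i\cdot\phi)(Tb_i)$, which involves the functionals $c_i\cdot\phi$ rather than $a\cdot\phi$, so you cannot conclude anything from $a\cdot\phi=0$ or from $a\cdot\phi=\phi\cdot a'$ this way.

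Your stated reason for avoiding faithfulness also rests on a misreading. In a measured regular multiplier Hopf algebroid the partial integrals $\cphic$ and $\bpsib$ are \emph{assumed} faithful, and together with faithfulness of $\mu_{B},\mu_{C}$ this makes $\phi$ and $\psi$ faithful; Theorem \ref{theorem:integrals-faithful} only shows that this assumption is redundant under additional hypotheses. So faithfulness is at your disposal, and the paper's (very terse) proof relies on it: the representatives in $\omega=a\cdot\phi=\phi\cdot a'=a''\cdot\psi=\psi\cdot a'''$ are then unique and, by Theorem \ref{theorem:modular-automorphism} and Corollary \ref{corollary:modular-element}, related by $a=\sigma^{\phi}(a')=a''\delta$ and so on. The agreement of the four formulas on $M(A)$ then follows by extending the relations $\phi(ab)=\phi(b\sigma^{\phi}(a))$ and $\psi=\delta\cdot\phi$ to multiplier arguments, and that extension \emph{is} legitimate using only idempotency of $A$: for $a'=\sum_ib_ic_i$ one has
\begin{align*}
\phi(a'T)=\sum_i\phi\bigl(b_i(c_iT)\bigr)=\sum_i\phi\bigl((c_iT)\sigma^{\phi}(b_i)\bigr)=\sum_i\phi\bigl(T\sigma^{\phi}(b_i)\sigma^{\phi}(c_i)\bigr)=\phi(T\sigma^{\phi}(a')),
\end{align*}
since $c_iT$ and $T\sigma^{\phi}(b_i)$ lie in $A$. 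If you replace your local-unit manipulations by this kind of computation, the rest of your argument goes through.
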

\begin{proof}
  The point is to show that the formulas given above are   compatible in the sense that for each $\upsilon \in \hat A$, the   extension $j(\upsilon)$ is well-defined, and this can easily  be done using Theorem \ref{theorem:modular-automorphism} and Corollary
  \ref{corollary:modular-element}. 
\end{proof}
We henceforth regard elements of $\hat A$ as functionals on $M(A)$
without mentioning the embedding $j$  explicitly.
\begin{proposition} \label{proposition:convolution} Let
 $(\mathcal{A},\mu_{B},\mu_{C},\bpsib,\cphic)$ be a measured regular multiplier Hopf algebroid.  For every $\upsilon
  \in A^{\sqcup}$ and $b\in A$,
 there exist      $\rho(\upsilon)(b),\lambda(\upsilon)(b) \in M(A)$ such that
    \begin{align*}
      \rho(\upsilon)(b) &= \rho(\upsilonb)(b) \text{ in } R(A), & 
\rho(\upsilon)(b) &=      \rho(\bupsilon)(b) \text{ in } L(A), \\
\lambda(\upsilon)(b) &= \lambda(\upsilonc)(b) \text{ in } R(A),  &
\lambda(\upsilon)(b) &= \lambda(\cupsilon)(b) \text{ in } L(A).
    \end{align*}
Moreover, the following relations hold   for all $\upsilon 
  \in A^{\sqcup}$ and $\omega \in \hat A$:
  \begin{align*}
    \upsilon \circ \rho(\omega)&=\omega \circ \lambda(\upsilon) \in A^{\sqcup}, &
    \rho(\upsilon \circ \rho(\omega)) &=
    \rho(\upsilon)\rho(\omega), \\
    \upsilon \circ \lambda(\omega) &= \omega \circ
    \rho(\upsilon) \in A^{\sqcup}, & \lambda(\upsilon \circ
    \lambda(\omega)) &= \lambda(\upsilon)\lambda(\omega).
  \end{align*}
\end{proposition}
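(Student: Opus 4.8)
The plan is to reduce both assertions to the relations for the native convolution operators in Lemma~\ref{lemma:convolution}, exploiting that under our standing hypotheses the factorizable functionals separate the points of $A$. Indeed $\phi$ and $\psi$ are faithful, so every nonzero $a\in A$ is detected by some element of $\hat A=A\cdot\phi=\phi\cdot A=A\cdot\psi=\psi\cdot A\subseteq A^{\sqcup}$; this is precisely the separation hypothesis needed to invoke Lemma~\ref{lemma:convolution}(4). Throughout I fix $\upsilon\in A^{\sqcup}$ and $b\in A$ with factorization maps $\bupsilon,\upsilonb,\cupsilon,\upsilonc$. Since $A$ and its module structures are idempotent, the relations of Lemma~\ref{lemma:convolution} extend from $A\cdot\dcA$, $\dAc\cdot A$ and their analogues to all of $A^{\sqcup}$, so I may use them freely.

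For Part~1 I would first glue the native operators. By Lemma~\ref{lemma:convolution}(4), applied to $\cupsilon$ and $\upsilonc$ (whose composites with $\mu_{C}$ both equal $\upsilon$), the operators $\rho(\cupsilon)\colon A\to L(A)$ and $\rho(\upsilonc)\colon A\to R(A)$ determine a single element $\rho(\upsilon)(b)\in M(A)$, the multiplier compatibility being exactly what pairing with $\hat A$ through Lemma~\ref{lemma:convolution}(3) delivers; the construction of $\lambda(\upsilon)(b)$ out of $\lambda(\bupsilon)$ and $\lambda(\upsilonb)$ is parallel. It then remains to replace, in the two formulas for $\rho(\upsilon)(b)$, the $C$-valued maps by the $B$-valued ones $\bupsilon,\upsilonb$, and dually for $\lambda(\upsilon)(b)$. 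This is where \emph{counitality} enters: using the factorizations \eqref{eq:counit-functional-factorizations} of the counit functional, the multiplicativity \eqref{eq:counits-multiplicative} of the two counits, and the mixed co-associativity \eqref{eq:compatible} relating $\Delta_{B}$ and $\Delta_{C}$, I would check that $\omega(\rho(\bupsilon)(b)c)=\omega(\rho(\cupsilon)(b)c)$ for every $\omega\in\hat A$ and $c\in A$; point separation then forces $\rho(\bupsilon)(b)=\rho(\cupsilon)(b)$ in $L(A)$, and the three remaining identities follow in the same way.

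Part~2 is comparatively soft. For $\omega\in\hat A$ the value $\rho(\omega)(b)$ already lies in $A$, since $\omega$ has a factorization map of the form $c\cdot\comega$ and the observation preceding Lemma~\ref{lemma:convolution} applies; hence $\upsilon\circ\rho(\omega)$ is a genuine functional, while $\omega\circ\lambda(\upsilon)$ makes sense because $\lambda(\upsilon)(b)\in M(A)$ and $\omega$ extends to $M(A)$ by Lemma~\ref{lemma:extension-multipliers}. The equality $\upsilon\circ\rho(\omega)=\omega\circ\lambda(\upsilon)$ is then exactly Lemma~\ref{lemma:convolution}(3), and exhibiting the factorization maps of this functional (built from those of $\upsilon$ and $\omega$) shows it lies in $A^{\sqcup}$. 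The multiplicativity $\rho(\upsilon\circ\rho(\omega))=\rho(\upsilon)\rho(\omega)$, together with its $\lambda$-analogue, then follows from the co-associativity conditions \eqref{eq:delta-coassociative} and \eqref{eq:right-delta-co-associative}: sliced against $\hat A$ and rewritten through the identity just proved, both sides become the same twofold slice of an iterated comultiplication, and point separation promotes this scalar identity to the claimed operator identity in $M(A)$.

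The crux is the identification inside Part~1. A priori the $B$-valued and $C$-valued realizations of a single convolution operator are built from genuinely different slice maps --- the relation $\mu_{C}\circ\cupsilon=\mu_{B}\circ\bupsilon$ does \emph{not} yield $\cupsilon=S_{B}\circ\bupsilon$ elementwise --- so their coincidence cannot be read off formula by formula and must be extracted from counitality after pairing with the separating family $\hat A$. Once this is in hand, the gluing into $M(A)$ and the algebraic relations of Part~2 are routine consequences of Lemma~\ref{lemma:convolution} and co-associativity.
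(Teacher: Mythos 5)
Your handling of the gluing and of the second set of identities is essentially the paper's own argument: pair the $L(A)$- and $R(A)$-valued realizations against the separating family $\hat A$, use the exchange identity of Lemma~\ref{lemma:convolution}(3) and faithfulness of $\phi$ to obtain a two-sided multiplier, record $\omega\circ\rho(\upsilon)=\upsilon\circ\lambda(\omega)$ along the way, and deduce $\rho(\upsilon\circ\rho(\omega))=\rho(\upsilon)\rho(\omega)$ from the commutation in Lemma~\ref{lemma:convolution}(2) plus point separation. One caveat you gloss over: for a general $\upsilon\in A^{\sqcup}$ the maps $\cupsilon,\upsilonc$ need not lie in $A\cdot\dcA$ or $\dAc\cdot A$, so $\rho(\cupsilon)(b)$ and $\rho(\upsilonc)(b)$ are genuinely only one-sided multipliers and cannot be fed to $\omega\in\hat A$ directly; the relations of Lemma~\ref{lemma:convolution} do not simply ``extend to all of $A^{\sqcup}$'' by idempotence, but must be re-derived after first multiplying by elements of $A$ (this is why the paper works with $\omega=c\cdot\phi\cdot a$ and the products $a\rho(\upsilonc)(b)$ and $\rho(\cupsilon)(b)c$, and why faithfulness of $\phi$, not mere point separation, is what closes the argument). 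That extension is exactly the content of the first assertion, so it cannot be taken for granted.

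The step you single out as the crux, however, is not a step of the proof at all. The $B$/$C$ subscripts in the displayed statement are transposed (compare the definitions at the start of the subsection, the paper's own proof, and Corollary~\ref{corollary:convolution}): $\rho$ is only ever applied to the $C$-valued maps $\cupsilon,\upsilonc$, and $\lambda$ only to the $B$-valued maps $\bupsilon,\upsilonb$. No identification of a $B$-valued with a $C$-valued realization of the same convolution operator is asserted, and none could be: an expression such as $(\id\otimes S_{B}^{-1}\circ\bupsilon)(\Delta_{B}(a)(b\otimes 1))$ is not defined, because $\bupsilon$ is equivariant for left multiplication by $B$ while the second tensor leg of $\AlA$ is balanced over left multiplication by $S_{B}(B)=C$, so the slice map does not descend to the quotient. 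The identity $\omega(\rho(\bupsilon)(b)c)=\omega(\rho(\cupsilon)(b)c)$ that you propose to extract from counitality therefore has no meaning, and the final paragraph of your argument addresses a non-issue. Counitality enters this proposition only through Lemma~\ref{lemma:convolution} itself (via the counit functional $\varepsilon$ in its parts (1) and (3)), not through any comparison of $B$- and $C$-valued slices.
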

\begin{proof}
Let
 $\upsilon
  \in A^{\sqcup}$, $b\in A$ and
 $\omega \in \hat A$. 

Since $\phi$ is faithful, elements of $\hat A \subseteq A^{\sqcup}$ separate the points of $A$, and  so Lemma \ref{lemma:convolution} (4)
  implies $\lambda(\bomega)(b) = \lambda(\omegab)(b)$ and $
  \rho(\comega)(b) = \rho(\omegac)(b)$. We therefore drop the   subscripts $B$ and $C$ and write $\lambda(\omega)$ and
  $\rho(\omega)$ from now on.  

Suppose $\omega=\phi\cdot a$ with $a\in A$. Then
  \begin{align*}
    \omega(\rho(\upsilonc)(b)) = \phi(a\rho(\upsilonc)(b)) &= (\phi
    \underset{\mu_{C}}{\otimes} \upsilon)((a \oo 1)\Delta_{C}(b)) = \upsilon(\lambda( \phi
    \cdot a)(b)) = \upsilon(\lambda(\omega)(b)).
  \end{align*}

  A similar calculation shows that $\omega(\rho_{C}(\upsilon)(b)) =
  \upsilon(\lambda(\omega)(b))$. For $\omega\in \hat A$ of the   form $\omega=c\cdot \phi \cdot a$ with $a,c\in A$, we obtain
  \begin{align*}
    \phi((a\rho(\upsilonc)(b))c) = \upsilon(\lambda(\omega)(b)) =
    \phi(a(\rho(\cupsilon)(b)c)).
  \end{align*}
  Since $a,c\in A$ were arbitrary and $\phi$ is faithful, we can   conclude that $(a\rho(\upsilonc)(b))c =
  a(\rho(\cupsilon)(b)c)$ for all $a,c\in A$ so that
  $\rho(\cupsilon)(b)$ and $\rho_{C}(\upsilon)(b)$ form a two-sided   multiplier $\rho(\upsilon)(b)$ as claimed.

  Along the way, we just showed that $\omega \circ \rho(\upsilon) =
  \upsilon \circ \lambda(\omega)$. One easily verifies that this   composition belongs to $A^{\sqcup}$, for example, $_{B}(\upsilon \circ
  \lambda(\omega)) =\bupsilon \circ \lambda(\omega)$.

Let now also $\omega' \in \hat A$. Then $\lambda(\omega')$ commutes with
  $\rho(\omega)$ by Lemma \ref{lemma:convolution} and hence
  \begin{align*}
    \omega' \circ \rho(\upsilon) \circ \rho(\omega) = \upsilon \circ
    \lambda(\omega')\circ \rho(\omega) = \upsilon \circ \rho(\omega)
    \circ \lambda(\omega') = \omega' \circ \rho(\upsilon \circ
    \rho(\omega)).
  \end{align*}
  Since $\omega' \in \hat A$ was arbitrary and $\hat A$ separates the   points of $A$, we can conclude $    \rho(\upsilon \circ \rho(\omega)) =
    \rho(\upsilon)\rho(\omega)$.   A similar argument proves the remaining equation.
\end{proof}
The first part of the preceding result can be rewritten as follows.
\begin{corollary} \label{corollary:convolution}
  Let $(\mathcal{A},\mu_{B},\mu_{C},\bpsib,\cphic)$ be a measured
  regular multiplier Hopf algebroid.  Then for all $\upsilon \in
  A^{\sqcup}$ and $a,b,c\in A$,
  \begin{gather}
    a   ( (\upsilon \underset{\mu_{B}}{\otimes} \id)(\Delta_{B}(b)(1\otimes  c)))  = a \lambda(\upsilon)(b)c=  (\upsilon \underset{\mu_{C}} \otimes \id)((1\otimes a)\Delta_{C}(b))c,\\
    a((\id \underset{\mu_{B}}{\otimes} \upsilon)(\Delta_{B}(b)(c\otimes 1))) = a\rho(\upsilon)(b)c =  ((\id \underset{\mu_{C}}{\otimes} \upsilon)((a\otimes 1)\Delta_{C}(b)))c.
  \end{gather}
\end{corollary}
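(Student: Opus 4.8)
The plan is to read the Corollary as a direct transcription of Proposition~\ref{proposition:convolution}: the only thing to do is to recognise each of the four bracketed expressions as the left- or the right-multiplier action of one of the two-sided multipliers $\lambda(\upsilon)(b)$ and $\rho(\upsilon)(b)$ in $M(A)$ produced there. So I would fix $\upsilon \in A^{\sqcup}$ and $b\in A$, and recall from Proposition~\ref{proposition:convolution} that $\lambda(\upsilon)(b)$ and $\rho(\upsilon)(b)$ are genuine two-sided multipliers, whose images in $L(A)$ and in $R(A)$ are computed from the factorizations of $\upsilon$ through the convolution operators $\lambda$ and $\rho$. Because they are two-sided, the triple products $a\lambda(\upsilon)(b)c$ and $a\rho(\upsilon)(b)c$ are unambiguous; these are the middle terms of the two displayed lines, while the two outer terms will be nothing but the $L(A)$- and the $R(A)$-descriptions of the same multiplier, pre- and post-multiplied by $a$ and $c$.

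The substance is to unwind the relative slice maps of Lemma~\ref{lemma:factorizable-slice-tensor}. Consider the first line. The slice $\upsilon \underset{\mu_{B}}{\otimes}\id$ on $\AlA$ acts on the first tensor leg through the factorization of $\upsilon$ determined by the left $B$-module structure of that leg; comparing with the definition of the left convolution operator gives $(\upsilon \underset{\mu_{B}}{\otimes}\id)(\Delta_{B}(b)(1\otimes c)) = \lambda(\upsilon)(b)c$ as an identity in $A$, the right-hand side being the action of $\lambda(\upsilon)(b)\in L(A)$ on $c$. Dually, $\upsilon \underset{\mu_{C}}{\otimes}\id$ on $\ArA$ acts on the first leg through the $C$-side factorization of $\upsilon$, and matching with the defining formula of the corresponding operator (this is where the antipodal component $S_{C}^{-1}$ enters) yields $(\upsilon \underset{\mu_{C}}{\otimes}\id)((1\otimes a)\Delta_{C}(b)) = a\lambda(\upsilon)(b)$, the action of $\lambda(\upsilon)(b)\in R(A)$ preceded by $a$. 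Multiplying the first identity on the left by $a$ and the second on the right by $c$, and invoking that $\lambda(\upsilon)(b)$ is two-sided to equate $a(\lambda(\upsilon)(b)c)$ with $(a\lambda(\upsilon)(b))c$, gives the first line. The second line is obtained in exactly the same way, now slicing the second tensor leg and using $\rho$ in place of $\lambda$, with $S_{B}^{-1}$ in the role of $S_{C}^{-1}$.

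The main obstacle, and the only genuine content beyond bookkeeping, is the matching in the two slice-identification steps: one must verify that the factorization of $\upsilon$ picked out by the relative slice over $\mu_{C}$ on $\ArA$ (respectively over $\mu_{B}$ on the second leg of $\AlA$) coincides with the map built into the definition of the convolution operator, which carries the antipodal components $S_{C}^{-1}$ or $S_{B}^{-1}$. This is precisely the assertion of Lemma~\ref{lemma:factorizable} that the factorizations of $\upsilon$ with respect to the bimodules ${_{B}A_{B}}$, ${_{C}A_{C}}$, ${_{B}A^{B}}$ and ${^{C}A_{C}}$ determine one another, combined with the compatibility of the source and target maps with the balancings defining $\AlA$ and $\ArA$; antipodality of the base weight is used at this point. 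Finally, Lemma~\ref{lemma:convolution}~(4) guarantees that the $B$- and the $C$-factorization descriptions of a given one-sided action agree, so that the passage between the $\Delta_{B}$- and the $\Delta_{C}$-formulas is legitimate and no ambiguity remains. Once these identifications are in place the stated equalities are immediate.
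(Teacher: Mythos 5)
Your proposal is correct and matches the paper's approach: the paper offers no separate argument at all, introducing the corollary only with the remark that it is a rewriting of the first part of Proposition~\ref{proposition:convolution}, which is exactly the unwinding of the two-sided multipliers $\lambda(\upsilon)(b),\rho(\upsilon)(b)$ and of the slice maps that you carry out. Your explicit matching of the $\mu_{B}$- and $\mu_{C}$-slices with the defining formulas of the convolution operators (via Lemma~\ref{lemma:factorizable} and Lemma~\ref{lemma:convolution}~(4)) simply makes visible the bookkeeping the paper leaves implicit.
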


The results above imply that $\hat{A}$ is an algebra and $A$ and $A^{\sqcup}$ are $\hat{A}$-bimodules:
\begin{theorem} \label{theorem:dual-algebra} Let
  $(\mathcal{A},\mu_{B},\mu_{C},\bpsib,\cphic)$ be a measured regular multiplier Hopf
  algebroid. Then the subspace $\hat A =A\cdot\phi \subseteq A^{\sqcup}$ is  an algebra and $A$ and $A^{\sqcup}$ are $\hat A$-bimodules with respect to the
  products given by
  \begin{gather*}
    \omega \omega' := \omega \circ \rho(\omega') = \omega' \circ
    \lambda(\omega) \quad \text{for all } \omega,\omega' \in \hat A,
    \\ \begin{aligned} \omega \ast a &= \rho(\omega)(a), & a \ast
      \omega &= \lambda(\omega)(a) && \text{for all } a\in A, \omega
      \in \hat
      A, \\
      \omega\upsilon &= \upsilon\circ \lambda(\omega), &
      \upsilon\omega &= \upsilon \circ \rho(\omega) && \text{for all }
      \upsilon \in A^{\sqcup}, \omega \in \hat A.
    \end{aligned}
  \end{gather*}
As such, all are non-degenerate, $\hat A$ and $A$ are idempotent, and $A$ and $A^{\sqcup}$ are faithful.
\end{theorem}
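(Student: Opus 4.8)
The plan is to reduce every assertion to the two structural results already established, Proposition~\ref{proposition:convolution} and Lemma~\ref{lemma:convolution}, together with the representation $\hat A=A\cdot\phi=\phi\cdot A=A\cdot\psi=\psi\cdot A$ of \eqref{eq:hata}. The cornerstone is that Proposition~\ref{proposition:convolution} turns $\rho$ into a homomorphism and $\lambda$ into an anti\-homomorphism: applying $\rho$ to $\omega\omega'=\omega\circ\rho(\omega')$ and $\lambda$ to $\omega\omega'=\omega'\circ\lambda(\omega)$ yields
\[
\rho(\omega\omega')=\rho(\omega)\rho(\omega'),\qquad \lambda(\omega\omega')=\lambda(\omega')\lambda(\omega)
\]
for $\omega,\omega'\in\hat A$. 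Granting this, associativity on $\hat A$ and the four module axioms are purely formal: for instance $(\omega\omega')\ast a=\rho(\omega)(\rho(\omega')(a))=\omega\ast(\omega'\ast a)$ and $a\ast(\omega\omega')=\lambda(\omega')(\lambda(\omega)(a))=(a\ast\omega)\ast\omega'$, while the compatibility $\omega\ast(a\ast\omega')=(\omega\ast a)\ast\omega'$ and the corresponding identities on $A^{\sqcup}$ all come down to the fact that each $\lambda(\omega)$ commutes with each $\rho(\omega')$ (Lemma~\ref{lemma:convolution}(2), extended to $A^{\sqcup}$ in Proposition~\ref{proposition:convolution}). I would dispatch all of these first, since they are free once the products are known to be well defined.

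Well-definedness then splits into membership statements. That the two formulas for each product agree is exactly the identities $\upsilon\circ\rho(\omega)=\omega\circ\lambda(\upsilon)$ and $\upsilon\circ\lambda(\omega)=\omega\circ\rho(\upsilon)$ of Proposition~\ref{proposition:convolution}, which also places $\omega\upsilon,\upsilon\omega$ in $A^{\sqcup}$. For the bimodule $A$ one must check $\omega\ast a=\rho(\omega)(a)\in A$ and $a\ast\omega=\lambda(\omega)(a)\in A$: writing $\omega=c\cdot\phi$ or $\omega=\phi\cdot c$ and using that $\phi$ is factorizable, the factorization maps $\comega,\bomega,\ldots$ of $\omega$ have the form $c\cdot\cphi$, $c\cdot\bphi$, etc., so these convolutions land in $A$ by the observation recorded just before Lemma~\ref{lemma:convolution}. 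The genuinely substantive point — and the step I expect to be the main obstacle — is closedness $\hat A\hat A\subseteq\hat A$, since Proposition~\ref{proposition:convolution} only gives $\omega\omega'\in A^{\sqcup}$. Here I would take $\omega=\phi\cdot a$, $\omega'=\phi\cdot a'$, expand $(\omega\omega')(b)=\phi(\lambda(\omega)(b)\,a')$, rewrite $\lambda(\omega)(b)$ via the slice $(\omega\underset{\mu_{B}}{\otimes}\id)(\Delta_{B}(b)(1\otimes\cdot))$ of Corollary~\ref{corollary:convolution}, and then use the left\-invariance of $\phi$ together with surjectivity and bijectivity of the canonical maps $\Tl,\Tr,\lT,\rT$ to absorb the whole expression into a single element $d\in A$ with $\omega\omega'=\phi\cdot d$; this runs parallel to the span computation at the end of the proof of Theorem~\ref{theorem:modular}.

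For the remaining properties the key auxiliary fact is that the counit functional $\varepsilon\in A^{\sqcup}$ acts as a two-sided identity on $A^{\sqcup}$: by Lemma~\ref{lemma:convolution}(1), $\varepsilon\circ\lambda(\omega)=\omega=\varepsilon\circ\rho(\omega)$ for every $\omega\in A^{\sqcup}$, so $\omega\varepsilon=\varepsilon\omega=\omega$ for all $\omega\in\hat A$. This at once gives faithfulness of $A^{\sqcup}$ over $\hat A$ (if $\omega\neq0$ then $\omega\varepsilon=\omega\neq0$) and of $A$ over $\hat A$ (if $\rho(\omega)(a)=0$ for all $a$ then $\omega=\varepsilon\circ\rho(\omega)=0$, and symmetrically via $\lambda$). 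Idempotence of the bimodule $A$, namely $\hat A\ast A=A=A\ast\hat A$, I would obtain from surjectivity of $\cphic,\bpsib$ (Proposition~\ref{proposition:full}) and bijectivity of the canonical maps, once more as in Theorem~\ref{theorem:modular}; and faithfulness of $\phi$ ensures that $\hat A=A\cdot\phi$ separates the points of $A$.

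From these ingredients the non-degeneracy and idempotence claims follow formally. Using the product identity $\omega'\circ\rho(\omega)=\omega'\omega$, one has $\omega'(\rho(\omega)(a))=(\omega'\omega)(a)$; hence $\hat A$ is non-degenerate because a functional annihilating all products annihilates $\hat A\ast A=A$ and so vanishes, and $A$ (resp.\ $A^{\sqcup}$) is non-degenerate because vanishing of all actions forces $\hat A\hat A$ (resp.\ $\rho(\hat A)(A)=A$) to vanish on the element, whence it is $0$ by separation. Finally, idempotence $\hat A\hat A=\hat A$ is obtained by combining closedness with $A=\hat A\ast A$ and the surjectivity arguments above, expressing each generator $\phi\cdot a$ of $\hat A$ through products. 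The only place where a real calculation is unavoidable is the closedness step, and I would organise the write-up so that it is carried out once and then reused for the idempotence statements.
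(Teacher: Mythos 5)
Your proposal is correct and follows essentially the same route as the paper: the closedness computation $\hat A\hat A\subseteq\hat A$ via writing $a\otimes b=\sum_i\Delta_B(d_i)(e_i\otimes 1)$ and using left-invariance of $\phi$ is exactly the paper's key identity \eqref{eq:dual-product}, and the algebra/bimodule axioms, idempotence and non-degeneracy are derived from Proposition \ref{proposition:convolution} and bijectivity of the canonical maps just as there. The only cosmetic difference is that you obtain faithfulness of $A$ and $A^{\sqcup}$ by using the counit functional $\varepsilon\in A^{\sqcup}$ as a quasi-unit (so that $\omega\varepsilon=\omega=\varepsilon\omega$ for $\omega\in\hat A$), whereas the paper chains faithfulness through idempotence of $A$ and non-degeneracy of the evaluation pairing; both are valid.
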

\begin{proof}
  We first show that the product defined on $\hat A$ takes values in  $\hat A$ again.  Let $\omega,\omega' \in \hat A$. Then $\omega
  \circ \rho(\omega')=\omega' \circ \lambda(\omega)$ by Proposition  \ref{proposition:convolution}. To see that this functional lies in
  $\hat A$, write $\omega=a\cdot \phi$ and $\omega' = b\cdot \psi$  with $a,b\in A$ and $a \otimes b = \sum_{i} \Delta_{B}(d_{i})(e_{i}
  \otimes 1)$ with $d_{i},e_{i} \in A$. Then $    (\omega \circ \rho(\omega'))(c)$ is equal to
  \begin{align*}
    (\phi \underset{\mu_{B}}{\otimes} \phi)(\Delta_{B}(c)(a \oo b)) 
    &=\sum_{i} (\phi  \underset{\mu_{B}}{\otimes} \phi)(\Delta_{B}(cd_{i})(e_{i} \otimes 1))
    \\ & = \sum_{i} \phi(\cphic(cd_{i})e_{i}) =\sum_{i}\phi(cd_{i}\cphic(e_{i}))
  \end{align*}
  for all $c \in A$ and hence
 \begin{align} \label{eq:dual-product} \omega \circ \rho(\omega') =
   f\cdot \phi \ \text{ if } \ \omega=a\cdot \phi, \ \omega'=b\cdot
   \phi, \ f= (\cphic \otimes \id)(\Tl^{-1}(a\otimes b)).
 \end{align}
  
Proposition \ref{proposition:convolution} now implies that  the products defined above turn $\hat A$ into an algebra and $A$ and  $A^{\sqcup}$ into $\hat A$-bimodules.

  Equation \eqref{eq:dual-product} and bijectivity of the canonical  maps $\Tl,\Tr$ imply that $\hat A$ is idempotent as an algebra and
  $A$ is idempotent as an $\hat A$-bimodule. These facts and  non-degeneracy of the pairing $\hat A \times A \to A$, $(\upsilon,a)
  \mapsto \upsilon(a)$ imply that $A$ is non-degenerate and faithful  as an $\hat A$-bimodule. But $A$ being faithful and idempotent as an
  $\hat A$-bimodule implies that the algebra $\hat A$ is  non-degenerate, and that  $A^{\sqcup}$ is non-degenerate and
  faithful as an $\hat A$-bimodule.
\end{proof}
In  \cite{timmermann:integrals}, we show that the algebra $\hat{A}$ constructed above can be endowed with the structure of a measured regular multiplier Hopf algebroid again, which can be regarded as a generalized Pontrjagin dual to the original measured regular multiplier Hopf algebroid.

We shall also need the following relations.
\begin{lemma} \label{lemma:convolution-2}
Let $\upsilon
  \in A^{\sqcup}$. Then the following relations hold:
\begin{enumerate}
\item $\rho(\upsilon) \circ S = S \circ \lambda(\upsilon \circ S)$
    and $\lambda(\upsilon) \circ S = S \circ \rho(\upsilon \circ S)$;
  \item for all $x,x',x'' \in B$, $y,y',y'' \in C$,
    \begin{align*}
      \rho(S_{B}(x'')x \cdot\upsilon \cdot y''x') (yby')&= S_{C}(y'')y
      \rho(\upsilon)(xbx) y'x'', \\
      \lambda(x'' y \cdot\upsilon \cdot S_{C}(y'') y')(xbx') &=
      y''x\lambda(\upsilon)(y'by)x'S_{B}(x'');
    \end{align*}
  \item if  $(\mathcal{A},\mu_{B},\mu_{C},\bpsib,\cphic)$ is a measured multiplier Hopf $*$-algebroid, then we have
 $\rho(\ast \circ \upsilon\circ \ast) = \ast \circ \rho(\upsilon)
    \circ \ast$ and $\lambda(\ast \circ \upsilon \circ \ast) = \ast
    \circ \lambda(\upsilon) \circ \ast$.
  \end{enumerate}
\end{lemma}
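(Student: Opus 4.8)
The plan is to reduce every assertion to the slice characterisation of the two convolution operators recorded in Corollary~\ref{corollary:convolution}: for $\upsilon\in A^{\sqcup}$ and $b\in A$ the multipliers $\lambda(\upsilon)(b),\rho(\upsilon)(b)\in M(A)$ are uniquely determined by
\begin{align*}
  d\,\lambda(\upsilon)(b)\,e &= d\,(\upsilon \underset{\mu_{B}}{\otimes}\id)(\Delta_{B}(b)(1\otimes e)) = (\upsilon\underset{\mu_{C}}{\otimes}\id)((1\otimes d)\Delta_{C}(b))\,e, \\
  d\,\rho(\upsilon)(b)\,e &= d\,(\id\underset{\mu_{B}}{\otimes}\upsilon)(\Delta_{B}(b)(e\otimes 1)) = ((\id\underset{\mu_{C}}{\otimes}\upsilon)((d\otimes 1)\Delta_{C}(b)))\,e
\end{align*}
for all $d,e\in A$. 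Since $\phi$ is faithful, $\hat A\subseteq A^{\sqcup}$ separates the points of $A$, so it suffices in each case to verify the claimed multiplier identity after sandwiching between arbitrary $d,e\in A$.

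For part~(1) I would fix $a\in A$ and test $\rho(\upsilon)(S(a))=S(\lambda(\upsilon\circ S)(a))$ by rewriting the left-hand side through the $\Delta_{B}$-slice above. The anti-multiplicativity of the antipode with respect to the two comultiplications, as encoded in the Galois--antipode diagrams \eqref{dg:galois-inverse} and \eqref{dg:galois-antipode}, lets me replace $\Delta_{B}(S(a))$ by $(S\otimes S)$ applied to $\Delta_{C}(a)$ after a flip; because $S$ is an anti-automorphism it factors out of the slice, leaving $\upsilon$ precomposed with $S$. Lemma~\ref{lemma:base}(2) with $k=1$ then identifies the twisted maps $S^{-1}\circ\bupsilon\circ S$, etc., exactly as the factorisation maps of $\upsilon\circ S$, so the resulting expression is precisely the $\Delta_{C}$-slice characterising $S(\lambda(\upsilon\circ S)(a))$. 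The second identity follows symmetrically, interchanging $\lambda\leftrightarrow\rho$ and $\Delta_{B}\leftrightarrow\Delta_{C}$, or by substituting $\upsilon\mapsto\upsilon\circ S^{-1}$. I expect this to be the main obstacle: the antipode swaps the roles of $B$ and $C$ and of the two balancings $\mu_{B},\mu_{C}$, so keeping the source and target legs and the twist from Lemma~\ref{lemma:base}(2) correctly aligned is the delicate bookkeeping.

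Part~(2) is a covariance computation. Lemma~\ref{lemma:base}(1) records the factorisation maps of $xy\cdot\upsilon\cdot x'y'$ in terms of those of $\upsilon$ and the modular automorphisms $\sigma_{B},\sigma_{C}$; by counitality of the base weight and Proposition~\ref{proposition:counit-kms} these are $\sigma_{B}=S_{B}^{-1}S_{C}^{-1}$ and $\sigma_{C}=S_{B}S_{C}$, so the $\sigma$-twists combine with the explicit $S_{B},S_{C}$ factors appearing in the statement. I would then move the base multiplications through the comultiplication using the bimodule relations \eqref{eq:left-delta-bimodule} and \eqref{eq:right-delta-bimodule} and read off the displayed formulas from the slice characterisation above. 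For part~(3), Lemma~\ref{lemma:base}(3) supplies the factorisation maps of $\ast\circ\upsilon\circ\ast$; combining this with the $*$-compatibility of the comultiplications in Definition~\ref{definition:involution}(3) and the self-adjointness of the positive functionals $\mu_{B},\mu_{C}$, the same slice characterisation yields $\rho(\ast\circ\upsilon\circ\ast)=\ast\circ\rho(\upsilon)\circ\ast$ and the corresponding identity for $\lambda$. Both (2) and (3) are routine once the conventions have been fixed in~(1).
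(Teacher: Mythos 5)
The paper offers no proof here beyond the word ``Straightforward,'' so there is nothing to compare against; your plan supplies exactly the verification the author leaves to the reader, and it is correct: the slice characterisation of Corollary~\ref{corollary:convolution}, the diagrams \eqref{dg:galois-antipode}, and Lemma~\ref{lemma:base} are precisely the needed ingredients, with separation by $\hat A$ reducing everything to identities between elements of $A$.

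One practical remark on the bookkeeping you flag in part~(1): if you expand $\Delta_{C}(S(a))$ or $\Delta_{B}(S(a))$ directly via \eqref{dg:galois-antipode}, a spurious $S^{2}$ appears. It is cleaner to start from the \emph{unmodified} products $\Delta_{B}(v)(u\otimes 1)$ and $(u\otimes 1)\Delta_{C}(v)$ and read each square of \eqref{dg:galois-antipode} in the direction
\begin{align*}
  \Sigma(S\otimes S)\bigl(\Delta_{B}(v)(u\otimes 1)\bigr) &= (1\otimes S(u))\Delta_{C}(S(v)), \\
  \Sigma(S\otimes S)\bigl((u\otimes 1)\Delta_{C}(v)\bigr) &= \Delta_{B}(S(v))(1\otimes S(u)),
\end{align*}
which, after applying the appropriate slice and Lemma~\ref{lemma:base}(2) with $k=-1$ (so that the twist $S\circ(\cdot)_{C}\circ S^{-1}=S_{C}\circ(\cdot)_{C}\circ S^{-1}$ matches the $S_{C}$ produced when $S$ is pushed past an element of $C$), yields $S\circ\lambda(\eta)=\rho(\eta\circ S^{-1})\circ S$ and $S\circ\rho(\eta)=\lambda(\eta\circ S^{-1})\circ S$; substituting $\eta=\upsilon\circ S$ gives the two stated identities with no $S^{2}$ ever entering. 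Parts~(2) and~(3) go through exactly as you describe.
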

\begin{proof}
  Straightforward.
\end{proof}

\subsection{The modular element}
\label{subsection:modular-element}
 We now  determine the behaviour of the comultiplication, counit and antipode on the modular elements relating a left integral $\phi$ to the right integrals $\phi
\circ S^{-1}$ and $\phi\circ S$. 
\begin{theorem} \label{theorem:modular-element-second} Let $(\mathcal{A},\mu_{B},\mu_{C},\bpsib,\cphic)$ be a measured regular multiplier Hopf algebroid. Write $\psi^{-}=\phi\circ S^{-1}$ and $\psi^{+}=\phi\circ S$. Then there exist   unique invertible multipliers $\delta^{-}, \delta^{+} \in M(A)$ such   that
  $\psi^{+} = \delta^{+} \cdot \phi$ and $\psi^{-} =
  \phi \cdot \delta^{-}$. These elements satisfy
  \begin{gather*}
    \begin{aligned}
      \bphi(a)\delta^{+} &= \lambda(\phi)(a) = \delta^{-} \phib(a)
      \text{ for all } a \in A, &
    \end{aligned} \\
    \begin{aligned}
             S(\delta^{+}) &= (\delta^{-})^{-1}, &
      \eps \cdot \delta^{-} &= \eps = \delta^{+} \cdot \eps, 
    \end{aligned} \\
    \begin{aligned}
 \Delta_{B}(\delta^{+}) &= \delta^{+} \oo
  \delta^{+}, &      \Delta_{B}(\delta^{-}) &= \delta^{+} \oo \delta^{-}, &
\Delta_{C}(\delta^{-}) &= \delta^{-} \oo
  \delta^{-}, &      \Delta_{C}(\delta^{+}) &= \delta^{-} \oo \delta^{+}.
    \end{aligned}
\end{gather*}
  If  $(\mathcal{A},\mu_{B},\mu_{C},\bpsib,\cphic)$ is a measured  multiplier Hopf $*$-algebroid, then $\delta^{+}=(\delta^{-})^{*}$.

Finally, if $\sigma^{\phi}(M(B))=M(B)$, then
\begin{align*}
 x \delta^{-}  &= \delta^{-} S^{2}(\sigma^{\phi}(x)), &   
x \delta^{+} &=
  \delta^{+} \sigma^{\phi}(S^{2}(x)) &&\text{for all } x\in M(B), \\
\delta^{-}y   &= S^{-2}(\sigma^{\psi^{-}}(y)) \delta^{-}, &
y\delta^{+} &=  \delta^{+}\sigma^{\psi^{+}}(S^{-2}(y)) &&\text{for all } y\in M(C),
\end{align*}
where
 $\sigma^{\psi^{-}}=S\circ(\sigma^{\phi})^{-1}\circ S^{-1}$ and $\sigma^{\psi^{+}} = S^{-1}\circ (\sigma^{\phi})^{-1}\circ S$ are the modular automorphisms of $\psi^{-}$ and $\psi^{+}$, respectively.
\end{theorem}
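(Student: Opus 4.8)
The plan is to produce the multipliers $\delta^{\pm}$ by the mechanism of Corollary~\ref{corollary:modular-element}, and then to extract all of their structural properties from a single convolution identity. Since $\phi$ is full and faithful, the functionals $\psi^{+}=\phi\circ S$ and $\psi^{-}=\phi\circ S^{-1}$ are full right integrals by Corollary~\ref{corollary:integrals-bimodule-antipode} and Remark~\ref{remark:full}, and they are faithful because $S$ is an automorphism. Proposition~\ref{proposition:uniqueness-full} therefore gives $\psi^{\pm}\lesssim\phi$ and $\phi\lesssim\psi^{\pm}$, and since $\phi$ admits a modular automorphism (Theorem~\ref{theorem:modular-automorphism}), Lemma~\ref{lemma:lesssim}(2) yields unique invertible multipliers $\delta^{+},\delta^{-}\in M(A)$ with $\psi^{+}=\delta^{+}\cdot\phi$ and $\psi^{-}=\phi\cdot\delta^{-}$; that is, $\phi(S(a))=\phi(a\delta^{+})$ and $\phi(S^{-1}(a))=\phi(\delta^{-}a)$ for all $a\in A$. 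The relation $S(\delta^{+})=(\delta^{-})^{-1}$ is then a short faithfulness argument: setting $c=S(a)$ in the second identity and using that $S$ is an anti-automorphism rewrites $\phi(a)=\phi(\delta^{-}S(a))=\phi(S(aS^{-1}(\delta^{-})))=\phi(aS^{-1}(\delta^{-})\delta^{+})$, so faithfulness of $\phi$ forces $S^{-1}(\delta^{-})\delta^{+}=1$.

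The technical heart is the identity $\bphi(a)\delta^{+}=\lambda(\phi)(a)=\delta^{-}\phib(a)$, where $\lambda(\phi)(a)\in M(A)$ is the two-sided convolution multiplier furnished by Proposition~\ref{proposition:convolution} and Corollary~\ref{corollary:convolution}. I would prove it by testing against the separating family $\hat A$: for $\omega\in\hat A$, Proposition~\ref{proposition:convolution} gives $\omega(\lambda(\phi)(a))=\phi(\rho(\omega)(a))$, while the defining relation $\phi\circ S=\delta^{+}\cdot\phi$, the right-invariance of $\psi^{+}$ in the form \eqref{eq:convolution-right-invariance}, and Lemma~\ref{lemma:convolution-2}(1) let me rewrite the same pairing as $\omega(\bphi(a)\delta^{+})$. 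Since $\hat A$ separates the points of $A$ and $\phi$ is faithful, the two multipliers coincide; the equality $\lambda(\phi)(a)=\delta^{-}\phib(a)$ follows in the same way from the factorization $\phi=\mu_{B}\circ\phib$, the $\Delta_{C}$-presentation of $\lambda(\phi)$ in Corollary~\ref{corollary:convolution}, and $\psi^{-}=\phi\cdot\delta^{-}$. This is the step I expect to be the main obstacle, as it requires keeping precise track of which factorization ($\bphi$ or $\phib$) and which base functional enters each slice, and of the left/right multiplier conventions.

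Granting this identity, the coproduct relations are obtained by imitating Van Daele's computation. Feeding $\lambda(\phi)(a)=\bphi(a)\delta^{+}$ into the coassociativity of $\Delta_{B}$ --- equivalently, using the multiplicativity $\lambda(\upsilon\circ\lambda(\omega))=\lambda(\upsilon)\lambda(\omega)$ of Proposition~\ref{proposition:convolution} --- and cancelling a nonzero scalar factor $\phi(a)$ by faithfulness gives $\Delta_{B}(\delta^{+})=\delta^{+}\otimes\delta^{+}$; the parallel argument with $\rho(\phi)$ and $\Delta_{C}$ gives $\Delta_{C}(\delta^{-})=\delta^{-}\otimes\delta^{-}$. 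The mixed relations $\Delta_{B}(\delta^{-})=\delta^{+}\otimes\delta^{-}$ and $\Delta_{C}(\delta^{+})=\delta^{-}\otimes\delta^{+}$ then follow by applying the antipode-coproduct intertwiners \eqref{dg:galois-antipode} and the bimodule rule \eqref{eq:delta-bimodule-extended} to these two, using $S(\delta^{+})=(\delta^{-})^{-1}$. The counit identities $\eps\cdot\delta^{-}=\eps=\delta^{+}\cdot\eps$ then drop out by applying $\id\otimes\eps$ and $\eps\otimes\id$ to the coproduct formulas (noting $\eps\circ\lambda(\phi)=\phi$ and $\eps|_{B}=\mu_{B}$ from Lemma~\ref{lemma:convolution}(1)), using invariance of $\eps$ under $S$ from \eqref{eq:antipode-counits}.

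Finally, in the $*$-case $\delta^{+}=(\delta^{-})^{*}$ is another faithfulness argument: self-adjointness of the positive integral $\phi$ turns $\phi(S(a))=\phi(a\delta^{+})$ into $\phi(S^{-1}(a^{*}))=\phi((\delta^{+})^{*}a^{*})$ (using $S(a)^{*}=S^{-1}(a^{*})$), which compared with $\psi^{-}=\phi\cdot\delta^{-}$ forces $(\delta^{+})^{*}=\delta^{-}$. The commutation relations with $M(B)$ and $M(C)$ follow by combining the defining relations $\psi^{\pm}=\delta^{\pm}$-twists of $\phi$ with Lemma~\ref{lemma:integrals-modular-base} (which controls how $B$ and $C$ pass through $\phi$ as $S^{2}$), the modular-automorphism identities of Theorem~\ref{theorem:modular-automorphism} together with the stated forms of $\sigma^{\psi^{\pm}}=S^{\mp1}\circ(\sigma^{\phi})^{-1}\circ S^{\pm1}$, and one last appeal to faithfulness of $\phi$ to cancel the free variable.
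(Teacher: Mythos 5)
Your architecture matches the paper's for most of the theorem: existence and uniqueness of $\delta^{\pm}$ via Proposition \ref{proposition:uniqueness-full} and Lemma \ref{lemma:lesssim} (which is exactly Corollary \ref{corollary:modular-element}), the key identity $\bphi(a)\delta^{+}=\lambda(\phi)(a)=\delta^{-}\phib(a)$ obtained by pairing against $\omega=b\cdot\phi$, flipping $\lambda$ and $\rho$ with Proposition \ref{proposition:convolution} and invoking \eqref{eq:convolution-right-invariance}--\eqref{eq:convolution-left-invariance}, and the faithfulness arguments for $S(\delta^{+})=(\delta^{-})^{-1}$, $\delta^{+}=(\delta^{-})^{*}$ and the commutation relations with $M(B)$, $M(C)$. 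Those parts are sound and essentially identical to the paper's proof.

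There is, however, a genuine gap in the comultiplication step, which you leave as a one-line appeal to "cancelling a nonzero scalar factor $\phi(a)$ by faithfulness". That is the multiplier Hopf \emph{algebra} mechanism, where $(\id\otimes\phi)(\Delta(a))=\phi(a)\delta$ with $\phi(a)\in\C$. Here the corresponding slice is $\lambda(\phi)(a)=\bphi(a)\delta^{+}$ with $\bphi(a)\in B$, an element of the base algebra sitting between the legs of $\AlA$, and there is no scalar to cancel. The paper instead evaluates $(\phi\underset{\mu_{B}}{\oo}\phi)(\Delta_{B}(\delta^{-}a)(1\oo b))$ in two ways to get $(\psi^{-}\underset{\mu_{B}}{\oo}\psi^{-})(\Delta_{B}(a)(1\oo b))$, uses surjectivity of $\Tr$ to promote this to an identity on all of $\AlA$, and then needs the commutation $\phib(\delta^{-}c)(\delta^{+})^{-1}=(\delta^{-})^{-1}\bphi(\delta^{-}c)$ (a consequence of your key identity plus $S(\delta^{+})=(\delta^{-})^{-1}$) to move the base-algebra factor across the balanced tensor product and identify the result as $(\phi\underset{\mu_{B}}{\oo}\phi)(\delta^{-}c\oo S(\delta^{-})^{-1}d)$; the relation $\Delta_{C}(\delta^{-})=\delta^{-}\oo\delta^{-}$ then requires a further separate argument testing $\Delta_{C}(\delta^{-})$ against $1\oo\phib(a)$. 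None of this is captured by a scalar cancellation. Relatedly, your plan to deduce $\eps\cdot\delta^{-}=\eps=\delta^{+}\cdot\eps$ by applying counits to the coproduct formulas is circular: evaluating $\beps$ on $\delta^{\pm}$ times something is precisely what you are trying to determine. The paper gets the counit identities directly from the key identity, via $\eps(\delta^{-}\phib(a)b)=\eps(\lambda(\phi)(a)b)=(\phi\underset{\mu_{B}}{\oo}\eps)(\Delta_{B}(a)(1\oo b))=\phi(a\beps(b))=\eps(\phib(a)b)$ together with fullness of $\phi$, and you should do the same.
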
 
\begin{proof}  
  The compositions $\psi^{+}$ and $\psi^{-}$ are full and faithful right integrals by Corollary
  \ref{corollary:integrals-bimodule-antipode} and Remark \ref{remark:full},
  and of the form $\psi^{+} = \delta^{+} \cdot \phi$ and $\psi^{-} =
  \phi \cdot \delta^{-}$ with unique invertible multipliers
  $\delta^{-},\delta^{+} \in M(A)$ by Corollary
  \ref{corollary:modular-element} and Theorem
  \ref{theorem:modular-automorphism}.  By Proposition
  \ref{proposition:convolution} and 
  \eqref{eq:convolution-left-invariance},
  \begin{align*}
    \phi(\lambda(\phi)(a)b) &= ((b \cdot \phi) \circ \lambda(\phi))(a)
    = (\phi \circ \rho(b\cdot \phi))(a) =
    ((\phi \circ S^{-1}) \circ \rho(\phi \cdot a))(b).
\end{align*}
Another application of  Proposition
  \ref{proposition:convolution} and 
  \eqref{eq:convolution-right-invariance} shows that this is equal to
  \begin{align*}  ((\phi \cdot a) \circ \lambda(\psi^{-}))(b) &= (\phi \cdot     a)(\bpsib^{-}(b)) = \phi(a\bpsib^{-}(b)) = \psi^{-}(\phib(a)b) =  \phi(\delta^{-}\phib(a)b)
  \end{align*}
  for all $a,b\in A$, and hence $\lambda(\phi)(a) = \delta^{-}\phib(a)$ for   all $a\in A$. A similar calculation shows that $\lambda(\phi)(a) =
  \bphi(a)\delta^{+}$ for all $a\in A$.
We have $(\delta^{-})^{-1} = S(\delta^{+})$ because
\begin{align*}
  \phi
= (\delta^{+} \cdot \phi) \circ S^{-1} =
  (\phi \circ S^{-1}) \cdot S(\delta^{+}) = \phi \cdot \delta^{-}   S(\delta^{+}).
\end{align*}

 The properties of the counit imply that
  \begin{align*}
    \eps(\delta^{-}\phib(a)b) &= \eps(\lambda(\phi)(a)b) = (\phi \underset{\mu_{B}}{\oo}
    \eps)(\Delta_{B}(a)(1 \oo b)) = \phi(a\beps(b)) = \eps(\phib(a)b)
  \end{align*}
  for all $a,b \in A$, whence $\eps \cdot \delta^{-} = \eps$. A similar   calculation shows that $\delta^{+}
  \cdot \eps = \eps$.

  The relations for the comultiplication require a bit more work. For all
  $a,b\in A$,
  \begin{align*}
    (\phi \underset{\mu_{B}}{\oo} \phi)(\Delta_{B}(\delta^{-} a)(1 \oo b)) &=
    \phi(\lambda(\phi)(\delta^{-} a)b) \\ &= \phi(\delta^{-}\phib(\delta^{-} a) b) = (\psi^{-} \underset{\mu_{B}}{\oo} \psi^{-})(\Delta_{B}(a)(1\oo b)).
  \end{align*}
 The map $\Tr$ being surjective, we can conclude  that for all
  $c,d \in A$,
  \begin{align*}
    (\phi \underset{\mu_{B}}{\oo} \phi)(\Delta_{B}(\delta^{-})(c \oo d)) &= (\psi^{-} \underset{\mu_{B}}{\oo}
    \psi^{-})(c \oo d) 
    = \phi(\delta^{-} S_{B}(\phib(\delta^{-} c)) d).
  \end{align*}
Since $\phib(\delta^{-} c)S^{-1}(\delta^{-}) = \phib(\delta^{-}   c)(\delta^{+})^{-1} = (\delta^{-})^{-1}\bphi(\delta^{-} c)$, the expression above equals
  \begin{align*}
         \phi(S(\phib(\delta^{-} c)S^{-1}(\delta^{-}))d) &=
         \phi(S((\delta^{-})^{-1}\bphi(\delta^{-} c))d) \\ &=\phi(S_{B}(\bphi(\delta^{-}          c))S(\delta^{-})^{-1}d) = (\phi \underset{\mu_{B}}{\oo} \phi)(\delta^{-} c \oo S(\delta^{-})^{-1}d).
  \end{align*}
  Consequently, $\Delta_{B}(\delta^{-}) = \delta^{-} \oo S(\delta^{-})^{-1}$.
  Since the antipode reverses the comultiplication (see \eqref{dg:galois-inverse}), we can conclude
  \begin{align*}
    \Delta_{C}(\delta^{+}) = \Delta_{C}(S^{-1}(\delta^{-})^{-1}) =
    \delta^{-} \oo S^{-1}(\delta^{-})^{-1} =  \delta^{-} \oo \delta^{+}.
  \end{align*}
  A similar argument shows that $\Delta_{B}(\delta^{-}) =\delta^{+} \oo
  \delta^{-}$. 

Let us compute $\Delta_{C}(\delta^{-})$. For all $a\in A$,
  \begin{align*}
    \Delta_{C}(\delta^{-}) (1 \oo \phib(a)) = \Delta_{C}(\delta^{-} \phib(a))
 &    = \Delta_{C}(\bphi(a) \delta^{+}) \\ &= \delta^{-} \oo
    \bphi(a)\delta^{+}  = (\delta^{-} \oo \delta^{-})(1 \oo \phib(a)).
  \end{align*}
Since $\phib(A)A=A$ and $\ArA$ is non-degenerate as a right module over $1 \oo A$   by assumption, this relation  implies $\Delta_{C}(\delta^{-}) = \delta^{-} \oo
  \delta^{-}$. A similar reasoning shows that 
  $\Delta_{B}(\delta^{+})=\delta^{+} \oo \delta^{+}$.

  If $(\mathcal{A},\mu_{B},\mu_{C},\bpsib,\cphic)$ is a measured multiplier Hopf $*$-algebroid,  then
  \begin{align*}
    \phi(a^{*}(\delta^{-})^{*}) &= \overline{\phi(\delta^{-} a)} = \overline{\phi
    \circ S^{-1}(a)} = \phi(S^{-1}(a)^{*}) = \phi(S(a^{*})) = \phi(a^{*}\delta^{+})
  \end{align*}
  for all $a \in A$, where we used \eqref{eq:counit-antipode-involution}, and hence $(\delta^{-})^{*}=\delta^{+}$. 

Finally, suppose that $\sigma^{\phi}(M(B))=M(B)$. Of the intertwining relations for $\delta^{+},\delta^{-}$  and multipliers of $B$ or $C$, we only prove the    first one; the others follow similarly. From Theorem \ref{theorem:modular-automorphism}, we conclude that
 for all $a\in A$ and $x\in B$,
  \begin{align*}
    \delta^{-} S^{2}(\sigma^{\phi}(x)) \phib(a) = \delta^{-} \phib(xa) &=
    \lambda(\phi)(xa) = x\lambda(\phi)(a) = x \delta^{-} \phib(a). \qedhere
  \end{align*}
\end{proof}

Let  $(\mathcal{A},\mu_{B},\mu_{C},\cphic,\bpsib)$ be a measured regular multiplier Hopf algebroid. Suppose that $\mathcal{A}$ is proper and that $\sigma^{\phi}(B)=B$. Under a mild non-degeneracy assumption, the left integral $\phi$ can be rescaled such that it becomes a left and right integral, like a Haar integral in the unital case except for the normalization. 

To formulate this condition, we use the following observation.  By \eqref{eq:bphi-phib}, we can define a multiplier $\bphi(y) \in M(B)$ such that for all $x\in B$,
  \begin{align} \label{eq:bphiy}
    x\bphi(y) &= \bphi(xy) = \bphi(yx) = \bphi(y)(\sigma^{\phi}\circ S^{2})^{-1}(x).
  \end{align}
\begin{theorem} \label{theorem:integrals-proper} 
Let $(\mathcal{A},\mu_{B},\mu_{C},\cphic,\bpsib)$ be a measured regular multiplier Hopf  algebroid.  Assume that $\mathcal{A}$ is proper, that  $\sigma^{\phi}(M(B))=M(B)$   and  that  $ \bphi(C)$ contains an invertible multiplier $z$. Then the functional $h:=z^{-1} \cdot \phi$ is a 
 left and a right integral for $(\mathcal{A},\mu_{B},\mu_{C})$,  and $(\mathcal{A},\mu_{B},\mu_{C},{_{B}h_{B}},{_{C}h_{C}})$ is a measured regular
    multiplier Hopf algebroid.
\end{theorem}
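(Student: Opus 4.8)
The plan is to rescale $\phi$ so that the rescaled functional becomes invariant under the antipode, and then to read off both one-sided invariance properties and the measured structure from that single fact.

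First I would record that $z=\bphi(y_{0})\in M(B)$ by \eqref{eq:bphiy}, so that the hypothesis that $z$ be invertible places $z^{-1}$ in $M(B)$. Since the base weight is counital it is modular by Proposition \ref{proposition:counit-kms}, hence the left integrals form an $M(B)$-bimodule by Corollary \ref{corollary:integrals-bimodule-antipode}, and therefore $h=z^{-1}\cdot\phi$ is again a left integral. By Lemma \ref{lemma:base} (1) with $x=z^{-1}$ its factorization maps are obtained from those of $\phi$ (for instance ${_{B}h}=\bphi(\,\cdot\,z^{-1})$ and ${_{C}h}={_{C}h}_{C}=\cphic(\,\cdot\,z^{-1})$); as $z^{-1}$ is invertible these are surjective exactly when those of $\phi$ are, so $h$ is full both as a left integral and, using Proposition \ref{proposition:full} for the $C$-side, as a candidate right integral, and faithfulness is inherited.

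The heart of the argument is to show $h=h\circ S$; granting this, $h\circ S$ is a right integral by Corollary \ref{corollary:integrals-bimodule-antipode} (2), so $h$ is simultaneously a left and a right integral. To verify $h=h\circ S$ I would rewrite, for $a\in A$,
\[ (h\circ S)(a)=\phi(S(a)z^{-1})=\phi\bigl(S(S^{-1}(z^{-1})a)\bigr)=\psi^{+}(S^{-1}(z^{-1})a)=\phi(S^{-1}(z^{-1})a\delta^{+}), \]
using $\psi^{+}=\phi\circ S=\delta^{+}\cdot\phi$ from Theorem \ref{theorem:modular-element-second}. Since $S^{-1}(z^{-1})\in M(C)$ and $\sigma^{\phi}|_{M(C)}=S^{2}|_{M(C)}$ by Theorem \ref{theorem:modular-automorphism}, applying the modular automorphism of $\phi$ and then faithfulness reduces $h=h\circ S$ to the single multiplier identity $z\delta^{+}=S(z)$. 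This identity I would obtain from the convolution operator: because $y_{0}\in C$ one has $\Delta_{B}(y_{0})=y_{0}\otimes 1$, so the counit-type slice $(\bphi\otimes\iota)(c\otimes d)=S_{B}(\bphi(c))d$ gives $\lambda(\phi)(y_{0})=S(\bphi(y_{0}))=S(z)$, while Theorem \ref{theorem:modular-element-second} gives $\lambda(\phi)(y_{0})=\bphi(y_{0})\delta^{+}=z\delta^{+}$; comparing the two expressions yields $z\delta^{+}=S(z)$, i.e.\ $\delta^{+}=z^{-1}S(z)$ (note $z$ and $S(z)\in M(C)$ commute), which closes the reduction.

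Once $h$ is known to be a full, faithful left and right integral, the $C$-bimodule factorization ${_{C}h_{C}}$ is a partial left integral and the $B$-bimodule factorization ${_{B}h_{B}}$ a partial right integral with $h=\mu_{C}\circ{_{C}h_{C}}=\mu_{B}\circ{_{B}h_{B}}$; the two one-sided factorizations of each map coincide precisely because $h$ is now a two-sided integral. Quasi-invariance of $(\mu_{B},\mu_{C})$ with respect to ${_{B}h_{B}}$ and ${_{C}h_{C}}$ is immediate from $h\in A^{\sqcup}$, counitality of the base weight is unchanged, and fullness was checked above, so $(\mathcal{A},\mu_{B},\mu_{C},{_{B}h_{B}},{_{C}h_{C}})$ is a measured regular multiplier Hopf algebroid. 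The step I expect to be the main obstacle is the proof of $z\delta^{+}=S(z)$: both the convolution identity $\lambda(\phi)(a)=\bphi(a)\delta^{+}$ and the factorization map $\bphi$ must be evaluated at the multiplier $y_{0}\in C$ rather than at an element of $A$, which forces one to use properness (so that $By_{0}\subseteq A$) to justify the extensions to multipliers and the slice computation based on $\Delta_{B}(y_{0})=y_{0}\otimes 1$.
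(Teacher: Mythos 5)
Your proposal is correct and follows essentially the same route as the paper: both derive the key identity $\delta^{+}=z^{-1}S(z)$ by evaluating $\lambda(\phi)$ at (elements of $A$ of the form) $xy_{0}$ in two ways — via $\lambda(\phi)=\bphi(\cdot)\delta^{+}$ from Theorem \ref{theorem:modular-element-second} and via the slice of $\Delta_{B}(y_{0})=y_{0}\otimes 1$ — and then prove $h\circ S=h$ by exactly the chain $\phi(S(a)z^{-1})=\phi(S^{-1}(z^{-1})a\delta^{+})=\phi(az^{-1})$ using Lemma \ref{lemma:integrals-modular-base}. The multiplier issue you flag at the end is precisely what the paper handles by working with $xy$ and $S(x')a$ for arbitrary $x,x'\in B$, $a\in A$, so your proposed fix via properness is the one actually used.
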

\begin{proof}
Denote by $\delta^{+}\in M(A)$  the multiplier satisfying $\delta^{+} \cdot \phi = \phi\circ S$ as in Theorem \ref{theorem:modular-element-second}. Let $a\in A$, $x,x' \in B$ and choose $y\in C$ such that $\bphi(y)=z$.
Then 
\begin{align*}
xz\delta^{+}S(x')a=\bphi(xy)\delta^{+} S(x')a =  \lambda(\phi)(xy)S(x')a = (\bphi \otimes \id)(\Delta_{B}(xy)(1 \otimes S(x')a)
\end{align*}
which is equal to
\begin{align*}
  (\bphi \otimes \id)(yx' \otimes xa) = S(x'z)xa = xS(z)S(x')a.
\end{align*}
Since $x,x'\in B$ and $a\in A$ were arbitrary, we can conclude that
\begin{align*}
  \delta^{+} = z^{-1}S(z).
\end{align*}
The functional $z^{-1}\cdot h$ is a left integral by Corollary \ref{corollary:integrals-bimodule-antipode}, and clearly full and faithful.  Subsequently using Lemma \ref{lemma:integrals-modular-base}, the definition of $\delta^{+}$ and the formula above, we find
\begin{align*}
  h(S(a))  = \phi(S(a)z^{-1}) &=\phi(S(S^{-1}(z^{-1})a)) \\ &= \phi(S^{-1}(z^{-1})a\delta^{+}) = \phi(a\delta^{+}S(z^{-1}))
=\phi(az^{-1})=h(a)
\end{align*}
for all $a\in A$. By Corollary  \ref{corollary:integrals-bimodule-antipode}, $h\circ S=h$ is also a right integral. 
\end{proof}

Let us look at the examples listed in Subsection \ref{subsection:measured} again:
\begin{example} \label{example:modular-groupoid-functions}
Let $G$ be a locally compact, \'etale Hausdorff groupoid and let $\mu$ be a Radon measure on the space of units $G^{0}$ with full support.  Consider the tuple
\begin{align*} 
  (\mathcal{A},\mu_{B},\mu_{C},\bpsib,\cphic)
\end{align*}
formed by the  multiplier Hopf $*$-algebroid of functions on $G$  defined in Example \ref{example:hopf-groupoid-functions}, the 
 base weight  defined in \eqref{eq:base-weight-groupoid-functions}  and the left- and the right-invariant maps defined in \eqref{eq:integrals-groupoid-functions}.
Note that the compositions $\phi=\mu_{C} \circ \cphic$ and $\psi=\mu_{B} \circ \bpsib$ satisfy $\psi=\phi\circ S=\phi\circ S^{-1}$.

We saw in Example \ref{example:measured-groupoid-functions} that this tuple  is a measured multiplier Hopf $*$-algebroid if  the measure $\mu$ is continuously quasi-invariant. Conversely, if the tuple is a measured multiplier Hopf $*$-algebroid, then   by Theorem \ref{theorem:modular-element-second}, $\psi=\delta \cdot \phi$ with $\delta:=\delta^{+}=\delta^{-}$ so that $\mu$ is continuously quasi-invariant with Radon-Nikodym derivative   $D=\delta^{-1}$.
\end{example}
For the measured multiplier Hopf algebroids associated to the convolution algebra of a locally compact, \'etale, Hausdorff groupoid, see Example \ref{example:measured-groupoid-algebra}, and to a tensor product $A=C\otimes B$, see Example \ref{example:measured-cb}, the modular elements $\delta^{-}$ and $\delta^{+}$ are evidently trivial.
\begin{example}
   Consider the measured regular multiplier Hopf algebroid associated to a two-sided crossed product $C \# H\# B$ and suitable $H$-invariant functionals $\mu_{B}$ and  $\mu_{C}$ as in Example \ref{example:measured-chb}. By \cite[Proposition 3.10]{daele},  $H$ has a modular element $\delta_{H}$ which  satisfies $\phi_{H} \circ S_{H} = \delta_{H} \cdot \phi_{H}$, and therefore  $\phi_{H} \circ S_{H}^{-1} = \phi_{H} \cdot S_{H}(\delta^{-1}_{H}) = \phi_{H} \cdot \delta_{H}$.  Short calculations show that the modular multipliers $\delta^{+}$ and $\delta^{-}$ of Theorem  \ref{theorem:modular-element-second} coincide with $\delta_{H}$.
\end{example}
\subsection{Faithfulness of integrals}
\label{subsection:faithful}
Non-zero integrals on multiplier Hopf algebras are always faithful \cite{daele}.  We now prove a corresponding statement for integrals on regular multiplier Hopf algebroids,
where the former need to be full, the latter locally projective, and the base algebra full. Note that all of these assumptions  become vacuous in the case of multiplier Hopf algebras. 
\begin{theorem} \label{theorem:integrals-faithful}
Let $\mathcal{A}=(A,B,C,S_{B},S_{C},\Delta_{B},\Delta_{C})$ be a   regular multiplier Hopf algebroid with counital base weight $(\mu_{B},\mu_{C})$. If $\mathcal{A}$ is locally projective and $B$ has local units, then  every full left or right integral for $(\mathcal{A},\mu_{B},\mu_{C})$ is faithful.
\end{theorem}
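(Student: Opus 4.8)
The plan is to first normalise the problem and then isolate a single ideal that must vanish. Since the antipode $S$ is a bijective anti-automorphism with $S(A)=A$, faithfulness of a functional $\chi$ is preserved under $\chi\mapsto\chi\circ S$, and by Corollary~\ref{corollary:integrals-bimodule-antipode} together with Remark~\ref{remark:full} a full right integral becomes a full left integral after composing with $S^{-1}$. Hence it suffices to treat a full left integral $\phi=\mu_{C}\circ\cphic$. Setting $\psi:=\phi\circ S$, which is a full right integral with associated partial right integral $\bpsib$, the tuple $(\mathcal{A},\mu_{B},\mu_{C},\bpsib,\cphic)$ is a measured regular multiplier Hopf algebroid, so all results of Sections~4--6 (in particular Theorem~\ref{theorem:modular} and Lemma~\ref{lemma:extension-multipliers}) are available. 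Faithfulness of $\phi$ means that both maps $a\mapsto a\cdot\phi$ and $a\mapsto\phi\cdot a$ are injective. I would next show that their kernels coincide: writing $K:=\{a:\phi\cdot a=0\}$ and $K':=\{a:a\cdot\phi=0\}$, a short computation using $A\cdot\phi=\phi\cdot A$ from Theorem~\ref{theorem:modular} (for $a\in K'$ and arbitrary $b$, choose $b'$ with $b\cdot\phi=\phi\cdot b'$ and evaluate at $a$) gives $K'\subseteq K$, and symmetrically $K\subseteq K'$. Moreover $K$ is a right ideal and $K'$ a left ideal, so $K=K'$ is a two-sided ideal, and the theorem reduces to showing $K=0$.

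\textbf{The key step: $\cphic(Aa)=0$ for $a\in K$.} This is the heart of the argument and is clean. Fix $a\in K$ and let $c\in A$, $y\in C$. Since $ca\in AK\subseteq K$, we have $\phi\cdot(ca)=0$; extending this functional to $M(A)$ by means of the embedding $j$ of Lemma~\ref{lemma:extension-multipliers} and using $C\subseteq M(A)$, we obtain $\phi\big((ca)y\big)=j(\phi\cdot(ca))(y)=0$. On the other hand, $\cphic\in\Hom(\Ac,\Cc)$ yields $\cphic(ca)\,y=\cphic\big((ca)y\big)$, whence $\mu_{C}\big(\cphic(ca)\,y\big)=\phi\big((ca)y\big)=0$ for all $y\in C$. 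As $\mu_{C}$ is faithful, this forces $\cphic(ca)=0$, i.e. $\cphic(Aa)=0$. Running the mirror-image computation for the full right integral $\psi$ and its partial integral $\bpsib$ gives the companion relation $\bpsib(aA)=0$.

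\textbf{Concluding $a=0$.} It remains to deduce $a=0$ from $\cphic(Aa)=0$ (together with $\bpsib(aA)=0$). The plan is to feed this vanishing into the left-invariance identity \eqref{eq:partial-right-deltac}, $(\id\otimes\cphic)((c\otimes 1)\Delta_{C}(w))=c\,\cphic(w)$, and into the dual identity obtained from the right counit \eqref{eq:right-counit}, and then to exploit bijectivity of the canonical maps $\lT$ and $\rT$ to transport the information from the ideal $Aa$ back to $a$ itself. The obstacle here is exactly the point-separation needed to invert the slice by $\cphic$: this is where the two extra hypotheses enter. Local projectivity of the base modules lets me invoke Lemma~\ref{lemma:projective}(2), reducing the problem to exhibiting a family of module-valued maps separating the points of $A$; and since $B$ has local units, so does $C\cong B$ via $S_{B}$, which (using the surjective counit and the recovery of products $(\epsilon\otimes\id)((1\otimes b)\Delta_{C}(w))=bw$) provides such a separating family \emph{independently of} $\phi$. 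This independence is essential: it is what replaces the faithfulness of $\phi$ that was available in the proof of Theorem~\ref{theorem:integrals-uniqueness}, whose slice-separation step I otherwise imitate. Carrying this through upgrades $\cphic(Aa)=0$ to $Aa=0$, and then non-degeneracy of ${}_{A}A$ gives $a=0$, so $K=0$ and $\phi$ is faithful.

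\textbf{Main difficulty.} I expect the genuine work to lie entirely in the last paragraph: the second step is a direct computation, but converting $\cphic(Aa)=0$ into $a=0$ requires carefully choosing the counit-based separating family and combining it with local projectivity and the Galois bijections, since any separating family built from $\cphic$ itself would make the argument circular.
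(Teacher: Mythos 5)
Your reductions are sound: composing with $S^{\pm1}$ reduces everything to a full left integral, and the identity $A\cdot\phi=\phi\cdot A$ from Theorem~\ref{theorem:modular} (which indeed requires no faithfulness) does show that the two kernels $K$ and $K'$ coincide. Your key step is also essentially correct, but you reach it by an unnecessary and mildly circular detour: Lemma~\ref{lemma:extension-multipliers} is stated for \emph{measured} multiplier Hopf algebroids, whose definition already demands faithful partial integrals, and its proof rests on Theorem~\ref{theorem:modular-automorphism}, which needs injectivity of $a\mapsto a\cdot\phi$ --- precisely what is at stake. You can avoid this entirely by putting the element of $C$ on the other side: if $\phi(Aa)=0$, then $\mu_{C}(y\,\cphic(ca))=\mu_{C}(\cphic(yca))=\phi((yc)a)=0$ for all $y\in C$ and $c\in A$, and faithfulness of $\mu_{C}$ gives $\cphic(Aa)=0$ directly. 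This is exactly how the paper opens its proof.

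The genuine gap is your final step. You propose to upgrade $\cphic(Aa)=0$ to $Aa=0$ by slicing $(b\otimes1)\Delta_{C}(a)$ with a counit-based separating family built from local units and local projectivity, but this cannot work as stated: the hypothesis $\cphic(Aa)=0$ only controls slices by translates of $\cphic$, and the assertion that this forces \emph{all} separating slices to vanish is equivalent to the faithfulness of $\cphic$ that you are trying to prove. The paper's mechanism is different, and each of its ingredients is doing real work: one applies the counit functional $\varepsilon$ to $c:=\lambda(\upsilonb\cdot b)(a)$ for arbitrary $b\in A$ and $\upsilonb\in\Hom(\Ab,\Bb)$, since $\varepsilon(c)=\mu_{B}(\upsilonb(ba))$ and local projectivity of $\Ab$ makes $\dAb$ point-separating on $A$. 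To see that $\varepsilon(c)=0$, one first uses the strong invariance identity \eqref{eq:convolution-left-invariance} together with $a\cdot\cphic=0$ and the commutation of $\lambda$- and $\rho$-convolution operators to get $\rho(\phic\cdot d)(c)=0$ for all $d$; then one writes $(f\otimes1)\Delta_{C}(c)=\sum_j f_j\otimes c_j$, uses local projectivity and the local units of $B$ to resolve $f_j=\sum_i e_i\,\omega^i_B(f_j)$ with $e_i\in A$, uses Theorem~\ref{theorem:modular} to trade $S^{-1}(e_i)\cdot\phi$ for $\phi\cdot d_i$, collapses $\sum_j c_jS^{-1}(f_j)$ to $\beps(c)S^{-1}(f)$ via the antipode diagram \eqref{dg:antipode}, and finally invokes surjectivity of $\bphi$ (this is where fullness enters) and faithfulness of $\mu_{B}$. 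None of these moves appears in your sketch --- in particular the roles of strong invariance, of Theorem~\ref{theorem:modular} inside this computation, and of fullness are absent --- so the passage from $\cphic(Aa)=0$ to $a=0$ is not established.
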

\begin{proof}
  We only prove the assertion for left integrals, closely following   the argument in \cite{daele}. A similar reasoning applies to right   integrals.

  Let $\phi$ be a full left integral on a projective regular   multiplier Hopf algebroid $\mathcal{A}$ with base weight $(\mu_{B},\mu_{C})$, let  $a\in A$ and suppose $ a\cdot \phi=0$.  Since $\mu_{C}$ is faithful,
  we can conclude that then also $a\cdot \cphic=0$.  Let $b\in A$ and $\upsilonb
  \in \Hom(\Ab,\Bb)$.   We show that
  \begin{align}\label{eq:faithful}
   \eps(c)=0, \quad \text{where }  c := \lambda(\upsilonb \cdot b)(a),
  \end{align}
 and then we can   conclude from Lemma \ref{lemma:convolution} that
  \begin{align*}
    (\mu_{B} \circ \upsilonb)(ba) = (\eps \circ \lambda(\upsilonb
    \cdot b))(a) = \eps(c)=0. 
  \end{align*}
  Using the facts that $\mu_{B}$ is faithful, $b\in A$ is arbitrary   and $A$ is non-degenerate, and that $\upsilonb \in \dAb$ is arbitrary   and $\dAb$ separates the points of $A$ because the module $\Ab$ is   locally projective, we can deduce that $a=0$.

Let us prove \eqref{eq:faithful}. 
  Lemma \ref{lemma:convolution} and \eqref{eq:convolution-left-invariance} imply for all $d\in A$ that
  \begin{align*}
      \rho(\phic \cdot d)(c) &= (\lambda(\upsilonb \cdot b) \circ \rho(\phic \cdot d) )(a) =
      (\lambda(\upsilonb \cdot b) \circ S \circ \rho(a \cdot \cphi))(d)
      = 0,
  \end{align*}
 and hence for all $f\in A$ and $\omega\in \dA$  of the form $\omega=\mu_{B}\circ \omegab$ with $\omegab\in \Hom(\Ab,\Bb)$,
  \begin{align*}
0=   (\omega \cdot f)( \rho(\phic \cdot d)(c)) = (\phi \cdot d)(\lambda(\omegab \cdot f)(c)).
  \end{align*}
Writing $(f \oo
  1)\Delta_{C}(c) = \sum_{j} f_{j} \oo c_{j}$ with $f_{j},c_{j} \in   A$, the equation above becomes
  \begin{align} \label{eq:faithful-aux}
    0 =  \sum_{j} \phi(dc_{j}S^{-1}(\omegab(f_{j}))).
  \end{align}
Since $\mathcal{A}$ is locally projective, we can find finitely many $\omega^{i}_{B} \in \Hom(\Ab,\Bb)$ and $e_{i} \in \Hom(\Bb,\Ab)$ such that  $\sum_{i} e_{i}(\omega^{i}_{B}(f_{j}))=f_{j}$ for all $j$, and since $B$ has local units, we can without loss of generality assume that $e_{i} \in A$ in the sense that $e_{i}(x) = e_{i}x$ for all $i$. 
By Theorem \ref{theorem:modular}, we can find   elements $d_{i} \in A$ such that $S^{-1}(e_{i}) \cdot \phi = \phi
  \cdot d_{i}$ for all $i$. Now, \eqref{eq:faithful-aux} implies
  \begin{align*}
    0 = \sum_{i,j} \phi(d_{i}c_{j}S^{-1}(\omega_{B}^{i}(f_{j}))) = \sum_{i,j}
    \phi(c_{j}S^{-1}(e_{i}\omega_{B}^{i}(f_{j}))) = \sum_{j} \phi(c_{j}S^{-1}(f_{j})).
  \end{align*}
  where we used Lemma \ref{lemma:convolution} again.  The second   diagram in \eqref{dg:antipode} shows that
 \begin{align*}
   \sum_{j} c_{j}S^{-1}(f_{j}) = \sum_{j} S^{-1}(f_{j}S(c_{j})) =
   S^{-1}(f S_{B}(\beps(c))) =\beps(c)S^{-1}(f),
 \end{align*}
and hence
\begin{align*}
   0 = \phi(\beps(c)S^{-1}(f)) = \mu_{B}(\beps(c)\bphi(S^{-1}(f))).
 \end{align*}
Since $f\in A$ was arbitrary, $\bphi$ is surjective and $\mu_{B}$ is faithful, we can conclude $\beps(c)=0$ and hence $\eps (c)=0$.
\end{proof}

\section{Modification}
\label{section:modification}

Our key assumption on a base weight for a regular multiplier Hopf algebroid, the counitality condition $\mu_{B} \circ \beps = \mu_{C}\circ \epsc$, turned out to be quite restrictive in several examples considered in subsection \ref{subsection:measured}, where it corresponded to invariance of $\mu_{B}$  and $\mu_{C}$ under certain actions of an underlying groupoid or a multiplier Hopf algebra. In case that the functionals are only quasi-invariant with respect to these actions, one can independently modify the left and the right comultiplication and accordingly the left and right counits  such that $\mu_{B}$ and $\mu_{C}$ will form a base weight for this modified  multiplier Hopf algebroid. 
We now describe this modification, which  is inspired by  \cite{daele:modified}, in  a systematic manner, starting with left and right multiplier bialgebroids and then turning to multiplier Hopf algebroids. Examples will be given in the next section.   

\subsection{Modification of left multiplier bialgebroids}

Let $\mathcal{A}_{B}=(A,B,s,t,\Delta)$ be a  left multiplier bialgebroid.

Given an automorphism $\theta$ of $B$, we write $_{\theta}A$ and $A_{\theta}$  when we regard $A$ as a left or right $B$-module via $x\cdot a:=s(\theta(x))a$ or  $a\cdot x:=as(\theta(x))$, respectively, and  $^{\theta}A$ and $A^{\theta}$  when $x\cdot a:=at(\theta^{-1}(x))$ or $a\cdot x:=t(\theta^{-1}(x))a$ for all $a\in A$ and $x\in B$.  

Note that then the quotients ${_{\theta}A} \otimes A^{B}$ and ${_{B}A} \otimes A^{\theta}$ of $A\otimes A$ coincide.

Suppose that $\Theta_{\lambda}$ and $\Theta_{\rho}$ are automorphisms of $A$ which, when extended to multipliers, satisfy 
\begin{align}
  \label{eq:theta-base}
\Theta_{\lambda}
\circ s&=s \circ \theta, & \Theta_{\rho} \circ t &= t\circ \theta^{-1}.
\end{align}
 Then the  isomorphisms
\begin{align} \label{eq:theta-isomorphisms}
  \Theta_{\lambda} \otimes \id, \, \id \otimes  \Theta_{\rho} &\colon {_{B}A} \otimes A^{B} \to {_{\theta}A} \otimes A^{B} = \bsA \otimes A^{\theta}
\end{align}
induce, by conjugation,  isomorphisms
\begin{align*}
    \Theta_{\lambda} \overline{\times}\id, \,  \id
    \overline{\times} \Theta_{\rho} &\colon \End({_{B}A} \otimes
    A^{B}) \to \End({_{\theta}A} \otimes A^{B})= \End({_{B}A} \otimes A^{\theta}).
\end{align*}
\begin{definition} \label{definition:left-modifier}
A \emph{modifier} of a left  multiplier bialgebroid    $\mathcal{A}_{B}=(A,B,s,t,\Delta)$ consists  of an automorphism $\theta$ of $B$ and automorphisms $\Theta_{\lambda}$, $\Theta_{\rho}$ of $A$ satisfying \eqref{eq:theta-base} and
\begin{align} \label{eq:theta-comultiplication}
 (\Theta_{\lambda} \overline{\times}\id) \circ
  \Delta = ( \id \overline{\times} \Theta_{\rho}) \circ \Delta.
\end{align}
\end{definition}
Evidently, such modifiers form a  group with respect to the composition given by
\begin{align*}
  (\theta,\Theta_{\lambda},\Theta_{\rho})(\theta',\Theta'_{\lambda},\Theta'_{\rho}) = (\theta\theta',\Theta_{\lambda}\Theta'_{\lambda},\Theta'_{\rho}\Theta_{\rho}).
\end{align*}

\begin{proposition} \label{prop:left-modification}
Let $\mathcal{A}_{B}=(A,B,s,t,\Delta)$ be a  left multiplier bialgebroid with a modifier $(\theta,\Theta_{\lambda},\Theta_{\rho})$. Denote by $\tilde \Delta$ the composition in \eqref{eq:theta-comultiplication}. Then 
$\tilde{\mathcal{A}}_{B}:=(A,B,s,t\circ \theta^{-1},\tilde \Delta)$
 is a left multiplier bialgebroid with canonical maps $(\tilde T_{\lambda},\tilde T_{\rho})$   given by
  \begin{align}\label{eq:theta-tl}
    \tilde T_{\lambda} &= (\id \otimes \Theta_{\rho})\circ T_{\lambda} = (\Theta_{\lambda} \otimes \id) \circ T_{\lambda}  \circ (\Theta_{\lambda} \otimes \id)^{-1}, \\ \label{eq:theta-tr}
    \tilde T_{\rho} &= (\Theta_{\lambda} \otimes \id) \circ T_{\rho} = (\id \otimes \Theta_{\rho}) \circ T_{\rho} \circ (\id \otimes \Theta_{\rho})^{-1}.
  \end{align}
\end{proposition}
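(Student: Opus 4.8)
The plan is to derive every axiom of a left multiplier bialgebroid for $\tilde{\mathcal A}_B$ by transporting the corresponding axiom of $\mathcal A_B$ across the two isomorphisms in \eqref{eq:theta-isomorphisms}, exploiting that the modified comultiplication is conjugation by either of them. Writing $U_\lambda := \Theta_\lambda\otimes\id$ and $U_\rho := \id\otimes\Theta_\rho$, regarded as isomorphisms ${_B A}\otimes A^B\to{_B A}\otimes A^\theta$ under the identification ${_\theta A}\otimes A^B = {_B A}\otimes A^\theta$, the modifier relation \eqref{eq:theta-comultiplication} says precisely that $\tilde\Delta(a) = U_\lambda\,\Delta(a)\,U_\lambda^{-1} = U_\rho\,\Delta(a)\,U_\rho^{-1}$ for all $a\in A$. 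The source map $s$ is unchanged, and $\tilde t = t\circ\theta^{-1}$ is again an anti-homomorphism into $M(A)$ with $\tilde t(B)=t(B)$, so its image still commutes with $s(B)$.

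First I would dispatch the module-theoretic conditions (1) and (2). Condition (1) concerns $A$ alone and is unchanged. For (2), the module ${_B A}$ is untouched, and $A^\theta$ (which plays the role of $\btA$) is faithful and idempotent because these properties for $A^\theta$ are equivalent, via the reindexing automorphism $\theta$, to those of $\btA$. Non-degeneracy of ${_B A}\otimes A^\theta$ over $A\otimes 1$ and over $1\otimes A$ I would transport from ${_B A}\otimes A^B$ through $U_\rho$, which commutes with the right $A\otimes 1$-action and intertwines the $1\otimes A$-action through the bijection $\Theta_\rho$. The same bookkeeping shows that conjugation by $U_\lambda$ (or $U_\rho$) carries the defining condition of $\AltkA$ to the analogue for $\End({_B A}\otimes A^\theta)$, so $\tilde\Delta(a)$ lies in the correct multiplier space; explicitly $\tilde\Delta(a)(c\otimes 1) = U_\lambda(\Delta(a)(\Theta_\lambda^{-1}(c)\otimes 1))$ and $\tilde\Delta(a)(1\otimes d) = U_\rho(\Delta(a)(1\otimes\Theta_\rho^{-1}(d)))$. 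Being a conjugate of the homomorphism $\Delta$ by an algebra isomorphism, $\tilde\Delta$ is itself a homomorphism.

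Next come the two parts of condition (3). The bimodule relation \eqref{eq:left-delta-bimodule} for $\tilde\Delta$, with $t$ replaced by $\tilde t$, is a direct computation: one inserts the conjugation forms of $\tilde\Delta$ and uses \eqref{eq:theta-base} together with the way $U_\lambda$ and $U_\rho$ transform the left multiplications $t(y)\otimes s(x)$, reducing it to \eqref{eq:left-delta-bimodule} for $\Delta$. The canonical-map formulas \eqref{eq:theta-tl} and \eqref{eq:theta-tr} then fall out by evaluating partial applications: from $\tilde\Delta(b)(a\otimes 1) = U_\rho(\Delta(b)(a\otimes 1)) = U_\lambda(\Delta(b)(\Theta_\lambda^{-1}(a)\otimes 1))$ one reads off, after factorization, $\tilde T_\lambda = U_\rho\circ T_\lambda = U_\lambda\circ T_\lambda\circ U_\lambda^{-1}$, and dually $\tilde T_\rho = U_\lambda\circ T_\rho = U_\rho\circ T_\rho\circ U_\rho^{-1}$ from $\tilde\Delta(a)(1\otimes b) = U_\lambda(\Delta(a)(1\otimes b)) = U_\rho(\Delta(a)(1\otimes\Theta_\rho^{-1}(b)))$. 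The equality of the two expressions in each line is exactly the modifier relation \eqref{eq:theta-comultiplication}.

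The main obstacle is the coassociativity condition \eqref{eq:delta-coassociative} for $\tilde\Delta$. My strategy is to evaluate both sides and reduce to \eqref{eq:delta-coassociative} for $\Delta$ by applying, in each occurrence of $\tilde\Delta$, whichever of its two conjugation forms keeps the automorphism insertions on the outer legs only: on the first leg $\tilde\Delta$ contributes a factor $\Theta_\lambda$ (via $U_\lambda$) and on the last leg a factor $\Theta_\rho$ (via $U_\rho$), while the middle leg is left untouched, so both sides collapse to the same element, informally $\Theta_\lambda(b_{(1)})a\otimes b_{(2)}\otimes\Theta_\rho(b_{(3)})c$. The freedom to switch between the $U_\lambda$- and $U_\rho$-representations of one and the same $\tilde\Delta$ is exactly \eqref{eq:theta-comultiplication}, and this is where the modifier hypothesis is essential. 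The delicate point is to make this rigorous at the level of multipliers rather than Sweedler symbols: one must work with the partial applications and the canonical maps, and check throughout that the intermediate expressions respect the balancings of the iterated tensor products for $\tilde{\mathcal A}_B$ (whose module structures use $\tilde t = t\circ\theta^{-1}$), the only difference from those for $\mathcal A_B$ being the transport automorphisms $\Theta_\lambda$ and $\Theta_\rho$ on the appropriate legs. This verification, rather than any new idea, is the real work.
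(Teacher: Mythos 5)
Your proposal is correct and follows essentially the same route as the paper: transport the module-theoretic conditions through the isomorphisms $\Theta_\lambda\otimes\id$ and $\id\otimes\Theta_\rho$, check the $s$-$t$-bilinearity directly from either conjugation form of $\tilde\Delta$, prove coassociativity by using the $\Theta_\lambda\overline{\times}\id$ form on the first leg and the $\id\overline{\times}\Theta_\rho$ form on the last leg so that both sides reduce to $(\Theta_\lambda\otimes\id\otimes\Theta_\rho)$ applied to the original coassociativity identity, and read off \eqref{eq:theta-tl}--\eqref{eq:theta-tr} from the definition of $\tilde\Delta$. Your version is, if anything, more explicit than the paper about the multiplier-level bookkeeping, which the paper dismisses as straightforward.
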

\begin{proof}
To see that  the space $\bsA \otimes A^{\theta}= {_{\theta}A} \otimes A^{B}$ is non-degenerate as a right module over $A\otimes 1$ and $1 \otimes A$, use the isomorphisms \eqref{eq:theta-isomorphisms}.  To check that $\tilde \Delta$ is bilinear with respect to $s$ and $t$ and co-associative is straightforward,  for example, 
\begin{align*}
\tilde\Delta(s(x)as(x')) &= (\Theta_{\lambda} \bar\times\id)((1\otimes s(x))\Delta(a)(1\otimes s(x')))
= (1\otimes s(x))\tilde\Delta(a)(1\otimes s(x'))
\end{align*}
 for all $x,x'\in B$ and $a\in A$, and
 \begin{align*}
(\tilde \Delta \otimes \iota)(\tilde \Delta(b)(1 \otimes c)) (a\otimes 1 \otimes 1) &=
(\Theta_{\lambda} \otimes \id \otimes \Theta_{\rho})(( \Delta \otimes \iota)( \Delta(b)(1 \otimes c)) (a\otimes 1 \otimes 1)) \\
&=
(\Theta_{\lambda} \otimes \id \otimes \Theta_{\rho})((\iota\otimes \Delta)(\Delta(b)(a \otimes 1))(1 \otimes 1 \otimes c))
 \\
&=(\iota\otimes  \tilde\Delta)(\tilde\Delta(b)(a \otimes 1))(1 \otimes 1 \otimes c)
 \end{align*}
for all $a,b,c\in A$.  The formulas \eqref{eq:theta-tl} and  \eqref{eq:theta-tr} follow from the definition of $\tilde \Delta$.
\end{proof}
We call $\tilde{\mathcal{A}}_{B}$ the \emph{modified} left multiplier bialgebroid or briefly \emph{modification} associated to $(\theta,\Theta_{\lambda},\Theta_{\rho})$.

\begin{remark}
In the situation above,
 the map
 \begin{align*}
   (\theta',\Theta'_{\lambda},\Theta'_{\rho}) \mapsto (\theta'\theta^{-1},\Theta_{\lambda}'\Theta_{\lambda}^{-1},\Theta'_{\rho}\Theta_{\rho}^{-1})
 \end{align*}
 is a bijection between all modifiers of $\mathcal{A}_{B}$ and all modifiers of $\tilde{\mathcal{A}}_{B}$, as one can easily check.
\end{remark}

\begin{lemma} \label{lemma:modified-full}
  Let $(\theta,\Theta_{\lambda},\Theta_{\rho})$ be a modifier of a full left multiplier bialgebroid $\mathcal{A}_{B}$. Then
  \begin{align*}
\Theta_{\lambda} \circ t&=t, & \Theta_{\rho} \circ  s &= s, &
    \Delta \circ \Theta_{\lambda} &= (\id \overline{\times}
    \Theta_{\lambda}) \circ \Delta, & \Delta \circ \Theta_{\rho} &=
    (\Theta_{\rho} \overline{\times} \id) \circ \Delta.
  \end{align*}
\end{lemma}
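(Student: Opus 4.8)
The plan is to unwind the defining identity \eqref{eq:theta-comultiplication} into two concrete identities for the canonical maps, then to prove the source/target relations $\Theta_{\lambda}\circ t=t$ and $\Theta_{\rho}\circ s=s$, and only afterwards the two comultiplicativity relations (which presuppose the first two, in order that $\iota\overline{\times}\Theta_{\lambda}$ and $\Theta_{\rho}\overline{\times}\iota$ be defined on $\End(\bsA\otimes\btA)$). Write $\tilde\Delta=(\Theta_{\lambda}\overline{\times}\iota)\circ\Delta=(\iota\overline{\times}\Theta_{\rho})\circ\Delta$ for the modified comultiplication. Reading \eqref{eq:theta-comultiplication} through the canonical maps exactly as in the proof of Proposition \ref{prop:left-modification} (formulas \eqref{eq:theta-tl} and \eqref{eq:theta-tr}), the modifier condition is equivalent to
\[
  \tilde\Delta(a)(1\otimes b)=(\Theta_{\lambda}\otimes\iota)(\Delta(a)(1\otimes b)),\qquad
  \tilde\Delta(b)(a\otimes 1)=(\iota\otimes\Theta_{\rho})(\Delta(b)(a\otimes 1))
\]
for all $a,b\in A$. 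I would use these together with the bimodule relations \eqref{eq:left-delta-bimodule} for both $\mathcal A_{B}$ and the modified bialgebroid $\tilde{\mathcal A}_{B}=(A,B,s,t\circ\theta^{-1},\tilde\Delta)$, and with fullness, invoked in the form that the sliced legs $(\iota\otimes\omega)(\Delta(a)(1\otimes b))$ and $(\omega\otimes\iota)(\Delta(b)(a\otimes 1))$ span $A$ as $a,b$ and the module maps $\omega$ vary.

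First I would prove $\Theta_{\lambda}\circ t=t$. Since $\Delta(t(y)a)=(t(y)\otimes 1)\Delta(a)$ by \eqref{eq:left-delta-bimodule} and $\Theta_{\lambda}$ is an algebra automorphism, the first identity above gives $\tilde\Delta(t(y)a)(1\otimes b)=(\Theta_{\lambda}(t(y))\otimes 1)\,\tilde\Delta(a)(1\otimes b)$. On the other hand the target map of $\tilde{\mathcal A}_{B}$ is $t\circ\theta^{-1}$, so $t(y)=(t\circ\theta^{-1})(\theta(y))$, and the bimodule relation for $\tilde\Delta$ gives $\tilde\Delta(t(y)a)(1\otimes b)=(t(y)\otimes 1)\,\tilde\Delta(a)(1\otimes b)$. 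Hence $\Theta_{\lambda}(t(y))$ and $t(y)$ act identically on the first leg of every $\tilde\Delta(a)(1\otimes b)$. Both commute with $s(B)$ (for $\Theta_{\lambda}(t(y))$ because $\Theta_{\lambda}(s(x))=s(\theta(x))$ by \eqref{eq:theta-base}), so slicing the second leg by module maps and using that, by bijectivity of $\Theta_{\lambda}$ and fullness, the resulting elements span $A$, together with non-degeneracy of $A$, yields $\Theta_{\lambda}(t(y))=t(y)$ in $M(A)$. The relation $\Theta_{\rho}\circ s=s$ follows symmetrically from the second identity and $\Delta(as(x))=\Delta(a)(1\otimes s(x))$.

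For the two comultiplicativity relations I would imitate the standard bialgebra argument. Feeding the defining identity into the coassociativity condition \eqref{eq:delta-coassociative} (applied to $\Delta$, or equivalently to $\tilde\Delta$) and then using the defining identity a second time on the inner two legs produces a threefold identity in which $\Delta\circ\Theta_{\lambda}$ appears on one side and $\iota\otimes\Theta_{\lambda}$ acting on a leg of $\Delta$ on the other. Collapsing the remaining auxiliary leg then gives $\Delta(\Theta_{\lambda}(a))(1\otimes b)=(\iota\otimes\Theta_{\lambda})(\Delta(a)(1\otimes\Theta_{\lambda}^{-1}(b)))$, that is $\Delta\circ\Theta_{\lambda}=(\iota\overline{\times}\Theta_{\lambda})\circ\Delta$, and symmetrically $\Delta\circ\Theta_{\rho}=(\Theta_{\rho}\overline{\times}\iota)\circ\Delta$.

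The hard part will be this collapsing step. In the second paragraph the bimodule relations delivered equalities of single multipliers directly; here one must cancel an entire auxiliary tensor leg inside an iterated balanced tensor product carrying several different module structures, and for a general full left multiplier bialgebroid there is no genuine counit to perform the collapse. The delicate points are therefore to keep track of which balanced tensor product each term inhabits, so that $\iota\otimes\Theta_{\lambda}$, $\Theta_{\rho}\otimes\iota$ and the slicing maps are all well defined, and to verify that it is exactly the fullness and non-degeneracy hypotheses that license the cancellation. By comparison, the source/target relations and the well-definedness of the conjugations are routine.
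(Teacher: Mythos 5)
Your argument for $\Theta_{\lambda}\circ t=t$ and $\Theta_{\rho}\circ s=s$ is correct and is essentially the paper's: compare the two evaluations of $\tilde\Delta(t(y)a)(1\otimes b)$ obtained from \eqref{eq:left-delta-bimodule} for $\Delta$ and for $\tilde\Delta$ (whose target map is $t\circ\theta^{-1}$), slice the second leg by maps in $\Hom(A^{B},B_{B})$, and use fullness and non-degeneracy.

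For the two comultiplicativity relations your outline agrees with the paper's (which runs the coassociativity diagram in \eqref{dg:left-galois-1} against the formula \eqref{eq:theta-tr} for $\tilde T_{\rho}$ and then collapses the auxiliary leg by slice maps and fullness), but you have mislocated the difficulty: the collapsing step is routine once the right threefold identity is available, and it is the derivation of that identity where the content lies. Concretely, what is needed is the mixed coassociativity
\begin{align*}
(\Delta\otimes\id)\bigl(\tilde\Delta(b)(1\otimes c)\bigr)(a\otimes 1\otimes 1)=(\id\otimes\tilde\Delta)\bigl(\Delta(b)(a\otimes 1)\bigr)(1\otimes 1\otimes c).
\end{align*}
If one simply expands the coassociativity of $\tilde\Delta$ using $(\Theta_{\lambda}\otimes\id)$ on second-leg cuts and $(\id\otimes\Theta_{\rho})$ on first-leg cuts and compares with \eqref{eq:delta-coassociative}, the result is only that $\Delta\circ\Theta_{\lambda}=(\id\,\overline{\times}\,\Theta_{\lambda})\circ\Delta$ is \emph{equivalent} to $\Delta\circ\Theta_{\rho}=(\Theta_{\rho}\,\overline{\times}\,\id)\circ\Delta$ --- a circle, not a proof. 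The maneuver that breaks the circle is to invoke the \emph{conjugation} form of \eqref{eq:theta-tr}, i.e.\ $\Delta(b)(1\otimes\Theta_{\rho}^{-1}(c))=(\Theta_{\lambda}\otimes\Theta_{\rho}^{-1})(\Delta(b)(1\otimes c))$, and to apply \eqref{eq:delta-coassociative} at the shifted argument $\Theta_{\rho}^{-1}(c)$: writing the outer factor on the right-hand side as $\tilde\Delta(d)(1\otimes c)=(\id\otimes\Theta_{\rho})(\Delta(d)(1\otimes\Theta_{\rho}^{-1}(c)))$, the two occurrences of $\Theta_{\rho}$ cancel against the shift and only $\Theta_{\lambda}$ survives on the middle leg, giving $\sum_{i}\Delta(\Theta_{\lambda}(b_{i}))(a\otimes1)\otimes c_{i}=\sum_{i}(\id\otimes\Theta_{\lambda})(\Delta(b_{i})(a\otimes1))\otimes c_{i}$ with $\sum_{i}b_{i}\otimes c_{i}=\Delta(b)(1\otimes c)$. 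Your phrase ``using the defining identity a second time on the inner two legs'' is too vague to guarantee this step, and its most natural reading leads to the circular version. Once the identity is in hand, slicing the third leg and invoking fullness does finish the proof as you describe.
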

\begin{proof}
 We only prove the relations involving $\Theta_{\lambda}$; similar arguments apply to $\Theta_{\rho}$.

 Formula \eqref{eq:theta-tr} implies
  \begin{align*}
    (\Theta_{\lambda}\otimes \id)((t(x) \otimes 1)T_{\rho}(a\otimes b)) &= 
    (\Theta_{\lambda}\otimes \id)(T_{\rho}(t(x)a\otimes b)) \\ &= (t(x)\otimes 1)     (\Theta_{\lambda}\otimes \id)(T_{\rho}(a\otimes b))
  \end{align*}
  for all $a,b\in A$ and $x\in B$.  Applying slice maps, we find that $t(x)\Theta_{\lambda}(c)=\Theta_{\lambda}(t(x)c)$ for all elements $c \in A$ of the  form $(\id \otimes \omega)(T_{\rho}(a\otimes b))$, where $a,b\in A$  and $\omega \in \Hom(A^{B},B_{B})$. Since $\mathcal{A}_{B}$ is full, such  elements span $A$ and hence $\Theta_{\lambda} \circ t = t$.

  Next, the formula for $\tilde T_{\rho}$ and the second  diagram in \eqref{dg:left-galois-1} show that the   outer cell and the left cell in the following diagram commute:
  \begin{align*}
    \xymatrix@C=30pt@R=18pt{
      \btA \otimes {^{B}A_{B'}} \otimes {_{B'}A}
       \ar[r]^{\id \otimes \Tr} \ar[d]_{T_{\lambda} \otimes \id} &
       A^{B} \otimes {^{B}_{B'}A} \otimes A^{B'}
       \ar[r]^{\id \otimes \Theta_{\lambda} \otimes \id} \ar[d]_{T_{\lambda} \otimes \id} &  \ar[d]^{T_{\lambda} \otimes \id} 
       A^{B} \otimes {^{\theta}_{\theta'}A} \otimes A^{B'}
       \\
       {_{B}A} \otimes A^{B}_{B'} \otimes {_{B'}A}
       \ar[r]_{\id \otimes \Tr}  & 
       {_{B}A} \otimes {_{B'}A^{B}} \otimes A^{B'}
       \ar[r]_{\id \otimes \Theta_{\lambda} \otimes \id}&
       {_{B}A} \otimes {_{\theta'}A^{\theta}} \otimes A^{B'}
    }
  \end{align*}
Here, we use the notation explained in Notation \ref{notation:tensor-products}.
  We apply slice maps of the form $\id \otimes \id \otimes \omega$,  where $\omega \in \Hom(A^{B},B_{B})$, use the assumption that  $\mathcal{A}_{B}$ is full, and conclude that $(\id \otimes \Theta_{\lambda}) T_{\lambda} = T_{\lambda}(\id \otimes \Theta_{\lambda})$ and   hence $\Delta \circ \Theta_{\lambda} = (\id\overline{\times}\Theta_{\lambda})\Delta$.
\end{proof}
The preceding result implies that in the full case, the modified left multiplier bialgebroid is isomorphic to the original one in the following sense.
\begin{definition}
An \emph{isomorphism} between   left  multiplier bialgebroids
\begin{align*}
  \mathcal{A}_{1}=(A_{1},B_{1},s_{1},t_{1},\Delta_{1}) \quad \text{and} \quad \mathcal{A}_{2}=(A_{2},B_{2},s_{2},t_{2},\Delta_{2}) 
\end{align*}
is a pair of isomorphisms $\Theta\colon A_{1} \to A_{2}$ and $\theta\colon B_{1}\to B_{2}$ such that for all $a,b,c\in A$,
\begin{align*}
\Theta\circ s_{1} &= s_{2} \circ \theta, &  \Theta \circ t_{1} &= t_{2} \circ \theta, &
  (\Theta \otimes \Theta)(\Delta_{1}(a)(b\otimes c)) &= \Delta_{2}(\Theta(a))(\Theta(b)\otimes \Theta(c)).
\end{align*}
\end{definition}
\begin{proposition} \label{proposition:modified-isomorphism}
Let $\mathcal{A}_{B}$ be a full  left multiplier bialgebroid with a modifier $(\theta,\Theta_{\lambda},\Theta_{\rho})$. Then     $(\Theta_{\lambda},\theta)$ and $(\Theta_{\rho},\id)$ are isomorphisms from $\mathcal{A}_{B}$ to the modification $\tilde{\mathcal{A}}_{B}$.
\end{proposition}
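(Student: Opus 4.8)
The plan is to check, for each of the two pairs $(\Theta_{\lambda},\theta)$ and $(\Theta_{\rho},\id)$, the three defining conditions of an isomorphism of left multiplier bialgebroids from $\mathcal{A}_{B}=(A,B,s,t,\Delta)$ to $\tilde{\mathcal{A}}_{B}=(A,B,s,t\circ\theta^{-1},\tilde\Delta)$, drawing throughout on the commutation relations supplied by Lemma \ref{lemma:modified-full}. The conditions on the source and target maps are immediate once these relations are in hand. For $(\Theta_{\lambda},\theta)$, the relation $\Theta_{\lambda}\circ s = s\circ\theta$ is the first half of \eqref{eq:theta-base}, while the target condition reads $\Theta_{\lambda}\circ t = (t\circ\theta^{-1})\circ\theta = t$, which is exactly $\Theta_{\lambda}\circ t = t$ from Lemma \ref{lemma:modified-full}. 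Symmetrically, for $(\Theta_{\rho},\id)$ the source condition $\Theta_{\rho}\circ s = s$ comes from Lemma \ref{lemma:modified-full}, and the target condition $\Theta_{\rho}\circ t = t\circ\theta^{-1}$ is the second half of \eqref{eq:theta-base}.

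Next I would verify that $\Theta_{\lambda}\otimes\Theta_{\lambda}$ and $\Theta_{\rho}\otimes\Theta_{\rho}$ descend to well-defined maps from the balanced tensor product $\bsA\otimes\btA$ of $\mathcal{A}_{B}$ to the balanced tensor product $\bsA\otimes A^{\theta}$ of $\tilde{\mathcal{A}}_{B}$: the balancing relation $s(x)a\otimes b = a\otimes t(x)b$ must be carried to $s(\theta(x))a'\otimes b' = a'\otimes t(x)b'$, and this again uses precisely $\Theta_{\lambda}\circ t = t$ (respectively $\Theta_{\rho}\circ s = s$ together with $\Theta_{\rho}\circ t = t\circ\theta^{-1}$). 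For the comultiplication identity I would combine the two presentations $\tilde\Delta = (\Theta_{\lambda}\overline{\times}\id)\circ\Delta = (\id\overline{\times}\Theta_{\rho})\circ\Delta$ from \eqref{eq:theta-comultiplication} with the relations $\Delta\circ\Theta_{\lambda} = (\id\overline{\times}\Theta_{\lambda})\circ\Delta$ and $\Delta\circ\Theta_{\rho} = (\Theta_{\rho}\overline{\times}\id)\circ\Delta$ of Lemma \ref{lemma:modified-full}, obtaining
\[
  \tilde\Delta\circ\Theta_{\lambda} = (\Theta_{\lambda}\overline{\times}\Theta_{\lambda})\circ\Delta, \qquad \tilde\Delta\circ\Theta_{\rho} = (\Theta_{\rho}\overline{\times}\Theta_{\rho})\circ\Delta,
\]
where $\Theta\overline{\times}\Theta$ denotes conjugation by the (now well-defined) map $\Theta\otimes\Theta$. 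Unwinding this conjugation and evaluating on $\Theta(b)\otimes\Theta(c) = (\Theta\otimes\Theta)(b\otimes c)$ yields, for $\Theta\in\{\Theta_{\lambda},\Theta_{\rho}\}$,
\[
  \tilde\Delta(\Theta(a))(\Theta(b)\otimes\Theta(c)) = (\Theta\otimes\Theta)(\Delta(a)(b\otimes c)),
\]
which is exactly the required compatibility of comultiplications.

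I do not expect a genuine conceptual obstacle here: once Lemma \ref{lemma:modified-full} is available the statement is essentially formal. The one delicate point, deserving the bulk of the care, is the bookkeeping of the three distinct $B$-module structures on $A$ (via $s$, via $t$, and via $t\circ\theta^{-1}$) and the corresponding balanced tensor products, so as to confirm that every occurrence of $\Theta_{\lambda}\otimes\Theta_{\lambda}$, $\id\otimes\Theta_{\lambda}$, $\Theta_{\lambda}\otimes\id$ and their $\rho$-analogues is well-defined on the space on which it acts. Each of these reductions rests on the fixing relations $\Theta_{\lambda}\circ t = t$ and $\Theta_{\rho}\circ s = s$; consequently fullness of $\mathcal{A}_{B}$ is used, but only through Lemma \ref{lemma:modified-full}, and no further hypothesis is needed.
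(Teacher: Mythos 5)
Your proposal is correct and follows essentially the same route as the paper: the source/target conditions come from \eqref{eq:theta-base} together with the fixing relations of Lemma \ref{lemma:modified-full}, and the comultiplication identity is obtained by combining one presentation of $\tilde\Delta$ from \eqref{eq:theta-comultiplication} with the intertwining relation $\Delta\circ\Theta_{\lambda}=(\id\overline{\times}\Theta_{\lambda})\circ\Delta$ (resp.\ its $\rho$-analogue) from that lemma, exactly as in the paper's two-line computation. Your extra care about the well-definedness of the various balanced tensor product maps is sound but is left implicit in the paper.
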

\begin{proof}
We only prove the assertion for $(\Theta_{\lambda},\theta)$. 
Lemma \ref{lemma:modified-full}  and the definition of $\tilde \Delta$ imply
that  for all $a,b,c\in A$,
  \begin{align*}
    \tilde \Delta(\Theta_{\lambda}(a))(\Theta_{\lambda}(b)\otimes \Theta_{\lambda}(c)) &=
    (\Theta_{\lambda} \otimes \id)(\Delta(\Theta_{\lambda}(a))(b \otimes \Theta_{\lambda}(c))) \\
    &= (\Theta_{\lambda} \otimes \Theta_{\lambda})(\Delta(a)(b\otimes c)). \qedhere
  \end{align*}
\end{proof}
 As in subsection \ref{subsection:modular-automorphism}, we consider  convolution operators 
$\lambda(\bupsilon),  \rho(\omega^{B}) \colon A \to L(A)$ associated to module maps $\bupsilon \in \Hom(\bA,\bB)$ and $\omega^{B} \in \Hom(A^{B},B_{B})$ by the formulas
\begin{align*}
 \lambda(\bupsilon)(a)b &:= (\bupsilon \otimes \id)(\Delta(a)(1 \otimes b)), &
 \rho(\omega^{B})(a)b &:= (\id \otimes \omega^{B})(\Delta(a)(b \otimes 1)).
\end{align*}
\begin{proposition} \label{proposition:modified-counits}
  Let $\mathcal{A}_{B}=(A,B,s,t,\Delta_{B})$ be a  left multiplier bialgebroid with a  left counit $\beps$ and a modifier $(\theta,\Theta_{\lambda},\Theta_{\rho})$.
  \begin{enumerate}
  \item If $\mathcal{A}_{B}$ is full, then $\theta \circ \beps \circ
    \Theta^{-1}_{\lambda} = \beps \circ \Theta_{\rho}^{-1}$, and this
    composition, denoted by $_{B}\tilde\varepsilon$, is the unique left counit of
    the modified left multiplier bialgebroid $\tilde{\mathcal{A}}_{B}$.
  \item If $\tilde{\mathcal{A}}_{B}$ has a left counit $_{B}\tilde{\varepsilon}$, then
    $\lambda(_{B}\tilde\varepsilon) = \Theta_{\rho}^{-1}$ and $
\rho(\theta\circ {_{B} \tilde\epsilon}) =
      \Theta_{\lambda}^{-1}$,
    where the convolution operators are formed with respect to
    $\Delta$.
  \end{enumerate}
\end{proposition}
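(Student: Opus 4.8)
The plan is to establish (2) first, since it is the computational core and needs no fullness hypothesis, and then to deduce (1) from it together with Proposition~\ref{proposition:modified-isomorphism}. The common mechanism is to translate the defining relations of a left counit of $\tilde{\mathcal{A}}_{B}$ into identities for the convolution operators formed with respect to the \emph{original} comultiplication $\Delta$. The only inputs are the two factorisations $\tilde\Delta = (\id\bar\times\Theta_{\rho})\circ\Delta = (\Theta_{\lambda}\bar\times\id)\circ\Delta$ from \eqref{eq:theta-comultiplication}, the explicit form of the conjugations (so that $\tilde\Delta(a)(1\otimes b) = (\id\otimes\Theta_{\rho})(\Delta(a)(1\otimes\Theta_{\rho}^{-1}(b)))$ and $\tilde\Delta(a)(b\otimes 1) = (\Theta_{\lambda}\otimes\id)(\Delta(a)(\Theta_{\lambda}^{-1}(b)\otimes 1))$), the compatibilities \eqref{eq:theta-base} together with the fact that $\Theta_{\lambda},\Theta_{\rho}$ are algebra automorphisms, and non-degeneracy of $A$ as a right $A$-module.

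For the first identity in (2) I would start from the left counit relation $(_{B}\tilde\varepsilon\otimes\id)(\tilde\Delta(a)(1\otimes b)) = ab$ for $\tilde{\mathcal{A}}_{B}$, whose slice uses the modified target $t\circ\theta^{-1}$. Rewriting $\tilde\Delta$ through $\Theta_{\rho}$ and using $t\circ\theta^{-1} = \Theta_{\rho}\circ t$ from \eqref{eq:theta-base}, the modified slice becomes $\Theta_{\rho}$ applied to the \emph{original} slice $_{B}\tilde\varepsilon\otimes\id$ (legitimate since $_{B}\tilde\varepsilon\in\Hom(\bA,\bB)$), so the relation reads $\Theta_{\rho}(\lambda(_{B}\tilde\varepsilon)(a)\Theta_{\rho}^{-1}(b)) = ab$; cancelling $\Theta_{\rho}$ and letting $b$ vary over $A$ gives $\lambda(_{B}\tilde\varepsilon) = \Theta_{\rho}^{-1}$. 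The second identity is entirely parallel, starting from $(\id\otimes{_{B}\tilde\varepsilon})(\tilde\Delta(a)(b\otimes 1)) = ab$, using the factorisation through $\Theta_{\lambda}$ and $\Theta_{\lambda}^{-1}\circ s = s\circ\theta^{-1}$. Here one must respect that the right convolution $\rho$ is defined only on $\Hom(A^{B},B_{B})$: the map produced from $_{B}\tilde\varepsilon$ is $\theta^{-1}\circ{_{B}\tilde\varepsilon}$, which lands in $\Hom(A^{B},B_{B})$ precisely because the modified target is $t\circ\theta^{-1}$, and the computation yields $\rho(\theta^{-1}\circ{_{B}\tilde\varepsilon}) = \Theta_{\lambda}^{-1}$.

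For (1) I would invoke Proposition~\ref{proposition:modified-isomorphism}: in the full case $(\Theta_{\lambda},\theta)$ and $(\Theta_{\rho},\id)$ are isomorphisms $\mathcal{A}_{B}\to\tilde{\mathcal{A}}_{B}$. Since a left counit transports along an isomorphism $(\Theta,\theta')$ to $\theta'\circ\varepsilon\circ\Theta^{-1}$, applying this to $\beps$ produces $\theta\circ\beps\circ\Theta_{\lambda}^{-1}$ and $\beps\circ\Theta_{\rho}^{-1}$, both left counits of $\tilde{\mathcal{A}}_{B}$. To see that they coincide, and are the \emph{unique} such counit, I would use (2): any left counit $\tilde\varepsilon$ of $\tilde{\mathcal{A}}_{B}$ satisfies $\lambda(\tilde\varepsilon) = \Theta_{\rho}^{-1}$ with respect to $\Delta$, so it suffices to show that $\bupsilon\mapsto\lambda(\bupsilon)$ is injective on $\Hom(\bA,\bB)$. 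This is where fullness enters: if $\lambda(\bupsilon) = 0$, then for every $\omega\in\Hom(A^{B},B_{B})$ and all $a,b\in A$ one computes $\bupsilon\big((\id\otimes\omega)(\Delta(a)(1\otimes b))\big) = 0$ (applying $\omega$ to the vanishing element $\sum t(\bupsilon(c_{i}))d_{i}$ and using the module properties of $\bupsilon$ and $\omega$), and by fullness such elements span $A$, forcing $\bupsilon = 0$ --- the same spanning property used in the proof of Lemma~\ref{lemma:modified-full}.

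The routine but error-prone part, and the place I would concentrate, is the bookkeeping of the twisted module structures: tracking which balanced tensor product each expression lives in, checking that each slice descends to it, and keeping straight the direction of the conjugations $\bar\times$ and of the twist by $\theta^{\pm1}$. In particular, the well-definedness constraint on $\rho$ is what fixes the appearance of $\theta^{-1}$ (rather than $\theta$) in the second identity of (2). The only genuinely non-formal step is the injectivity of $\lambda$ used for uniqueness in (1), which relies essentially on the fullness hypothesis.
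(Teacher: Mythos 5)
Your proposal is correct and follows the same architecture as the paper's proof, with one substantive difference in part (1). For part (2), your element\-wise rewriting of the modified counit relation through $\tilde\Delta(a)(1\otimes b)=(\id\otimes\Theta_{\rho})(\Delta(a)(1\otimes\Theta_{\rho}^{-1}(b)))$ and $t\circ\theta^{-1}=\Theta_{\rho}\circ t$ is exactly the content of the commuting diagram the paper uses (built from $\tilde T_{\rho}=(\id\otimes\Theta_{\rho})\circ T_{\rho}$), yielding $\Theta_{\rho}(\lambda({_{B}\tilde\varepsilon})(a)b)=a\Theta_{\rho}(b)$; the second identity is handled the same way. For part (1), both you and the paper transport $\beps$ along the two isomorphisms of Proposition \ref{proposition:modified-isomorphism} to obtain the candidates $\theta\circ\beps\circ\Theta_{\lambda}^{-1}$ and $\beps\circ\Theta_{\rho}^{-1}$; the paper then simply cites uniqueness of the left counit of a full left multiplier bialgebroid (\cite[Proposition 3.5]{timmermann:regular}), whereas you derive uniqueness from part (2) together with injectivity of $\bupsilon\mapsto\lambda(\bupsilon)$ on $\Hom(\bA,\bB)$, proved via the identity $\bupsilon\bigl((\id\otimes\omega)(\Delta(a)(1\otimes b))\bigr)=\omega\bigl((\bupsilon\otimes\id)(\Delta(a)(1\otimes b))\bigr)$ and the spanning property that fullness provides. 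That is a valid, self-contained substitute which in effect reproves the cited result; it buys independence from the reference at the cost of a little extra work. Finally, your bookkeeping catches a genuine slip in the statement: since ${_{B}\tilde\varepsilon}(t(x)a)={_{B}\tilde\varepsilon}(a)\theta(x)$, only $\theta^{-1}\circ{_{B}\tilde\varepsilon}$ lies in $\Hom(A^{B},B_{B})$, so $\rho(\theta\circ{_{B}\tilde\varepsilon})$ is not even defined unless $\theta^{2}=\id$; your version $\rho(\theta^{-1}\circ{_{B}\tilde\varepsilon})=\Theta_{\lambda}^{-1}$ is the correct reading, consistent with $\theta^{-1}\circ{_{B}\tilde\varepsilon}=\beps\circ\Theta_{\lambda}^{-1}$ from part (1).
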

\begin{proof}
(1) The isomorphisms  in  Proposition \ref{proposition:modified-isomorphism} yield two left counits $\beps \circ \Theta_{\rho}^{-1}$ and $\theta \circ \beps \circ \Theta_{\lambda}^{-1}$ of $\tilde{\mathcal{A}}$, which necessarily coincide by \cite[ Proposition 3.5]{timmermann:regular} because $\tilde{A}_{B}$ is full.  

(2) We only prove the first equation. The following diagram 
commutes,
 \begin{align*}
\xymatrix@C=30pt@R=15pt{
  \Ab\otimes \bA \ar[r]^{T_{\rho}} \ar[d]_{\id \otimes \Theta_{\rho}} & \bsA \otimes A^{B} \ar[d]_{\id \otimes \Theta_{\rho}} \ar[r]^(0.6){_{B}\tilde\varepsilon \otimes \id} & A \ar[d]^{\Theta_{\rho}} \\
  \Ab \otimes \bA \ar[r]_{\tilde T_{\rho}} & \bsA \otimes A^{\theta} \ar[r]_(0.6){_{B}\tilde\varepsilon \otimes \id} &A,
} & 
\end{align*}
 and shows that
$\Theta_{\rho}( \lambda(_{B}\tilde\varepsilon)(a)b) = a\Theta_{\rho}(b)$
for all $a,b\in A$.
\end{proof}

Suppose that  $\mathcal{A}_{B}$ is \emph{unital} in the sense that 
the algebras $A,B$ and the maps $s,t,\Delta_{B}$ are unital. Then modifiers have a nice description in terms of  the maps from $A$ to $ B$ considered above. 

Consider the convolution product on the space $\Hom(_{B}A^{B},\bB_{B})$,  given by
\begin{align*}
  \bupsilon^{B}\ast \bomega^{B} &:=   (\bupsilon^{B} \otimes \bomega^{B}) \circ \Delta \colon a \mapsto \sum \bomega^{B}(a_{(2)})\bupsilon^{B}(a_{(1)}),
\end{align*}
where $\bupsilon^{B},\bomega^{B} \in \Hom(\bsA^{B},\bB_{B})$ and $\AltkA$ is identified with the Takeuchi product inside $\bsA \otimes \btA$. If it exists, then the left counit $\beps$ is the unit for this product.
We call an element $_{B}\chi^{B} \in \Hom(_{B}A^{B},\bB_{B})$ a \emph{character} if for all $a,b\in A$,
\begin{align*}
  {_{B}\chi^{B}}(ab) = {_{B}\chi^{B}}(as(_{B}\chi^{B}(b))) = {_{B}\chi^{B}}(at(_{B}\chi^{B}(b))).
\end{align*}
\begin{lemma}
  Let $\mathcal{A}_{B}=(A,B,s,t,\Delta)$ be a unital full left multiplier bialgebroid with a left counit. Then there exist canonical bijections between
  \begin{enumerate}
  \item  all modifiers $(\id,\Theta_{\lambda},\Theta_{\rho})$ for $\mathcal{A}_{B}$;
  \item  all automorphisms $\Theta_{\lambda}$ of $A$ satisfying $\Theta_{\lambda}\circ s=s$, $\Theta_{\lambda}\circ t = t$, $\Delta \circ \Theta_{\lambda} = (\id \overline{\times} \Theta_{\lambda}) \circ \Delta$;
  \item  all automorphisms $\Theta_{\rho}$ of $A$ satisfying $\Theta_{\rho} \circ t=t$, $\Theta_{\rho} \circ s=s$, $\Delta \circ \Theta_{\rho} = (\Theta_{\rho} \overline{\times} \id)$;
  \item all invertible characters $_{B}\chi^{B}  \in \Hom({_{B}A}^{B},_{B}B_{B})$.
  \end{enumerate}
\end{lemma}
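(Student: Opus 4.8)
The plan is to treat the character description (4) as the hub and to wire (1), (2) and (3) into it through the convolution operators $\lambda$ and $\rho$ and the modified left counit. Throughout I will use that $\mathcal{A}_{B}$ is unital and full, so that Lemma~\ref{lemma:modified-full} and Proposition~\ref{proposition:modified-counits} apply, and that the left counit $\beps$ lies in $\Hom({_{B}A}^{B},{_{B}B}_{B})$ and is the unit for the convolution product $\ast$, with $\lambda(\beps)=\rho(\beps)=\id_{A}$ by \eqref{eq:left-counit}. I will also record at the outset that a left counit of a unital full left multiplier bialgebroid is an invertible character: the bimodule property is \eqref{eq:left-counit-bimodule}, the multiplicativity is the bialgebroid analogue of \eqref{eq:counits-multiplicative} (derived from \eqref{eq:left-counit} and coassociativity), and invertibility holds with inverse $\beps$ itself since $\beps$ is the convolution unit.

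The central bijection is between (1) and (4). In the direction $(\id,\Theta_{\lambda},\Theta_{\rho})\mapsto{_{B}\chi^{B}}$ I will take ${_{B}\chi^{B}}$ to be the left counit ${_{B}\tilde\varepsilon}=\beps\circ\Theta_{\lambda}^{-1}=\beps\circ\Theta_{\rho}^{-1}$ of the modification $\tilde{\mathcal{A}}_{B}$ furnished by Proposition~\ref{proposition:modified-counits}(1) (with $\theta=\id$); by the opening remark this is an invertible character, its convolution inverse being the counit of the inverse modifier $(\id,\Theta_{\lambda}^{-1},\Theta_{\rho}^{-1})$. Conversely, given an invertible character ${_{B}\chi^{B}}$, I will set $\Theta_{\lambda}:=\rho({_{B}\chi^{B}})^{-1}$ and $\Theta_{\rho}:=\lambda({_{B}\chi^{B}})^{-1}$, the convolution operators being formed with $\Delta$. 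The character axiom is exactly what makes $\rho({_{B}\chi^{B}})$ and $\lambda({_{B}\chi^{B}})$ multiplicative, while invertibility of ${_{B}\chi^{B}}$ together with the fact that $\lambda$ and $\rho$ turn $\ast$ into composition and send $\beps$ to $\id_{A}$ makes them bijective, hence automorphisms of $A$. The base relations \eqref{eq:theta-base} follow from the bimodule properties of ${_{B}\chi^{B}}$ (using that an invertible character is unital, $\chi(1)=1_{B}$), and the compatibility \eqref{eq:theta-comultiplication} I will read off from Proposition~\ref{proposition:modified-counits}(2) applied with ${_{B}\tilde\varepsilon}={_{B}\chi^{B}}$.

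That these two assignments are mutually inverse is then immediate from Proposition~\ref{proposition:modified-counits}(2), which gives $\rho({_{B}\tilde\varepsilon})=\Theta_{\lambda}^{-1}$ and $\lambda({_{B}\tilde\varepsilon})=\Theta_{\rho}^{-1}$; in particular a modifier is determined by $\Theta_{\lambda}$ alone, and likewise by $\Theta_{\rho}$ alone. For (2) and (3), the forward maps $(\id,\Theta_{\lambda},\Theta_{\rho})\mapsto\Theta_{\lambda}$ and $\mapsto\Theta_{\rho}$ land in the stated sets of automorphisms by Lemma~\ref{lemma:modified-full}; injectivity is the determinacy just noted, and surjectivity I will obtain by reconstruction: given $\Theta_{\lambda}$ as in (2), set ${_{B}\chi^{B}}:=\beps\circ\Theta_{\lambda}^{-1}$, verify it is an invertible character, and apply the previous paragraph. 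The reconstruction is confirmed by a short computation: substituting $a=\Theta_{\lambda}(a')$ into $\rho({_{B}\chi^{B}})(a)b=(\id\otimes{_{B}\chi^{B}})(\Delta(a)(b\otimes 1))$ and using $\Delta\circ\Theta_{\lambda}=(\id\overline{\times}\Theta_{\lambda})\circ\Delta$ together with the counit axiom \eqref{eq:left-counit} yields $\rho({_{B}\chi^{B}})\circ\Theta_{\lambda}=\id$, so $\rho({_{B}\chi^{B}})=\Theta_{\lambda}^{-1}$ and the loop closes; the case of (3) is symmetric.

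The hard part will be pinning down the precise dictionary between the convolution structure and composition, namely that the character axiom is equivalent to multiplicativity of $\rho({_{B}\chi^{B}})$ and $\lambda({_{B}\chi^{B}})$ and that invertibility of ${_{B}\chi^{B}}$ for $\ast$ corresponds exactly to bijectivity of these operators; once this is established, everything else is bookkeeping anchored in Proposition~\ref{proposition:modified-counits}. A secondary technical point is that $\lambda$ and $\rho$ are respectively a homomorphism and an anti-homomorphism (or vice versa) for $\ast$, so I must track the order carefully to ensure $\Theta_{\lambda}$ and $\Theta_{\rho}$ are genuine two-sided inverses and that the reconstructed $\Theta_{\rho}$ from a given $\Theta_{\lambda}$ satisfies the full compatibility \eqref{eq:theta-comultiplication} rather than merely a one-sided version.
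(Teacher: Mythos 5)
Your proposal is in substance the paper's own proof: the two key constructions --- composing the left counit with the automorphisms to produce invertible characters, and applying the convolution operators $\rho$ and $\lambda$ to a character to recover automorphisms --- are exactly the ones used there, up to your convention of inserting inverses everywhere (harmless, since each of the four sets is a group closed under inversion, so post-composing a bijection with the inverse map is still a bijection). Your organisation, with (4) as the hub and the mutual-inverse check carried out via Proposition \ref{proposition:modified-counits}(2) and the identity $\rho({_{B}\chi^{B}})\circ\Theta_{\lambda}=\iota$, is if anything more explicit than the paper's, which builds maps $(1)\to(2),(3)$, then $(2),(3)\to(4)$, then $(4)\to(1)$ and leaves the closing of the loop implicit.

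One step needs repair. In the direction $(4)\to(1)$ you propose to \emph{read off} the compatibility \eqref{eq:theta-comultiplication} for the candidate triple $(\iota,\rho({_{B}\chi^{B}})^{-1},\lambda({_{B}\chi^{B}})^{-1})$ from Proposition \ref{proposition:modified-counits}(2). That proposition presupposes that a modifier (and hence its modification with a counit) is already given, so it cannot certify that a candidate triple is a modifier; as written this is circular. The fix is the direct verification the paper alludes to: for $a,b,c\in A$, both $\bigl((\rho({_{B}\chi^{B}})\,\overline{\times}\,\iota)\Delta(a)\bigr)(b\otimes c)$ and $\bigl((\iota\,\overline{\times}\,\lambda({_{B}\chi^{B}}))\Delta(a)\bigr)(b\otimes c)$ are the slice $\iota\otimes{_{B}\chi^{B}}\otimes\iota$ of the iterated coproduct of $a$, so they agree by \eqref{eq:delta-coassociative}; the version with inverses follows because the conjugation operators $\rho({_{B}\chi^{B}})\,\overline{\times}\,\iota$ and $\iota\,\overline{\times}\,\lambda({_{B}\chi^{B}})$ commute. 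With that substitution, and your (correctly flagged) observation that an invertible character satisfies ${_{B}\chi^{B}}(1)=1_{B}$ so that the base relations \eqref{eq:theta-base} hold, the argument goes through.
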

\begin{proof}
For every modifier $(\id,\Theta_{\lambda},\Theta_{\rho})$, the automorphisms $\Theta_{\lambda}$ and $\Theta_{\rho}$ satisfy the conditions in (2) and (3) by Lemma \ref{lemma:modified-full}.

Assume that $\Theta_{\lambda}$ is an automorphism as in (2) and denote by $\beps$ the counit of $\mathcal{A}_{B}$.  Then  the map ${_B\chi^B}:=\beps \circ \Theta_{\lambda} \colon A \to B$ lies in $\Hom(\bsA^{B},\bB_{B})$ because of \eqref{eq:left-counit-bimodule}, is a character because of \cite[ Proposition 3.5]{timmermann:regular}, and the composition  ${_B\bar\chi^B}:=  \beps \circ \Theta^{-1}_{\lambda}$ is its convolution inverse because
 \begin{align*}
   {_B\chi^B} \ast {_B\bar\chi^B} =  {_B\chi^B}\circ (\id \otimes {_B\bar\chi^B}) \circ \Delta = {_B\chi^B} \circ (\id \otimes  \beps)  \circ \Delta \circ \Theta^{-1}_{\lambda} = {_B\chi^B} \circ \Theta^{-1}_{\lambda} = \beps
 \end{align*}
and similarly ${_B\bar\chi^B} \ast {_B\chi^B} = \beps$. 

Similar arguments show that for every automorphism $\Theta_{\rho}$ as in (3), the map $\beps \circ \Theta_{\rho}$ is an invertible character.

Finally, assume that ${_B\chi^B}$ is an invertible character as in (4). Then the  maps $\Theta_{\lambda}:=\rho({_B\chi^B})$ and $\Theta_{\rho}:=\lambda({_B\chi^B})$ are bijections of $A$ because ${_B\chi^B}$ is invertible in the convolution algebra, and they are automorphisms because ${_B\chi^B}$  is a character. Using co-associativity, one easily verifies that $(\id,\Theta_{\lambda},\Theta_{\rho})$ is a modifier.
\end{proof}

Right multiplier bialgebroids can be modified similarly. 
\begin{definition}
  A \emph{modifier} of a right multiplier bialgebroid $\mathcal{A}_{C}=(A,C,s,t,\Delta)$ consists of an automorphism $\theta$ of $C$ and automorphisms ${_{\lambda}\Theta}$ and ${_{\rho}\Theta}$ of $A$ satisfying
  \begin{align*}
    {_{\lambda}\Theta} \circ t &= t\circ \theta^{-1}, &
    {_{\rho}\Theta} \circ s &= s\circ \theta, &
    ({_{\lambda}\Theta} \overline{\times} \id) \circ \Delta &=
    (\id \overline{\times} {_{\rho}\Theta})\circ \Delta.
  \end{align*}
\end{definition}
The results obtained above carry over to right multiplier bialgebroids  in a straightforward way.
In particular, for every modifier $(\theta,{_{\lambda}\Theta},{_{\rho}\Theta})$ of a right multiplier bialgebroid $\mathcal{A}_{C}=(A,C,s,t,\Delta)$, we obtain a modified right multiplier bialgebroid
\begin{align*}
  \tilde{\mathcal{A}}_{C}:=(A,C,s,t\circ \theta^{-1},\tilde \Delta), \quad
  \text{where } \tilde \Delta=({_{\lambda}\Theta} \overline{\times}
  \id)\circ \Delta = (\id \overline{\times} {_{\rho}\Theta})\circ
  \Delta.
\end{align*}
 If $\mathcal{A}_{C}$ is full, then we have isomorphisms $(\Theta_{\lambda},\id)$ and $(\Theta_{\rho},\theta)$ from $\mathcal{A}_{C}$ to $\tilde{\mathcal{A}}_{C}$, where the notion of an isomorphism between right multiplier bialgebroids is evident. 

\subsection{Modification of regular multiplier Hopf algebroids} 

 Let us now consider the two-sided case.
 \begin{definition}
   A \emph{modifier} for a multiplier bialgebroid $\mathcal{A}=(A,B,C,S_{B},S_{C},\Delta_{B},\Delta_{C})$ is a tuple $(\Theta_{\lambda},\Theta_{\rho},{_{\lambda}\Theta},{_{\rho}\Theta})$ such that
   \begin{enumerate}
   \item $\Theta_{\lambda}$ extends to an automorphism of $B$ and
 $(\Theta_{\lambda}|_{B},\Theta_{\lambda},\Theta_{\rho})$ is a modifier of the  associated left   multiplier bialgebroid  $\mathcal{A}_{B}$,
\item $_{\rho}\Theta$ extends to an automorphism of $C$ and
$({_{\rho}\Theta|_{C}},{_{\lambda}\Theta,{_{\rho}\Theta}})$ is a
     modifier of the associated  right multiplier bialgebroid $\mathcal{A}_{C}$.
   \end{enumerate}
We call such a modifier
 \emph{trivial on the base} if all four automorphisms
  act trivially on $B$ and on $C$. We call it  \emph{self-adjoint} if
  $\mathcal{A}$ is a multiplier $*$-bialgebroid and
  \begin{align*}
    * \circ
  \Theta_{\lambda} = {_{\lambda}\Theta} \circ * \quad\text{and}\quad * \circ   \Theta_{\rho} = {_{\rho}\Theta} \circ \ast. 
  \end{align*}
 \end{definition}
Note that every  modifier as above satisfies
\begin{align}
  \label{eq:modifier-antipode-base}
  \Theta_{\rho} \circ S_{B} \circ \Theta_{\lambda} &= S_{B} &&\text{and} & {_{\lambda}\Theta} \circ S_{C} \circ {_{\rho}\Theta} &= S_{C},
\end{align}
 and that modifiers form a group with respect to the composition
\begin{align*}
  (\Theta_{\lambda},\Theta_{\rho},{_{\lambda}\Theta},{_{\rho}\Theta}) \cdot (\Theta_{\lambda}',\Theta_{\rho}',{_{\lambda}\Theta}',{_{\rho}\Theta}') &:=
(\Theta_{\lambda}\Theta_{\lambda}',\Theta_{\rho}'\Theta_{\rho},{_{\lambda}\Theta}'{_{\lambda}\Theta},{_{\rho}\Theta}{_{\rho}\Theta'}).
\end{align*}
 \begin{proposition} \label{prop:modification}
   Let  $\mathcal{A}=(A,B,C,S_{B},S_{C},\Delta_{B},\Delta_{C})$ be a multiplier bialgebroid with a modifier $(\Theta_{\lambda},\Theta_{\rho},{_{\lambda}\Theta},{_{\rho}\Theta})$ and 
 let
   \begin{align*}
     \tilde S_{B} &:= S_{B} \circ \Theta_{\lambda}|_{B}^{-1}, & \tilde S_{C} &:= S_{C} \circ {_{\rho}\Theta}  |_{C}^{-1}, &
     \tilde \Delta_{B} &:= (\Theta_{\lambda} \overline{\times}\id)\circ \Delta_{B}, &
     \tilde \Delta_{C} &:= ({_{\lambda}\Theta} \overline{\times}\id) \circ \Delta_{C}.
   \end{align*}
   Then $\tilde{\mathcal{A}}=(A,B,C, \tilde S_{B},\tilde S_{C},\tilde \Delta_{B},\tilde \Delta_{C})$ is a  multiplier bialgebroid.
If moreover $\mathcal{A}$ is a multiplier $*$-bialgebroid and the modifier is self-adjoint, then also $\tilde{\mathcal{A}}$ is a  multiplier $*$-bialgebroid.
 \end{proposition}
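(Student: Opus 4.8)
The plan is to assemble $\tilde{\mathcal{A}}$ from its left and right halves and then verify the two mixed co-associativity conditions by a conjugation argument. First I would note that the underlying algebra $A$ and the subalgebras $B,C\subseteq M(A)$ are unchanged, and that $\tilde S_{B}=S_{B}\circ\Theta_{\lambda}|_{B}^{-1}$ and $\tilde S_{C}=S_{C}\circ{_{\rho}\Theta}|_{C}^{-1}$ are again anti-isomorphisms $B\to C$ and $C\to B$, being composites of $S_{B},S_{C}$ with automorphisms of $B$ and $C$. By condition (1) in the definition of a modifier, $(\Theta_{\lambda}|_{B},\Theta_{\lambda},\Theta_{\rho})$ is a modifier of the left multiplier bialgebroid $\mathcal{A}_{B}=(A,B,\id_{B},S_{B},\Delta_{B})$, so Proposition \ref{prop:left-modification} shows that $\tilde{\mathcal{A}}_{B}=(A,B,\id_{B},\tilde S_{B},\tilde\Delta_{B})$ is a left multiplier bialgebroid; crucially, no fullness is required here. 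Symmetrically, condition (2) together with the right-handed analogue of Proposition \ref{prop:left-modification} shows that $\tilde{\mathcal{A}}_{C}=(A,C,\id_{C},\tilde S_{C},\tilde\Delta_{C})$ is a right multiplier bialgebroid.

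The substance of the proof is the verification of the two relations in \eqref{eq:compatible} for $\tilde\Delta_{B}$ and $\tilde\Delta_{C}$. Here I would exploit that each modified comultiplication admits two descriptions: by \eqref{eq:theta-comultiplication}, $\tilde\Delta_{B}=(\Theta_{\lambda}\overline{\times}\id)\circ\Delta_{B}=(\id\overline{\times}\Theta_{\rho})\circ\Delta_{B}$, and likewise $\tilde\Delta_{C}=({_{\lambda}\Theta}\overline{\times}\id)\circ\Delta_{C}=(\id\overline{\times}{_{\rho}\Theta})\circ\Delta_{C}$. For the first identity in \eqref{eq:compatible} I would expand $\tilde\Delta_{B}$ through $\Theta_{\lambda}$ on its left leg and $\tilde\Delta_{C}$ through ${_{\rho}\Theta}$ on its right leg; writing out the two sides shows that both acquire the common outer automorphism $\Theta_{\lambda}\otimes\id\otimes{_{\rho}\Theta}$ of the triple balanced tensor product, and that what remains inside is precisely the first relation of \eqref{eq:compatible} for $\mathcal{A}$ with $a$ replaced by $\Theta_{\lambda}^{-1}(a)$ and $c$ by ${_{\rho}\Theta}^{-1}(c)$. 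Since that relation holds for all elements of $A$, the modified identity follows; the second relation in \eqref{eq:compatible} is handled in the same way after expanding $\tilde\Delta_{B}$ through $\Theta_{\rho}$ and $\tilde\Delta_{C}$ through ${_{\lambda}\Theta}$, yielding the common prefactor ${_{\lambda}\Theta}\otimes\id\otimes\Theta_{\rho}$. I expect the main obstacle to be exactly this bookkeeping: choosing, for each side of each identity, which of the two descriptions of $\tilde\Delta_{B}$ and $\tilde\Delta_{C}$ to deploy so that a single automorphism conjugates the modified identity back into \eqref{eq:compatible}, while keeping track that the leg-wise module structures are respected throughout.

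Finally, for the involutive statement I would verify conditions (2) and (3) of Definition \ref{definition:involution} for $\tilde{\mathcal{A}}$, using the self-adjointness relations $\ast\circ\Theta_{\lambda}={_{\lambda}\Theta}\circ\ast$ and $\ast\circ\Theta_{\rho}={_{\rho}\Theta}\circ\ast$, equivalently $\ast\circ{_{\lambda}\Theta}=\Theta_{\lambda}\circ\ast$ and $\ast\circ{_{\rho}\Theta}=\Theta_{\rho}\circ\ast$. For condition (2), a direct computation pushing the involution through $\tilde S_{C}=S_{C}\circ{_{\rho}\Theta}|_{C}^{-1}$ and $\tilde S_{B}=S_{B}\circ\Theta_{\lambda}|_{B}^{-1}$ and applying the modifier relation \eqref{eq:modifier-antipode-base} collapses $\tilde S_{B}\circ\ast\circ\tilde S_{C}\circ\ast$ to $S_{B}\circ\ast\circ S_{C}\circ\ast=\id_{C}$, and symmetrically for the other equation. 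For condition (3), I would expand $\tilde\Delta_{B}$ via $\Theta_{\lambda}$ and $\tilde\Delta_{C}$ via ${_{\lambda}\Theta}$, move the two involutions past these automorphisms using $\ast\circ{_{\lambda}\Theta}=\Theta_{\lambda}\circ\ast$, and invoke condition (3) for $\mathcal{A}$ with $b$ replaced by ${_{\lambda}\Theta}^{-1}(b)$; both sides then agree. This last step also reuses the well-definedness of $(-)^{*}\otimes(-)^{*}$, which for $\tilde{\mathcal{A}}$ is guaranteed by the already-established condition (2).
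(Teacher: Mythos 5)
Your proposal is correct and follows essentially the same route as the paper: it assembles $\tilde{\mathcal{A}}$ from the one-sided modifications via Proposition \ref{prop:left-modification} and its right-handed analogue, verifies the mixed co-associativity conditions by the same conjugation trick used there for co-associativity of $\tilde\Delta_{B}$ (choosing for each leg the appropriate description of the modified comultiplication), and checks the $*$-conditions exactly as the paper does, using self-adjointness of the modifier together with \eqref{eq:modifier-antipode-base}. Your write-up is somewhat more explicit than the paper's (which compresses the mixed co-associativity step into a reference), but no new idea is involved.
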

 \begin{proof}
   By Proposition \ref{prop:left-modification} and its right-handed analogue, $\tilde{\mathcal{A}}_{B}:=(A,B,\id_{B},\tilde S_{B},\tilde \Delta_{B})$ is a left and
   $\tilde{\mathcal{A}}_{C}:=(A,C,\id_{C},\tilde S_{C},\tilde\Delta_{C})$ is a right multiplier bialgebroid. The mixed coassociativity relations follow by similar arguments as the coassociativity of $\tilde \Delta_{B}$, see the proof of  Proposition \ref{prop:left-modification}. Thus, $\tilde{\mathcal{A}}$ is a multiplier bialgebroid.

Assume that $\mathcal{A}$ is a multiplier $*$-bialgebroid and that the modifier is self-adjoint. Then
\begin{align*}
  \tilde S_{B} \circ \ast \circ \tilde S_{C} \circ \ast &=  S_{B} \circ \Theta_{\lambda}^{-1} \circ \ast \circ S_{C} \circ \Theta_{\rho}^{-1} \circ \ast \\ & = S_{B} \circ \ast \circ {_{\lambda}\Theta}^{-1} \circ S_{C} \circ \Theta_{\rho}^{-1} \circ \ast = S_{B} \circ \ast \circ S_{C} \circ \ast = \id_{C}
\end{align*}
and similarly $\tilde S_{C} \circ \ast \circ \tilde S_{C} \circ \ast =\id_{B}$. Finally, self-adjointness of the modifier immediately  implies that $\tilde \Delta_{B}$ and $\tilde \Delta_{C}$ satisfy condition (3) in Definition  \ref{definition:involution}.
 \end{proof}
In the situation above, we call $\tilde{\mathcal{A}}$ the \emph{modification} of $\mathcal{A}$.

We can now formulate the main result of this section:
 \begin{theorem}\label{theorem:modification}
Let $\mathcal{A}=(A,B,C,S_{B},S_{C},\Delta_{B},\Delta_{C})$ be a regular multiplier Hopf algebroid with a modifier $(\Theta_{\lambda},\Theta_{\rho},{_{\lambda}\Theta},{_{\rho}\Theta})$. Then  the modification $\tilde {\mathcal{A}}$ is  a regular multiplier Hopf algebroid again. The counits and antipode $\beps,\epsc,S$ of $\mathcal{A}$  and the counits and antipode  ${_{B}\tilde \epsilon}, \tilde \epsilon_{C},\tilde S$  of $\tilde{\mathcal{A}}$ are related by 
\begin{align}
    _{B}\tilde \epsilon &= \beps \circ \Theta^{-1}_{\rho} =
    \Theta_{\lambda}\circ \beps \circ \Theta^{-1}_{\lambda}, \label{eq:modified-lt-counit} \\ \tilde
    \epsilon_{C} &= \epsc \circ {_{\lambda}\Theta}^{-1}
    ={_{\rho}\Theta} \circ \epsc \circ {_{\rho}\Theta}^{-1}, \label{eq:modified-rt-counit}\\
  \tilde S &= \Theta_{\rho} \circ S \circ {_{\rho}\Theta}^{-1} = {_{\lambda}\Theta}\circ S \circ  \Theta_{\lambda}^{-1}. \label{eq:modified-antipode}
\end{align}
If $\mathcal{A}$ is a multiplier Hopf $*$-algebroid and the modifier is self-adjoint, then also $\tilde{\mathcal{A}}$ is a  multiplier Hopf $*$-algebroid.
 \end{theorem}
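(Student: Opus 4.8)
The plan is to derive the statement from the characterization of regular multiplier Hopf algebroids in Theorem~\ref{tm:hopf-characterization}, using that $\tilde{\mathcal{A}}$ is already a multiplier bialgebroid (a multiplier $*$-bialgebroid in the involutive case) by Proposition~\ref{prop:modification}. Since $\mathcal{A}$ is a regular multiplier Hopf algebroid, its underlying left and right multiplier bialgebroids $\mathcal{A}_{B}$ and $\mathcal{A}_{C}$ are full, so Lemma~\ref{lemma:modified-full}, Proposition~\ref{proposition:modified-isomorphism} and Proposition~\ref{proposition:modified-counits}, together with their right-handed analogues, apply. In particular, these supply the intertwining relations
\begin{align*}
  \Delta_{B} \circ \Theta_{\lambda} &= (\id \overline{\times} \Theta_{\lambda})\circ \Delta_{B}, & \Delta_{B} \circ \Theta_{\rho} &= (\Theta_{\rho} \overline{\times} \id)\circ \Delta_{B}, \\
  \Delta_{C} \circ {_{\rho}\Theta} &= (\id \overline{\times} {_{\rho}\Theta})\circ \Delta_{C}, & \Delta_{C} \circ {_{\lambda}\Theta} &= ({_{\lambda}\Theta} \overline{\times} \id)\circ \Delta_{C},
\end{align*}
along with $\Theta_{\lambda}\circ t = t$, $\Theta_{\rho} \circ s = s$ on the left and the symmetric relations on the right; these are the workhorses for all subsequent computations.

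Next I would set $\tilde S := \Theta_{\rho} \circ S \circ {_{\rho}\Theta}^{-1}$, which is an anti-automorphism of $A$ as a composite of the anti-automorphism $S$ with two automorphisms, and check that it coincides with ${_{\lambda}\Theta}\circ S \circ \Theta_{\lambda}^{-1}$ by pushing $S$ through the intertwining relations and invoking \eqref{eq:modifier-antipode-base}. This second identity, together with the base compatibility \eqref{eq:modifier-antipode-base} and the definitions $\tilde S_{B}=S_{B}\circ\Theta_{\lambda}|_{B}^{-1}$, $\tilde S_{C}=S_{C}\circ{_{\rho}\Theta}|_{C}^{-1}$, yields condition~(1) of Theorem~\ref{tm:hopf-characterization}, namely $\tilde S(xyax'y') = \tilde S_{C}(y')\tilde S_{B}(x')\tilde S(a)\tilde S_{C}(y)\tilde S_{B}(x)$, which is a purely base-algebra verification. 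For the counits I would take ${_{B}\tilde\epsilon}$ and $\tilde\epsilon_{C}$ to be the left counit of $\tilde{\mathcal{A}}_{B}$ and the right counit of $\tilde{\mathcal{A}}_{C}$ furnished and shown unique by Proposition~\ref{proposition:modified-counits}(1) and its right-handed analogue; this is precisely the source of the formulas \eqref{eq:modified-lt-counit} and \eqref{eq:modified-rt-counit}.

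The heart of the argument is condition~(2) of Theorem~\ref{tm:hopf-characterization}: the two antipode diagrams \eqref{dg:antipode} must commute for $\tilde{\mathcal{A}}$. Here I would substitute the expression \eqref{eq:theta-tr} for the modified canonical map $\tilde T_{\rho} = (\Theta_{\lambda} \otimes \id)\circ T_{\rho}$ (and the right-handed analogue for ${_{\lambda}\tilde T}$), insert the definitions of $\tilde S$, $\tilde S_{C}$, ${_{B}\tilde\epsilon}$, $\tilde\epsilon_{C}$, and then transport the four automorphisms through the original diagram \eqref{dg:antipode} for $\mathcal{A}$ using the intertwining relations above and \eqref{eq:modifier-antipode-base}. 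I expect this bookkeeping — matching the twist by $\Theta_{\lambda}$ carried by $\tilde T_{\rho}$ against those by $\Theta_{\rho}$ and ${_{\rho}\Theta}$ carried by $\tilde S$ and $\tilde S_{C}$ — to be the main obstacle, precisely because the left and right modifications are independent, so one cannot simply transport the entire Hopf structure along a single isomorphism. Once both diagrams are verified, Theorem~\ref{tm:hopf-characterization} shows that $\tilde{\mathcal{A}}$ is a regular multiplier Hopf algebroid with uniquely determined antipode and counits, and comparison with the constructions above yields \eqref{eq:modified-lt-counit}--\eqref{eq:modified-antipode}. Finally, in the involutive case with a self-adjoint modifier, Proposition~\ref{prop:modification} already establishes that $\tilde{\mathcal{A}}$ is a multiplier $*$-bialgebroid satisfying the conditions of Definition~\ref{definition:involution}, so together with the regularity just proved it is a multiplier Hopf $*$-algebroid.
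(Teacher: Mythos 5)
Your proposal is correct in outline but takes a genuinely different route from the paper. The paper establishes regularity of $\tilde{\mathcal{A}}$ directly from the definition: by \eqref{eq:theta-tl} and \eqref{eq:theta-tr} the canonical maps of $\tilde{\mathcal{A}}$ are compositions of the (bijective) canonical maps of $\mathcal{A}$ with the automorphisms $\Theta_{\lambda}\otimes\id$, $\id\otimes\Theta_{\rho}$, so they are bijective by construction; the counit formulas then come from Proposition~\ref{proposition:modified-counits} exactly as you say, and the antipode formula \eqref{eq:modified-antipode} is derived \emph{a posteriori} by two diagram chases that sandwich the antipode diagram of $\mathcal{A}$ (respectively the diagram of \cite[Proposition 5.8]{timmermann:regular}) between inner and outer copies connected by the automorphisms. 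You instead route everything through the characterization Theorem~\ref{tm:hopf-characterization}: you first construct $\tilde S$ and the counits and then verify the diagrams \eqref{dg:antipode} for $\tilde{\mathcal{A}}$. This is legitimate and buys you the antipode and counit formulas as part of the regularity proof rather than as an afterthought, at the cost of front-loading the diagram bookkeeping; the computational content of your verification of \eqref{dg:antipode} for $\tilde{\mathcal{A}}$ is essentially the same double diagram chase the paper performs in the other direction, so nothing is saved there. The paper's route is shorter for regularity itself, since bijectivity of $\tilde T_{\lambda},\tilde T_{\rho},{_{\lambda}\tilde T},{_{\rho}\tilde T}$ is immediate.

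One step you should tighten: you assert that the identity $\Theta_{\rho}\circ S\circ{_{\rho}\Theta}^{-1}={_{\lambda}\Theta}\circ S\circ\Theta_{\lambda}^{-1}$ follows by ``pushing $S$ through the intertwining relations and invoking \eqref{eq:modifier-antipode-base}.'' The relations \eqref{eq:modifier-antipode-base} only control the restrictions of the automorphisms to $B$ and $C$, and the intertwining relations of Lemma~\ref{lemma:modified-full} only relate the automorphisms to the comultiplications; neither gives you a relation between $S$ and the $\Theta$'s on all of $A$. To prove the identity you genuinely need the antipode diagrams \eqref{dg:antipode} (or \eqref{dg:galois-inverse}) of the original $\mathcal{A}$ --- this is precisely what the paper's left-hand diagram extracts, namely $S\circ\Theta_{\lambda}^{-1}=\Theta_{\rho}\circ S$ on $A$. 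Since you verify those diagrams for $\tilde{\mathcal{A}}$ anyway, the gap is recoverable, but as written the equality of the two expressions for $\tilde S$ is not yet justified.
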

 \begin{proof}
   The  canonical maps of the     multiplier bialgebroid  $\tilde{\mathcal{A}}$ are     bijective by construction, see Proposition \ref{prop:left-modification}, and the formulas for the counits follow from Proposition \ref{proposition:modified-counits} and its right-handed analogue. To prove \eqref{eq:modified-antipode}, consider the following diagrams.
    \begin{align*}
     \xymatrix@R=20pt@C=-12pt{ \Ab \otimes \bA 
       \ar[rrrr]^{S_{C}\epsc \otimes \id} \ar[ddd]_{T_{\rho}} \ar[rd]^{\id \otimes \Theta^{-1}_{\rho}} &&\qquad\qquad&& A  &
\qquad &  \cA \otimes \Ac 
       \ar[rrrr]^{\tilde S \otimes \id} \ar[ddd]_{\tilde T_{\lambda}} \ar[rd]^{\Theta_{\lambda}^{-1} \otimes \id} &&\qquad\qquad && A^{\theta_{C}} \otimes \Ac  
\\
       & \Ab \otimes \bA \ar[rr]^(0.55){S_{C}\epsc \otimes \id}  \ar[d]_{T_{\rho}} && \ar[ru]^{\Theta_{\rho}} A  &  &&
       & \cA \otimes \Ac \ar[rr]^{S \otimes \id}  \ar[d]_{T_{\lambda}} && \ar[ru]^{{_{\lambda}\Theta} \otimes \id} A^{C} \otimes  \Ac & \\
       & \bA \otimes A^{B} \ar[rr]_{S\otimes \id} \ar[ld]^{\Theta_{\lambda}^{-1}\otimes \Theta_{\rho}} && \ar[rd]_{\Theta_{\rho} \otimes \Theta_{\rho}} \ar[u]_{m} {^{B}A} \otimes  A^{B} & && 
       & \bA \otimes A^{B} \ar[rr]_{\Sigma T_{\rho}^{-1}} \ar[ld]^{\Theta_{\lambda} \otimes \id} &&  \ar[u]_{\rT} \ar[rd]_{\id \otimes \id} \bA  \otimes \Ab & 
\\
\bA \otimes A^{B} \ar[rrrr]_{S \otimes \id} &&&& {^{B}A} \otimes A^{B}\ar[uuu]_{m} && \bA \otimes A^{\theta_{B}} \ar[rrrr]_{\Sigma \tilde T_{\rho}^{-1} } &&&& \bA \otimes \Ab \ar[uuu]_{_{\rho}\tilde T}
     }
   \end{align*}
   In the diagram on the left hand side, the outer and inner square commute by the defining property of the antipode, and since the left, the right and the upper cells commute,  so does the lower one, showing that $\Theta_{\rho} S = \Theta_{\lambda}^{-1}S$.  In the diagram on the right hand side, the outer and inner square commute by   \cite[Proposition 5.8]{timmermann:regular}. Since the left, the right and the lower cell commute as well, so does the upper one, showing that $\tilde S = {_{\lambda}\Theta} S \Theta_{\lambda}^{-1}$.
 \end{proof}
The following example is the counterpart to  Van Daele's modification \cite{daele:modified}:
 \begin{example}
Let $\mathcal{A}=(A,B,C,S_{B},S_{C},\Delta_{B},\Delta_{C})$ be a regular multiplier Hopf algebroid and let $u,v\in B$ be invertible. Then the formulas
\begin{align*}
  \Theta_{\lambda}(a) &=v^{-1}av, & \Theta_{\rho}(a) &= S_{B}(v^{-1})aS_{B}(v), &  {_{\lambda}\Theta}(a) &= uau^{-1}, & {_{\rho}\Theta}(a) &= S_{C}^{-1}(u)aS_{C}^{-1}(u^{-1})
\end{align*}
define a modifier $(\Theta_{\lambda},\Theta_{\rho},{_{\lambda}\Theta},{_{\rho}\Theta})$ and the counits and antipode of the associated modification $\tilde{\mathcal{A}}$ are given by
\begin{align*}
  _{B}\tilde\varepsilon(a) &=\beps(av^{-1})v, & \tilde\varepsilon_{C}(a) &= S_{C}^{-1}(u^{-1})\epsc(ua), & \tilde S(a) &= uS(vav^{-1})u^{-1},
\end{align*}
compare with the formulas given in  \cite[Proposition 1.12 and 1.13]{daele:modified} and \eqref{eq:wmha-source-target}.
 \end{example}

Partial integrals do not change when a multiplier bialgebroid is modified and the modifier is trival on the base:
\begin{lemma} \label{lemma:modification-integrals}
Let $\mathcal{A}=(A,B,C,S_{B},S_{C},\Delta_{B},\Delta_{C})$ be a regular multiplier Hopf algebroid with a modifier $(\Theta_{\lambda},\Theta_{\rho},{_{\lambda}\Theta},{_{\rho}\Theta})$ that is trivial on the base. Then partial integrals of $\mathcal{A}$ and of  the modification $\tilde {\mathcal{A}}$ coincide.
\end{lemma}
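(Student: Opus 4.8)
The plan is to prove separately that a map $\bpsib\colon A\to B$ is a partial right integral for $\mathcal{A}$ if and only if it is one for $\tilde{\mathcal{A}}$, and likewise that a map $\cphic\colon A\to C$ is a partial left integral for $\mathcal{A}$ if and only if it is one for $\tilde{\mathcal{A}}$, using the characterisations in Proposition~\ref{proposition:partial-integrals}, conditions (1a) and (2b). First I would record that, because the modifier is trivial on the base, the relevant data other than the comultiplications are unchanged: from $\Theta_{\lambda}|_{B}=\id$ and ${_{\rho}\Theta}|_{C}=\id$ we get $\tilde S_{B}=S_{B}$ and $\tilde S_{C}=S_{C}$ by Proposition~\ref{prop:modification}, while the source maps remain $\id_{B}$ and $\id_{C}$. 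Hence $\tilde{\mathcal{A}}_{B}$ and $\mathcal{A}_{B}$ (resp.\ $\tilde{\mathcal{A}}_{C}$ and $\mathcal{A}_{C}$) share the same module structures $\bsA,\btA$ (resp.\ $\cAt,\cAs$), the same slice maps, and the same spaces $\Hom(\bA,\bB)$ and $\Hom(\Ac,\Cc)$. Thus only the invariance equations, which involve $\tilde\Delta_{B}$ and $\tilde\Delta_{C}$, have to be compared.

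For the right integral I would use the conjugation form of the modified canonical map from Proposition~\ref{prop:left-modification}, namely $\widetilde{\Tr}=(\id\otimes\Theta_{\rho})\circ\Tr\circ(\id\otimes\Theta_{\rho})^{-1}$, where $\Tr(a\otimes b)=\Delta_{B}(a)(1\otimes b)$, which gives for all $a,b\in A$
\begin{equation*}
  \tilde\Delta_{B}(a)(1\otimes b)=(\id\otimes\Theta_{\rho})\big(\Delta_{B}(a)(1\otimes\Theta_{\rho}^{-1}(b))\big).
\end{equation*}
The slice map $\bpsib\otimes\id\colon\AlA\to A$ sends $c\otimes d\mapsto S_{B}(\bpsib(c))d$, and since $S_{B}(\bpsib(c))\in C$ is fixed by $\Theta_{\rho}$ (triviality on the base), one has $(\bpsib\otimes\id)\circ(\id\otimes\Theta_{\rho})=\Theta_{\rho}\circ(\bpsib\otimes\id)$. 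Applying this, and using that $\Theta_{\rho}|_{B}=\id$ together with the bijectivity of $\Theta_{\rho}^{-1}$ on $A$, I obtain
\begin{equation*}
  (\bpsib\otimes\id)(\tilde\Delta_{B}(a)(1\otimes b))=\Theta_{\rho}\big((\bpsib\otimes\id)(\Delta_{B}(a)(1\otimes\Theta_{\rho}^{-1}(b)))\big),
\end{equation*}
whose right-hand side equals $\bpsib(a)b$ for all $a,b$ exactly when $(\bpsib\otimes\id)(\Delta_{B}(a)(1\otimes b'))=\bpsib(a)b'$ for all $a,b'$, i.e.\ exactly when $\bpsib$ satisfies condition (1a) for $\mathcal{A}$. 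This yields the desired equivalence for right integrals.

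The left integral is treated by the opposite argument, using the right-handed analogue of Proposition~\ref{prop:left-modification}, which gives $\widetilde{\lT}=({_{\lambda}\Theta}\otimes\id)\circ\lT\circ({_{\lambda}\Theta}\otimes\id)^{-1}$ with $\lT(a\otimes b)=(a\otimes 1)\Delta_{C}(b)$, so that $(a\otimes 1)\tilde\Delta_{C}(b)=({_{\lambda}\Theta}\otimes\id)\big(({_{\lambda}\Theta}^{-1}(a)\otimes 1)\Delta_{C}(b)\big)$. Now the slice $\id\otimes\cphic\colon\ArA\to A$ sends $c\otimes d\mapsto c\,S_{C}(\cphic(d))$ with $S_{C}(\cphic(d))\in B$ fixed by ${_{\lambda}\Theta}$, whence $(\id\otimes\cphic)\circ({_{\lambda}\Theta}\otimes\id)={_{\lambda}\Theta}\circ(\id\otimes\cphic)$; combined with ${_{\lambda}\Theta}|_{C}=\id$ this shows, exactly as above, that $(\id\otimes\cphic)((a\otimes 1)\tilde\Delta_{C}(b))=a\cphic(b)$ for all $a,b$ if and only if $\cphic$ satisfies condition (2b) for $\mathcal{A}$. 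This establishes the claim for left integrals and hence the lemma.

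The only genuinely delicate point, and the one I would verify most carefully, is that the slice maps intertwine with the modifying automorphisms: this is precisely where triviality of the modifier on \emph{both} $B$ and $C$ is essential. The twist $S_{B}$ (resp.\ $S_{C}$) built into the slice map produces a factor lying in $C$ (resp.\ $B$), so commuting $\Theta_{\rho}$ (resp.\ ${_{\lambda}\Theta}$) past the slice needs triviality on $C$ (resp.\ $B$), whereas the final collapse of $\Theta_{\rho}(\bpsib(a))$ (resp.\ ${_{\lambda}\Theta}(\cphic(b))$) needs triviality on $B$ (resp.\ $C$). No fullness or other hypothesis on $\mathcal{A}$ enters, since the conjugation formulas for the modified canonical maps hold for an arbitrary modifier.
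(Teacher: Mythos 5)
Your proof is correct, and it is precisely the verification the paper has in mind: the paper's own proof of this lemma reads ``Straightforward and left to the reader,'' so your argument via the conjugation formulas $\tilde T_{\rho}=(\id\otimes\Theta_{\rho})\circ T_{\rho}\circ(\id\otimes\Theta_{\rho})^{-1}$ and its right-handed analogue, together with the observation that the twisted slice maps $c\otimes d\mapsto S_{B}(\bpsib(c))d$ and $c\otimes d\mapsto cS_{C}(\cphic(d))$ intertwine the base-trivial automorphisms, supplies exactly the omitted details. Your closing remark correctly isolates where triviality on both $B$ and $C$ is needed, so nothing is missing.
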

\begin{proof}
  Straightforward and left to the reader.
\end{proof}

As outlined above and illustrated in the next section,  some naturally appearing multiplier Hopf algebroids admit a counital base weight only after modification. We now investigate when such a modification exists.

Similarly as before, we consider for a map $^{C}\chi_{C} \in \Hom(^{C}A_{C},\cC_{C})$ the  convolution operators $\lambda(^{C}\chi_{C}),\rho(^{C}\chi_{C}) \colon A \to R(A)$ defined by
\begin{align*}
a\lambda(^{C}\chi_{C})(b)   &= ( {^{C}\chi_{C}} \otimes \id)((1\otimes a)\Delta_{C}(b)), &
a\rho(^{C}\chi_{C})(b)  &= (\id \otimes {^{C}\chi_{C}})((a\otimes 1)\Delta_{C}(b)).
\end{align*}
 \begin{proposition}
   Let $\mathcal{A}=(A,B,C,S_{B},S_{C},\Delta_{B},\Delta_{C})$ be a regular multiplier Hopf algebroid with an antipodal base weight $(\mu_{B},\mu_{C})$.  The following conditions are equivalent:
   \begin{enumerate}
   \item there exists a modifier $(\Theta_{\lambda},\Theta_{\rho},{_{\lambda}\Theta},{_{\rho}\Theta})$, trivial on the base, such that $(\mu_{B},\mu_{C})$ is a counital base weight for the associated modification $\tilde{\mathcal{A}}$;
   \item there exists a functional $\chi \in A^{\sqcup}$   such that
     \begin{enumerate}
     \item  $_{B}\chi \in \Hom(\bA^{B},\bB_{B})$ and $\chi_{C} \in \Hom(^{C}A_{C},\cC_{C})$,
     \item  the maps
       $\rho(_{B}\chi)$, $\lambda({_{B}\chi})$, $
\rho(\chi_{C})$, $\lambda(\chi_{C})$ are automorphisms of $A$.
     \end{enumerate}
   \end{enumerate}
 \end{proposition}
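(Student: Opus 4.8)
The plan is to prove both implications using the modified counits supplied by Theorem~\ref{theorem:modification} together with the convolution description of modifiers in Proposition~\ref{proposition:modified-counits}.

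For $(1)\Rightarrow(2)$, suppose a trivial-on-base modifier $(\Theta_{\lambda},\Theta_{\rho},{_{\lambda}\Theta},{_{\rho}\Theta})$ is given such that $(\mu_{B},\mu_{C})$ is counital for $\tilde{\mathcal{A}}$, and let $\chi:=\mu_{B}\circ {_{B}\tilde\epsilon}=\mu_{C}\circ\tilde\epsilon_{C}$ be the associated counit functional. Since the modifier is trivial on the base, the base anti-isomorphisms are unchanged, so Remarks~\ref{remarks:counital}(3), applied to $\tilde{\mathcal{A}}$, show that $\chi$ is factorizable with respect to $(\mu_{B},\mu_{C})$ with $_{B}\chi={_{B}\tilde\epsilon}$ and $\chi_{C}=\tilde\epsilon_{C}$; in particular $\chi\in A^{\sqcup}$. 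Condition (2a) is then just the bimodule property of the modified left and right counits, i.e.\ \eqref{eq:left-counit-bimodule} and \eqref{eq:rt-counit-bimodule} for $\tilde{\mathcal{A}}$. Finally, Proposition~\ref{proposition:modified-counits}(2) (with $\theta=\id$) gives $\lambda({_{B}\chi})=\Theta_{\rho}^{-1}$ and $\rho({_{B}\chi})=\Theta_{\lambda}^{-1}$, and its right-handed analogue identifies $\lambda(\chi_{C})$ and $\rho(\chi_{C})$ with the inverses of ${_{\lambda}\Theta}$ and ${_{\rho}\Theta}$; as all four are automorphisms of $A$ by the definition of a modifier, this yields (2b).

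For the converse $(2)\Rightarrow(1)$, given such a $\chi$ I would set
\[
  \Theta_{\lambda}:=\rho({_{B}\chi})^{-1},\quad \Theta_{\rho}:=\lambda({_{B}\chi})^{-1},\quad {_{\lambda}\Theta}:=\lambda(\chi_{C})^{-1},\quad {_{\rho}\Theta}:=\rho(\chi_{C})^{-1},
\]
mirroring the assignment from the unital characterisation of modifiers above. The first task is to verify that this tuple is a modifier that is trivial on the base. The relations \eqref{eq:theta-base} and base-triviality follow from the bimodule hypotheses (2a) together with the extended bimodule relations \eqref{eq:delta-bimodule-extended}, while the comultiplication compatibility \eqref{eq:theta-comultiplication}, its right-handed counterpart, and the two coupling conditions between the left and the right pair are obtained from coassociativity of $\Delta_{B}$, of $\Delta_{C}$, and the mixed coassociativity \eqref{eq:compatible}, using that $_{B}\chi$ and $\chi_{C}$ are characters in the sense of the unital case. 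Once the modifier is in place, Proposition~\ref{proposition:modified-counits}(1) and its right analogue identify the modified counits as $_{B}\tilde\epsilon={_{B}\chi}$ and $\tilde\epsilon_{C}=\chi_{C}$ (via the partial refinement $\beps\circ\lambda({_{B}\chi})={_{B}\chi}$ of Lemma~\ref{lemma:convolution}(1)). Composing with $\mu_{B}$ and $\mu_{C}$ and invoking factorizability $\mu_{B}\circ{_{B}\chi}=\chi=\mu_{C}\circ\chi_{C}$ gives $\mu_{B}\circ{_{B}\tilde\epsilon}=\mu_{C}\circ\tilde\epsilon_{C}$, which together with the assumed antipodality of $(\mu_{B},\mu_{C})$ shows that $(\mu_{B},\mu_{C})$ is a counital base weight for $\tilde{\mathcal{A}}$.

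The main obstacle is precisely the verification in the converse direction that the four convolution automorphisms satisfy all the modifier axioms --- in particular the mixed conditions linking $(\Theta_{\lambda},\Theta_{\rho})$ to $({_{\lambda}\Theta},{_{\rho}\Theta})$, where the mixed coassociativity \eqref{eq:compatible} must enter, and the base-triviality, which rests on the bimodule hypotheses (2a). This is the genuine non-unital, two-sided generalisation of the unital computation; every other step reduces to the cited results on modified counits and to factorizability.
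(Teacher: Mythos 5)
Your argument is correct and follows the same route as the paper's proof: for $(1)\Rightarrow(2)$ take $\chi$ to be the counit functional of the modification and read off (2a), (2b) from Proposition \ref{proposition:modified-counits} and its right-handed analogue, and for $(2)\Rightarrow(1)$ define the modifier as the inverses of the four convolution operators, check the modifier axioms via (mixed) coassociativity, and identify the modified counits with $_{B}\chi$ and $\chi_{C}$. The paper is in fact terser than you are — it dismisses the modifier verification with ``using coassociativity, one easily verifies'' — so your explicit flagging of that step and of which coassociativity conditions enter is a faithful (and slightly more honest) rendering of the same proof.
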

 \begin{proof}
Suppose that $(\Theta_{\lambda},\Theta_{\rho},{_{\lambda}\Theta},{_{\rho}\Theta})$ is a modifier as in (1). Then the counit functional $\chi:=\tilde\epsilon$ of the associated modification $\tilde{\mathcal{A}}$  satisfies (2a) and (2b)   by  Proposition \ref{proposition:modified-counits} and by its right-handed analogue.

Conversely, suppose $\chi\in A^{\sqcup}$ satisfies the conditions in (2), and denote by $\Theta_{\lambda},\Theta_{\rho},{_{\lambda}\Theta},{_{\rho}\Theta}$ inverses of the convolution operators in  (2b). Using coassociativity, one easily verifies that these automorphisms form a modifier of $\mathcal{A}$.  By   Proposition \ref{proposition:modified-counits}, the left counit of the associated modification $\tilde{\mathcal{A}}$ satisfies $\lambda(_{B}\tilde\varepsilon) = \Theta_{\rho}^{-1} = \lambda(_{B}\chi)$, whence $_{B}\tilde\varepsilon = {_{B}\chi}$. Likewise, $\tilde\varepsilon_{C} = \chi_{C}$. Therefore,
$\mu_{B}   \circ {_{B}\tilde\varepsilon} = \chi = \mu_{C} \circ {_{C}\tilde\varepsilon}$.
 \end{proof}

\section{Examples of modified measured multiplier Hopf algebroids}

For several examples of  regular multiplier Hopf algebroids considered in the preceding sections, our assumptions on base weights translated into quite restrictive invariance conditions. We now show that if  weaker and quite natural quasi-invariance assumptions hold, then the original multiplier Hopf algebroid can be modified in such a way that the modification meets all of our assumptions and becomes a measured multiplier Hopf algebroid.

\subsection{The convolution algebra of an \'etale groupoid} \label{subsection:modification-convolution}
Let $G$ be a locally compact, \'etale Hausdorff groupoid, and
consider the multiplier Hopf $*$-algebroid
\begin{align*}
 \hat{\mathcal{A}} = (\hat A,\hat B,\hat C, \hat S_{\hat B},\hat S_{\hat C}, \hat \Delta_{\hat B},\hat \Delta_{\hat C})
\end{align*}
associated to the convolution algebra $\hat A =C_{c}(G)$ of $G$ as in Example \ref{example:hopf-groupoid-algebra}.

Let furthermore $\mu$ be a Radon measure on $G^{0}$ and consider the associated functional 
\begin{align*} 
\hat  \mu\colon C_{c}(G^{0}) \to \C, \quad f\mapsto \int_{G^{0}} f \intd \mu.
\end{align*}
We saw in Example \ref{example:measured-groupoid-algebra} that the base weight  $(\hat\mu,\hat\mu)$  for $\hat{\mathcal{A}}$  is counital if and only if $\mu$ is invariant. 
Suppose now that $\mu$ is only continuously quasi-invariant in the sense explained in Example \ref{example:quasi-groupoid-functions}, that is,
 the measures $\nu$ and $\nu^{-1}$ on $G$ defined by \eqref{eq:dnu} 
are related by a  continuous Radon-Nikodym derivative $D \in C(G)$ such that 
$\nu  =  D\nu^{-1}$.

 We can then modify $\hat{\mathcal{A}}$ and obtain  a measured multiplier Hopf $*$-algebroid as follows.

The Radon-Nikodym cocycle $D$ yields a one-parameter family of automorphisms
\begin{align*}
  \sigma_{t} \colon C_{c}(G) \to C_{c}(G), \quad (\sigma_{t}(f))(\gamma) = f(\gamma)D^{t}(\gamma),
\end{align*}
on the convolution algebra $C_{c}(G)$.  The automorphisms
\begin{align*}
  \Theta_{\lambda}&:=\Theta_{\rho}:=\sigma_{1/2} &&\text{and} & {_{\lambda}\Theta}&:={_{\rho}\Theta}:=\sigma_{-1/2}
\end{align*}
form a modifier of $\hat{\mathcal{A}}$  which is trivial on the base and self-adjoint.
  The   associated modification  is the multiplier Hopf $*$-algebroid 
\begin{align*}
\tilde{\mathcal{A}}=  (\hat A, \hat B,\hat C, \hat S_{\hat C},\hat S_{\hat C}, \tilde \Delta_{\hat B},\tilde\Delta_{\hat C}),
\end{align*}
where $\hat A = C_{c}(G)$ and $\hat B = \hat C = C_{c}(G^{0})$ with $\hat S_{\hat B} = \hat S_{\hat C} = \id_{C_{c}(G^{0})}$ as before, but
\begin{align*}
  (\tilde \Delta_{\hat B}(f)(g\otimes h))(\gamma', \gamma'') &= \sum_{s(\gamma)=r(\gamma')} f(\gamma)D^{1/2}(\gamma) g(\gamma^{-1}\gamma')h(\gamma^{-1}\gamma''), \\
  ((g\otimes h)\tilde \Delta_{\hat C}(f))(\gamma',\gamma'') &=
 \sum_{r(\gamma)=s(\gamma')} f(\gamma)D^{-1/2}(\gamma) g(\gamma'\gamma^{-1})h(\gamma''\gamma^{-1})
\end{align*}
for all $f,g,h\in C_{c}(G)$. Here, we use the isomorphisms \eqref{eq:groupoid-isomorphism-convolution}. Short calculations show that the antipode  $\tilde S$ remains unchanged, that is,
\begin{align*}
 (\tilde S(f))(\gamma) =  f(\gamma^{-1}), &
\end{align*}
 and   the counits $_{\hat B}\tilde\varepsilon$ and $\tilde\varepsilon_{\hat C}$  are given by
\begin{align*}
  (_{\hat B}\tilde \varepsilon (f))(u) &= \sum_{r(\gamma)=u} f(\gamma)D^{-1/2}(\gamma), &
  (\tilde\varepsilon_{\hat C}(f))(u) &= \sum_{s(\gamma)=u} f(\gamma)D^{1/2}(\gamma).
\end{align*}

For the modification $\tilde{\mathcal{A}}$, the base weight  $(\hat \mu,\hat \mu)$ is counital because
for all $f\in C_{c}(G)$,
\begin{align*}
  \hat\mu(_{\hat B}\tilde \varepsilon(f)) = \int_{G} f D^{-1/2} \intd \nu = \int_{G} fD^{1/2} \intd \nu^{-1} = \hat\mu(\tilde\varepsilon_{\hat C}(f)).
\end{align*}

 By Example \ref{example:integrals-groupoid-algebra} and Lemma \ref{lemma:modification-integrals}, the restriction map $C_{c}(G) \to C_{c}(G^{0})$ is left- and right-invariant with respect to the modified comultiplications, and the composition with $\hat \mu$ gives a total left and right integral
\begin{align*} 
  \hat \phi=\hat \psi \colon C_{c}(G) \to \C, \quad f \mapsto \int_{G^{0}} f|_{G^{0}} \intd \mu.
\end{align*}
We thus obtain a measured multiplier Hopf $*$-algebroid $(\hat{\mathcal{A}},\hat\mu,\hat\mu,\hat\phi,\hat\psi)$.

Since $\hat\phi\circ S = \hat \phi$, the modular element is trivial.  The modular automorphism of $\hat \phi$ is
  $\sigma_{1}$ because for all $f,g\in C_{c}(G)$,
\begin{align*}
   \hat \phi(f\ast g) = \int_{G} f(\gamma) g(\gamma^{-1}) \intd\nu(\gamma) = \int_{G} g(\gamma^{-1})f(\gamma)D(\gamma) \intd\nu^{-1}(\gamma) = \hat \phi(g\ast \sigma_{1}(f)).
\end{align*}

\subsection{Crossed products for symmetric actions on commutative algebras} \label{subsection:modification-crossed}
Let $C$ be a non-degenerate,  idempotent and commutative algebra with a left action of a regular multiplier Hopf algebra $(H,\Delta_{H})$ which is symmetric in the sense that \eqref{eq:ch-symmetric} holds, denote by $A=C\# H$ the associated crossed product and consider the regular multiplier Hopf algebroid $\mathcal{A}=(A,C,C,\id,\id,\Delta_{B},\Delta_{C})$ defined in Example \ref{example:hopf-ch}.
Suppose moreover that $(H,\Delta_{H})$ has a left and a right integral $\phi_{H}$ and $\psi_{H}$, and define a partial left integral $\cphic$ and a partial right integral    $\bpsib$ as in \eqref{eq:integrals-ch}. 
 We saw in Example \ref{example:measured-ch} that  for  every faithful $H$-invariant functional $\mu$, the tuple $(\mathcal{A},\mu,\mu,\bpsib,\cphic)$  is  a measured regular multiplier Hopf algebroid.
We now show that if $\mu$ only satisfies a weaker quasi-invariance condition,  then we can modify $\mathcal{A}$ so that we obtain a  measured regular multiplier Hopf algebroid  again. To simplify the discussion, we shall only consider the unital  case. 

We start with a few preliminaries on functionals that are quasi-invariant with respect to an action of a Hopf algebra. For the application in the next subsection, we drop   the commutativity assumption on $C$ and the symmetry assumption for a moment.
Recall that a Hopf algebra is regular if its antipode is invertible.

  Let $C$ be a unital algebra with a left action of a regular Hopf  algebra  $(H,\Delta_{H})$ so that  $C$ becomes a left $H$-module algebra. As before, we identify $C$ and $H$ with subalgebras of $C\# H$.

A \emph{unital one-cocycle} for $(H,\Delta_{H})$ with values in $C$ is a map $\omega\colon H\to C$ satisfying the following equivalent conditions:
\begin{enumerate}
\item $\omega(1_{H})=1_{C}$ and   $\omega(hg) = \omega(h_{(1)})(h_{(2)} \actright \omega(g))$ for all $h,g\in H$;
\item the map $\alpha_{\omega}\colon H \to C\# H$ given by $h\mapsto \omega(h_{(1)}) h_{(2)}$ is a unital homomorphism.
\end{enumerate}
We call
 a faithful   functional $\mu$ on $C$  \emph{quasi-invariant} with respect to $H$ if there exists a  map $D\colon H \to C$, $h\mapsto D_{h}$,  such that
\begin{align*}
  \mu(S(h)\actright y)  &= \mu(D_{h}y) \quad \text{for all }  h\in H, y\in C.
\end{align*}
 We then call $D$ the \emph{Radon-Nikodym cocycle} of $\mu$. This terminology is justified:
\begin{lemma}
If $\mu$ is a faithful and quasi-invariant functional on $C$, then its Radon-Nikodym cocycle  $D$ is a one-cocycle.
\end{lemma}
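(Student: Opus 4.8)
The plan is to verify directly that $D$ satisfies the two conditions in part (1) of the definition of a unital one-cocycle, namely $D_{1_{H}}=1_{C}$ and $D_{hg}=D_{h_{(1)}}(h_{(2)}\actright D_{g})$ for all $h,g\in H$. In both cases the strategy is the same: produce an identity of the form $\mu(X y)=\mu(Y y)$ for all $y\in C$ and then invoke faithfulness of $\mu$ (injectivity of $d\mapsto \mu\cdot d$, i.e.\ $\mu(dC)=0\Rightarrow d=0$) to conclude $X=Y$. The first condition is immediate: since $S(1_{H})=1_{H}$ and the action is unital, quasi-invariance at $h=1_{H}$ gives $\mu(y)=\mu(D_{1_{H}}y)$ for all $y$, whence $D_{1_{H}}=1_{C}$.

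For the cocycle identity I would first rewrite the left-hand side using quasi-invariance twice. Since $S$ is an anti-homomorphism and the action is a module action, $S(hg)\actright y=S(g)\actright(S(h)\actright y)$, so applying quasi-invariance with $g$ and then with $hg$ yields
\[
\mu(D_{hg}\,y)=\mu\big(S(hg)\actright y\big)=\mu\big(D_{g}\,(S(h)\actright y)\big)
\]
for all $y\in C$. It therefore suffices to show that this right-hand side equals $\mu\big(\sum_{(h)}D_{h_{(1)}}(h_{(2)}\actright D_{g})\,y\big)$, as faithfulness of $\mu$ will then force $D_{hg}=\sum_{(h)}D_{h_{(1)}}(h_{(2)}\actright D_{g})$.

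To establish this, I would start from the target expression and run quasi-invariance in reverse on the first leg, replacing $\mu(D_{h_{(1)}}w)$ by $\mu\big(S(h_{(1)})\actright w\big)$ with $w=(h_{(2)}\actright D_{g})y$, and then expand $S(h_{(1)})\actright(\,\cdot\,)$ by the module-algebra rule together with the anti-comultiplicativity $\Delta_{H}(S(k))=S(k_{(2)})\otimes S(k_{(1)})$. The resulting terms regroup, using coassociativity and $(S(b)c)\actright D_g=S(b)\actright(c\actright D_g)$, into $\sum_{(h)}\big((S(h_{(2)})h_{(3)})\actright D_{g}\big)\,(S(h_{(1)})\actright y)$. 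The main obstacle is precisely the Sweedler bookkeeping in this regrouping; the key simplification is the antipode axiom $\sum S(h_{(1)})h_{(2)}=\varepsilon_{H}(h)1_{H}$, which gives $\sum_{(h)}h_{(1)}\otimes S(h_{(2)})h_{(3)}=h\otimes 1_{H}$ and hence collapses the inner factor to $1_{H}\actright D_{g}=D_{g}$, leaving exactly $\mu\big(D_{g}(S(h)\actright y)\big)$. Comparing with the expression above and applying faithfulness of $\mu$ then completes the proof that $D$ is a unital one-cocycle.
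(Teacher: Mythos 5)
Your proof is correct and follows essentially the same route as the paper's: both rewrite $\mu(D_{hg}y)$ as $\mu(D_g(S(h)\actright y))$ via quasi-invariance, and both identify this with $\mu(D_{h_{(1)}}(h_{(2)}\actright D_g)y)$ using the module-algebra rule, the anti-comultiplicativity of $S$ and the antipode axiom, before invoking faithfulness. The only difference is organizational: the paper first records the general identity $\mu(y'(S(h)\actright y))=\mu(D_{h_{(1)}}(h_{(2)}\actright y')y)$ and then specializes $y'=D_g$, whereas you run the same Sweedler computation directly with $y'=D_g$.
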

\begin{proof}
Let $h\in H$ and $y\in C$. Then  by definition,
  \begin{align} \label{eq:chb-aux-1} \mu(y'(S(h) \actright y)) =
    \mu(S(h_{(1)}) \actright ((h_{(2)} \actright y')y)) =
    \mu(D_{h_{(1)}}(h_{(2)} \actright y')y).
  \end{align}
  Taking $y'=D_{g}$, we find
  \begin{align*}
    \mu(D_{hg}y) = \mu(S(hg) \actright y) &= \mu (D_{g}(S(h) \actright
    y)) = \mu (D_{h_{(1)}}(h_{(2)} \actright D_{g})y). 
  \end{align*}
Since $\mu$ is faithful, the assertion follows. 
\end{proof}

 Regard
the left action of $H$ on  $C$  as a left action of the co-opposite Hopf algebra $H^{\co}$ on the opposite algebra
$C^{\op}$.  If $C$ is commutative and the action of $H$ is symmetric, then $C^{\op} \# H^{\co}$ is canonically isomorphic to $C\# H$.
\begin{lemma} \label{lemma:chb-modification}
  Let $\mu$ be a faithful, quasi-invariant functional on $C$ and 
 suppose that
 \begin{align}
   \label{eq:2}
   D_{h_{(1)}}(h_{(2)} \actright y) = (h_{(1)} \actright y)D_{h_{(2)}}
 \end{align}
 for all $h\in H$ and $y\in C$. Then there exist  automorphisms $\beta_{D}$ of $C\# H$ and $\beta^{\dag}_{D}$ of $C^{\op}\# H^{\co}$ such that
    \begin{align} \label{eq:ch-modifier}
      \beta_{D}(y\#h)&= yD_{h_{(1)}}\# h_{(2)}, &
        \beta^{\dag}_{D} (y\# h)&= D_{h_{(2)}}y \# h_{(1)}.
    \end{align}
\end{lemma}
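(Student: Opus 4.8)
The plan is to establish the assertion for $\beta_D$ directly, and then obtain the statement for $\beta^{\dag}_D$ by applying the result for $\beta_D$ to the co-opposite Hopf algebra $H^{\co}$ acting on the opposite algebra $C^{\op}$, with the \emph{same} map $D$. Throughout I would use that, by the preceding lemma, $D$ is a unital one-cocycle, so that $\alpha_D\colon H\to C\#H$, $h\mapsto D_{h_{(1)}}h_{(2)}$, is a unital algebra homomorphism, and I recall that $C\#H$ equals $C\otimes H$ as a vector space and is generated by the subalgebras $C$ and $H$ subject to the relation $hy=\sum(h_{(1)}\actright y)h_{(2)}$.

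First I would verify that the linear map $\beta_D$, $\beta_D(y\#h)=\sum yD_{h_{(1)}}\#h_{(2)}$, is a unital algebra endomorphism. A short Sweedler computation reduces multiplicativity to \eqref{eq:2}: expanding $\beta_D((y\#h)(y'\#h'))$ and using the cocycle identity $D_{h_{(2)}h'_{(1)}}=\sum D_{h_{(2)}}(h_{(3)}\actright D_{h'_{(1)}})$ gives
\begin{align*}
 \beta_D\big((y\#h)(y'\#h')\big)=\sum y(h_{(1)}\actright y')D_{h_{(2)}}(h_{(3)}\actright D_{h'_{(1)}})\#h_{(4)}h'_{(2)},
\end{align*}
whereas $\beta_D(y\#h)\beta_D(y'\#h')=\sum yD_{h_{(1)}}(h_{(2)}\actright y')(h_{(3)}\actright D_{h'_{(1)}})\#h_{(4)}h'_{(2)}$; the two agree precisely because $\sum(h_{(1)}\actright y')D_{h_{(2)}}=\sum D_{h_{(1)}}(h_{(2)}\actright y')$ by \eqref{eq:2}. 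Equivalently, one invokes the universal property of the smash product for the inclusion of $C$ and the homomorphism $\alpha_D$, whose compatibility with the relation $hy=\sum(h_{(1)}\actright y)h_{(2)}$ is again exactly \eqref{eq:2}.

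Next I would prove that $\beta_D$ is bijective by producing a linear inverse, and this is where regularity of $H$ genuinely enters. The one-cocycle $D$ admits a two-sided convolution inverse $\bar D\in\Hom(H,C)$ for the convolution $(f\ast g)(h)=\sum f(h_{(1)})g(h_{(2)})$ coming from the algebra $C$: setting $\bar D_h:=\sum h_{(1)}\actright D_{S_H(h_{(2)})}$ and using the cocycle identity together with $\sum h_{(1)}S_H(h_{(2)})=\varepsilon_H(h)1_H$ and $D_{1_H}=1_C$ yields $\sum D_{h_{(1)}}\bar D_{h_{(2)}}=\varepsilon_H(h)1_C$, while a symmetric formula built from $S_H^{-1}$ produces a left convolution inverse, and the two coincide in the unital convolution algebra $\Hom(H,C)$. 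Writing $\beta_{\bar D}$ for the endomorphism given by the same formula as $\beta_D$ with $\bar D$ in place of $D$, the counit axiom and $D\ast\bar D=\bar D\ast D=\varepsilon_H(\cdot)1_C$ give $\beta_{\bar D}\beta_D=\beta_D\beta_{\bar D}=\id$, for instance $\beta_{\bar D}(\beta_D(y\#h))=\sum y\,(D\ast\bar D)(h_{(1)})\#h_{(2)}=y\#h$; hence $\beta_D$ is an automorphism. I expect this convolution-inverse step to be the main obstacle, since both existence of $\bar D$ and the verification that $\beta_{\bar D}$ inverts $\beta_D$ require care, whereas multiplicativity is routine.

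Finally, for $\beta^{\dag}_D$ I would apply the construction above verbatim to the regular Hopf algebra $H^{\co}$ and the $H^{\co}$-module algebra $C^{\op}$ set up before the lemma, again with the map $D$. The key point is that $D$ is a unital one-cocycle in this new setting: the required identity $D_{hg}=\sum(h_{(1)}\actright D_g)D_{h_{(2)}}$ follows from the original cocycle identity $D_{hg}=\sum D_{h_{(1)}}(h_{(2)}\actright D_g)$ together with \eqref{eq:2}, and the analogue of \eqref{eq:2} appropriate to $(H^{\co},C^{\op})$ is once more exactly \eqref{eq:2}. The automorphism produced by the first part for this data sends $y\#h\mapsto\sum y\cdot_{\op}D_{h^{\co}_{(1)}}\#h^{\co}_{(2)}=\sum D_{h_{(2)}}y\#h_{(1)}$, which is precisely the stated form of $\beta^{\dag}_D$.
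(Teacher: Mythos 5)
Your proposal is correct and follows essentially the same route as the paper: multiplicativity is reduced to \eqref{eq:2} via the cocycle identity (equivalently, via the homomorphism $\alpha_D$), and your inverse $\beta_{\bar D}$ with $\bar D_h=\sum h_{(1)}\actright D_{S_H(h_{(2)})}$ is exactly the map $\bar\beta_D$ used in the paper. The only cosmetic differences are that you obtain two-sidedness of the inverse from the convolution-algebra formalism (a right convolution inverse together with a left one built from $S_H^{-1}$), where the paper instead checks the composition $\beta_D\bar\beta_D=\id$ by a second direct computation using \eqref{eq:2}, and that you spell out the $(C^{\op},H^{\co})$ reduction for $\beta^{\dag}_D$ which the paper dismisses with ``follows similarly''.
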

\begin{proof}
 We only prove the assertion on $\beta_{D}$; the existence of $\beta^{\dag}_{D}$ follows similarly. 

 The formula for $\beta_{D}$ defines a homomorphism because the map $\alpha_{D} \colon h\mapsto D_{h_{(1)}}\# h_{(2)}$ is a homomorphism and
  \begin{align*}
    \beta_{D}(1\# h)\beta_{D}(y\# 1) &=
D_{h_{(1)}}(h_{(2)} \actright y) \# h_{(3)} = (h_{(1)} \actright y)D_{h_{(2)}} \# h_{(3)} = \beta_{D}((h_{(1)} \actright y) \# h_{(2)})
  \end{align*}
for all $h\in H$ and $y\in C$. It is bijective because the map 
\begin{align*}
  \bar\beta_{D} \colon C\# H \to C\# H, \quad
y\# h \mapsto  y (h_{(1)} \actright \omega(S_{H}(h_{(2)}))) \# h_{(3)},
\end{align*}
where $S_{H}$ denotes the antipode of $(H,\Delta_{H})$,
is  inverse to $\beta_{D}$.  Indeed, both are  $C$-linear on the left hand side and satisfy
\begin{align*}
\bar  \beta_{D}(\beta_{D}(1\# h)) &= D_{h_{(1)}}(h_{(2)} \actright D_{S_{H}(h_{(3)})}) \# h_{(4)} = D_{h_{(1)}S_{H}(h_{(2)})} \# h_{(3)} =  1 \# h, \\
  \beta_{D}(\bar \beta_{D}(1\# h)) &=
  (h_{(1)} \actright D_{S_{H}(h_{(2)})}) D_{h_{(3)}} \# h_{(4)}
\\ &= (h_{(1)} \actright (D_{S_{H}(h_{(3)})}(S_{H}(h_{(2)}) \actright D_{h_{(4)}}))) \# h_{(5)}  \\ &= (h_{(1)} \actright D_{S_{H}(h_{(2)})h_{(3)}}) \# h_{(4)}  = 1\# h. \qedhere
\end{align*}
\end{proof}

We now apply  the preceding considerations to our example. 
\begin{proposition}
  Let $C$ be a unital, commutative algebra with a left action of a regular Hopf algebra $(H,\Delta_{H})$ that is symmetric in the sense that for all $h\in H$ and $y\in C$,
  \begin{align*}
    h_{(1)} \otimes h_{(2)} \actright y = h_{(2)} \otimes h_{(1)} \actright y.
  \end{align*}
  Suppose that $\mu$ is a faithful, quasi-invariant functional on $C$, and that $\phi_{H}$ is an integral on $(H,\Delta_{H})$.
Define the  regular multiplier Hopf algebroid $\mathcal{A}=(A,B,C,S_{B},S_{C},\Delta_{B},\Delta_{C})$  as in Example \ref{example:hopf-ch}, the maps  $\beta_{D}$ and $\beta^{\dag}_{D}$ as in  \eqref{eq:ch-modifier}, and  $\cphic$  as in \eqref{eq:integrals-ch}. Then:
 \begin{enumerate}
 \item   $((\beta^{\dag}_{D})^{-1},(\beta_{D})^{-1},\id,\id)$ is a modifier of $\mathcal{A}$;
 \item $(\mu,\mu)$ is a counital base weight for the associated modification $\tilde{\mathcal{A}}$;
 \item   $(\tilde{\mathcal{A}},\mu,\mu,\cphic,\cphic)$ is a measured multiplier Hopf algebroid;
 \item the modular automorphism $\sigma^{\phi}$ of $\phi=\mu\circ \cphic$ is given by
   \begin{align*}
     \sigma^{\phi}(yh) =y\sigma_{H}(h_{(2)})D_{S^{-1}(h_{(1)})}
   \end{align*}
for all $y\in C$ and $h\in H$,
where $\sigma_{H}$ denotes the modular automorphism of $\phi_{H}$.
 \end{enumerate}
\end{proposition}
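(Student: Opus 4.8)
The four assertions are of decreasing independence: (1) sets up the modifier, (2) reduces counitality to the Radon--Nikodym cocycle identity, (3) packages everything via the general machinery of Section~6, and (4) is a direct computation using Theorem~\ref{theorem:modular-element-second} and the formula for $\sigma^{\phi}$ in the crossed-product case. I would treat them in this order, since each feeds the next.

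\textbf{Assertion (1).}
The goal is to check that $((\beta^{\dag}_{D})^{-1},(\beta_{D})^{-1},\id,\id)$ satisfies the defining conditions of a modifier for $\mathcal{A}$. Because $B=C$ and $S_{B}=S_{C}=\id$ here, the base automorphisms collapse: the two maps acting as $\Theta_{\lambda},\Theta_{\rho}$ must restrict to a common automorphism $\theta$ of $C$, and since $\beta_{D}$ and $\beta^{\dag}_{D}$ fix $C\subseteq A$ (both formulas are $C$-linear and send $y\#1\mapsto y\#1$), the modifier is automatically \emph{trivial on the base}, so $\theta=\id$. The real content is the intertwining relation $(\Theta_{\lambda}\bar\times\id)\circ\Delta_{B}=(\id\bar\times\Theta_{\rho})\circ\Delta_{B}$ from \eqref{eq:theta-comultiplication}, together with its right-handed analogue for $\Delta_{C}$. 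First I would verify \eqref{eq:theta-base} and \eqref{eq:modifier-antipode-base} (trivially satisfied on the base); then I would compute both sides of the comultiplication condition on a generator $yhx=yh$ using the explicit $\Delta_{B}(yh)(a\otimes a')=yh_{(1)}a\otimes h_{(2)}a'$ from Example~\ref{example:hopf-ch} and the formula $\beta^{\dag}_{D}(y\#h)=D_{h_{(2)}}y\#h_{(1)}$. The key input is that $\alpha_{D}\colon h\mapsto D_{h_{(1)}}\#h_{(2)}$ is a homomorphism (the cocycle property of $D$, established in the lemma preceding Lemma~\ref{lemma:chb-modification}) together with symmetry \eqref{eq:ch-symmetric}; these let me move the cocycle factor across the coproduct of $H$ so that the two sides match.

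\textbf{Assertion (2).}
By Theorem~\ref{theorem:modification}, the modified counits are $_{B}\tilde\varepsilon=\beps\circ\Theta_{\rho}^{-1}=\beps\circ\beta_{D}$ and $\tilde\varepsilon_{C}=\epsc\circ{_{\lambda}\Theta}^{-1}=\epsc$ (since $_{\lambda}\Theta=\id$). Using $\beps(yh)=y\varepsilon_{H}(h)$ and $\epsc(hy)=y\varepsilon_{H}(h)$ from \eqref{eq:ch-counits-antipode}, I would compute $\mu\circ{_{B}\tilde\varepsilon}(yh)=\mu\bigl(yD_{h_{(1)}}\bigr)\varepsilon_{H}(h_{(2)})=\mu(yD_{h})$ and compare with $\mu\circ\tilde\varepsilon_{C}$ applied to the same element rewritten as a product of $C$- and $H$-parts. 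The counitality equation $\mu\circ{_{B}\tilde\varepsilon}=\mu\circ\tilde\varepsilon_{C}$ then becomes exactly the quasi-invariance identity $\mu(S(h)\actright y)=\mu(D_{h}y)$ defining the Radon--Nikodym cocycle, after using commutativity of $C$ and the relation between $\tilde\varepsilon_{C}$ expressed on $hy$ versus $yh$ via symmetry. Antipodality of $(\mu,\mu)$ holds because $S_{B}=S_{C}=\id$, so counitality is the only thing to verify.

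\textbf{Assertions (3) and (4).}
For (3), once (2) gives a counital base weight, I would invoke Lemma~\ref{lemma:modification-integrals} (the modifier is trivial on the base, so $\cphic$ remains a partial integral for $\tilde{\mathcal{A}}$), quasi-invariance from Lemma~\ref{lemma:quasi-modular} as in Example~\ref{example:quasi-ch}, and fullness plus faithfulness in the unital setting; these assemble into the definition of a measured multiplier Hopf algebroid. For (4), I would apply Theorem~\ref{theorem:modular-automorphism}: $\sigma^{\phi}|_{C}=S^{2}|_{C}$, which here is trivial on $C$ since $S^{2}$ fixes the commutative base, and on $H$ the computation parallels the last displayed example of Subsection~\ref{subsection:modular-element}, with the modular element $\delta_{H}$ of $\phi_{H}$ replaced by the cocycle data; combining $\sigma^{\phi}(y)=y$, the relation $\phi(h'yhx)=\phi(yhx\,\sigma_{H}(h'))$ from $H$-invariance, and the modification correction $D$ yields $\sigma^{\phi}(yh)=y\sigma_{H}(h_{(2)})D_{S^{-1}(h_{(1)})}$.

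\textbf{Main obstacle.}
The hard part will be the bookkeeping in Assertion~(1): checking that $(\beta^{\dag}_{D})^{-1}$ and $(\beta_{D})^{-1}$ genuinely intertwine the comultiplications requires pushing the cocycle $D$ through several Sweedler legs of $\Delta_{H}$ while keeping track of where the symmetry condition \eqref{eq:ch-symmetric} and the extra hypothesis \eqref{eq:2} of Lemma~\ref{lemma:chb-modification} are invoked. One must confirm that \eqref{eq:2} holds automatically from commutativity and symmetry in the present setting (so that $\beta_{D}$ is well-defined as an automorphism), and that the identities used to invert $\beta_{D}$ in Lemma~\ref{lemma:chb-modification} are consistent with the coassociativity manipulations needed for \eqref{eq:theta-comultiplication}. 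Everything else reduces cleanly to results already proved in Sections~3--7.
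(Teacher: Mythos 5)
Your proposal follows essentially the same route as the paper on all four parts: for (1) the paper's verification is the one-line balanced-tensor computation $(\beta^{\dag}_{D}\overline{\times}\id)(\Delta_{B}(y\# h)) = (yD_{h_{(2)}}\# h_{(1)})\otimes(1\# h_{(3)}) = (y\# h_{(1)})\otimes(D_{h_{(2)}}\# h_{(3)}) = (\id\overline{\times}\beta_{D})(\Delta_{B}(y\# h))$, followed by passing to inverses in the modifier group (the right-handed condition is vacuous since both right components are $\id$); (2) reduces counitality to $\mu(D_{h}y)=\mu(S(h)\actright y)$ exactly as you describe; (3) assembles Lemma \ref{lemma:modification-integrals} and faithfulness of $\phi_{H}$; (4) is a direct verification. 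Your worry about \eqref{eq:2} resolves affirmatively: symmetry applied to the map $g\otimes c\mapsto D_{g}c$ gives $D_{h_{(1)}}(h_{(2)}\actright y)=D_{h_{(2)}}(h_{(1)}\actright y)$, and commutativity finishes, so Lemma \ref{lemma:chb-modification} applies --- this is precisely the paper's justification.

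The one place your plan is off is (4): the relation $\phi(h'yhx)=\phi(yhx\,\sigma_{H}(h'))$ that you propose to import ``from $H$-invariance'' does not hold here, because $\mu$ is only quasi-invariant; taken literally it would give $\sigma^{\phi}(h)=\sigma_{H}(h)$ with no $D$ factor, and the phrase ``modification correction $D$'' does not by itself supply the missing step. The computation that works (and is the paper's) verifies directly that $\phi\bigl(yh\,\sigma_{H}(h'_{(2)})D_{S^{-1}(h'_{(1)})}\bigr)=\phi(h'yh)$: pull the $C$-factor out via $\cphic(gy)=y\phi_{H}(g)$, convert $\mu(D_{S^{-1}(h'_{(1)})}y)$ into $\mu(h'_{(1)}\actright y)$ by the quasi-invariance identity, use $\phi_{H}(h\sigma_{H}(h'_{(2)}))=\phi_{H}(h'_{(2)}h)$, and reassemble to get $\phi((h'_{(1)}\actright y)h'_{(2)}h)=\phi(h'yh)$. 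So the factor $D_{S^{-1}(h_{(1)})}$ is not a correction bolted onto the invariant-case formula but exactly the term that absorbs the Radon-Nikodym cocycle; with that repair your outline matches the paper's proof.
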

\begin{proof}
  (1) 
Since the action is symmetric, we can apply Lemma \ref{lemma:chb-modification} and conclude that $\beta^{\dag}_{D}$ and $\beta_{D}$ are automorphisms. They form a modifier of the left multiplier bialgebroid $\mathcal{A}_{B}$ because 
  \begin{align*}
    (\beta^{\dag}_{D} \overline{\times} \id)(\Delta_{B}(y\# h)) &= (yD_{h_{(2)}} \# h_{(1)}) \otimes (1\# h_{(3)}) \\ & = (y\# h_{(1)}) \otimes (D_{h_{(2)}} \# h_{(3)}) = (\id \overline{\times} \beta_{D})(\Delta_{B}(y\# h)).
  \end{align*}
  Therefore, their inverses form a modifier as well.

(2) Let $y\in C$ and $h\in H$. Then by \eqref{eq:modified-lt-counit} and \eqref{eq:ch-counits-antipode},
\begin{align*}
  \mu({_{B}\tilde\varepsilon}(y\# h)) = \mu(\beps(\beta_{D}(y\# h))) = \mu(\beps(D_{h_{(1)}}y \# h_{(2)})) = \mu(D_{h}y) 
\end{align*}
and, because the right comultiplication remains unchanged and the action is symmetric,
\begin{align*}
  \mu(\tilde\varepsilon_{C}(y\# h)) = \mu(\varepsilon_{C}(h_{(1)}(S(h_{(2)}) \actright y))) = \mu(S(h) \actright y).
\end{align*}

(3)  By Example \ref{example:integrals-ch} and Lemma \ref{lemma:modification-integrals}, $\cphic$ is a partial integral for $\tilde{\mathcal{A}}$, by  (2), $(\mu,\mu)$ is a counital base weight, and using the fact that $\phi_{H}$ is faithful \cite[Theorem 3.7]{daele}, it is not difficult to see that $\phi$ is faithful as well.

(4) By Lemma \ref{lemma:integrals-modular-base}, $\sigma^{\phi}(y)=S^{2}(y)=y$ for all $y\in C$. Let $h,h'\in H$ and $y\in C$. Then
\begin{align*}
  \phi(y h\sigma_{H}(h'_{(2)})D_{S^{-1}(h'_{(1)})}) &= \phi(h\sigma_{H}(h'_{(2)})D_{S^{-1}(h'_{(1)})}y)  \\ &=
  \mu(D_{S^{-1}(h'_{(1)})}y) \phi_{H}(h\sigma_{H}(h'_{(2)})) \\
& = \mu(h'_{(1)} \actright y) \phi_{H}(h'_{(2)}h) = \phi((h'_{(1)} \actright y)h'_{(2)}h) = \phi(h'yh).
\end{align*}
The assertion follows.
\end{proof}
\subsection{Two-sided crossed products}\label{subsection:modification-twosided}
 Consider the regular multiplier Hopf algebroid $\mathcal{A}$ obtained from a two-sided crossed product $A=C\# H \# B$ associated to compatible left and right actions of a regular multiplier Hopf algebra $(H,\Delta_{H})$ on idempotent, non-degenerate algebras $C$ and $B$ with given anti-isomorphisms $S_{B}$ and $S_{C}$ as in Example  \ref{example:hopf-chb}.
In Example \ref{example:measured-chb}, we saw that an antipodal, modular base weight $(\mu_{B},\mu_{C})$ for $\mathcal{A}$ is counital if and only if $\mu_{B}$ and $\mu_{C}$  are invariant with respect to the actions of $H$. 
We  now show that if $\mu_{B}$ and $\mu_{C}$ are only  quasi-invariant, then we can modify $\mathcal{A}$ so that we obtain a measured regular multiplier Hopf algebroid.  To simplify the discussion, we  assume
 the algebras $B$, $C$  and $H$ to be unital again.

First, we need further preliminaries about quasi-invariant functionals.  Let $C$ be a unital algebra with a left action of a regular Hopf algebra $(H,\Delta_{H})$  and a faithful, quasi-invariant functional $\mu$. 
We regard 
the action also as a left action of the co-opposite Hopf algebra $H^{\co}$ on the opposite algebra
$C^{\op}$ again. Given a functional $\mu$ on $C$, we denote by $\mu^{\op}$ the corresponding functional on $C^{\op}$.
\begin{lemma} \label{lemma:chb-modification-2}
  Let $\mu$ be a faithful, quasi-invariant functional on $C$ that admits a modular automorphism $\sigma$ such that $\sigma(h\actright y)=S^{2}(h) \actright \sigma(y)$ for all  $h\in H$ and  $y\in C$. Then:
  \begin{enumerate}
  \item $\sigma(D_{h}) = D_{S^{2}(h)}$ for all $h\in H$;
  \item the functional $\mu^{\op}$ is quasi-invariant and its Radon-Nikodym cocycle is $D$;
  \item  $D_{h_{(1)}}(h_{(2)} \actright y) = (h_{(1)} \actright y)D_{h_{(2)}}$ for all $h\in H$ and $y\in C$.
  \end{enumerate}
\end{lemma}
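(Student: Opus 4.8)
The plan is to extract from the hypotheses a single auxiliary identity, read off (1) and (2) from it directly, and handle (3) by a separate Sweedler computation against $\mu$ combined with faithfulness. Throughout, $S$ denotes the (invertible) antipode of $H$, and I will freely use that $\mu\circ\sigma=\mu$ and that quasi-invariance means $\mu(S(h)\actright y)=\mu(D_h y)$ for all $h\in H$, $y\in C$.

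First I would establish the key identity
\begin{equation*}
\mu(y D_h) = \mu(S^{-1}(h) \actright y), \qquad h\in H,\ y\in C,
\end{equation*}
which I will call $(\ast)$. The computation is short: by the modular property $\mu(yD_h)=\mu(D_h\sigma(y))$; by quasi-invariance $\mu(D_h\sigma(y))=\mu(S(h)\actright\sigma(y))$; and by the compatibility $\sigma(g\actright y)=S^2(g)\actright\sigma(y)$ with $g=S^{-1}(h)$ one rewrites $S(h)\actright\sigma(y)=\sigma(S^{-1}(h)\actright y)$, so that $\mu\circ\sigma=\mu$ yields $(\ast)$. Assertion (2) is then immediate: regarding the action as an $H^{\co}$-action on $C^{\op}$, whose antipode is $S^{-1}$ and whose product is reversed, $(\ast)$ reads $\mu^{\op}(S^{-1}(h)\actright y)=\mu^{\op}(D_h\cdot_{\op}y)$, which is exactly quasi-invariance of $\mu^{\op}$ with Radon--Nikodym cocycle $D$, uniqueness of the cocycle following from faithfulness of $\mu$.

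For (1) I would combine $(\ast)$ with quasi-invariance applied at $S^{-2}(h)$: since $S(S^{-2}(h))=S^{-1}(h)$, this gives $\mu(S^{-1}(h)\actright y)=\mu(D_{S^{-2}(h)}y)$, whence by $(\ast)$ and the modular property $\mu(yD_h)=\mu(D_{S^{-2}(h)}y)=\mu\big(y\,\sigma(D_{S^{-2}(h)})\big)$. As $y$ is arbitrary and $\mu$ is faithful, $D_h=\sigma(D_{S^{-2}(h)})$, and replacing $h$ by $S^2(h)$ yields $\sigma(D_h)=D_{S^2(h)}$.

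The main work, and the step I expect to be the obstacle, is (3). The tempting shortcut — comparing the cocycle relation $D_{hg}=D_{h_{(1)}}(h_{(2)}\actright D_g)$ with its $H^{\co}$-counterpart $D_{hg}=(h_{(1)}\actright D_g)D_{h_{(2)}}$ coming from (2) — only proves the claim for $y$ in the image of $D$, which need not span $C$, so it is insufficient. Instead I would test both sides against $\mu(-\,w)$ and invoke faithfulness. Writing $L_h(y)=D_{h_{(1)}}(h_{(2)}\actright y)$ and $R_h(y)=(h_{(1)}\actright y)D_{h_{(2)}}$, a Sweedler computation using quasi-invariance, the module-algebra axiom $k\actright(ab)=(k_{(1)}\actright a)(k_{(2)}\actright b)$, the antipode relations $\sum h_{(1)}\otimes S(h_{(2)})h_{(3)}=h\otimes 1$ and $\sum S(h_{(1)})h_{(2)}\otimes h_{(3)}=1\otimes h$, together with the modular property and the compatibility, should give
\begin{equation*}
\mu\big(L_h(y)w\big)=\mu\big(y\,(S(h)\actright w)\big)=\mu\big(R_h(y)w\big)
\end{equation*}
for all $w\in C$; the second equality is where the modular automorphism and the identity $\sigma(h\actright y)=S^2(h)\actright\sigma(y)$ enter, collapsing $S(h_{(2)})S^2(h_{(1)})=S\big(S(h_{(1)})h_{(2)}\big)$ to a counit. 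Faithfulness of $\mu$ then forces $L_h(y)=R_h(y)$, which is (3). The delicate points will be the bookkeeping of the Sweedler legs after each splitting of the antipode's coproduct (via $\Delta\circ S=(S\otimes S)\circ\Delta^{\op}$) and keeping the order of factors straight in the possibly noncommutative algebra $C$.
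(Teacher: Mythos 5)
Your proof is correct and follows essentially the same route as the paper: your identity $(\ast)$ is just a repackaging of the paper's computation for (1), your argument for (2) matches, and your plan for (3) — pairing $D_{h_{(1)}}(h_{(2)}\actright y)$ and $(h_{(1)}\actright y)D_{h_{(2)}}$ against a test element under $\mu$, reducing both to $\mu(y(S(h)\actright w))$ via the module-algebra axiom, the collapse $S(h_{(2)})S^{2}(h_{(1)})=S\bigl(S(h_{(1)})h_{(2)}\bigr)$ and the modular/compatibility relations, then invoking faithfulness — is exactly the paper's proof of (3). Your observation that comparing the two cocycle identities alone would only settle (3) for $y$ in the image of $D$ is accurate, and the paper likewise avoids that shortcut.
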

\begin{proof}
We repeatedly use faithfulness of $\mu$. Let $y,y' \in C$ and $h \in H$.

(1)  The relation $\mu\circ \sigma=\mu$ and the assumption on $\sigma$ imply that 
\begin{align*}
  \mu(D_{h}y) = \mu(S(h) \actright y) = \mu(S^{3}(h) \actright \sigma(y)) = \mu(D_{S^{2}(h)}\sigma(y)) = \mu(cD_{S^{2}(h)}).
\end{align*}

(2) The antipode $S_{H}^{\co}$ of $H^{\co}$ is the inverse of the antipode  $S_{H}$ of $(H,\Delta_{H})$, whence
\begin{align*}
  \mu^{\op}(S_{H^{\co}}(h) \actright c) = \mu(S_{H}^{-1}(h) \actright c) =\mu(D_{S^{-2}(h)}c) = \mu(cD_{h}).
\end{align*}

(3) By assumption on $\sigma$,
\begin{align}
  \mu((h_{(1)} \actright y')D_{h_{(2)}}y) &= \mu(D_{h_{(2)}}y(S^{2}(h_{(1)}) \actright \sigma(y')))  \nonumber \\
& = \mu(S(h_{(2)}) \actright
y(S^{2}(h_{(1)}) \actright \sigma(y')) )  
\nonumber \\ &= \mu((S(h) \actright y)\sigma(y')) = \mu(y'(S(h) \actright y)), \label{eq:chb-aux-base}
\end{align}
and by  \eqref{eq:chb-aux-1}, this is equal to $ \mu(D_{h_{(1)}}(h_{(2)} \actright y')y)$.
\end{proof}

In the following proposition, we write $y^{\op}$ if we regard an element $y$ of an algebra $C$ as an element in the opposite algebra $C^{\op}$.
\begin{proposition}
Let $(H,\Delta_{H})$ be a Hopf algebra with  invertible antipode $S_{H}$ and let
 $C$  be a unital left $H$-module algebra with a faithful functional $\mu$  that
 is quasi-invariant with respect to $H$ and admits a modular automorphism $\sigma$ such that  for all $h\in H$, $c\in C$,
 \begin{align} \label{eq:chb-modular}
   \sigma(h\actright c) = S_{H}^{2}(h) \actright \sigma(c).
 \end{align}
Regard the opposite algebra $B:=C^{\op}$  as a right $H$-module algebra via $x\actleft h := S_{H}(h)\actright x$. 
\begin{enumerate}
\item  The anti-isomorphisms
 \begin{align*}
   S_{B} \colon B=C^{\op} &\to C, \ y^{\op}\mapsto y, &
   S_{C} \colon C&\to C^{\op} = B, \ y\mapsto \sigma(y)^{\op},
 \end{align*}
satisfy 
$S_{B}(x\actleft h) =S_{H}(h) \actright S_{B}(x)$ and $S_{C}(h\actright y) = S_{C}(y) \actleft S_{H}(h)$.
\end{enumerate}
Define the associated regular multiplier Hopf algebroid 
 $\mathcal{A}=(A,B,C,S_{B},S_{C},\Delta_{B},\Delta_{C})$, where $A=C\# H \# B$, as in Example \ref{example:hopf-chb}.
 \begin{enumerate} \setcounter{enumi}{1}
 \item  There exist automorphisms $\Theta_{\lambda}$,
   $\Theta_{\rho}$ of $A$ such that
   \begin{align*}
     \Theta_{\lambda}(yhx) &= y(D_{h_{(2)}})^{\op}h_{(1)}x, &
     \Theta_{\rho}(yhx) &= yD_{h_{(1)}}h_{(2)}x
   \end{align*}
   for all $y\in C$, $h \in H$, $x\in B$, and
   $(\Theta^{-1}_{\lambda},\Theta^{-1}_{\rho},\id_{A},\id_{A})$ is a modifier of
   $\mathcal{A}$.
 \item  The pair $(\mu^{\op},\mu)$ is a counital base weight for the
associated   modification $\tilde{\mathcal{A}}$.
 \item Suppose that $(H,\Delta_{H})$ has a left and right integral $\phi_{H}$. Then the formulas
   \begin{align*}
     \cphic(yhx) &:=y\phi_{H}(h)\mu^{ \op}(x), &
     \bpsib(yhx) &:=\mu(y)\phi_{H}(h)x,
        \end{align*}
where $y\in C$, $h\in H$ and $x\in B$, define a partial left and a partial right integral, and
$(\tilde{\mathcal{A}},\mu^{\op},\mu,\bpsib,\cphic)$ is a measured regular multiplier Hopf algebroid.
\item The  modular automorphism of $\phi=\mu\circ \cphic$ is given by
\begin{align*}
\sigma^{\phi}(y) &=\sigma(y), &\sigma^{\phi}(y^{\op}) &= \sigma^{-1}(y)^{\op}, &
 \sigma^{\phi}(h)&= \sigma_{H}(h_{(2)}) D_{S(h_{(1)})} (D_{S^{-1}(h_{(3)})})^{\op}
\end{align*}
for all $y\in C$ and $h\in H$, where $\sigma_{H}$ denotes the modular automorphism of $\phi_{H}$.
 \end{enumerate}
\end{proposition}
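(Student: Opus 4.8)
The plan is to treat the five assertions in turn, leaning on the modification machinery of Section~\ref{section:modification} together with Lemmas \ref{lemma:chb-modification} and \ref{lemma:chb-modification-2}. Assertion (1) is a direct check: writing $x=c^{\op}$ and using $S_{B}(c^{\op})=c$, $S_{C}(y)=\sigma(y)^{\op}$ and the definition $x\actleft h=S_{H}(h)\actright x$, the relation $S_{B}(x\actleft h)=S_{H}(h)\actright S_{B}(x)$ is immediate, while $S_{C}(h\actright y)=S_{C}(y)\actleft S_{H}(h)$ reduces exactly to the modular hypothesis \eqref{eq:chb-modular}; thus \eqref{eq:chb-action-antipode} holds and $\mathcal{A}$ is well defined by Example \ref{example:hopf-chb}. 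For assertion (2), I would identify $\Theta_{\rho}$ as the extension of the automorphism $\beta_{D}$ of Lemma \ref{lemma:chb-modification} by the identity on the commuting factor $B$, and $\Theta_{\lambda}$ as the analogous extension built from $\beta_{D}^{\dag}$ by the identity on $C$. Since $C$ and $B$ commute in $A$ and $D_{h_{(1)}}(h_{(2)}\actright y)=(h_{(1)}\actright y)D_{h_{(2)}}$ by Lemma \ref{lemma:chb-modification-2}(3), checking the generating relations $hy=(h_{(1)}\actright y)h_{(2)}$ and $xh=h_{(1)}(x\actleft h_{(2)})$ shows these formulas define algebra automorphisms, bijective because $\beta_{D},\beta_{D}^{\dag}$ are. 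As $D_{1_{H}}=1$, both fix $B$ and $C$ pointwise, so the tuple is trivial on the base, and the identity $(\Theta_{\lambda}\overline{\times}\id)\circ\Delta_{B}=(\id\overline{\times}\Theta_{\rho})\circ\Delta_{B}$ is verified by the same balanced-tensor computation as in the preceding proposition; since modifiers form a group, the inverse tuple $(\Theta_{\lambda}^{-1},\Theta_{\rho}^{-1},\id,\id)$ is then a modifier of $\mathcal{A}$.

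Assertion (3) then follows from Theorem \ref{theorem:modification}: since the third and fourth components are the identity, $\tilde\varepsilon_{C}=\epsc$ is unchanged and $_{B}\tilde\varepsilon=\beps\circ\Theta_{\rho}$ by \eqref{eq:modified-lt-counit}. Antipodality of $(\mu^{\op},\mu)$ is immediate from $S_{C}(y)=\sigma(y)^{\op}$, $S_{B}(c^{\op})=c$ and $\mu\circ\sigma=\mu$. For counitality I would evaluate both $\mu^{\op}\circ{_{B}\tilde\varepsilon}$ and $\mu\circ\epsc$ on a general $yhx$; deriving from \eqref{eq:counits-multiplicative} and Example \ref{example:hopf-chb} the explicit values $\beps(yhx)=(x\actleft S_{H}^{-1}(h))S_{B}^{-1}(y)$ and $\epsc(yhx)=S_{C}^{-1}(x)(S_{H}^{-1}(h)\actright y)$, the left side reduces to $\mu(yD_{h_{(1)}}(h_{(2)}\actright c_{0}))$ and, after one use of the modular property of $\mu$, the right side to $\mu(c_{0}(S_{H}^{-1}(h)\actright y))$, where $x=c_{0}^{\op}$. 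These agree by the $D$-twisted invariance identity obtained from the defining relation $\mu(S_{H}(h)\actright y)=\mu(D_{h}y)$, the commutation Lemma \ref{lemma:chb-modification-2}(3), and quasi-invariance of $\mu^{\op}$ (Lemma \ref{lemma:chb-modification-2}(2)); in the invariant specialization $D_{h}=\varepsilon_{H}(h)1$ this collapses to the classical identity $\mu((k\actright a)b)=\mu(a(S_{H}(k)\actright b))$, which I would use as a sanity check.

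For assertion (4), by Lemma \ref{lemma:modification-integrals} the partial integrals of $\tilde{\mathcal{A}}$ coincide with those of $\mathcal{A}$, so $\cphic$ and $\bpsib$ are partial left and right integrals by Example \ref{example:integrals-chb}. The decisive simplification is that $\phi=\mu\circ\cphic$ and $\psi=\mu^{\op}\circ\bpsib$ coincide, both sending $yhx\mapsto\mu(y)\phi_{H}(h)\mu^{\op}(x)$; hence the modular element is trivial and, since $\phi=\psi$ already factors through the bimodule maps $\cphic$ and $\bpsib$, quasi-invariance is automatic. Fullness holds because $\phi_{H}\neq0$ and $\mu,\mu^{\op}$ are nonzero, and faithfulness follows from faithfulness of $\phi_{H}$ \cite[Theorem 3.7]{daele} together with faithfulness of $\mu$, exactly as in \S\ref{subsection:modification-crossed}. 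Thus $(\tilde{\mathcal{A}},\mu^{\op},\mu,\bpsib,\cphic)$ is measured, which licenses the use of Theorem \ref{theorem:modular-automorphism} in the final step.

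Assertion (5) is where the real work lies. Writing $\tilde S$ for the modified antipode (with $\tilde S|_{C}=S_{C}$, $\tilde S|_{B}=S_{B}$ since the modifier is trivial on the base), Theorem \ref{theorem:modular-automorphism} together with $\phi=\psi$ gives $\sigma^{\phi}|_{C}=\tilde S^{2}|_{C}=\sigma$ and, on $B$, $\sigma^{\phi}(y^{\op})=\tilde S^{-2}(y^{\op})=\sigma^{-1}(y)^{\op}$, using $\tilde S(c^{\op})=c$ and $\tilde S(y)=\sigma(y)^{\op}$. The remaining value $\sigma^{\phi}(h)$ must be extracted from $\phi(hb)=\phi(b\,\sigma^{\phi}(h))$: I would compute $\phi(h\cdot y'h'x')=\mu(h_{(1)}\actright y')\phi_{H}(h_{(2)}h')\mu^{\op}(x')$ and match it, for all $y'\in C$, $h'\in H$, $x'\in B$, against $\phi(y'h'x'\cdot z)$ with the candidate $z=\sigma_{H}(h_{(2)})D_{S(h_{(1)})}(D_{S^{-1}(h_{(3)})})^{\op}$. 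I expect this matching to be the main obstacle: it interweaves the modular automorphism $\sigma_{H}$ of $\phi_{H}$ (through $\phi_{H}(gk)=\phi_{H}(k\sigma_{H}(g))$), the cocycle $D$ with its commutation relations, and the modular property of $\mu$, all carried out in triple Sweedler bookkeeping. Useful controls are that the formula collapses to $\sigma^{\phi}(1_{H})=1$ at $h=1_{H}$, and that its structure is parallel to the one-sided formula $\sigma^{\phi}(yh)=y\sigma_{H}(h_{(2)})D_{S^{-1}(h_{(1)})}$ of \S\ref{subsection:modification-crossed}, the extra factor $(D_{S^{-1}(h_{(3)})})^{\op}$ accounting for the additional $B$-degree of freedom.
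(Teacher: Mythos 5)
Your proposal tracks the paper's own proof essentially step for step: the identification of $\Theta_{\rho}$ with $\beta_{D}\otimes\id$ under $A\cong(C\# H)\otimes B$ and of $\Theta_{\lambda}$ with the analogous extension of $\beta^{\dag}_{D}$ via $BH\cong C^{\op}\# H^{\co}$, the reduction of counitality to the identity $\mu((h_{(1)}\actright y')D_{h_{(2)}}y)=\mu(y'(S_{H}(h)\actright y))$ established in the proof of Lemma \ref{lemma:chb-modification-2}, the observation $\phi=\psi$ for assertion (4), and Lemma \ref{lemma:integrals-modular-base} for $\sigma^{\phi}$ on the base algebras are all exactly what the paper does. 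Two caveats. First, in your counitality check the right-hand side after one application of the modular automorphism is $\mu((S_{H}^{-1}(h)\actright y)c_{0})$, not $\mu(c_{0}(S_{H}^{-1}(h)\actright y))$: $C$ is not assumed commutative in this proposition, so the order matters, and a further use of $\sigma$ together with \eqref{eq:chb-modular} is needed to bring the expression into the form of \eqref{eq:chb-aux-base}; the argument still closes, but the step as written is not quite right. Second, and more substantively, the verification of the candidate for $\sigma^{\phi}(h)$ in (5) --- which you correctly single out as the main obstacle --- is only set up, not carried out. The paper executes it by first rewriting the coproduct of $\sigma_{H}(h')$ via the relation $\Delta_{H}\circ\sigma_{H}=(\sigma_{H}\otimes S_{H}^{-2})\circ\Delta_{H}$, which is what allows the $C$-factor $D_{S_{H}(h'_{(1)})}$ and the $B$-factor $(D_{S_{H}^{-1}(h'_{(3)})})^{\op}$ to be pulled past $h\sigma_{H}(h'_{(2)})$, and then collapsing the three tensor legs using $\mu(D_{S_{H}^{-1}(g)}y)=\mu(g\actright y)$, $\phi_{H}(h\sigma_{H}(g))=\phi_{H}(gh)$ and Lemma \ref{lemma:chb-modification-2}(2). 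Without that identity for $\Delta_{H}\circ\sigma_{H}$ the Sweedler bookkeeping does not close, so this is the one genuine piece of content your outline still owes.
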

\begin{proof}
  (1) This follows immediately from the definitions and \eqref{eq:chb-modular}.

(2)  The map $\Theta_{\rho}$ acts trivially on the subalgebra $B\subseteq A$,  and like the automorphism $\beta_{D}$ of $C\# H$ defined in  Lemma \ref{lemma:chb-modification} on the subalgebra $C\# H \cong CH \subseteq A$.  Thus,
   the canonical linear isomorphism $A \cong (C\# H) \otimes B$ identifies $\Theta_{\rho}$ with the map $\beta_{D} \otimes \id$ which is bijective. Now,  $\Theta_{\rho}$ is an automorphism because for all $x\in B$, $y\in C$, $h\in H$,
   \begin{align*}
  \Theta_{\rho}(x)     \Theta_{\rho}(yh) = xyD_{h_{(1)}}h_{(2)} = yD_{h_{(1)}}h_{(2)}(x \actleft h_{(3)}) = \Theta_{\rho}(yh_{(1)}) \Theta_{\rho}((x\actleft h_{(2)})).
   \end{align*}

To prove the assertions concerning the map $\Theta_{\lambda}$, note that the subalgebra  $BH \subseteq A$ is isomorphic to $C^{\op} \# H^{\co}$ because
\begin{align*}
  h y^{\op} &= (y^{\op} \actleft S^{-1}(h_{(2)})) h_{(1)} = (h_{(2)} \actright y)^{\op} h_{(1)}
\end{align*}
for all $h\in H$, $y\in C$. Now, similar arguments as above show that the desired properties of $\Theta_{\lambda}$ follow easily from the corresponding properties of the automorphism $\beta^{\dag}_{D}$  of $C^{\op} \# H^{\co}$.

The tuple $(\Theta_{\lambda},\Theta_{\rho},\id,\id)$ is a modifier of $\mathcal{A}$ because
\begin{align*}
 (\Theta_{\lambda}\overline{\times} \id)(\Delta_{B}(yhx)) 
&= y(D_{h_{(2)}})^{\op} h_{(1)} \otimes h_{(3)}x   = yh_{(1)} \otimes D_{(2)}h_{(3)}x = (\id \overline{\times} \Theta_{\rho})(\Delta_{B}(yhx))
\end{align*}
for all $y\in C$, $h\in H$ and $x\in B$.

(3) By assumption and definition, the base weight $(\mu^{\op},\mu)$ is antipodal and modular, and
the relations \eqref{eq:modified-lt-counit},  \eqref{eq:modified-rt-counit} and \eqref{eq:chb-base-weights} imply
\begin{align*}
  (\mu^{\op} \circ {_{B}  \tilde\varepsilon})(y^{\op}hy') &= (\mu^{\op} \circ \beps)(\Theta_{\lambda}(y^{\op}hy'))  \\ &=(\mu^{\op} \circ \beps)(y^{\op}(D_{h_{(2)}})^{\op}h_{(1)}y') = \mu((h_{(1)} \actright y')D_{h_{(2)}}y), \\
(\mu \circ \tilde\varepsilon_{C})(y^{\op}hy') &= (\mu_{C} \circ \varepsilon_{C})(y^{\op}hy') =
   \mu(S^{-1}(h \actright y)y') =   \mu_{C}(y'(S(h) \actright y))
\end{align*}
for all $y,y'\in C$ and $h\in H$.
 By \eqref{eq:chb-aux-base}, both expressions coincide.

(4) By Example \ref{example:integrals-chb}, $\cphic$ is a partial left and $\bpsib$ is a partial right integral. Since $\mu\circ \cphic = \mu^{\op} \circ \bpsib$, the base weight $(\mu^{\op},\mu)$ is quasi-invariant with respect to these partial integrals, and by (3), it is counital. Hence, it only remains to verify that the functional $\phi=\mu \circ \cphic$ is faithful, and this is easy.

(5)  By Lemma \ref{lemma:integrals-modular-base}, $\sigma^{\phi}(y)=S^{2}(y)=\sigma(y)$ for all $y\in C$. 
The same lemma and the relation $\mu^{\op}\circ \bpsib=\phi$ imply $\sigma^{\phi}(y^{\op})=S^{-2}(y^{\op})=\sigma^{-1}(y)^{\op}$ for all $y\in C$. Let now $y,y' \in C$ and $h,h'\in H$. Then
We compute
\begin{align*}
  \phi(y'hy^{\op} \cdot \sigma_{H}(h'_{(2)})D_{S(h'_{(1)})}(D_{S^{-1}(h'_{(2)})})^{\op}).
\end{align*}
Since $\Delta_{H} \circ \sigma_{H} = (\sigma_{H} \otimes S_{H}^{-2})\circ \Delta_{H}$, the expression above is equal to
\begin{align*}
  \phi(D_{S^{-1}(h'_{(1)})}y'& \cdot h\sigma_{H}(h'_{(2)})  \cdot (y^{\op} \actleft S^{-2}(h'_{(3)}))(D_{S^{-1}(h'_{(4)})})^{\op}) \\
&= \mu(D_{S^{-1}(h'_{(1)})}y')  \cdot \phi_{H}(h\sigma_{H}(h'_{(2)})) \cdot \mu^{\op}((y^{\op} \actleft S^{-2}(h'_{(3)}))(D_{S^{-1}(h'_{(4)})})^{\op}) \\
&=\mu(h'_{(1)} \actright y') \cdot \phi_{H}(h'_{(2)}h) \cdot \mu(D_{S^{-1}(h'_{(4)})}(S^{-1}(h'_{(3)}) \actright y)) \\
&= \mu(h'_{(1)} \actright y') \cdot \phi_{H}(h'_{(2)}h) \cdot \mu(y) \\
&= \phi(h' \cdot y'hy^{\co}). \qedhere
\end{align*}
\end{proof}

\subsection{Dynamical quantum groups}
In \cite{timmermann:dynamical}, we studied integration on dynamical quantum groups in order to construct operator-algebraic completions in the form of measured quantum groupoids. 
The present results generalize and clarify the results  obtained  in \cite{timmermann:dynamical}, for example, they explain the  deviation of the antipode on the operator-algebraic level from the  algebraic  antipode observed in \cite[Proposition 2.7.13]{timmermann:dynamical}.

\subsubsection*{Multiplier $(\mathcal{B},\Gamma)$ Hopf $*$-algebroids}
To recall the notion of  a dynamical quantum group as defined in \cite{timmermann:dynamical}, we need some preliminaries.

Let $\mathcal{B}$   be a commutative $*$-algebra with local units and  a left action of a discrete group
  $\Gamma$. Denote by $e\in \Gamma$ the unit element. 
A \emph{$(\mathcal{B},\Gamma)^{\ev}$-algebra} is a $*$-algebra $A$ with  a grading by $\Gamma\times \Gamma$, local units in $A_{e,e}$, and a  non-degenerate $*$-homomorphism $\mathcal{B}\otimes \mathcal{B} \to M(A)$  satisfying
\begin{align*}
  a(x\otimes y) = (\gamma(x)\otimes \gamma'(y))a \in A_{\gamma,\gamma'}  \quad \text{for all } a \in A_{\gamma,\gamma'}, x,y\in \mathcal{B}.
\end{align*}
Such  $(\mathcal{B},\Gamma)^{\ev}$-algebras form a monoidal category as follows.  Morphisms  are non-degenerate $\mathcal{B}\otimes \mathcal{B}$-linear $*$-homomorphisms into multipliers that preserve the grading. The product $A \tilde{\otimes} C$ of $(\mathcal{B},\Gamma)^{\ev}$-algebras $A$ and $C$ is the quotient of
  \begin{align*}
   \bigoplus_{\gamma,\gamma',\gamma''} A_{\gamma,\gamma'} \otimes C_{\gamma',\gamma''} 
  \end{align*}
by the subspace spanned by all elements of the form $(1\otimes x)a\otimes c - a\otimes (x \otimes 1)c$, which becomes a $(\mathcal{B},\Gamma)^{\ev}$-algebra in a natural way, and the unit object is the  crossed product $\mathcal{B}\rtimes \Gamma$ with diagonal $\Gamma\times \Gamma$-grading and the multiplication map $\mathcal{B}\otimes \mathcal{B}\to \mathcal{B} \hookrightarrow M(\mathcal{B}\rtimes \Gamma)$.

 A \emph{multiplier $(\mathcal{B},\Gamma)$-Hopf $*$-algebroid}  consists of a $(\mathcal{B},\Gamma)^{\ev}$-algebra $A$ with a comultiplication, counit and antipode, which are morphisms 
 \begin{align*}
   \Delta &\colon A \to M(A\tilde\otimes A),&  \bm{\varepsilon} &\colon A  \to \mathcal{B}\rtimes \Gamma, & S &\colon A \to A^{\co,\op}
 \end{align*}
satisfying natural conditions, see \cite[Definition 1.3.5]{timmermann:dynamical}, where $A^{\co,\op}$ is a suitably defined bi-opposite of $A$.

 Such a multiplier $(\mathcal{B},\Gamma)$-Hopf $*$-algebroid can be regarded as a  multiplier Hopf $*$-algebroid with certain extra structure as follows.  The $*$-homomorphism $\mathcal{B}\otimes \mathcal{B} \to M(A)$ extends to embeddings of  $\mathcal{B}\otimes 1$ and $1\otimes \mathcal{B}$ into $M(A)$. Denote by $B$ and $C$ the respective images, and by $S_{B}\colon B\to C$ and $S_{C} \colon C \to B$ the canonical (anti-)isomorphism. Then $\AlA$ is a left and  $\ArA$ a  right  $A\tilde\otimes A$-module, and the formulas
 \begin{align*}
   \Delta_{B}(a)(b\otimes c) &:= \Delta(a)(b\otimes c), & (b\otimes c)\Delta_{C}(a) &:= (b\otimes c)\Delta(a)
 \end{align*}
define a left and a right comultiplication so that
\begin{align*}
  \mathcal{A}:=(A,B,C,S_{B},S_{C},\Delta_{B},\Delta_{C})
\end{align*}
 becomes a multiplier $*$-bialgebroid. Its  canonical maps are bijective  by \cite[Proposition 1.3.8]{timmermann:dynamical}, so it is a multiplier Hopf $*$-algebroid. One easily verifies that its antipode is $S$ and its left and right counits $\beps$ and $\epsc$ are equal to the composition of $\bm{\varepsilon}$ with the linear maps $\mathcal{B}\rtimes \Gamma$ given by $\sum_{\gamma} x_{\gamma} \gamma \mapsto \sum_{\gamma} x_{\gamma}$ and $\sum_{\gamma} \gamma x_{\gamma}\mapsto \sum_{\gamma}x_{\gamma}$, respectively.

\subsubsection*{Integration}
The ingredients for integration considered in  \cite[Definition 1.6.1]{timmermann:dynamical} and here are related as follows. 

Assume that the multiplier $(\mathcal{B},\Gamma)$-Hopf $*$-algebroid $(A,\Delta,S,\varepsilon)$ is \emph{measured} in the sense of \cite[Definition 1.6.1]{timmermann:dynamical}, that is, it comes with
\begin{enumerate}
\item a map $  \cphic \colon A \to \mathcal{B}\cong C$, called a left integral in \cite{timmermann:dynamical}, which has to be
 $C$-linear, left-invariant with respect to $\Delta$, and to vanish on $A_{\gamma,\gamma'}$ if $\gamma\neq e$;
\item a map $\bpsib \colon A \to \mathcal{B} \cong B$, called a right integral in \cite{timmermann:dynamical}, which has to be $B$-linear, right-invariant with respect to $\Delta$, and to  vanish on $A_{\gamma,\gamma'}$ if $\gamma'\neq e$;
\item a  faithful, positive linear functional on $\mathcal{B}$ that is quasi-invariant with respect to the action of $\Gamma$ in a suitable sense and satisfies $\mu\circ \cphic = \mu\circ \bpsib$.
\end{enumerate}

Let us first consider the conditions in (1) and (2).  The first two conditions  are easily seen to be equivalent to $\cphic$ being left-invariant and $\bpsib$ being right-invariant with respect to $\Delta_{B}$ and $\Delta_{C}$ in the sense of Definition \ref{definition:invariant-elements}. 
 The third conditions in (1) and (2) follows from the first ones if the action of $\Gamma$ on $\mathcal{B}$ is free in the sense that for every non-zero $x\in \mathcal{B}$ and every $\gamma\neq e$, there is some $x'\in \mathcal{B}$ such that  $x(x'-\gamma(x'))$ is non-zero.

Let us next consider the conditions in (3). Write $\mu_{B}$ and $\mu_{C}$ for $\mu$, regarded as a functional on $B$ or $C$, respectively. Then $\mu_{B}\circ S_{C}=\mu_{C}$ and $\mu_{C} \circ S_{B}=\mu_{B}$ by definition.
If
 $a\in A$ and $\bm{\varepsilon}(a)=\sum_{\gamma} x_{\gamma} \gamma \in B\rtimes \Gamma$, then
\begin{align*}
  \mu_{B}(\beps(a)) &= \sum_{\gamma}\mu(x_{\gamma}), &
  \mu_{C}(\epsc(a)) &= \sum_{\gamma}\mu(\gamma^{-1}(x_{\gamma})).
\end{align*}
If $\mu$ is \emph{invariant} under the action of $\Gamma$ on $\mathcal{B}$, then
 $(\mu_{B},\mu_{C})$ is a counital base weight for $\mathcal{A}$  and we obtain a measured multiplier Hopf $*$-algebroid.

\subsubsection*{Modification} The condition of quasi-invariance imposed on the functional $\mu$ in (c) is strengthened in \cite[Section 2.1, condition (A2)]{timmermann:dynamical} as follows. There, one assumes existence of a family of self-adjoint, invertible multipliers $D^{1/2}_{\gamma} \in M(\mathcal{B})$  satisfying 
\begin{align*}
  D^{1/2}_{e} &= 1, & D^{1/2}_{\gamma\gamma'} &= \gamma'{}^{-1}(D^{1/2}_{\gamma})D^{1/2}_{\gamma'}, & \mu(\gamma(xD_{\gamma})) &= \mu(x)
\end{align*}
for all $\gamma,\gamma' \in \Gamma$ and $x\in \mathcal{B}$. Given such a family, we can modify $\mathcal{A}$ and obtain a measured multiplier Hopf $*$-algebroid as follows.  The formulas
\begin{align*}
\bar{D}^{\pm 1/2}(a) &= a(1\otimes D^{\mp 1/2}_{\gamma'}), &
D^{\pm 1/2}(a) &= a(D^{\mp 1/2}_{\gamma} \otimes 1), \quad \text{ where } a \in A_{\gamma,\gamma'},
\end{align*}
define automorphisms of $A$ and 
 $(\bar{D}^{1/2},D^{1/2},\bar{D}^{-1/2},D^{-1/2})$ is a modifier for $\mathcal{A}$ which is   trivial on the base and self-adjoint. This can be checked easily, see also \cite[Lemma 1.6.3]{timmermann:dynamical}.
 We thus obtain a modified multiplier Hopf $*$-algebroid
 \begin{align*}
   \tilde{\mathcal{A}} = (A,B,C,S_{B},S_{C},\tilde \Delta_{B},\tilde \Delta_{C}).
 \end{align*}
Now, $(\mu_{B},\mu_{C})$ is a counital base weight for $\tilde{\mathcal{A}}$. Indeed, if $\bm{\varepsilon}(a) = \sum_{\gamma} \gamma x_{\gamma}$, then by \eqref{eq:modified-lt-counit} and \eqref{eq:modified-rt-counit},
\begin{align*}
\mu_{B}(_{B}\tilde\varepsilon(a)) &= \sum_{\gamma} \mu(\gamma(x_{\gamma} D^{1/2}_{\gamma})) = 
 \sum_{\gamma} \mu(x_{\gamma}D^{-1/2}_{\gamma}) =  \sum_{\gamma} \mu(x_{\gamma}D^{-1/2}_{\gamma}) = \mu_{C}( \tilde\varepsilon_{C}(a)).
\end{align*}
 Thus, every measured multiplier $(\mathcal{B},\Gamma)$-Hopf $*$-algebroid satisfying  \cite[Section 2.1, condition (A2)]{timmermann:dynamical} gives rise to  a measured multiplier Hopf $*$-algebroid. 

Theorem \ref{theorem:integrals-proper} shows that in the proper case, the assumption $\mu\circ \cphic = \mu \circ \bpsib$ in \cite{timmermann:dynamical} does not restrict generality.

The measured quantum groupoid associated to $(A,\Delta,\bm{\varepsilon},S)$ in \cite{timmermann:dynamical} is rather a completion of the modification $\tilde{\mathcal{A}}$ than of $\mathcal{A}$. Indeed,
the formula for the comultiplication on the Hopf-von Neumann bimodule given  in  \cite[Lemma 2.5.8]{timmermann:dynamical} looks like the modified comultipliation $\tilde \Delta_{B}$, and the antipode  of the associated measured quantum groupoid extends the modified antipode $\tilde S =D^{1/2}SD^{1/2}$ instead of the original antipode $S$, see \cite[Proposition 2.7.13]{timmermann:dynamical}.

 \subsection*{Acknowledgements} The author would like to thank Alfons Van Daele for inspiring and fruitful discussions.

     \bibliographystyle{abbrv}
 \def\cprime{$'$}

\end{document}